\documentclass[a4paper]{amsart}

\usepackage{geometry}          
\usepackage{amsmath, amstext, ,amscd, amssymb, mathrsfs, amsthm}
\usepackage{stmaryrd}
\usepackage{mathrsfs}
\usepackage{hyperref}
\usepackage{enumitem}
\usepackage[all]{xy}

\newcommand{\A}{\mathcal{A}}
\newcommand{\B}{\mathcal{B}}
\newcommand{\R}{\mathbb{R}}
\newcommand{\C}{\mathbb{C}}
\newcommand{\T}{\mathbb{T}}
\newcommand{\Z}{\mathbb{Z}}
\newcommand{\N}{\mathbb{N}}

\newcommand{\F}{\mathrm{F}}
\newcommand{\G}{\mathrm{G}}

\newcommand{\smooth}{\mathscr{C}^{\infty}}

\newcommand{\sob}{\boldsymbol{H}}

\newcommand{\HH}{\mathrm{H}}
\newcommand{\ic}{\sqrt{-1}}
\newcommand{\lie}{\mathcal{L} }
\newcommand{\bw}{\overline{w}}

\newcommand{\derpar}[2]{\frac{\partial#1}{\partial#2}}

\newcommand{\e}{\varepsilon}

\newcommand{\E}{\mathbb{E}}

\newcommand{\FF}{\mathscr{F}}

\newcommand{\db}{\bar{\partial}}
\newcommand{\Wedge}{\Lambda^{0,\bullet}}

\newcommand{\End}{\mathrm{End}}
\newcommand{\Sp}{\mathrm{Sp}}

\newcommand{\LL}{\mathscr{L}}
\newcommand{\0}{\mathcal{O}}
\newcommand{\rad}{\mathcal{R}}
\newcommand{\PP}{\mathbb{P}}

\newcommand{\RR}{\mathrm{R}}
\newcommand{\kahler}{\text{K\"{a}hler}}
\newcommand{\n}{\nabla}
\newcommand{\ol}{\overline}
\newcommand{\ul}{\underline}
\newcommand{\trs}{\tr_{s}}
\newcommand{\limarrow}[2]{\xrightarrow[#1\to #2]{}}
\newcommand{\isom}{\overset{\sim}{\longrightarrow}}
\newcommand{\K}{\mathbb{K}}
\newcommand{\toep}{\mathcal{T}}
\newcommand{\Fp}{{\mathcal{F}_p}}
\newcommand{\har}{\mathscr{H}}
\newcommand{\boldrm}[1]{{\boldsymbol{\mathrm{#1}}}}

\renewcommand{\Re}{\mathrm{Re}}
\renewcommand{\Im}{\mathrm{Im}}

\DeclareMathOperator{\Id}{Id} 

\DeclareMathOperator{\tr}{Tr}

\DeclareMathOperator{\ch}{ch}

\newtheorem{prop}{Proposition}[section]
\newtheorem{thm}[prop]{Theorem}

\newtheorem{lemme}[prop]{Lemma}

\newtheorem{cor}[prop]{Corollary}
\newtheorem{assumption}[prop]{Assumption}

\theoremstyle{definition}

\newtheorem{defn}[prop]{Definition}

\theoremstyle{remark}

\newtheorem{rem}[prop]{Remark}

\pagestyle{headings}     
  
\numberwithin{equation}{section}
\setcounter{section}{-1}

\begin{document}

\title{The asymptotic of the holomorphic analytic torsion forms}
\date{\today}

\author{Martin {\sc{Puchol}}} 
\address{Universit\'{e} Paris Diderot--Paris 7, Campus des Grands Moulins, B\^{a}timent Sophie Germain, case 7012, 75205 Paris Cedex 13}
\email{martin.puchol@imj-prg.fr}

\begin{abstract}
The purpose of this paper is first to give an asymptotic formula for the holomorphic analytic torsion forms of a fibration associated with increasing powers of a given line bundle. Secondly, we generalize this formula, thanks to the theory of Toeplitz operators, in the case where the powers of the line bundle is replaced by the direct image of powers of a line bundle on a bigger manifold. In both cases we have to make fiberwise positivity assumption on the line bundle. This results are the family versions of the results of Bimsut and Vasserot on the asymptotic of the holomorphic torsion.
\end{abstract}

\maketitle

Mathematics Subject Classification 2010: 58J52, 41A60 (primary), 47B35, 35K08 (secondary)


\section{Introduction}

The holomorphic analytic torsion was defined in \cite{MR0383463} by Ray and Singer as the complex analogue of its real version for flat vector bundles. It is obtained by regularizing the determinant of the Kodaira Laplacian of holomorphic vector bundles on a compact complex manifold. It appears in the study by Bismut-Gillet-Soul\'{e} of the determinant of the fiberwise cohomology of a holomorphic fibration in \cite{bismut-gillet-soule}.

Analytic torsion has an extension in the family setting: the analytic torsion forms, defined in various degrees of generality by Bismut-Gillet-Soul\'{e} \cite{MR929147}, Bismut-K\"{o}hler \cite{bismut-kohler} and Bismut \cite{MR3099098}. The 0-degree  component of these forms is the analytic torsion of Ray-Singer along the fiber. The analytic torsion forms have found many applications, especially because it was introduced, by Gillet and Soul\'{e} in particular, as the analytic counterpart of the direct image in Arakelov geometry. In deed, the torsion appear in the arithmetic Riemann-Roch theorem \cite{MR1189489} and the torsion forms in the arithmetic Riemann-Roch-Grothendieck theorem in higher degrees  \cite{MR2473633}. An other application of holomorphic torsion is the study of the moduli space of K3 surfaces by Yoshikawa in \cite{yoshikawa2004k3} and his subsequent works. See also the recent works \cite{Gil:2012fk,MR3165730} on analytic torsion classes and their application to the the arithmetic Grothendieck-Riemann-Roch theorem in the case of general projective morphisms between regular arithmetic varieties.

Analytic torsion has an equivariant version, introduced in \cite{ma2000submersions} and \cite{bismut2004holomorphic}. In \cite{MR1872550,MR1881571}, K\"{o}hler and Roessler have used equivariant  torsion in their work on a Lefschetz type fixed point formula in Arakelov geometry.

In \cite{bismut-vasserot-89}, Bismut and Vasserot computed the asymptotic of the analytic torsion associated with increasing powers of a positive line bundle, using the heat kernel method of \cite{MR886814} (see also \cite[Sect. 5.5]{ma-marinescu}). They also extended their result in \cite{bismut-vasserot-90}, in the case where the powers of the line bundle are replaced by the symmetric powers of a positive bundle using a trick due to Getzler \cite{MR894572}. These asymptotics have played an important role in a result of arithmetic ampleness by Gillet and Soul\'{e} \cite{MR1189489} (see also \cite[Chp VIII]{MR1208731}). 

In this paper, we give the family versions at the level of forms of the results Bismut and Vasserot for the analytic torsion forms. We first consider the case of torsion forms of a fibration associated with increasing powers of a given positive line bundle which is positive along the fiber. This correspond to \cite{bismut-vasserot-89}. We will use a similar strategy as in that paper, but some additional difficulties appear due to the horizontal differential forms appearing in the Bismut superconnection (compared to the Kodaira Laplacian) used in the definition of the torsion forms. Indeed, the operators we are dealing with here have a nilpotent part (i.e., the part in positive degree along the basis) that must be taken into account, especially when estimating resultants or heat kernels. Moreover, to give the asymptotic formula we have to compute explicitly super-traces of terms involving an exponential coupling horizontal forms and vertical Clifford variables, which makes the computation much more complicated than in \cite{bismut-vasserot-89}.  Note also that in all our results of smooth convergence, we have to take into account the derivatives along the basis.

Next, we consider the case of torsion forms of a fibration associated with the direct image of powers of a line bundle on a bigger manifold. We have to make some partial positivity assumption on the line bundle. This generalize \cite{bismut-vasserot-90} in two ways. Firstly we work in the family setting. Secondly it is easy to see that the results of \cite{bismut-vasserot-90} apply in fact to the direct image of powers of a line bundle on a bigger manifold given by a principal $G$-bundle with $G$ compact and connected. Here we do not assume that this is the case, and as a consequence, we cannot use the same trick as in \cite{bismut-vasserot-90} to reduce the problem to our first result. Thus, even if the basis is a point, i.e., for the torsion, we get a new result when compared to \cite{bismut-vasserot-90} .

In the general case, we thus use the same heat kernel approach as in our first result. However here, in addition to the difficulties pointed out above, we have to deal with the fact that the dimension of the bundle we are working with grows to infinity. In particular, we cannot hope to have a limiting operator for the rescaled operator, nor limitings coefficients in the development of the heat kernel, and in all our proofs we have to make uniform estimates on spaces that change. To overcome these issues, we will draw inspiration from \cite{BMZweb,MR2838248} and use the formalism of Toeplitz operators of \cite{ma-marinescu}. The idea is to use the operator norm on matrices to have uniform boundedness properties of Toeplitz operators, and to replace the convergence to limiting objects by an approximation by objects with Toeplitz coefficients.

We now give more details about our results. Let $M$ and $B$ be two complex manifolds. Let $\pi \colon M \to B$ be a holomorphic fibration with compact fiber $X$ of dimension $n$. We denote by $TX$ the holomorphic tangent bundle to the fiber, and $T_\R X$ the real tangent bundle. We denote by $T_\C X=T_\R X\otimes \C$ the complexified tangent bundle, and $T^{(1,0)}X,T^{(0,1)}X\subset T_\C X$ the $\pm\ic$-eigenspace of the complex structure $J^{T_\R X}$ of the fiber. Recall that we have a canonical isomorphism $TX\simeq T^{(1,0)}X$. In the sequel, we will use the same notations for all the other tangent bundles.

Let $(\pi, \omega)$ be a structure of Hermitian  fibration in the sense of Section \ref{Sect-Hermitian fibration}, i.e., $\omega$ is a smooth $(1,1)$-form on $M$ which induces a Hermitian metric $h^{T X}$ along the fibers. 

Let $(\xi, h^\xi)$ be a holomorphic Hermitian vector bundle on $M$, and let $(L,h^L)$ be a holomorphic Hermitian line bundle on $M$. We denote the curvature of the Chern connection of $L$ by $R^L$, and we make the following basic assumption:

\begin{assumption}
\label{RLpositive}
The (1,1)-form $\ic R^L$ is positive along the fibers, which means that for any $0\neq U \in T^{(1,0)}X$, we have
\begin{equation}
R^L(U,\ol{U}) > 0. 
\end{equation}
\end{assumption}

Let $\dot{R}^{X,L}\in \End(TX)$ be the Hermitian matrix such that for any $U,V \in T^{(1,0)}X$,
\begin{equation}
\label{defRpoint}
R^L(U,\overline{V}) = \langle \dot{R}^{X,L} U, V \rangle _{h^{TX}}.
\end{equation}
By Assumption \ref{RLpositive}, $\dot{R}^{X,L}$ is positive definite.

 For $p\in \N$, let $L^p$ be the $p^{th}$ tensor power of $L$. We assume that there is a $p_0\in \N$ such that the direct image $R^i\pi_*(\xi\otimes L^p)$ is locally free for all $p\geq p_0$.

 \begin{rem}
 \label{positivite->liberte}
If the basis $B$ is compact, then Assumption \ref{RLpositive} implies that  for $p$ large enough the direct image $R^\bullet\pi_*(\xi\otimes L^p)$ is automatically locally free, and moreover that $R^i\pi_*(\xi\otimes L^p)=0$ for $i>0$. Thus our hypothesis is in fact a uniformity assumption over the compact subsets of $B$.
 \end{rem}
 
  \textbf{In the sequel, all results holds for $p\geq p_0$, and we will not repeat this hypothesis. }
 
 We endow $\xi \otimes L^p$ with the metric $h^{\xi \otimes L^p}$ induced by $h^\xi$ and $h^L$. We can then define (see Definition \ref{def-torsion}) the analytic torsion forms $\mathscr{T}(\omega, h^{\xi\otimes L^p})$ associated with $(\pi, \omega)$ and $(\xi \otimes L^p,h^{\xi \otimes L^p})$.

If $\alpha$ is a form on $B$, we denote by $\alpha^{(k)}$ its component of degree $k$. We can now state our first main result, which is the extension of \cite{bismut-vasserot-89} in the family case:
\begin{thm}
\label{mainthm}
Let $k \in \{0,\dots,\dim B\}$. Then the component of degree $2k$ of the torsion form $\mathscr{T}(\omega, h^{\xi\otimes L^p})$ associated with $\omega$ and $h^{\xi\otimes L^p}$ have the following asymptotic as $p\to +\infty$:
\begin{equation}
\label{mainthm-eq}
 \mathscr{T}(\omega, h^{\xi\otimes L^p})^{(2k)} = \frac{\mathrm{rk}(\xi)}{2}  \left(\int_X  \log \left[ \det\left( \frac{p\dot{R}^{X,L}}{2\pi}\right) \right] e^{p\frac{\ic}{2\pi}R^L}\right)^{(2k)}  +o(p^{k+n}),
\end{equation}
in the topology of $\smooth$ convergence on compact subsets of $B$.
\end{thm}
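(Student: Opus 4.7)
The plan is to compute the small- and large-time asymptotics, uniformly in $p$, of the heat supertrace $\phi \trs[N_V \exp(-B_{t,p}^2)]$---where $B_{t,p}$ denotes the $t$-rescaled Bismut superconnection on $\Omega^{0,\bullet}(X,\xi\otimes L^p)$ and $N_V$ is the vertical number operator---then feed the result into the Mellin-transform definition of $\mathscr{T}(\omega, h^{\xi\otimes L^p})$ and differentiate at $s=0$. The dominant contribution comes from $t \sim 1/p$, so it is natural to split the Mellin integral into a large-time region $t \geq A$ (for fixed $A > 0$) and a small-time region $t \in (0, A]$, the latter being then analyzed by the change of variable $t = u/p$.

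In the large-time regime, Assumption \ref{RLpositive} yields a spectral gap of order $p$ for the fiberwise Kodaira Laplacian on forms of positive vertical degree (a family version of the Andreotti--Grauert / Bismut--Vasserot vanishing). The horizontal nilpotent part of $B_{t,p}^2$ produces only polynomial-in-$t$ corrections and does not destroy this gap. Consequently $\phi \trs[N_V \exp(-B_{t,p}^2)]$ decays like $e^{-cpt}$ uniformly on compact subsets of $B$, and its contribution to the Mellin transform, together with all its $B$-derivatives, is $O(p^{-\infty})$.

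In the small-time regime, after the change of variable $t = u/p$, use the Bismut--Lebeau--Ma localization technique to reduce the problem to a neighborhood of a fixed point $x_0$ in a given fiber. In normal fiber coordinates around $x_0$, and after a Getzler-type rescaling of the Clifford variables, the operator $\frac{1}{p}B_{u/p,p}^2$ converges, in $\smooth$ topology and including all $B$-derivatives, to a model operator $\mathscr{L}_{x_0}^{\infty}$ on $T_{x_0,\R}X \cong \C^n$. Explicitly, $\mathscr{L}_{x_0}^{\infty}$ is a harmonic oscillator with frequencies encoded by $\dot R^{X,L}(x_0)$, twisted by the vertical Clifford algebra and coupled multiplicatively to the horizontal $(1,1)$-form $\frac{\ic}{2\pi}R^L(x_0)$. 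Establishing this uniform convergence, including $B$-derivatives, is the main technical step: compared to \cite{bismut-vasserot-89}, one must track the nilpotent horizontal part of $B_{t,p}^2$ through the resolvent and semigroup estimates, typically via polynomial Sobolev bounds tuned to horizontal form degrees.

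The final step is the explicit evaluation of $\phi \trs[N_V \exp(-u \mathscr{L}_{x_0}^{\infty})]$. Mehler's formula combined with the standard Berezin/fermionic supertrace computation produces a $\det(\dot R^{X,L})$-type factor, while the horizontal form $\frac{\ic}{2\pi}R^L(x_0)$, commuting with the rest of $\mathscr{L}_{x_0}^{\infty}$, contributes multiplicatively via $\exp(p\frac{\ic}{2\pi}R^L)$. The remaining Mellin integral in $u$ reduces to the scalar $\log\det$ identity already used in \cite{bismut-vasserot-89}. Integrating the resulting fiberwise density over $X$ and extracting the horizontal degree $2k$ component gives \eqref{mainthm-eq}. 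The principal obstacles are thus (i) the uniform $\smooth$-in-$B$ control of the rescaled heat kernel in the presence of the horizontal nilpotent part, and (ii) the explicit supertrace computation coupling the vertical harmonic oscillator, the vertical Clifford action, and the exterior powers of the horizontal form $R^L$; the latter is where the argument genuinely departs from \cite{bismut-vasserot-89} and requires the new combinatorial input alluded to in the introduction.
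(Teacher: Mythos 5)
Your overall strategy---localize, rescale $B_{p,u/p}^2$ to a model harmonic oscillator coupled to vertical Clifford variables and horizontal forms, then compute the supertrace---is the same as the paper's (Sections \ref{localization}--\ref{demothm2}). However, two claims in the proposal are incorrect and together hide the genuinely hard part of the argument.

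First, the large-time claim is false. You assert that the spectral gap of order $p$, together with $N_V$ vanishing on degree-zero forms, makes $\Phi\trs[N_V\exp(-B_{t,p}^2)]$ decay like $e^{-cpt}$ for $t\geq A$, and hence the large-$t$ contribution is $O(p^{-\infty})$. This overlooks that $B_p^2 = D_p^2 + R_p$ is not self-adjoint: $R_p$ is the nilpotent part living in positive horizontal degree, and the contour projector $\PP_{p,u}$ around $\lambda=0$ is not the orthogonal projection onto $\ker D_p^2$ (cf. \eqref{grossesomme}). The Duhamel-type expansion of $\PP_{p,u}$ produces terms of the form $P_p R_p (uC_p^{(0)})^{-1} R_p P_p$, etc., which do not vanish under the supertrace even with $N_V$ (the $\Lambda^{\geq 2}(T^*_\R B)$-part of $R_p$ can preserve vertical degree zero). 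Theorem \ref{dominationugrand} of the paper shows that $p^{-n}\psi_{1/\sqrt p}\trs[N_{u/p}\exp(-B_{p,u/p}^2)]$ is a polynomial in $1/\sqrt{u}$ with vanishing constant term and bounded coefficients, hence only $O(1/\sqrt u)$---and this decay is sharp, since the model density $\RR_u$ in \eqref{expressionRu} decays like $1/u$, not exponentially. Establishing even the correct $O(1/\sqrt u)$ bound, uniformly in $p$ and with $B$-derivatives, requires the full contour/Duhamel machinery of Section \ref{demothm2} (Lemma \ref{estimeeaveclambda0}, Propositions \ref{termeavecKpu}--\ref{termeavecPpu}); your $e^{-cpt}$ heuristic short-circuits precisely the step where the nilpotent horizontal part bites.

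Second, you consistently write $N_V$ where the torsion form requires $N_u = N_V + i\omega^H/u$ (see \eqref{defNueq}). In the general Hermitian fibration the $i\omega^H/u$ term is not optional: it is responsible for the shift of indices between $A_j$ and $A_{j+1}$ in \eqref{defAtildej} and for the extra $\frac{\ic}{u}\omega^H$ term in $\RR_u$ \eqref{expressionRu}, both of which survive in the final formula. Dropping it changes the degree bookkeeping and yields the wrong answer except when $\omega^H=0$.

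Aside from these two points, the proposal correctly identifies the small-time mechanism (Bismut--Lebeau--Ma localization, Getzler rescaling, the need for uniform $B$-derivative control through the nilpotent part, and a harder-than-\cite{bismut-vasserot-89} supertrace computation coupling vertical Clifford with horizontal forms). But the proposal is incomplete as written: the step "large time is negligible" cannot be carried out as stated, and the analysis needed to repair it is essentially Theorem \ref{dominationugrand}, which must then be paired with a uniform small-$u$ expansion of the heat kernel (Theorem \ref{dvlptuniformedunoyaudelachaleur}) and a dominated-convergence argument to pass to the limit in the Mellin integral.
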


We now turn to our second result. Let $N$, $M$ and $B$ be three complex manifolds. Let $\pi_1 \colon N \to M$ and $\pi_2 \colon M \to B$ be holomorphic fibrations with compact fiber $Y$ and $X$ respectively.  Then $\pi_3:=\pi_2\circ\pi_1\colon N \to B$ is a holomorphic fibration, whose compact fiber is denoted by $Z$. We denote by $n_X$ (resp. $n_Y$, $n_Z$) the complex dimension of $X$ (resp. $Y$, $Z$). Note that $\pi_1|_{Z}\colon Z\to X$ is a holomorphic fibration with fiber $Y$. This is summarized in the following diagram:

$$  \xymatrix{
 Y\ar[r]  & Z\ar[r] \ar[d]^{\pi_1} & N \ar[d]^{\pi_1} \ar[rd]^{\pi_3} \\
 & X \ar[r]& M\ar[r]_{\pi_2} & B
 }$$

We suppose that we are given $(\pi_2,  \omega^M)$ a structure of Hermitian  fibration (see Section \ref{Sect-Hermitian fibration}). We denote by $T^H_BM = TX^{\perp,\omega^M}$ the corresponding horizontal space.

Let $(\xi, h^\xi)$ be a holomorphic Hermitian vector bundle on $M$, and let $(\eta, h^\eta)$ be a holomorphic Hermitian vector bundle on $N$. Let $(L,h^L)$ be a holomorphic Hermitian line bundle on $N$. We denotes its Chern connection by $\n^L$, and the corresponding curvature by $R^L$.

 As above, we make a positivity assumption on $L$:
\begin{assumption}
\label{RLpositive2}
The (1,1)-form $\ic R^L$ is positive along the fibers of $\pi_3$, that is for any $0\neq U \in TZ$, we have
\begin{equation}
R^L(U,\ol{U}) > 0. 
\end{equation}
\end{assumption}

In particular, $\frac{\ic}{2\pi}R^L$ enables us to define metrics  $g^{T_\R Z}$ and $g^{T_\R Y}$ on  $T_\R Z$ and $T_\R Y$ (see \eqref{def-metrci-fibre}).

We assume that there is $p_0\in \N$ such that for $p\geq p_0$, the direct image $R^\bullet\pi_{1*}( L^p)$ is locally free and $R^i\pi_{1*}( L^p)=0$ for $i>0$.  Then for $p\geq p_0$,
\begin{equation}
F_p:=H^0\big(Y,L^p|_{Y}\big)
\end{equation}
is  a  holomorphic vector bundle on $M$, endowed with the $L^2$ metric $h^{F_p}$ induced by  $h^L$ and $g^{T_\R Y}$. 
 
For $p\geq p_0$, we also assume that  the direct images $R^\bullet \pi_{2*}( F_p)$ and $R^\bullet \pi_{3*}( L^p)$ are locally free. Then an easy spectral sequence argument shows that for all $i\geq 0$, 
\begin{equation}
\label{R=RR}
R^i \pi_{2*}( F_p)\simeq R^i \pi_{3*}( L^p).
\end{equation}

 \begin{rem}
 \label{positivite->liberte2}
If the basis $B$ is compact, then Assumption \ref{RLpositive2} and Kodaira vanishing theorem imply the existence of $p_0$ such that for $p\geq p_0$  the above conditions are satisfied, i.e., the direct images $R^\bullet\pi_{1*}( L^p)$, $R^\bullet \pi_{2*}( F_p)$ and $R^\bullet \pi_{3*}( L^p)$ are locally free and concentrated in degree zero. In particular, 
\begin{equation}
H^\bullet\big(X,F_p|_{X}\big)= H^0\big(X, F_p|_{X}\big)\simeq H^0\big(Z,L^p|_{Z}\big).
\end{equation}

 Thus our hypothesis is again a uniformity assumption over the compact subsets of $B$.
 \end{rem}
 
  \textbf{Here again,  all results in the sequel holds for $p\geq p_0$, and we will not repeat this hypothesis. }

We endow $\xi \otimes F_p$ with the metric $h^{\xi \otimes F_p}$ induced by $h^\xi$ and $h^{F_p}$. Let $\mathscr{T}(\omega^M, h^{\xi\otimes F_p})$ be the holomorphic analytic torsion associated with $\omega^M$ and $(\xi\otimes F_p,h^{\xi\otimes F_p})$ as in Definition \ref{def-torsion}.

Let 
\begin{equation}
T^H_BN = (TZ)^\perp , \quad T^H_MN = (TY)^\perp,
\end{equation}
where the orthogonal complements are taken with respect to $R^L$. Then 
\begin{equation}
T^H_XZ:=T^H_MN\cap TZ
\end{equation}
 is the orthogonal complement of $TY$ in $TZ$. Moreover,
\begin{equation}
T^H_BN\simeq \pi_3^*TB, \quad T^H_MN\simeq \pi_1^*TM \quad \text{and} \quad T^H_XZ\simeq \pi_1^*TX.
\end{equation}

Let $\dot{R}^{X,L}\in  \pi_1^*\End(TX)$  be the Hermitian matrix such that for any $U,V\in TX$, if we denote their horizontal lifts by $U^H,V^H \in T^H_XZ$, then
\begin{equation}
R^L(U^H,\overline{V}^H) = \langle \dot{R}^{X,L} U, V \rangle _{h^{TX}}.
\end{equation}
By Assumption \ref{RLpositive2}, $\dot{R}^{X,L}$ is positive definite.

\begin{rem}
\label{N->Mkahler}
Note that $(\pi_1,-\frac{\ic}{2\pi}R^L)$ and $(\pi_{1}|_Z,-\frac{\ic}{2\pi}R^L|_Z)$ define {\kahler} fibrations in the sense of Section \ref{kahler-fibration}, with respective horizontal spaces $T^H_MN$ and $T^H_XZ$.
\end{rem}

 We can now state the second main result of this paper, which is an extension of Theorem \ref{mainthm}, and the family version of \cite{bismut-vasserot-90} (see the introduction of Section \ref{Sectionimagedirecte}).
\begin{thm}
\label{mainthm2}
Let $k \in \{0,\dots,\dim B\}$. Then the component of degree $2k$ of the torsion form $\mathscr{T}(\omega, h^{\xi\otimes F_p})$ associated with $\omega^M$ and $h^{\xi\otimes F_p}$ have the following asymptotic as $p\to +\infty$:
\begin{equation}
 \mathscr{T}(\omega^M, h^{\xi\otimes F_p})^{(2k)} = \frac{\mathrm{rk}(\xi)\mathrm{rk}(\eta)}{2} \left( \int_Z  \log \left[ \det\left( \frac{p\dot{R}^{X,L}}{2\pi}\right) \right] e^{p\frac{\ic}{2\pi}R^L} \right)^{(2k)}  +o(p^{k+n_Z}),
\end{equation}
in the topology of $\smooth$ convergence on compact subsets of $B$.
\end{thm}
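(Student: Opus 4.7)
My plan is to parallel the proof of Theorem \ref{mainthm} on the total fibration $\pi_3 \colon N \to B$, using the formalism of Toeplitz operators to compensate for the absence of a genuine ``limiting rescaled operator'' caused by the fact that $\mathrm{rk}(F_p) \to \infty$ with $p$. By Definition \ref{def-torsion}, the torsion form is a regularized Mellin transform of $\trs[N^V \exp(-\mathbf{A}_p^2)]$, where $\mathbf{A}_p$ is the Bismut superconnection on $\ahf(X, (\xi \otimes F_p)|_X)$; the problem reduces to controlling this supertrace asymptotically in $p$, uniformly in $t \in (0, +\infty)$, with enough regularity to get $\smooth$ convergence on compact subsets of $B$.

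The first step is to lift the analysis from $\pi_2 \colon M \to B$ to $\pi_3 \colon N \to B$ by means of \eqref{R=RR} and Remark \ref{N->Mkahler}. Smooth sections of $F_p$ over $X$ are canonically identified with sections of $L^p$ over $Z$ lying in the kernel of the fiberwise $\db^Y$-Laplacian. Using the fiberwise Bergman projector $P_p^Y$ onto this kernel, the square $\mathbf{A}_p^2$ is intertwined with a Toeplitz-type operator on $\ahf(Z, \xi|_Z \otimes L^p|_Z)$, obtained by conjugating, by $P_p^Y$, the Bismut superconnection for $\pi_3$ with coefficients in $\xi|_Z \otimes L^p|_Z$.

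Next I perform the Bismut-Vasserot rescaling $t \mapsto t/p$ and the Getzler-type rescaling of vertical Clifford variables by $\sqrt{p}$, exactly as in the proof of Theorem \ref{mainthm}. In the scalar case this gives a family of operators converging to a harmonic oscillator along $T_\R Z$. Here, because $P_p^Y$ does not exactly commute with the horizontal differentiations entering $\mathbf{A}_p$, genuine convergence is unavailable; instead one obtains an asymptotic Toeplitz expansion $\exp(-\mathbf{A}_p^2(t/p)) \sim \sum_r p^{-r/2} \toep_r(t)$, whose leading coefficient combines the harmonic-oscillator heat kernel on $T_\R Z$ with the fiberwise Bergman kernel. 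Its supertrace against $N^V$ is computed as in Theorem \ref{mainthm}, which after integration along $Z$ produces the factor $\int_Z \log\det(p\dot R^{X,L}/2\pi)\, e^{p\frac{\ic}{2\pi}R^L}$. The Mellin step is then handled by splitting the $t$-integral at $t=1$: on $(0,1]$ one uses the above uniform expansion, while on $[1,+\infty)$ one exploits a uniform spectral gap for $\mathbf{A}_p^2$ coming from Assumption \ref{RLpositive2} combined with a fiberwise Kodaira vanishing argument.

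The main obstacle is the uniformity in $p$ of the heat-kernel expansion on a space of sections whose dimension grows with $p$. Following the approach of \cite{BMZweb,MR2838248}, this is where the operator-norm boundedness of Toeplitz operators in \cite{ma-marinescu} is essential: it makes it possible to absorb the horizontal (positive basis-degree) part of $\mathbf{A}_p$, which is nilpotent but does not preserve $\ran(P_p^Y)$, into error terms of controlled Toeplitz type. One also needs finite-propagation-speed localization, together with derivative estimates that are uniform in $p$ and keep track of basis derivatives. A secondary difficulty, inherited from Theorem \ref{mainthm}, is the explicit supertrace computation mixing horizontal differential forms and vertical Clifford variables, which here must be performed at the level of the leading Toeplitz symbol rather than at the level of an honest limiting operator.
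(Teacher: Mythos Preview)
Your proposal captures the right high-level ingredients (Toeplitz formalism for the growing rank of $F_p$, localization, rescaling, uniform spectral gap, Mellin split), but the specific route you sketch diverges from the paper's at a crucial point, and the step you propose there is not justified as stated.

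You propose to intertwine $\mathbf{A}_p^2$ with $P_p^Y(\text{Bismut superconnection for }\pi_3)^2 P_p^Y$ and then perform the rescaling on $T_\R Z$. This intertwining does not hold: the $\pi_3$-superconnection contains the $Y$-Dirac operator and the $Y$-Clifford variables, and conjugating by $P_p^Y$ does not simply remove them; the discrepancy is governed by the second fundamental form of $F_p\subset\mathcal{F}_p$, and sorting this out rigorously is essentially the content of the composition formula for torsion forms. The remark just after the statement of Theorem~\ref{mainthm2} explains precisely why the paper avoids going through $\pi_3$: that route only gives the result modulo $\Im\partial+\Im\db$, and only under extra K\"ahler hypotheses on $(\pi_2,\omega^M)$.

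What the paper does instead is work entirely on $X$ with $F_p$-valued sections. The key input is Ma--Zhang's Theorem~\ref{RFpToeplitz}: the curvature $R^{F_p}/p$ is itself a Toeplitz operator with principal symbol $R^L(\,\cdot^H,\,\cdot^H)$. Plugging this into the Lichnerowicz formula~\eqref{Lichnerowiczeq} shows directly that $B_{p}^2$ is a second-order differential operator along $X$ with coefficients in $\C[p]\otimes\toep$ (equation~\eqref{Bup2=Toeplitz}). The rescaling is then performed along $T_\R X$ only (not $T_\R Z$), and the asymptotic rescaled operator $\ul{\LL}_t$ is a harmonic oscillator on $T_{\R,x_0}X$ \emph{with Toeplitz coefficients}; equivalently, it is the Berezin--Toeplitz quantization of the family $y\mapsto\har_{x_0}(y)$ of harmonic oscillators on $T_{\R,x_0}X$ parameterized by $Y_{x_0}$ (see \eqref{defhary} and \eqref{ul(L)=toep}). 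The new analytic step, absent from your sketch, is Theorem~\ref{chaleurettoeplitzcommutent}: one must show that the heat kernel of this Toeplitz-coefficient oscillator is, to leading order, the Toeplitz operator with symbol $e^{-\har_{x_0}(\cdot)}(0,0)$; this requires the Hilbert-space-valued Toeplitz calculus developed in Section~\ref{operatorwithToeplitzcoeff}. The integral over $Y$ then enters only at the very end, via the trace formula~\eqref{traceToeplitz} for Toeplitz operators, which converts $\trs^{F_p}$ into $p^{n_Y}\int_Y\tr^\eta[\cdot]\,e^{\Theta^Y}+O(p^{n_Y-1})$.

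Two smaller corrections: there is no Getzler rescaling of vertical Clifford variables here---only the spatial dilation $S_t$ with $t=1/\sqrt{p}$ and the horizontal rescaling $\psi_{1/\sqrt{u}}$; and the small-$u$ uniform expansion (Theorem~\ref{dvlptuniformedunoyaudelachaleur2}) cannot be obtained by the compactness trick used for Theorem~\ref{dvlptuniformedunoyaudelachaleur}, because the bundle changes with $p$---the paper instead proves it by a direct double rescaling $(t,v)=(1/\sqrt{p},\sqrt{u})$ with uniform resolvent estimates.
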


\begin{rem}
 Theorem \ref{mainthm2} is the family version of \cite{bismut-vasserot-90}, with a more general bundle. Indeed, let $V$ is a positive bundle on $M$ in the sense of \cite{bismut-vasserot-90}. Then on the projectivization $N:=\mathbb{P}(V^*)$ of $V^*$ we can define $L$ to be the dual of the universal line bundle. Then $L$ satisfies Assumption \ref{RLpositive2}. Let $Y$ be the fiber of $\mathbb{P}(V^*)\to M$, then for any $p\in \N$, $H^{\bullet}(Y,L^p|_Y) = H^0(Y,L^p|_Y)\simeq S^p(V)$ the $p^{\mathrm{th}}$ symmetric power of $V$. Thus if we apply Theorem \ref{mainthm2} for this fibration $N\to M$ and with $B$ being a point, we find \cite{bismut-vasserot-90}.

When is $(\pi_2,\omega^M)$ is a {\kahler} fibration, we can prove Theorem \ref{mainthm2} modulo $\Im \partial +\Im \db$ from Theorem \ref{mainthm}. In deed, we can use \cite{bismut-kohler} and  \cite{MR1765553} to express $\mathscr{T}(\omega^M, h^{\xi\otimes F_p})$ in terms of torsions associated with $\pi_1^*\xi\otimes \eta \otimes L^p$, then apply Theorem \ref{mainthm} to get the asymptotic. It is important to keep in mind that this method  cannot prove the convergence at the level of forms and that when $B$ is not compact or not {\kahler}, the space $\Im \partial +\Im \db$ is not closed. Thus, this strategy is relevant only when $B$ is compact {\kahler}. 

Note however that in degree zero, i.e., for the torsion of Ray-Singer, we do not have this problem of taking quotient. Thus Theorem \ref{mainthm2} in degree 0 can be seen as a consequence of \cite[Thm. 8]{bismut-vasserot-89}, Theorem \ref{mainthm} (in all degrees) and  \cite[Thm. 3.1]{MR1305280} (which is \cite{MR1765553} in degree 0). In this situation, our approach gives a direct proof.
\end{rem}

As explained above and in Section \ref{Sectionimagedirecte}, we will use the formalism of Toeplitz operators to prove this theorem. 

We now recall the definition given in \cite[Def. 7.2.1]{ma-marinescu} of a Toeplitz operator.

Let $b\in B$. Set $x\in X_b:=\pi_2^{-1}(b)$ and  $Y_x := \pi_1^{-1}(x)$. Let $P_{p,x}$ be the orthogonal projection 
\begin{equation}
P_{p,x} \colon L^2(Y_x,\eta\otimes L^p)\to H^0(Y_x,\eta\otimes L^p),
\end{equation}
\begin{defn}
\label{def-op-de-Toepltiz}
A \emph{Toeplitz operator} on $Y_x$ is a family of operators $T_p\in\End( L^2(Y_x,\eta\otimes L^p))$ satisfying the following two properties:
\begin{enumerate}[label=(\roman*)]
\item for any $p\in \N$, we have
\begin{equation}
T_p=P_{p,x}T_pP_{p,x};
\end{equation}
\item there exists a sequence $f_r \in  \smooth\big(Y, \End(\eta)\big)$ such that for any $k\in \N$ there is  $C_k>0$ with
\begin{equation}
\left \| T_p - \sum_{r=0}^k p^{-r} P_{p,x}f_rP_{p,x} \right\|_\infty \leq C_k p^{-k-1},
\end{equation}
where $\|\cdot\|_\infty$ denotes the operator norm.
\end{enumerate}
\end{defn}  

In the course of the proof of Theorem \ref{mainthm2}, we will prove an important result which is that the heat kernel of the Bismut superconnection is asymptotic to a family of Toeplitz operator. Let us give some detail about this result. Let $B_{u,p}$ be the Bismut superconnection associated with $\omega^M$ and $(\xi\otimes F_p,h^{\xi\otimes F_p})$ (see Definition \ref{Bu}). Then by Theorem \ref{Lichnerowicz}, $B^2_{u,p}$ is a fiberwise elliptic second order differential operator. Let $\exp(-B_{p,u/p}^2)$ be the corresponding heat kernel. For $b\in B$, let $\exp(-B_{p,u/p}^2)(x,x')$ be the smooth Schwartz kernel of $\exp(-B_{p,u/p}^2)$ with respect to $dv_{X_b}(x')$. Then
 \begin{equation}
\exp(-B_{p,u/p}^2)(x,x) \in \End\left( \Lambda^\bullet(T^*_{\R,b} B)\otimes \left(\Wedge(T^*X_b)\otimes \xi \otimes F_p\right) \right).
\end{equation}

For $a>0$,  $\psi_a$ is the automorphism of $\Lambda(T_\R^*B)$ such that if $\alpha \in \Lambda^{q}(T_\R^*B)$, then 
\begin{equation}
\label{defpsia}
\psi_a \alpha = a^q \alpha.
\end{equation}
Let $\Omega_u$ be the form defined in \eqref{defOmegau2}. Then we show that
 \begin{thm}
\label{cvcenoyaudelachaleur2}
Let $k\in \N$. As  $p\to +\infty$, uniformly as $u$ varies in a compact subset of $\R^*_+$ and $(b,x)$ varies in a compact subset of $M$, we have the following asymptotic for the operator norm on $\End\left( \Lambda^\bullet(T^*_{\R,b} B)\otimes \left(\Wedge(T^*X_b)\otimes \xi \otimes F_p\right) \right)$ and the operator norm of the derivatives up to order $k$:
\begin{multline}
\psi_{1/\sqrt{p}}\exp(-B_{p,u/p}^2) (x,x)\\
= \frac{p^{n_X}}{(2\pi)^{n_X}} P_{p,x} e^{-\Omega_{u,(x,\cdot)}}\frac{\det(\dot{R}^{X,L}_{(x,\cdot)})}{\det \big(1-\exp(-u\dot{R}^{X,L}_{(x,\cdot)})\big)}\otimes \Id_{\xi_{x}}P_{p,x} + o(p^{n_X}).
\end{multline}
Here the dot symbolize the variable in $Y_x$.
\end{thm}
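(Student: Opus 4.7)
The plan is to combine the heat-kernel rescaling strategy of \cite{bismut-vasserot-89} with the Toeplitz operator formalism of \cite{ma-marinescu}, as announced in the introduction. First, by finite propagation speed for the wave operator associated with $B_{p,u/p}^2$, the kernel $\exp(-B_{p,u/p}^2)(x,x)$ coincides, modulo errors $O(p^{-\infty})$ uniformly in $(u,x)$ on compact sets, with the heat kernel of an operator supported in a fixed geodesic ball around $x\in X_b$. In a trivialization by parallel transport on $\xi,\eta,L$ along this ball, I would identify the fibers of $F_p$ with $H^0(Y_{\cdot},\eta\otimes L^p)\subset L^2(Y_{\cdot},\eta\otimes L^p)$, the inclusion being realized by $P_{p,\cdot}$, and view sections of $\xi\otimes F_p$ accordingly. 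This reduces the problem to the analysis of an operator on $\R^{2n_X}\times Y_{x}$ with $p$ as a large parameter.

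The second step is the rescaling: shrink the $X$-variable by $Z\mapsto Z/\sqrt{p}$ and apply $\psi_{1/\sqrt{p}}$ to base forms. Starting from the Lichnerowicz-type formula of Theorem \ref{Lichnerowicz} and using a radial gauge for $L^p$ along $X_b$, one computes that the rescaled operator admits an expansion in powers of $p^{-1/2}$ whose leading term is a fiberwise harmonic oscillator in the $X$-variable with frequencies $\dot{R}^{X,L}(x,\cdot)$, coupled to the $Y_{x}$-variable only through multiplication operators, and with the base-form part of the superconnection producing a coupling by $\Omega_u$.

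The main obstacle, which is absent in \cite{bismut-vasserot-89}, is that $\rank(F_p)\to\infty$, so there is no fixed limiting operator to pass to. Following \cite{BMZweb,MR2838248} and \cite[Ch.~7]{ma-marinescu}, I would measure everything in the operator norm on $\End(L^2(Y_x,\eta\otimes L^p))$ uniformly in $p$, and identify limiting coefficients as Toeplitz operators on $Y_x$ in the sense of Definition \ref{def-op-de-Toepltiz}. The core technical estimates are uniform resolvent bounds for the rescaled operator in the $X$-direction, proved by Sobolev methods that are insensitive to the rank of $F_p$; combined via Duhamel with the asymptotic expansion of the Bergman kernel of $L^p|_{Y_x}$, they yield an expansion of $\psi_{1/\sqrt{p}}\exp(-B_{p,u/p}^2)(x,x)$ in powers of $p^{-1/2}$ whose coefficients have the form $P_{p,x}f_rP_{p,x}$. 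I expect the hardest part to be precisely these uniform resolvent estimates, which must keep track simultaneously of the nilpotent horizontal-form part of the rescaled superconnection and of the growing $Y_{x}$-rank.

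Finally, Mehler's formula applied to the $X$-direction harmonic oscillator yields the factor $(2\pi)^{-n_X}\det(\dot{R}^{X,L})/\det(1-\exp(-u\dot{R}^{X,L}))$, the exponential $e^{-\Omega_u}$ emerges from the nilpotent part of the rescaled superconnection by the same algebraic identity used in \cite{bismut-vasserot-89}, and $\Id_{\xi_x}$ comes from the triviality of the leading action on $\xi$. The smooth convergence up to order $k$ in the base directions is handled by differentiating the rescaled problem before passing to the limit, giving higher-order versions of the same estimates without affecting the leading term.
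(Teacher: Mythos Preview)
Your outline is essentially the paper's proof: localization by finite propagation speed, rescaling by $1/\sqrt{p}$ in the $X$-variable, uniform Sobolev/resolvent estimates in operator norm (insensitive to $\rank F_p$), identification of the leading term as a harmonic oscillator family $\har_{x_0}(y)$ parametrized by $y\in Y_{x_0}$, and Mehler's formula for the final expression.

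One step deserves more emphasis than you give it. After rescaling, the paper does not pass directly from resolvent bounds plus Bergman-kernel asymptotics to an expansion with Toeplitz coefficients. Instead it introduces an explicit intermediate operator $\ul{\LL}_t$ whose coefficients are genuine Berezin--Toeplitz operators (not merely bounded uniformly in $p$), proves $\|e^{-u\LL_t}-e^{-u\ul{\LL}_t}\|\to 0$ by the resolvent method, and then proves separately (Theorem~\ref{chaleurettoeplitzcommutent}) that
\[
e^{-u\ul{\LL}_t}(Z,Z')\;=\;P_{p,x_0}\,e^{-u\har_{x_0}(\cdot)}(Z,Z')\,P_{p,x_0}+o(1).
\]
The content here is that the heat semigroup of $P_p\har P_p$ is asymptotically $P_p e^{-\har}P_p$, i.e.\ that quantization and exponentiation commute to leading order. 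The paper does this via contour integrals and an extension of the Toeplitz calculus to symbols valued in $\mathcal{B}(\mathcal{H})$ for $\mathcal{H}=L^2(T_{\R,x_0}X,\ldots)$ (Section~\ref{operatorwithToeplitzcoeff}, especially Theorem~\ref{prodtoeplitztenseurhilbert}), rather than by Duhamel. Your phrase ``combined via Duhamel with the asymptotic expansion of the Bergman kernel'' gestures at this but does not isolate it; in practice this is the step where most of the new machinery is needed, more so than the uniform resolvent estimates you flag as hardest.
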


In degree 0, $B_{p,u/p}^{2,(0)}=\frac{u}{p}\square_p$, where $\square_p$ denotes the Kodaira Laplacian of $(F_p|_X,h^{F_p|_X})$. We thus get the asymptotic of the heat kernel:
\begin{equation}
\exp(-\frac{u}{p}\square_p) (x,x)
= \frac{p^{n_X}}{(2\pi)^{n_X}} P_{p,x} e^{-\omega_{u,(x,\cdot)}}\frac{\det(\dot{R}^{X,L}_{(x,\cdot)})}{\det \big(1-\exp(-u\dot{R}^{X,L}_{(x,\cdot)})\big)}\otimes \Id_{\xi_{x}}P_{p,x} + o(p^{n_X}).
\end{equation}
where $\omega_u=\Omega_u^{(0)}$. Let $\{ w_j\}$ be  an orthonormal frame of $(TX,h^{TX})$, with dual frame $\{w^j\}$, we then have $\omega_u =  uR^L(w_k^H,\bw_\ell^H)\bw^{\ell}\wedge i_{\bw_k}$. Thus, the asymptotic of the heat kernel is given by a Toeplitz operator associated with a term similar to the one appearing in the classical asymptotic of the heat kernel associated with high powers of a line bundle (see for instance \cite[Thm. 1.6.1]{ma-marinescu}).

\begin{rem}
 Note that in the proof of Theorem \ref{cvcenoyaudelachaleur2} which we give in this paper, we do not use the assumption that $L$ is positive along the fiber $Z$, but only along the fiber $Y$.
\end{rem}

The results of this paper appear (in a more detail-heavy way) in the PhD thesis of the author \cite{thesis_puchol} and were announced in \cite{asymp-torsion-CRAS}.

This paper is organized as follows. In Section \ref{torsion} we recall the definition given in \cite{MR3099098} of the analytic torsion forms, in Section \ref{Sectionlinebundle} we give the asymptotic of the torsion forms associated with increasing powers of a given line bundle and in Section \ref{Sectionimagedirecte} we give the asymptotic of the torsion forms associated with the direct image of powers of a line bundle on a bigger manifold. Sections \ref{Sectionlinebundle} and \ref{Sectionimagedirecte} begin with introductions where the reader can find the notations and assumptions.


\section{The holomorphic analytic torsion forms}
\label{torsion}

In this section, following \cite[Chap. 3-4]{MR3099098}, we will define the holomorphic analytic torsion forms associated to a holomorphic Hermitian (non-necessarily {\kahler}) fibration. This section is organized as follows. In Subsection \ref{Sect-Hermitian fibration} we define Hermitian fibrations, In Subsection \ref{Sect-Bismut-superconnection} we recall the definition of the Bismut superconnection associated with a Hermitian fibration and give the formula for its square, in Subsection \ref{cohomologiedelafibre}, we introduce the cohomology of the fiber as a bundle on the basis and its Chern connection, in Subsection \ref{formesdetorsionsanalytic} we define the analytic torsion forms and finally in Subsection \ref{kahler-fibration} we recall the definition of a  {\kahler} fibration and we specialize the above constructions in this case.

\subsection{A Hermitian  fibration}
\label{Sect-Hermitian fibration}

Let $M$ and $B$ be two complex manifolds of respective dimension $m$ and $\ell$. Let $\pi \colon M \to B$ be a holomorphic fibration with $n$-dimensional compact fiber $X$. Recall that we denote by $TM$ (resp. $TB$) the holomorphic tangent bundle of $M$ (resp. $B$), and by $TX$ the relative holomorphic tangent bundle $TM/B$. We denote the real tangent bundles by $T_{\R}M$, etc. and their complexification by $T_{\C}M$, etc. 

Let $J^{T_\R X}$ be the complex structure on $T_\R X$, and let $\omega$ be a smooth real (1,1)-form on $M$. Let 
\begin{equation}
\label{defomegaX}
\omega^X=\omega|_{T_\R X\times T_\R X}.
\end{equation}
We assume that the formula
\begin{equation}
\langle \cdot \,, \cdot \rangle_{g^{T_\R X}} := \omega^X(J^{T_\R X}\cdot \, ,\cdot)
\end{equation}
defines a Riemannian structure on $T_\R X$. We denote by $h^{TX}$ the associated Hermitian structure on $TX$.

Let $T^HM\subset TM$ be the orthogonal bundle to $TX$ in $TM$ with respect to $\omega$, and $T_\R^HM\subset T_\R M$ be the corresponding real vector bundle. Then we have the isomorphism of smooth vector bundles
\begin{equation}
\label{identifications}
T^HM \simeq \pi^*TB, \quad \text{and} \quad TM = T^HM\oplus TX.
\end{equation}
If $U \in T_\R B$, we denote by $U^H$ its lift in $T_\R^HM$.

The identifications \eqref{identifications} yields to the isomorphism
\begin{equation}
\Lambda^\bullet(T^*_\R M) \simeq \pi^*\Lambda^\bullet(T^*_\R B)\otimes \Lambda^\bullet(T^*_\R X).
\end{equation}
Here, and in this whole paper, $\otimes$ denotes the graded tensor product.

Let 
\begin{equation}
\label{defomegaH}
\omega^H=\omega|_{T_\R^H M\times T_\R^H M}.
\end{equation}
 We extend $\omega^X$ and $\omega^H$ (by zero) to $T_\R^HM\oplus T_\R X$. Then
\begin{equation}
\label{decompo-omega}
\omega=\omega^X+\omega^H.
\end{equation}

We call the data $(\pi,\omega)$ a \emph{Hermitian  fibration}.

\subsection{The Bismut superconnection of a Hermitian  fibration}
\label{Sect-Bismut-superconnection}

Let $(\pi, \omega)$ be a  Hermitian  fibration with associated Hermitian metric along the fibers $h^{TX}$. 

Let $g^{T_\R B}$ be a Riemannian metric on $B$, and let $g^{T_\R M}$ be the metric on $M$ induced by $g^{T_\R B}$, $g^{T_\R Z}$ and the decomposition \eqref{identifications}. Ultimately, the objects we will define will not depend on the choice of $g^{T_\R B}$.

Let $(\xi,h^{\xi})$ be a holomorphic Hermitian vector bundle on $M$. Let $\nabla^{TX}$ and $\nabla^{\xi}$ be the Chern connections on $(TX,h^{TX})$ and $(\xi,h^\xi)$. We denote their curvature by $R^{TX}$ and $L^\xi$ respectively. Let $\n^{\Wedge}$ be the connexion induced by $\n^{TX}$ on $\Wedge(T^*X):=\Lambda^\bullet (T^{*(0,1)}X)$, and $\n^{\Wedge\otimes \xi}$ be the connexion on $\Wedge(T^*X)\otimes \xi$ induced by $\n^{\Wedge}$ and $\n^\xi$.

\begin{defn}
For $0\leq p \leq \dim X$, and $b \in B$, set 
\begin{equation}
E_b^k = \smooth \left( X_b, \left(\Lambda^{0,k}(T^*X)\otimes \xi \right)|_{X_b} \right), \qquad E_b = \bigoplus_{k=0}^{\dim X} E_b^k.
\end{equation}
As in \cite{MR813584} or \cite{MR929147}, we can think of the $E_b$'s as the fibers of a $\Z$-graded infinite dimensional vector bundle $E$ on $B$. In this case, smooth sections of $E$ on $B$ are identified with smooth sections of $\Wedge(T^*X)\otimes \xi$ on $M$.
\end{defn}

Let $dv_{X_b}$ be the volume element of $(X_b , h^{TX}|_{X_b})$. Let $\langle\cdot , \cdot \rangle$ be the Hermitian product on $E$ associated to $h^{TX}$ and $h^\xi$:
\begin{equation}
\label{prodsurEb}
\langle s , s' \rangle_b = \frac{1}{(2\pi)^{\dim X}} \int_{X_b} \langle s , s' \rangle_{\Wedge\otimes \xi}(x) dv_{X_b}(x).
\end{equation}

\begin{defn}
For $U \in T_\R B$ and s a smooth section of $E$ on $B$, set
\begin{equation}
\label{defnablaEeq}
\n^{E}_U = \n^{\Wedge\otimes \xi}_{U^H}s.
\end{equation}
We extend $\n^E$ to an operator on $\smooth \big( M, \pi^*\Lambda^\bullet(T^*_\R B)\otimes\Wedge(T^*X)\otimes \xi\big)$, which will be again denoted by $\n^E$.
Let  ${\n^{E}}'$ and ${\n^{E}}''$ be the holomorphic and anti-holomorphic part of $\n^{E}$.
\end{defn}

Note that $\n^E$ does not necessarily preserve the Hermitian product \eqref{prodsurEb} on $E$.

For $b\in B$, let $\db^{X_b}$ be the Dolbeault operator acting on $E_b$ and let $\db^{X_b,*}$ be its formal adjoint with respect to the Hermitian product \eqref{prodsurEb}. Set
\begin{equation}
D^{X_b} = \db^{X_b}+\db^{X_b,*}.
\end{equation}

Let $C(T_\R X)$ be the Clifford algebra of $(T_\R X, g^{T_\R X})$. The bundle $\Wedge(T^*X)\otimes \xi$ is a $C(T_\R X)$-Clifford module: if $U\in T X \simeq T^{(1,0)}X$, denote by $U^*\in T^{*(0,1)}X$ its dual for the metric $h^{TX}$, and then 
\begin{equation}
c(U)=\sqrt{2}U^*\wedge \quad \text{ and }\quad c(\ol{U}) = -\sqrt{2}i_{\ol{U}}.
\end{equation}

Let $P^{T_\R X}$ be the projection $T_\R M = T_\R^HM \oplus T_\R X \to T_\R X$. For $U,V \in \smooth(B,T_\R B)$ set 
\begin{equation}
\label{def-T(UH,VH)}
T(U,V) = -P^{T_\R X} [ U^H , V^H].
\end{equation}

\begin{defn}
Let $f_1, \dots , f_{2n}$ be a basis of $T_\R B$ and $f^1, \dots , f^{2n}$ its dual basis. Set
\begin{equation}
\label{defcT}
c(T)  = \frac{1}{2} \sum_{\alpha, \beta} f^\alpha f^\beta c\left(T(f_\alpha, f_\beta) \right),
\end{equation}
which is a section of $\left[ \Lambda(T^*_\R B)\otimes \End \big( \Wedge(T^*X)\otimes \xi\big) \right]^{\text{odd}}$.

Let $T^{(1,0)}$ and $T^{(0,1)}$ be the components of the $(1,1)$ form $T$ in $T^{(1,0)}X$ and $T^{(0,1)}X$ respectively. We define $c(T^{(1,0)})$ and $c(T^{(0,1)})$ as in \eqref{defcT}, so that
\begin{equation}
c(T) = c(T^{(1,0)}) +c(T^{(0,1)}).
\end{equation}
\end{defn}

Let $\gamma$ be the one form on $T_\R B$ such that
\begin{equation}
\lie_{A^H}dv_X = \gamma(A)dv_X.
\end{equation}

We assume temporarily that $\det(TX)$ has a square root $\lambda$. Equivalently, $T_\R X$ is equipped with a spin structure. Then $\lambda$ is a holomorphic Hermitian vector bundle on $M$. Let $\n^\lambda$ be the corresponding Chern connection. Let
\begin{equation}
\mathcal{S}^{TX}=\Wedge(T^*X)\otimes \lambda^*
\end{equation}
be the associated $(T_\R X,g^{T_\R X})$-spinor bundle. Let $\n^{\mathcal{S}^{TX},LC}$ be the connection on $\mathcal{S}^{TX}$ induced by $\n^{T_\R X,LC}$, the Levi-Civita connection of $T_\R X$. Finally, let $\n^{\Wedge,LC}$ be the connection on $\Wedge(T^*X)$ induced by $\n^{\mathcal{S}^{TX},LC}$ and $\n^\lambda$, and let $\n^{\Wedge\otimes \xi,LC}$ be the connection  induced by $\n^{\Wedge,LC}$ and $\n^\xi$ on $\Wedge(T^*X)\otimes \xi$.

Note that, as $\det(TX)$ has always locally a square root, the connection $\n^{\Wedge\otimes \xi,LC}$ is in fact always defined.

The reader should be careful about the fact that  in \cite{MR3099098}, the Clifford algebra $C(T_\R X)$ is constructed with respect to $g^{T_\R X}/2$, so that our formulas will differ from those of \cite{MR3099098} by some powers of $1/\sqrt{2}$.

Let $(e_1,\dots,e_{2n})$ be an orthonormal frame of $T_\R X$.

\begin{defn}
We follow here \cite[Defs. 3.7.2, 3.7.4 and 3.7.5]{MR3099098}.
\begin{enumerate}
\item The Dirac operator of the fiber is defined by
\begin{equation}
D^{X,LC}:=c(e_i)\n^{\Wedge\otimes \xi,LC}_{e_i}
\end{equation}
\item For $U\in T_\R B$ and $s$ a smooth section of $E$, let
\begin{equation}
\n^{E,LC}_U s = \n^{\Wedge\otimes \xi,LC}_{U^H}s+ \frac{1}{2}\gamma(U)s.
\end{equation}
\item Finally,  let
\begin{equation}
A^{LC}=\n^{E,LC}+\frac{1}{\sqrt{2}}D^{X,LC} - \frac{c(T)}{2\sqrt{2}}.
\end{equation}
This superconnection on $E$ is called the \emph{Levi-Civita superconnection}.
\end{enumerate}
\end{defn}

Let $(e^1,\dots,e^{2n})$ be the dual frame of $(e_1,\dots,e_{2n})$. We define a map $\alpha  \mapsto \alpha^c $ from $ \Lambda(T^*_\R X)$ to $C(T_\R X)$ by setting for $1\leq i_1<\dots<i_k\leq 2n$:
\begin{equation}
\left(e^{i_1}\wedge\dots\wedge e^{i_k}\right)^c=2^{-k/2}c(e_{i_1})\dots c(e_{i_k}).
\end{equation}
We extend this map to a map (denoted in the same way) from $\Lambda^\bullet(T^*_\R M)\simeq\pi^*\Lambda^\bullet(T^*_\R B)\otimes\Lambda^\bullet(T^*_\R X)$ to $\pi^*\Lambda^\bullet(T^*_\R B)\otimes C(T_\R X)$.

\begin{prop}
The following formula holds
\begin{equation}
D^X=\frac{1}{\sqrt{2}}D^{X,LC}+ \frac{1}{2}\Big( \left( \db^X-\partial^X\right)i\omega^X\Big)^c.
\end{equation}
\end{prop}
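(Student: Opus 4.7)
The plan is to compare the two Dirac operators $D^X = \db^X + \db^{X,*}$ (built from the Chern connection) and $D^{X,LC} = c(e_i)\n^{\Wedge\otimes\xi,LC}_{e_i}$ (built from the Levi-Civita connection), and to identify their zeroth-order difference with the stated Clifford multiplication by a real 3-form.

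First, I would rewrite $D^X$ using the Chern connection on $\Wedge(T^*X)\otimes\xi$ via the standard Lichnerowicz-type local formula (see e.g.\ \cite[Thm.\ 1.4.5]{ma-marinescu}), obtaining
$$\sqrt{2}\,D^X = c(e_i)\n^{\Wedge\otimes\xi}_{e_i} + R_{\mathrm{Ch}},$$
where $R_{\mathrm{Ch}}$ is a zeroth-order Clifford-valued term coming from the torsion of $\n^{TX}$, itself expressible in terms of $\partial\omega^X$. Second, since both $\n^{TX}$ and $\n^{T_\R X,LC}$ are metric connections on the same underlying real bundle $T_\R X$ and their lifts to $\Wedge(T^*X)\otimes\xi$ share the same $\xi$-component, their difference is induced by a 1-form $S \in \smooth\big(X, T^*_\R X\otimes\mathfrak{so}(T_\R X)\big)$, giving
$$\sqrt{2}\,D^X - D^{X,LC} = c(e_i)\tilde S(e_i) + R_{\mathrm{Ch}},$$
where $\tilde S$ denotes the lift of $S$ to $\End(\Wedge(T^*X))$.

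Third, the classical identity of Hermitian geometry expresses $S$ entirely in terms of $d\omega^X = \partial\omega^X + \db^X\omega^X$ (the Chern connection is uniquely determined by $h^{TX}$ together with the holomorphic structure, while the Levi-Civita connection is the torsion-free metric connection). Substituting and working in a local holomorphic frame $\{w_j\}$ of $T^{(1,0)}X$ normalized so that $\n^{TX}w_j = 0$ at a chosen point, the right-hand side collapses into a Clifford multiplication by a real 3-form which I would match with $\frac{1}{\sqrt{2}}\big((\db^X-\partial^X)(i\omega^X)\big)^c$; dividing by $\sqrt{2}$ then yields the proposition.

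The main obstacle is the third step: the algebraic bookkeeping of the several compensating factors of $\sqrt{2}$ (arising from the convention $c(U)=\sqrt{2}\,U^*\wedge$ on $T^{(1,0)}X$ together with the normalization $(e^{i_1}\wedge\cdots\wedge e^{i_k})^c = 2^{-k/2}c(e_{i_1})\cdots c(e_{i_k})$, and responsible for the discrepancy with the conventions of \cite{MR3099098}), and of the factors of $i$ produced by splitting the real 3-form $d\omega^X$ into its $(2,1)$- and $(1,2)$-parts. Once $S$ has been written out explicitly as a trilinear combination of components of $\partial\omega^X$ and $\db^X\omega^X$, the matching is a direct computation with no further conceptual input.
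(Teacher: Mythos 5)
The paper's ``proof'' is a bare citation to \cite[Thm.\ 3.7.3]{MR3099098} and \cite[Thm.\ 1.4.5]{ma-marinescu}, which are this formula up to notation and normalization; your three-step decomposition (Chern-Lichnerowicz formula for $\db+\db^*$, Clifford-module lift of the difference $S=\n^{T_\R X,LC}-\n^{T_\R X,\mathrm{Ch}}$, rewriting $S$ in terms of $d\omega^X$) is a legitimate route and is broadly consistent with how those references argue. Steps 1 and 2 are structurally sound: the $\db^*$-side does produce a zeroth-order correction from the torsion of the Chern connection, and since $\n^{\Wedge\otimes\xi,LC}$ and the Chern lift agree on both the $\xi$-factor and the $\lambda$-factor, their difference on $\Wedge(T^*X)\otimes\xi$ is indeed the Clifford image of an $\mathfrak{so}(T_\R X)$-valued one-form.

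The genuine gap is step 3, which you yourself flag as ``the main obstacle'' and then decline to carry out. But the entire content of the proposition \emph{is} that computation: the claim is a precise equality of Clifford coefficients, and whether the combined correction $\tfrac{1}{\sqrt2}\bigl(c(e_i)\tilde S(e_i)+R_{\mathrm{Ch}}\bigr)$ equals $\tfrac12\bigl((\db^X-\partial^X)i\omega^X\bigr)^c$, rather than the same thing off by a factor of $2$ or a sign, can only be settled by writing $S$ out via the Hermitian-torsion identity, splitting $d\omega^X$ into $(2,1)$ and $(1,2)$ parts, and tracking the $2^{-k/2}$ in the $\alpha\mapsto\alpha^c$ normalization against the $\sqrt2$ in $c(U)=\sqrt2\,U^*\wedge$. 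Asserting that this ``is a direct computation with no further conceptual input'' is not the same as doing it; as it stands, the proposal verifies nothing beyond the shape of the answer. A secondary issue: you invoke \cite[Thm.\ 1.4.5]{ma-marinescu} in step~1 to justify the Chern-connection Lichnerowicz formula, but that theorem is the Levi-Civita statement being proved here (and is exactly what the paper cites as the proof), so the reference is at best misplaced and at worst circular; the step-1 identity should instead be derived directly from $\db=\bar w^j\wedge\n^{\Wedge\otimes\xi}_{\bar w_j}$ and integration by parts for $\db^*$.
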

\begin{proof}
See \cite[Thm. 3.7.3]{MR3099098} or \cite[Thm. 1.4.5]{ma-marinescu}.
\end{proof}

Recall that for $a>0$,  $\psi_a$ is the automorphism of $\Lambda(T_\R^*B)$ such that if $\alpha \in \Lambda^{q}(T_\R^*B)$, then 
\begin{equation}
\psi_a \alpha = a^q \alpha.
\end{equation}
By \eqref{identifications}, we may also see $\psi_a$ as an automorphism of $\Lambda(T_\R^{H,*}M)$.

We can now define the superconnection of main interest for us.
\begin{defn}
\label{Bu}
For $u>0$, the \emph{Bismut superconnection} $B$ on $E$, and its rescaled version $B_u$ are defined by
\begin{equation}
\label{defBu}
\begin{aligned}
&B=A^{LC}+ \frac{1}{2}\Big( \left( \db^M-\partial^M\right)i\omega\Big)^c, \\
&B_u=\sqrt{u} \psi_{1/\sqrt{u}}B\psi_{\sqrt{u}}.
\end{aligned}
\end{equation}
\end{defn}

Then $B_u$ acts on 
\begin{equation}
\Omega^\bullet (B,E) := \smooth \Big( M, \pi^*\Lambda^\bullet (T^*_\R B)\otimes \Wedge (T^*X)\otimes \xi \Big).
\end{equation}
Moreover, by \cite[(3.3.3), (3.5.17), (3.6.4) and (3.8.1)]{MR3099098}, the part of degree 0 in $\Lambda^\bullet T_\R B$ of $B$ is
\begin{equation}
\label{B^0=D}
B^{(0)}=D^X.
\end{equation}

\begin{rem}
This definition of the Bismut superconnection may not be the more natural and correspond in fact to \cite[Thm. 3.8.1]{MR3099098}. However, for the sake of concision we prefer to define $B$ this way. We refer the reader to \cite[Chap. 3]{MR3099098} for an other definition of $B$.
\end{rem}

Let $\n^{T_\R B,LC}$ be the Levi-Civita connection on $(T_\R B,g^{T_\R B})$. Then $\n^{T_\R B,LC}$ lifts to a connection $\n^{T_\R^HM,LC}$ on $T_\R^HM$, and we define $\n^{T_\R M,\oplus} = \n^{T_\R^HM,LC} \oplus \n^{T_\R X,LC}$. Let $\n^{T_\R M,LC}$ be the Levi-Civita connection of $M$. Set $\mathrm{S}= \n^{T_\R M,LC} - \n^{T_\R M,\oplus}$. Then $\mathrm{S}$ is a one form on $M$ taking values in antisymmetric elements of $\End (T_\R M)$. Moreover, by \cite[Thm. 1.9]{MR813584}, the $(3,0)$-tensor 
\begin{equation}
S(\cdot \, ,\cdot \, ,\cdot \, )  = \langle \mathrm{S}(\cdot)\cdot,\cdot \rangle_{h^{\T_\R M}} 
\end{equation}
does not depend on $g^{T_\R B}$.

From now on, we will always use latin indices $i,j,\dots$ for the vertical variables, and greek indices $\alpha,\beta,\dots$ for the horizontals variables. Let $\{e_i\}$ be an orthonormal basis of $T_\R X$ with dual basis $\{e^{i}\}$ and $\{f_\alpha\}$ a basis of $T_\R B$ with dual basis $\{f^\alpha\}$ (which will be identified with basis of $T_\R^HM$ and $(T_\R^HM)^*)$. For any $(k,0)$-tensor $A$, we will denote by $A_{a_1,\dots,a_k} = A(e_{a_1},\dots,e_{a_k})$ where $e_{a_i} = e_j$ or $f_\alpha$.

Let $K^X$ be the scalar curvature of $(X,TX)$. Set
\begin{equation}
L'^{\xi} = L^\xi +\frac{1}{2} \tr(R^{TX}).
\end{equation}

For $u>0$, define
\begin{equation}
\label{defnablau}
\n_{u,e_i} = \n^{\Wedge\otimes \xi,LC}_{e_i}+\frac{1}{\sqrt{2u}} S_{i,j,\alpha}c(e_j)f^\alpha + \frac{1}{2u} S_{i,\alpha,\beta}f^\alpha f^\beta+\frac{1}{2}\psi_{1/\sqrt{u}}\Big(i_{e_i} \left( \db^M-\partial^M\right)i\omega\Big)^c\psi_{\sqrt{u}},
\end{equation}
which is a fiberwise connection on $\pi^*\Lambda^\bullet (T^*_\R B)\otimes\Wedge(T^*X)\otimes \xi$.

The following theorem is the fundamental Lichnerowicz formula proved in  \cite[Thm. 3.9.3]{MR3099098}.

\begin{thm}
\label{Lichnerowicz}
For $u>0$,
\begin{multline}
\label{Lichnerowiczeq}
B_u^2 = -\frac{u}{2}(\n_{u,e_i})^2 +\frac{uK^X}{8} + \frac{u}{4} c(e_i)c(e_j)L'^\xi_{i,j}+\sqrt{\frac{u}{2}}c(e_i)f^\alpha L'^\xi_{i,\alpha}+ \frac{f^\alpha f^\beta}{2}L'^\xi_{\alpha,\beta} \\
-u\psi_{1/\sqrt{u}}\Big( \db^M\partial^Mi\omega\Big)^c\psi_{\sqrt{u}} - \frac{u}{16}\Big \| \left( \db^X-\partial^X\right)i\omega^X \Big\|^2_{\Lambda^\bullet(T^*_\R X)}.
\end{multline}
Thus, $B_u^2$ is a fiberwise elliptic second order differential operator. In particular, its heat kernel $\exp(-B_u^2)$ exists.
\end{thm}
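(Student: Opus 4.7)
The statement is a Bismut-Lichnerowicz formula for the rescaled superconnection $B_u$, and my plan is to prove it by computing $B^2$ first and then rescaling. Explicitly, since $B_u = \sqrt u\,\psi_{1/\sqrt u}B\psi_{\sqrt u}$, one has $B_u^2 = u\,\psi_{1/\sqrt u}B^2\psi_{\sqrt u}$, so it suffices to compute $B^2$ and track the form-degree in $\Lambda^\bullet T_\R^* B$.

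The first step is to compute $(A^{LC})^2$. The Levi-Civita superconnection of a Riemannian fibration with Clifford fiber bundle admits a well-known Lichnerowicz-type formula (Bismut \cite{MR813584}; see also Berline-Getzler-Vergne): there is a fiberwise connection $\widetilde\n_{e_i}$ on $E$, built from $\n^{\Wedge\otimes\xi,LC}_{e_i}$ corrected by the tensor $S$ via the terms $\frac{1}{\sqrt 2}S_{i,j,\alpha}c(e_j)f^\alpha + \frac{1}{2}S_{i,\alpha,\beta}f^\alpha f^\beta$ (these are exactly the first two correction terms in the definition \eqref{defnablau} of $\n_{u,e_i}$, before rescaling), such that
\begin{equation*}
(A^{LC})^2 = -\tfrac12(\widetilde\n_{e_i})^2 + \tfrac{K^X}{8} + \tfrac14 c(e_i)c(e_j)R^{\Wedge\otimes\xi,LC}_{i,j} + \tfrac{1}{\sqrt 2}c(e_i)f^\alpha R^{\Wedge\otimes\xi,LC}_{i,\alpha} + \tfrac12 f^\alpha f^\beta R^{\Wedge\otimes\xi,LC}_{\alpha,\beta}.
\end{equation*}
The Clifford curvature terms are then converted from the spin-Levi-Civita curvature to the Chern curvature $L'^\xi = L^\xi + \frac12 \tr R^{TX}$ using the standard Koszul-type identity relating the Chern connection on $\Wedge(T^*X)\otimes\xi$ with $\n^{\Wedge\otimes\xi,LC}$, which accounts for the $\frac12 \tr R^{TX}$ contribution in $L'^\xi$.

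The second step is to add the non-K\"ahler correction. Setting $\tau := \frac12((\db^M-\partial^M)i\omega)^c$, we have $B = A^{LC} + \tau$ and $B^2 = (A^{LC})^2 + \{A^{LC},\tau\} + \tau^2$. The anti-commutator $\{A^{LC},\tau\}$ splits into (i) a first-order fiberwise piece $-c(e_i)(i_{e_i}\tau)$-type that combines cleanly with the Bochner Laplacian $(\widetilde\n_{e_i})^2$ to shift $\widetilde\n_{e_i}$ by exactly the remaining term $\frac12(i_{e_i}(\db^M-\partial^M)i\omega)^c$ appearing in \eqref{defnablau}, and (ii) a zero-order contribution $-(\db^M\partial^M i\omega)^c$ coming from $\db^M$ and $\partial^M$ squaring to zero and the commutation with the Dolbeault operator on the fiber. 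For $\tau^2$, using $c(\alpha)c(\alpha) = -\|\alpha\|^2$ on the purely vertical part isolates the scalar term $-\frac{1}{16}\|(\db^X-\partial^X)i\omega^X\|^2_{\Lambda^\bullet(T^*_\R X)}$, while the mixed vertical-horizontal pieces of $\tau^2$ are absorbed into the above Clifford sums.

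Finally, I apply the rescaling: since $\psi_{1/\sqrt u}f^\alpha\psi_{\sqrt u} = u^{-1/2}f^\alpha$, a monomial in $B^2$ of horizontal form-degree $q$ gets multiplied by $u^{1-q/2}$ in $B_u^2$. Reading this off term by term reproduces the coefficients $u$, $\sqrt{u/2}$, $\frac12$, $u$, $u$ in the statement, and the fiberwise connection $\widetilde\n_{e_i}$ (together with the $\tau$-shift from Step 2) turns into $\n_{u,e_i}$ as defined in \eqref{defnablau}. The principal obstacle lies in Step 2: the form $\tau$ couples horizontal form-degrees with vertical Clifford variables, so $\{A^{LC},\tau\}$ and $\tau^2$ generate many cross terms which must conspire to produce exactly (a) the $\omega$-piece of $\n_{u,e_i}$, (b) the single scalar $-u(\db^M\partial^M i\omega)^c$ term, and (c) only the purely vertical norm-squared. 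One must also be scrupulous about powers of $1/\sqrt 2$ because our Clifford algebra is built on $(T_\R X,g^{T_\R X})$ whereas \cite{MR3099098} uses $g^{T_\R X}/2$, as noted just before Definition \ref{Bu}.
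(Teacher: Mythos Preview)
Your outline is a correct sketch of the standard proof of the Bismut--Lichnerowicz formula in the non-K\"ahler setting, and it matches the strategy carried out in \cite[Thm.~3.9.3]{MR3099098}. Note, however, that the present paper does not reprove this result: it simply quotes it from \cite{MR3099098}. So your proposal is not so much a different route as an expansion of what the paper takes as a black box.

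A few minor remarks on the sketch itself. In Step~1, the curvature terms in the classical Lichnerowicz formula for $(A^{LC})^2$ are already $L'^\xi$ once one accounts for the spinor twist $\lambda^*$ (this is how $\frac12\tr R^{TX}$ enters), so no further ``conversion'' is needed there. In Step~2, your description of how $\{A^{LC},\tau\}$ produces the shift of $\widetilde\nabla_{e_i}$ by $\frac12(i_{e_i}(\db^M-\partial^M)i\omega)^c$ is correct in spirit, but the honest computation requires the Bianchi-type identities for $(\db^M-\partial^M)i\omega$ and careful bookkeeping of which pieces of $\tau$ live in which horizontal degree; this is precisely the content of \cite[\S3.7--3.9]{MR3099098}, and your acknowledgement that this is the ``principal obstacle'' is apt. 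The rescaling in Step~3 is straightforward and your degree-counting is right.
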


\begin{rem}
In this theorem, as in the whole article, we use the usual following notation: if $C$ is a smooth section of $T_\R^*X\otimes\End\big(\Wedge(T^*X) \otimes \xi\big)$, then
\begin{equation}
\label{convention}
\left( \n^{\Wedge\otimes \xi}_{e_i} + C(e_i)\right)^2 = \sum_i \left(\n^{\Wedge\otimes \xi}_{e_i} + C(e_i )\right)^2 - \n^{\Wedge\otimes \xi}_{\sum_i \n^{TX}_{e_i} e_i} - C\left(\sum_i \n^{TX}_{e_i} e_i \right).
\end{equation}
\end{rem}


\subsection{The cohomology of the fiber}
\label{cohomologiedelafibre}

We assume that the direct image $R^\bullet \pi_*\xi$ of $\xi$ by $\pi$ is locally free. For $b\in B$, let $H^\bullet(X_b,\xi|_{X_b})$ be the cohomology of the sheaf of holomorphic sections of $\xi$ over $X_b$. Then the $H^\bullet(X_b,\xi|_{X_b})$'s form a $\Z$-graded holomorphic vector bundle $H(X,\xi|_{X})$ on $B$ and $R^\bullet \pi_*\xi=H^\bullet(X,\xi|_{X})$.

For $b\in B$, let $K(X_b , \xi|_{X_b}) = \ker (D^{X_b})$. By Hodge theory, we know that for every $b\in B$
\begin{equation}
\label{iden}
H^\bullet(X_b,\xi|_{X_b}) \simeq K^\bullet(X_b,\xi|_{X_b}), 
\end{equation}

The Hermitian product \eqref{prodsurEb} on $E_b$ restricts to the right and side of \eqref{iden}, so $h^{TX}$ and $h^{\xi}$ induce a metric $h^{H(X,\xi|_{X})}$ on the holomorphic vector bundle $H(X,\xi|_{X})$, for which the $H^k(X,\xi|_{X})$ are mutually orthogonal.

\begin{defn}
Let $\n^{H(X,\xi|_{X})}$ be the Chern connection on $\big(H(X,\xi|_{X}), h^{H(X,\xi|_{X})}\big)$.
\end{defn}

For $\ol{U} \in T^{0,1}B$ and $s\in \smooth(B,E)$, set ${\n_{\ol{U}}^{E,u}}''=\lie_{\ol{U}^H}$. Let ${\n^{E,u}}'$ be the adjoint of ${\n^{E,u}}''$ defined by $\langle {\n^{E,u}}' s , s'\rangle = \langle s, {\n^{E,u}}''s' \rangle$, and let $\n^{E,u}={\n^{E,u}}'+{\n^{E,u}}''$ (see \cite[Chp. 3]{MR3099098}).

Let $P^{K_b}$ be the orthogonal projection form $E_b$ on $K(X_b,\xi|_{X_b})$. We define the connection $\n^{K(X,\xi|_{X})}$ on $K(X,\xi|_{X})$ by
\begin{equation}
\label{n^K}
\n^{K(X,\xi|_{X})} = P^K \n^{E,u} P^K.
\end{equation}

The following proposition is proved in \cite[Prop. 4.10.3]{MR3099098}.

\begin{prop}
Under the identification \eqref{iden}, the connections $\n^{H(X,\xi|_{X})}$ and $\n^{K(X,\xi|_{X})}$ agree.
\end{prop}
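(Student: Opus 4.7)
\medskip

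\noindent\textbf{Proof plan.} The strategy is to exploit the uniqueness of the Chern connection: on a holomorphic Hermitian vector bundle, a connection is the Chern connection if and only if it is compatible with the Hermitian metric and its $(0,1)$-part coincides with the Dolbeault operator of the bundle. Under the identification \eqref{iden}, I will verify that $\n^{K(X,\xi|_X)} = P^K \n^{E,u} P^K$ satisfies both properties, so it must equal $\n^{H(X,\xi|_X)}$.

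\medskip

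\noindent\emph{Metric compatibility.} By construction, ${\n^{E,u}}'$ is defined as the formal adjoint of ${\n^{E,u}}''$ with respect to $\langle \cdot,\cdot\rangle$, so $\n^{E,u}$ preserves $\langle \cdot,\cdot\rangle$ in the sense $d \langle s,s'\rangle = \langle \n^{E,u} s, s'\rangle + \langle s, \n^{E,u} s'\rangle$ for smooth sections $s,s'$ of $E$. For $s,s' \in \smooth(B, K(X,\xi|_X))$, since $P^K$ is the orthogonal projection, $\langle (1-P^K)\n^{E,u}s, s'\rangle = 0$ (and likewise on the other side). Hence
\begin{equation*}
d\langle s,s'\rangle = \langle P^K \n^{E,u} P^K s, s'\rangle + \langle s, P^K \n^{E,u} P^K s'\rangle,
\end{equation*}
which is precisely the metric compatibility of $\n^{K(X,\xi|_X)}$ with $h^{H(X,\xi|_X)}$ under the identification \eqref{iden}.

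\medskip

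\noindent\emph{Identification of the $(0,1)$-part.} The holomorphic structure on $R^\bullet\pi_*\xi = H(X,\xi|_X)$ is inherited from the Dolbeault complex on $M$ via the direct image functor; concretely, a local section is holomorphic iff it admits a local representative by a family of $\db^{X_b}$-closed $\xi$-valued $(0,q)$-forms whose horizontal $(0,1)$ derivative is also fiberwise $\db^{X_b}$-exact. Using the harmonic representation $H^\bullet(X_b,\xi|_{X_b}) \simeq K^\bullet(X_b,\xi|_{X_b})$, this Dolbeault operator becomes: take a harmonic representative $s$, apply the Lie derivative $\lie_{\ol{U}^H}$ along horizontal $(0,1)$ vector fields (this is precisely ${\n^{E,u}_{\ol{U}}}''$), and project onto the harmonic part via $P^K$, since the $\db^{X_b}$-exact part is cohomologically trivial. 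This gives exactly $P^K {\n^{E,u}}'' P^K$, which is the $(0,1)$-component of $\n^{K(X,\xi|_X)}$.

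\medskip

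\noindent\emph{Conclusion and the main difficulty.} Combining the two steps, $\n^{K(X,\xi|_X)}$ is a Hermitian connection on the holomorphic bundle $(H(X,\xi|_X), h^{H(X,\xi|_X)})$ whose $(0,1)$-part equals the natural Dolbeault operator; by uniqueness of the Chern connection, it equals $\n^{H(X,\xi|_X)}$. The subtle step is the second one: one must verify rigorously that the abstract holomorphic structure on $R^\bullet\pi_*\xi$ (obtained from the derived direct image) coincides, under harmonic identification and along with the projection $P^K$, with the fiberwise harmonic part of the horizontal Lie derivative. This amounts to a standard but slightly technical argument interchanging the operations of taking harmonic representatives and of differentiating horizontally; the key input is that $\lie_{\ol{U}^H}$ commutes with $\db^M$ modulo contributions that are fiberwise exact (hence killed by $P^K$), which itself follows from $(\db^M)^2 = 0$ together with the horizontal--vertical decomposition of forms on $M$ induced by \eqref{identifications}.
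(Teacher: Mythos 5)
The paper does not actually give a proof of this proposition: it simply cites \cite[Prop.\ 4.10.3]{MR3099098}. So there is no proof in the paper to compare yours against; I will evaluate the proposal on its own merits.

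Your strategy — verify metric compatibility and the $(0,1)$-part of $P^K \n^{E,u} P^K$, then invoke uniqueness of the Chern connection on $\big(H^\bullet(X,\xi|_X),h^{H(X,\xi|_X)}\big)$ — is the standard and correct route. The metric-compatibility step is handled properly: since $\n^{E,u}$ preserves the $L^2$ metric on $E$ (indeed $\n^{E,u}$ is constructed precisely for this, unlike $\n^E$, which in the non-K\"ahler case does \emph{not} preserve the metric), and $P^K$ is the orthogonal projection, compressing by $P^K$ preserves metric compatibility of the restricted connection. That part is fine.

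The genuine content of the proposition, however, sits entirely in your second step, and there you only sketch the argument. You need to show that, under the harmonic identification $H^q(X_b,\xi|_{X_b})\simeq K^q(X_b,\xi|_{X_b})$, the operator $P^K\lie_{\ol U^H}P^K$ agrees with the $\db$-operator of the coherent sheaf $R^q\pi_*\xi$. This requires, at a minimum: (i) that for a smooth family $s$ of fiberwise harmonic $(0,q)$-forms, the form $\lie_{\ol U^H}s$ is again fiberwise $\db^{X_b}$-closed modulo $\db^{X_b}$-exact, so that its harmonic projection represents a well-defined cohomology class — this uses $(\db^M)^2=0$ and the horizontal-vertical splitting, exactly as you say; and (ii) that this cohomology class equals the one produced by the derived-functor holomorphic structure on $R^q\pi_*\xi$. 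The second part is the place where one actually has to unwind what the holomorphic structure on the higher direct image sheaf \emph{is}, relate $\lie_{\ol U^H}$ to the horizontal component of $\db^M$, and track the passage from an arbitrary $\db^{X_b}$-closed representative to a harmonic one (the $\db^{X_b}$-exact discrepancy is killed by $P^K$ only after one knows its horizontal Lie derivative is still fiberwise exact). You flag this as "standard but slightly technical," and the outline is pointed in the right direction, but it is precisely this identification that constitutes the theorem — as written, the proposal names the difficulty without discharging it. For a complete argument you would want to write out the decomposition of $\db^M$ on $\Lambda^\bullet(T^{*(0,1)}M)\otimes\xi$ into its vertical component $\db^X$, horizontal component, and the torsion/contraction term, and show explicitly that the horizontal component acting on $\db^X$-closed families descends to the Dolbeault operator of $R^\bullet\pi_*\xi$.

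Minor point of caution: the paper's sentence defining ${\n^{E,u}}'$ by "$\langle {\n^{E,u}}' s , s'\rangle = \langle s, {\n^{E,u}}''s' \rangle$" is a shorthand; the correct reading (and what Bismut's book carries out) is the usual determination of the $(1,0)$-part by the metric-compatibility identity $\partial\langle s,s'\rangle=\langle {\n^{E,u}}'s,s'\rangle+\langle s,{\n^{E,u}}''s'\rangle$. Your step 1 tacitly uses this correct reading, which is fine, but it would be worth stating explicitly since the paper's wording is ambiguous.
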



\subsection{The analytic torsion forms}
\label{formesdetorsionsanalytic}

\begin{defn}
\label{espacesdeformes}
For any complex manifold $Z$, we denote by $Q^Z$ the vector space of real forms on $Z$ which are sum of forms of type $(p,p)$. We also denote by $Q^{Z,0}$ the subspace of the $\alpha\in Q^Z$ that can be written $\alpha = \partial \beta + \db \gamma$ for some $\beta$, $\gamma$ smooth form on $Z$.
\end{defn}

Let $N_V$ be the number operator defining the $\Z$-grading on $\Wedge(T^*X)\otimes \xi$ and on $E$.
\begin{defn}
For $u>0$, set
\begin{equation}
\label{defNueq}
N_u = N_V + i \frac{\omega^{H}}{u}.
\end{equation}
\end{defn}

Let $\Phi$ be the endomorphism of $\Lambda^{\text{even}}(T_\R^*B)$ defined by 
\begin{equation}
\Phi\colon\alpha \in \Lambda^{2k}(T_\R^*B) \mapsto (2i\pi)^{-k}\alpha.
\end{equation}

Let $\tau$ be the involution defining the $\Z_2$-graduation on $E$. If $H\in \End ( E)$ is trace class, we define its \emph{supertrace} $\trs[H]$ by
\begin{equation}
\trs[H]= \tr[\tau H].
\end{equation}
 We extend the supertrace to get an application $\trs \colon \Omega^\bullet(B,E)\to \Omega^\bullet(B)$. 

\begin{thm}[see {\cite[Thm. 4.5.2]{MR3099098}}]
For any $u>0$, the forms $\Phi \trs \left[ \exp(-B_u^2) \right]$ and \\ $\Phi \trs \left[ N_u \exp(-B_u^2) \right]$ lie in $Q^B$. Moreover the following identity holds in $Q^B$
\begin{equation}
\derpar{}{u} \Phi \trs \left[ \exp(-B_u^2) \right] = -\frac{1}{u} \frac{\db \partial}{2i\pi} \Phi \trs \left[ N_u \exp(-B_u^2) \right].
\end{equation}
\end{thm}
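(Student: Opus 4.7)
The plan is to follow the classical Bismut strategy for holomorphic superconnection transgression. First I would split $B_u = B'_u + B''_u$ into its ``holomorphic'' and ``antiholomorphic'' parts, where $B''_u$ gathers $\db^X$, the antiholomorphic pieces of $\n^{E}$ and of $c(T)$, and the $\db^M$-part of $\tfrac{1}{2}((\db^M-\partial^M)i\omega)^c$, while $B'_u$ is its formal adjoint with respect to the Hermitian product \eqref{prodsurEb}. Modelled on Dolbeault-type differentials, these satisfy $(B'_u)^2 = (B''_u)^2 = 0$, whence the basic structural identity
\begin{equation*}
B^2_u = [B'_u, B''_u]_+,
\end{equation*}
on which everything else relies.

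To establish that $\Phi \trs[e^{-B^2_u}]$ and $\Phi \trs[N_u e^{-B^2_u}]$ lie in $Q^B$, I would use the factor $\Phi$ to recast the question as realness and $(k,k)$-typing of each degree-$2k$ component. Realness follows from an antilinear involution of $\Omega^\bullet(B,E)$ implementing the Hermitian adjoint, which exchanges $B'_u$ and $B''_u$ and so leaves $\trs[e^{-B^2_u}]$ invariant under complex conjugation on $B$ combined with the appropriate sign. The bidegree property follows because $B'_u$ raises $(p,q)$-bidegree on $B$ by $(1,0)$ and $B''_u$ by $(0,1)$, so only $(k,k)$-components survive in the expansion of the exponential. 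The same argument applies to $\trs[N_u e^{-B^2_u}]$ since $N_u$ preserves the total bidegree.

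For the transgression identity, Duhamel's formula combined with $B^2_u = [B'_u, B''_u]_+$ gives
\begin{equation*}
\derpar{}{u} \trs[e^{-B^2_u}] = -\trs\left[\left[B'_u, \derpar{B''_u}{u}\right]_{+} e^{-B^2_u}\right] - \trs\left[\left[\derpar{B'_u}{u}, B''_u\right]_{+} e^{-B^2_u}\right].
\end{equation*}
The crucial algebraic identity---which follows from the rescaling $B_u = \sqrt{u}\,\psi_{1/\sqrt{u}} B \psi_{\sqrt{u}}$ and the explicit horizontal piece $\tfrac{1}{2}((\db^M-\partial^M)i\omega)^c$ in $B$---is
\begin{equation*}
u \derpar{B''_u}{u} = -[N_u, B''_u], \qquad u \derpar{B'_u}{u} = [N_u, B'_u],
\end{equation*}
where the $i\omega^H/u$ contribution to $N_u$ is precisely the correction compensating for the horizontal $\psi_{\sqrt u}$-rescaling. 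Substituting and using the Chern--Weil-type identities $\trs[[B'_u, A] e^{-B^2_u}] = \partial \trs[A \, e^{-B^2_u}]$ and $\trs[[B''_u, A] e^{-B^2_u}] = \db \trs[A \, e^{-B^2_u}]$ (both instances of the vanishing of the supertrace on graded commutators, applied to the $\partial$- and $\db$-parts of the total differential on the base), the right-hand side condenses to $\tfrac{1}{u}\db\partial \trs[N_u e^{-B^2_u}]$. Multiplication by $\Phi$ yields the normalising factor $(2i\pi)^{-1}$ and the announced formula in $Q^B$.

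The main obstacle is justifying $u \partial_u B''_u = -[N_u, B''_u]$ on the nose, not merely up to $\db$-exact terms. Writing $B''_u = \sqrt{u}\,\psi_{1/\sqrt{u}} B'' \psi_{\sqrt{u}}$ and differentiating in $u$ separates naturally into vertical and horizontal contributions: the vertical piece reproduces the standard $[N_V, \cdot]$ commutator, while the horizontal piece, once one exploits that $\omega^H$ is the horizontal $(1,1)$-part of $\omega$ entering $B$ through $((\db^M-\partial^M)i\omega)^c$, gives rise to precisely the $[i\omega^H/u, \cdot]$ commutator. Verifying this matching requires a careful inspection of how $\psi_{\sqrt u}$ interacts with $c(T)$ and with the horizontal form factors; once it is in place, the remaining steps are routine applications of the graded-commutator calculus of Quillen's superconnection formalism.
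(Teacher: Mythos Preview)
The paper does not give its own proof of this theorem: it is stated with an explicit citation to \cite[Thm.~4.5.2]{MR3099098} and no argument is reproduced. So there is nothing in the present paper to compare your proposal against.

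That said, your sketch is the standard Bismut strategy and is broadly in line with how the result is established in \cite{MR3099098} (and earlier, in the K\"ahler case, in \cite{MR929147,bismut-kohler}). Two points deserve care. First, in the general Hermitian setting of \cite{MR3099098}, $B'_u$ is not literally the formal adjoint of $B''_u$ with respect to \eqref{prodsurEb}; the correct splitting is by holomorphic/antiholomorphic type, and one checks $(B'_u)^2=(B''_u)^2=0$ from the Dolbeault-type structure, not from adjointness. Second, the identity $u\,\partial_u B''_u = -[N_u,B''_u]$ is exactly the content of \cite[Thm.~4.4.2, Prop.~4.4.3]{MR3099098}: the $i\omega^H/u$ term in $N_u$ compensates not only the $\psi_{\sqrt u}$-rescaling but also the contribution of the horizontal part of $(\db^M-\partial^M)i\omega$ in the non-K\"ahler correction to $B$. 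Once these are verified, the rest of your outline (Duhamel, graded-commutator calculus, the $\partial$/$\db$ Chern--Weil identities) is routine and matches the cited proof.
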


Let $(\alpha_u)_{u\in \R^+}$ and $\alpha$ be smooth forms on $B$. We say that as $u\to + \infty $ (resp. $u\to 0$), $\alpha_u = \alpha + O(f(u))$, if and only if for any compact set $K$ in $B$ and any $k \in \N$ their exists $C>0$ such that for every $u\geq 1$ (resp. $u\leq1$) the norm of all the derivatives of order $\leq k$ of $\alpha_u - \alpha$ over $K$ is bounded by $Cf(u)$.

\begin{thm}[see {\cite[Thm. 4.10.4]{MR3099098}}]
\label{thmutoinf}
As $u\to +\infty$,
\begin{equation}
\begin{aligned}
\Phi \trs \left[ \exp(-B_u^2) \right] &= \Phi \trs \left[ \exp(-(\n^{H(X,\xi|_{X})})^2) \right] + O\left(\frac{1}{\sqrt{u}} \right), \\
\Phi \trs \left[N_u \exp(-B_u^2) \right] &= \Phi \trs \left[ N_V \exp(-(\n^{H(X,\xi|_{X})})^2) \right] + O\left(\frac{1}{\sqrt{u}} \right).
\end{aligned}
\end{equation}
\end{thm}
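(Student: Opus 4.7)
The strategy is the standard large-time analysis of Bismut superconnection heat kernels, going back to Bismut--Gillet--Soul\'e \cite{MR929147}. The key observation is that the rescaling in Definition~\ref{Bu} places $\sqrt{u}\,D^X$ as the horizontal-degree-zero part of $B_u$, whose square $uD^{X,2}$ has strictly positive spectral gap on the orthogonal complement of $K(X,\xi|_X)$. Consequently, as $u\to\infty$, the heat operator $\exp(-B_u^2)$ localizes onto $K(X,\xi|_X)\simeq H(X,\xi|_X)$, and the restriction of the remaining horizontal pieces of $B_u^2$ to this kernel produces exactly $(\n^{H(X,\xi|_X)})^2$.

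Concretely, I would decompose $B = \sum_{k=0}^{2\ell}B^{(k)}$ by horizontal form-degree, where $\ell = \dim_\R B$ (the sum is finite since $B$ acts on $\pi^*\Lambda^\bullet T_\R^*B\otimes\Wedge(T^*X)\otimes\xi$). By \eqref{B^0=D} we have $B^{(0)} = D^X$, and the rescaling from Definition~\ref{Bu} yields
\begin{equation*}
B_u = \sqrt{u}\,D^X + B^{(1)} + u^{-1/2}B^{(2)} + \cdots + u^{1/2-\ell}B^{(2\ell)},
\end{equation*}
so that $B_u^2 = uD^{X,2} + \sqrt{u}\{D^X,B^{(1)}\} + A_u$, where $A_u$ is a finite sum of operators of horizontal degree $\geq 2$ whose coefficients are bounded uniformly in $u\geq 1$. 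Fiberwise on a compact subset of $B$, $D^{X_b,2}$ has a uniform spectral gap: its nonzero eigenvalues are bounded below by some $\mu>0$. Expanding $\exp(-B_u^2)$ through a Duhamel--Volterra series with base $uD^{X,2}$ produces a finite sum of iterated integrals
\begin{equation*}
\int_{\Delta_j}\exp(-s_1 u D^{X,2})\,Q_{i_1}\,\exp(-s_2 u D^{X,2})\,Q_{i_2}\cdots\,ds,
\end{equation*}
where each $Q_i$ is one of the perturbations $\sqrt{u}\{D^X,B^{(1)}\}$, $\{D^X,B^{(2)}\} + (B^{(1)})^2$, and so on.

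Inserting $\mathrm{Id} = P^{K_b}+(1-P^{K_b})$ between every two factors, and using that $(1-P^{K_b})\exp(-suD^{X,2})$ has operator norm bounded by $e^{-s u\mu}$ uniformly in $b$, only the terms with every inserted projection equal to $P^{K_b}$ survive as $u\to\infty$; every other term is $O(u^{-1/2})$, because each off-kernel factor contributes $e^{-su\mu}$, which integrated over the simplex and combined with at most one $\sqrt{u}$ coming from a $\{D^X,B^{(1)}\}$-factor yields $O(u^{-1/2})$. Crucially, the dangerous piece $\sqrt{u}\{D^X,B^{(1)}\}$ sandwiched between two $P^{K_b}$'s vanishes by $D^X P^{K_b} = P^{K_b}D^X = 0$, so the apparently divergent $\sqrt{u}$ is systematically absorbed. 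The surviving limit is $P^{K_b}\exp(-P^{K_b}A_\infty P^{K_b})P^{K_b}$, which by the identification of $P^K\n^{E,u}P^K$ with $\n^{H(X,\xi|_X)}$ from Subsection~\ref{cohomologiedelafibre} equals $\exp(-(\n^{H(X,\xi|_X)})^2)$; applying $\Phi\trs$ gives the first asymptotic. The second asymptotic follows by the same argument with $N_u = N_V + i\omega^H/u$ inserted in the supertrace, the piece $i\omega^H/u$ contributing $O(u^{-1})$.

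The main obstacle is the systematic control of the divergent $\sqrt{u}$ factor: a naive Duhamel expansion of $\exp(-B_u^2)$ produces terms of order $u^{1/2}$, $u$, $u^{3/2}$, and so on, and the whole argument crucially relies on the combination of $P^K D^X = 0$ with the exponential decay of the heat semigroup off the kernel to convert each such divergent factor into a genuine $u^{-1/2}$ correction. This bookkeeping is performed once and for all in Bismut's abstract superconnection formalism; see \cite[Ch.~4]{MR3099098}.
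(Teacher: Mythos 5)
Your overall strategy — rescaling, Duhamel expansion around $uD^{X,2}$, insertion of $P^K + (1-P^K)$, exploitation of the spectral gap — is indeed the standard one, and it is the approach behind the cited result \cite[Thm.~4.10.4]{MR3099098} (the paper itself gives no proof; it only cites Bismut). However, your bookkeeping of the $\sqrt{u}$ factors is not correct, and the error affects the identification of the limit.

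Concretely, you assert that any term of the Duhamel expansion containing an off-kernel factor is $O(u^{-1/2})$, ``combined with at most one $\sqrt{u}$ coming from a $\{D^X,B^{(1)}\}$-factor.'' This is false. Consider the second-order Volterra term with two factors of $\sqrt{u}\{D^X,B^{(1)}\}$, with $P^K$ on the outer exponentials and $1-P^K$ on the inner one. It carries a prefactor $u$ (not $\sqrt{u}$), and the inner factor produces a decay $(1-P^K)e^{-s_1 u D^{X,2}}(1-P^K)$ of size $e^{-s_1 u\mu}$. After integrating $s_1$ over $[0,1]$ one gets $u\int_0^1 e^{-s_1 u\mu}\,ds_1 \to 1/\mu$, which is $O(1)$, not $O(u^{-1/2})$. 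Using $D^X P^K = P^K D^X = 0$ one sees that this term contributes
\begin{equation*}
-\,P^K B^{(1)}(1-P^K)B^{(1)}P^K
\end{equation*}
in the limit $u\to\infty$. This is precisely the ``curvature correction'': it exactly converts $P^K(B^{(1)})^2P^K$ into $(P^K B^{(1)} P^K)^2$. Consequently, the naive limit you wrote, $P^K\exp\bigl(-P^K A_\infty P^K\bigr)P^K$, is not the correct limiting operator; the correct limit is $\exp\bigl(-(P^K B^{(1)}P^K)^2\bigr)$, and it only emerges after one accounts for these order-one crossed terms, together with their higher-order cousins.

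A second, smaller gap: once the limit is identified as $\exp\bigl(-(P^K B^{(1)}P^K)^2\bigr)$, one still has to justify that this agrees under the supertrace with $\exp\bigl(-(\n^{H(X,\xi|_X)})^2\bigr)$. In the K\"ahler case $B^{(1)}=\n^E$ and \eqref{nablaHetnablaEkahler} closes the loop, but in the non-K\"ahler setting of this paper the $\Lambda^1(T^*_\R B)$-component of the Bismut superconnection is not $\n^{E,u}$; Subsection~\ref{cohomologiedelafibre} only identifies $P^K\n^{E,u}P^K$ with $\n^{H(X,\xi|_X)}$, so the equality of compressed connections (or at least of the resulting Chern--Weil forms) must be verified separately. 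Finally, the statement includes uniform $\smooth$ control on compacts of $B$, i.e.\ all $b$-derivatives must be $O(u^{-1/2})$ as well; your sketch only treats the pointwise supertrace.
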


\begin{thm}[see {\cite[Prop. 4.6.1]{MR3099098}}]
\label{thmuto0}
There exist locally computable forms $(c_j \in Q^B)_{j\geq-m}$ and $(C_j \in Q^B)_{j\geq-m}$ such that for $u\to 0$ and for any $k\in \N$,
\begin{equation}
\label{asymp0}
\begin{aligned}
\Phi \trs \left[ \exp(-B_u^2) \right] =  \sum_{j=-m}^k c_j u^j + O(u^{k+1}), \\
\Phi \trs \left[N_u \exp(-B_u^2) \right] = \sum_{j=-m}^k C_j u^j + O(u^{k+1}).
\end{aligned}
\end{equation}
\end{thm}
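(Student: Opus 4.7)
The plan is to prove Theorem \ref{thmuto0} by the local index-theoretic approach for Bismut superconnections, as developed by Bismut in \cite{MR813584} and revisited in \cite[Chap. 4]{MR3099098}. Since $\exp(-B_u^2)(x,x)$ depends only on local data at $x\in X_b$, I would fix $x_0\in X_b$, introduce fiberwise geodesic normal coordinates around $x_0$ together with a synchronous trivialization of $\Wedge(T^*X)\otimes \xi$, and analyze $B_u^2$ through the Lichnerowicz formula of Theorem \ref{Lichnerowicz}. The asymptotic as $u\to 0$ is then extracted by a Getzler-type rescaling: after the change of variables $Z=\sqrt u\,Z'$ in the fiber, I would rescale the vertical Clifford generators by $c(e_i)\mapsto\frac{1}{\sqrt u}(e^i\wedge - u\,i_{e_i})$, noting that the horizontal form rescaling $\psi_{1/\sqrt u}$ is already built into the definition of $B_u=\sqrt u\,\psi_{1/\sqrt u}B\psi_{\sqrt u}$. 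Under this combined rescaling, $uB_u^2$ converges as $u\to 0$ to a generalized harmonic oscillator $\mathcal L$ on $T_{\R,x_0}X_b$, with coefficients in $\Lambda^\bullet(T^*_{\R,b}B)\otimes \Wedge(T^*_{x_0}X_b)\otimes\End(\xi_{x_0})$ built from $R^{TX}$, $R^\xi$, $\omega$ and the tensor $S$.

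Mehler's formula provides a closed expression for the heat kernel of $\mathcal L$, and Taylor expanding the rescaled coefficients of $B_u^2$ in powers of $\sqrt u$ yields, via Duhamel, a formal power-series expansion of $\exp(-B_u^2)(x_0,x_0)$ in $\sqrt u$. Taking the fiberwise supertrace annihilates every Clifford monomial below top degree $2n$; this cancels all half-integer powers of $u$ and isolates a leading singularity $u^{-m}$ with $m=\dim_\C B$, produced by the action of $\psi_{1/\sqrt u}$ on the top horizontal form degree. Integrating over $X_b$ and applying $\Phi$ then produces the expansion \eqref{asymp0}, with coefficients $c_j$ given by explicit polynomial combinations of $R^{TX}$, $R^\xi$, $\omega$, $S$ and their covariant derivatives at $x_0$; these are locally computable and, by the complex-geometric nature of the setup (as in the proof of \cite[Thm. 4.5.2]{MR3099098}), lie in $Q^B$. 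The weighted case $\Phi\trs[N_u\exp(-B_u^2)]$ is handled identically, since the term $i\omega^H/u$ in $N_u$ contributes an additional horizontal $(1,1)$-form factor that fits naturally within the same bookkeeping and merely shifts the internal degrees without disturbing the integer-power structure.

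The main obstacle is consistently handling the mixed Clifford/horizontal-form couplings appearing in the Lichnerowicz formula, namely the terms $\sqrt{u/2}\,c(e_i)f^\alpha L'^\xi_{i,\alpha}$, $u\psi_{1/\sqrt u}(\db^M\partial^M i\omega)^c\psi_{\sqrt u}$ and $\tfrac12 f^\alpha f^\beta L'^\xi_{\alpha,\beta}$, together with the rescaled connection $\n_{u,e_i}$ of \eqref{defnablau} whose various pieces carry different negative powers of $u$. After the Getzler rescaling, each of these contributions must either stay bounded as $u\to 0$ or be absorbed into the model operator $\mathcal L$, so that the Duhamel remainder is of size $O(u^{k+1})$ uniformly on compact subsets of $B$, including derivatives along the base. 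This uniformity, obtained by standard parameter-dependent heat kernel estimates, is precisely what guarantees convergence in the smooth topology claimed in \eqref{asymp0}.
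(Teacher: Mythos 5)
The paper does not prove Theorem~\ref{thmuto0} itself; it is imported verbatim from \cite[Prop.\ 4.6.1]{MR3099098}, and the argument there is of the kind you sketch: fiberwise geodesic rescaling, Getzler rescaling of the vertical Clifford variables combined with the $\psi_{1/\sqrt u}$ conjugation already built into $B_u$, Mehler's formula for the limiting harmonic oscillator, and a Volterra/Duhamel expansion whose odd powers of $\sqrt u$ are removed by the supertrace. So at the level of method you are on the same track as the cited source.

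Watch, however, one concrete slip and the gap it hides. You assert a leading singularity $u^{-m}$ with $m=\dim_{\C} B$; but in the statement $m=\dim_{\C} M = n + \dim_{\C} B$, which is the naive bound: $u^{-n}$ from the on-diagonal fiber heat kernel combined with $u^{-\dim_{\C} B}$ from $\psi_{1/\sqrt u}$ acting on top horizontal degree. Your sharper value $u^{-\dim_{\C} B}$ quietly invokes the Mehler cancellation of the $u^{-n}$ under $\trs$. That cancellation is classical for the Kodaira Laplacian alone, but it is not automatic for $B_u^2$, precisely because of the mixed vertical/horizontal couplings you flag as the ``main obstacle'' --- the $\sqrt{u/2}\,c(e_i)f^\alpha L'^\xi_{i,\alpha}$ term, the $u\psi_{1/\sqrt u}(\db^M\partial^M i\omega)^c\psi_{\sqrt u}$ term, and the $u^{-1/2}$, $u^{-1}$ pieces of $\n_{u,e_i}$ in \eqref{defnablau}. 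Unless each of these is checked against both the $u^{-n}$ cancellation and the half-integer-power parity count (that the $\sqrt u$-weight of every Duhamel term matches the parity of its vertical Clifford degree after Getzler rescaling), the honest output of your sketch is only the conservative bound $j\ge -m$ of the theorem --- which is fine, since that is all that is claimed, but you should not restate $m$. Carrying out that term-by-term verification, together with showing that the resulting $c_j$, $C_j$ are real $(p,p)$-forms and hence lie in $Q^B$, is exactly where the real work in Bismut's proof is, and it is the part your sketch asserts rather than proves.
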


Following \cite[Def. 2.19]{MR929147}, \cite[Def. 3.8]{bismut-kohler} and \cite[(4.11.3)]{MR3099098}, we can now define the analytic torsion forms.

For $s\in \C$, $\Re (s)>1$, by Theorem \ref{thmuto0}, we can set
\begin{equation}
\zeta^1(s) = -\frac{1}{\Gamma(s)}\int_0^1 u^{s-1}\Phi \left\{ \trs \left[N_u \exp(-B_u^2) \right]  -   \trs \left[ N_V \exp(-(\n^{H(X,\xi|_{X})})^2) \right]  \right\}du,
\end{equation}
and $\zeta^1$ has a meromorphic extension to $\C$, which is holomorphic on $\{ |\Re(s)|<1/2\}$.

Similarly for $s\in \C$, $\Re (s)<1/2$, Theorem \ref{thmutoinf} allows us to define
\begin{equation}
\zeta^2(s) = -\frac{1}{\Gamma(s)}\int_1^{+\infty} u^{s-1}\Phi \left\{ \trs \left[N_u \exp(-B_u^2) \right]  -   \trs \left[ N_V \exp(-(\n^{H(X,\xi|_{X})})^2) \right]  \right\}du.
\end{equation}
Here again, $\zeta^2$ has a holomorphic extension on $\{ |\Re(s)|<1/2\}$.

Now, for $s\in \C$, $ |\Re(s)|<1/2$, define the holomorphic function
\begin{equation}
\label{defzeta}
\zeta(s) = \zeta^1(s)+\zeta^2(s).
\end{equation}

\begin{defn}
\label{def-torsion}
The \emph{holomorphic analytic torsion form} is the form
\begin{equation}
\mathscr{T}(\omega,h^\xi) := \zeta'(0).
\end{equation}
The components in the different degrees of $\mathscr{T}(\omega,h^\xi)$ are referred to as the \emph{holomorphic analytic torsion forms}.
\end{defn}

Using \eqref{asymp0}, we can write
\begin{equation}
\begin{aligned}
\mathscr{T}(\omega,h^\xi)=& - \int^1_0 \bigg\{ \Phi \trs \left[N_u \exp(-B_u^2) \right] -\sum_{j=-m}^0C_ju^j \bigg\} \frac{du}{u} \\
&-\int^{+\infty}_1 \Phi \left\{ \trs \left[N_u \exp(-B_u^2) \right]  -   \trs \left[ N_V \exp(-(\n^{H(X,\xi|_{X})})^2) \right]  \right\} \frac{du}{u} \\
&+\sum_{j=-m}^{-1}\frac{C_j}{j} +\Gamma'(1) \left( C_0 -  \Phi  \trs \left[ N_V \exp(-(\n^{H(X,\xi|_{X})})^2) \right] \right).
\end{aligned}
\end{equation}

The following analogue to \cite[Thm. 3.9]{bismut-kohler} is proved in \cite[Thm. 4.11.2]{MR3099098}
\begin{thm}
\label{RRG}
The smooth form $\mathscr{T}(\omega,h^\xi)$ lies in $Q^B$. Moreover
\begin{equation}
\frac{\db\partial}{2i\pi} \mathscr{T}(\omega,h^\xi) = \ch\left( H(X,\xi|_{X}), h^{H(X,\xi|_{X})}\right) - c_0.
\end{equation}
\end{thm}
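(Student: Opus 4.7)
The plan is to apply $\db\partial/(2i\pi)$ directly to the zeta function $\zeta(s)$ of Definition \ref{def-torsion} and to evaluate the resulting regularized Mellin integral at $s=0$. Set $\alpha_u := \Phi\trs[\exp(-B_u^2)]$, $\beta_u := \Phi\trs[N_u\exp(-B_u^2)]$, and $\alpha_\infty := \Phi\trs[\exp(-(\n^{H(X,\xi|_X)})^2)]$, $\beta_\infty := \Phi\trs[N_V\exp(-(\n^{H(X,\xi|_X)})^2)]$. Since $\n^{H(X,\xi|_X)}$ is a Chern connection on the $\Z$-graded bundle $H(X,\xi|_X)$ and commutes with $N_V$, standard Chern--Weil arguments give that both $\alpha_\infty$ and $\beta_\infty$ are $d$-closed, and that $\alpha_\infty = \ch(H(X,\xi|_X), h^{H(X,\xi|_X)})$. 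The transgression identity of Theorem \ref{RRG}'s preceding result (Theorem 4.5.2 as cited) therefore rewrites as $\frac{\db\partial}{2i\pi}(\beta_u - \beta_\infty) = -u\,\dot\alpha_u$.

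Next, I would establish the key identity
\begin{equation*}
\frac{\db\partial}{2i\pi}\zeta(s) = s\,\eta(s), \qquad \eta(s) := -\frac{1}{\Gamma(s)}\int_0^{\infty}u^{s-1}(\alpha_u - \alpha_\infty)\,du,
\end{equation*}
where $\eta(s)$ is defined via the same subtraction-and-analytic-continuation procedure as $\zeta(s)$: subtract the small-$u$ asymptotic $\sum_{-m\leq j\leq 0}c_j u^j$ from Theorem \ref{thmuto0} on $(0,1)$, and subtract the limit $\alpha_\infty$ on $(1,\infty)$ (controlled by Theorem \ref{thmutoinf}). The derivation proceeds by applying $\db\partial/(2i\pi)$ under the defining Mellin integral of $\zeta(s)$, replacing the integrand by $-u\,\dot\alpha_u$ via the transgression identity, splitting the integral at $u=1$, and integrating by parts on each subinterval. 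The $u=1$ boundary terms cancel, while the $u=0$ and $u=\infty$ boundary contributions, combined with the subtraction terms, reassemble into $s/\Gamma(s)$ times the (regularized) Mellin transform of $\alpha_u - \alpha_\infty$. An elementary residue computation then shows that the singular part of $\int_0^\infty u^{s-1}(\alpha_u - \alpha_\infty)\,du$ at $s=0$ is $(c_0 - \alpha_\infty)/s$, and dividing by $\Gamma(s)\sim 1/s$ yields $\eta(0) = \alpha_\infty - c_0$. Consequently
\begin{equation*}
\frac{\db\partial}{2i\pi}\mathscr{T}(\omega,h^\xi) = \left.\frac{d}{ds}\right|_{s=0}\!\bigl(s\,\eta(s)\bigr) = \eta(0) = \ch(H(X,\xi|_X), h^{H(X,\xi|_X)}) - c_0.
\end{equation*}

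The fact that $\mathscr{T}(\omega,h^\xi)\in Q^B$ then follows directly from $\alpha_u, \beta_u\in Q^B$ for each $u>0$ (Theorem 4.5.2), together with the stability of $Q^B$ under pointwise limits and under the regularized Mellin operations used in Definition \ref{def-torsion}. The main obstacle I anticipate is the bookkeeping of the integration by parts across the non-overlapping convergence strips of the various Mellin integrals: the integral of $u^s\dot\alpha_u$ converges near $u=0$ only for $\Re(s) > m$ and near $u=\infty$ only for $\Re(s) < 1/2$, so one cannot manipulate a single integral. Instead, one must subtract enough leading terms from Theorems \ref{thmuto0} and \ref{thmutoinf} to place each piece on a common strip around $s=0$, and then carefully check that the boundary contributions of the differentiated pieces reassemble into exactly the identity $\frac{\db\partial}{2i\pi}\zeta(s)=s\,\eta(s)$.
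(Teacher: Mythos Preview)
The paper does not supply its own proof of this theorem: it simply records that the statement is the analogue of \cite[Thm.~3.9]{bismut-kohler} and refers to \cite[Thm.~4.11.2]{MR3099098} for a proof. Your sketch is correct and is precisely the classical argument that underlies those references: pass $\db\partial/(2i\pi)$ under the Mellin integral defining $\zeta(s)$, use the transgression identity $\frac{\db\partial}{2i\pi}\beta_u=-u\,\dot\alpha_u$ together with the closedness of the Chern--Weil forms $\alpha_\infty,\beta_\infty$, integrate by parts after subtracting the $u\to 0$ asymptotics from Theorem~\ref{thmuto0} and the $u\to\infty$ limit from Theorem~\ref{thmutoinf}, and read off $\eta(0)=\alpha_\infty-c_0$ from the pole structure. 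The ``bookkeeping obstacle'' you anticipate is exactly the point requiring care in the cited proofs, and your proposed resolution (split at $u=1$, subtract enough asymptotic terms on each piece to work in a common strip around $s=0$, then check that boundary contributions cancel or reassemble into $s\,\eta(s)$) is the standard and correct one. So your proposal matches what the paper defers to the literature; there is nothing to correct.
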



\subsection{The case of a {\kahler}  fibration}
\label{kahler-fibration}

Following \cite[Def. 1.4 and Thm. 1.5]{MR929147}, we say that the Hermitian  fibration $(\pi, \omega)$ is a {\kahler} fibration if $\omega$ is closed.

We assume in this subsection that $\omega$ is closed. Then by \cite[Thms. 3.7.1, 3.7.3 and 3.8.1]{MR3099098} the superconnection $B_u$ agrees we the one define in \cite[Def. 1.7]{bismut-kohler}, which is the usual Bismut superconnection.

Therefor, \eqref{defBu}, \eqref{defnablau} and \eqref{Lichnerowiczeq} turn repectively to
\begin{equation}
\left\{
\begin{aligned}
&B_u=\n^{E} + \sqrt{u} D^X - \frac{c(T)}{2\sqrt{2u}}, \\
&\n_{u,e_i} = \n^{\Wedge\otimes \xi}_{e_i}+\frac{1}{\sqrt{2u}} S_{i,j,\alpha}c(e_j)f^\alpha + \frac{1}{2u} S_{i,\alpha,\beta}f^\alpha f^\beta \qquad \text{and}\\
&B_u^2 = -\frac{u}{2}(\n_{u,e_i})^2 +\frac{uK^X}{8} + \frac{u}{4} c(e_i)c(e_j)L'^\xi_{i,j}+\sqrt{\frac{u}{2}}c(e_i)f^\alpha L'^\xi_{i,\alpha}+ \frac{f^\alpha f^\beta}{2}L'^\xi_{\alpha,\beta}.
\end{aligned}
\right.
\end{equation} 

Moreover, \cite[Thm. 2.2]{MR929147} sharpens \eqref{asymp0}: the forms $c_j$, for $j\leq0$, can be explicitly computed. For any Hermitian vector bundle $(F,h^F)$ with Chern connection $\n^F$ and curvature $R^F$ on $M$, set
\begin{equation}
\mathrm{ch}(F,h^F) = \tr\left[ \exp\left( -\frac{R^F}{2\ic\pi} \right)\right],\quad \mathrm{Td}(F,h^F) = \det \left( \frac{R^F/2\ic\pi}{\exp(R^F/2\ic\pi)-1} \right).
\end{equation}
Then by \cite[Thm. 2.2]{MR929147} we get
\begin{equation}
\left\{
\begin{aligned}
&c_j=0 \text{ for } j<0 \qquad \text{and} \\
&c_0=\int_X \mathrm{Td} ( TX,h^{TX}) \mathrm{ch}(\xi,h^\xi).
\end{aligned}
\right.
\end{equation}

Finally, by \cite[Thm. 1.5]{bismut-kohler} $\n^E$ preserves the metric on $E$ and by \cite[Thm. 3.2]{bismut-kohler} we have
\begin{equation}
\label{nablaHetnablaEkahler}
\n^{H(X,\xi|_{X})} = P^K \n^{E} P^K.
\end{equation}
Then Theorem \ref{RRG} becomes \cite[Thm. 3.9]{bismut-kohler}, that is 
\begin{equation}
\frac{\db\partial}{2i\pi} \mathscr{T}(\omega,h^\xi) = \ch\left( H(X,\xi|_{X}), h^{H(X,\xi|_{X})}\right) - \int_X \mathrm{Td} ( TX,h^{TX}) \mathrm{ch}(\xi,h^\xi).
\end{equation}


\section{The asymptotic of the torsion associated to high power of a line bundle}
\label{Sectionlinebundle}

The purpose of this section is to prove Theorem \ref{mainthm}.

We recall some notations. Let $M$ and $B$ be two complex manifolds. Let $\pi \colon M \to B$ be a holomorphic fibration with compact fiber $X$ of dimension $n$. We suppose that we are given $(\pi, \omega)$ a structure of Hermitian  fibration. 

Let $(\xi, h^\xi)$ be a holomorphic Hermitian vector bundle on $M$, and let $(L,h^L)$ be a holomorphic Hermitian line bundle on $M$. We denote the curvature of the Chern connection of $L$ by $R^L$. Recall that by Assumption \ref{RLpositive}, $R^L$ is assumed to be positive along the fibers. We define
\begin{equation}
\label{ThetaM-X}
\Theta^M = \frac{\ic}{2\pi}R^L \quad \text{and} \quad \Theta^X = \frac{\ic}{2\pi}R^L|_{T_\R X\times T_\R X}.
\end{equation}
We extend $\Theta^X$ to $T_\R M=T_\R X\oplus (T_\R X)^{\perp,\Theta^M}$ by zero. 
 
We have also assumed that the direct image $R^i\pi_*(\xi\otimes L^p)$ is locally free (for $p$ large). We will use all the  constructions of Section \ref{torsion} associated with $(\xi\otimes L^p, h^{\xi\otimes L^p})$ instead of $(\xi, h^\xi)$ (where of course $h^{\xi\otimes L^p}$ is induced by $h^\xi$ and $h^L$). The corresponding objects will be denoted by

\begin{alignat}{3}
\label{defobjetsavecp}
& E_{p,b}^k = \smooth \left( X_b, \left(\Lambda^{0,k}(T^*X)\otimes \xi\otimes L^p \right)|_{X_b} \right), & \quad 
& \n^p = \n^{E_p,LC}, \notag\\
& \db^p = \text{Dolbeault operator of } E_p, &
& D_p = \db^p + \db^{p,*},\\
& B_p = \text{associated superconnection as in \eqref{defBu}}, &
&B_{p,u}=\sqrt{u} \psi_{1/\sqrt{u}}B_p\psi_{\sqrt{u}}.\notag
\end{alignat}

We also denote by  $\mathscr{T}(\omega, h^{\xi\otimes L^p})$  the associated analytic torsion forms.

Theorem \ref{mainthm} is the family version of \cite{bismut-vasserot-89}. The strategy of proof is similar, but differences appear in the proof of the intermediate results due to the horizontal differential forms appearing in $B_p^2$. One of the first consequence is that, unlike $D_p^2$, the operator $B_p^2$ is not self-adjoint, and one has to take a nilpotent part (the part in positive degree along the basis) into account when estimating resolvants or heat kernels (compare for instance the proofs of \cite[(20)]{bismut-vasserot-89} and of Theorem \ref {dominationugrand}). An other consequence is the limit of the heat kernel involves exponential of terms coupling horizontal forms and vertical Clifford variables, which make the computations of the super-traces much more complicated (see Theorem \ref{expressionzetatilde'(0)thm}). Note also that in all our results of smooth convergence, we have to take into account the derivatives along the basis $B$.

To simplify the statements in the following, we will assume that $B$ is compact. However, the reader should be aware of the fact that the constants appearing in the sequel depends on the compact subset of $B$ we are working on.

This section is organized as follows.  In Subsection \ref{localization}, we show that our problem is local. In Subsection \ref{rescalingBpu}, we rescale the Bismut superconnection and compute the limit operator. In Subsection \ref{sectioncvce}, we obtain the corresponding convergence of the heat kernel. In Subsection \ref{Asymptoticofthetorsionforms}, we prove our main theorem, using the result  proved in Subsection \ref{demothm2}.


\subsection{Localization of the problem}
\label{localization}

Fix $b_0\in B$. In this section, we will work along the fiber $X_{b_0}$, which will be denoted simply by $X$.

For $\e>0$ and $x_0\in X$, we denote by $B^X(x_0,\e)$ and $B^{T_{\R,x_0}X}(0,\e)$ the open balls in $X$ and $T_{\R,x_0}X$ with center $x_0$ and $0$ and radius $\e$ respectively. If $\exp^X_{x_0}$ is the exponential map of $X$, then for $\e$ small enough, $ Z\in B^{T_{\R,x_0}X}(0,\e) \mapsto \exp^X_{x_0}(Z) \in B^X(x_0,\e)$ is a diffeomorphism, which gives local coordinates by identifying $T_{\R,x_0}X$ with $\R^{2n}$ via an orthonormal basis $\{e_i \}$ of $T_{\R,x_0}X$. From now on, we will always identify $ B^{T_{\R,x_0}X}(0,\e)$ and $ B^X(x_0,\e)$.

Let $\mathrm{inj}^X$ be the injectivity radius of $X$ and let $\e \in ]0,\mathrm{inj}^X/4[$. Such an $\e$ can be chosen uniformly for $b_0$ varying in a compact subset of $B$.

Let $x_1,\dots x_N$ be points of $X$ such that $\{U_{x_k} = B^X(x_k,\e)\}_{k=1}^N$ is an open covering of $X$. On each $U_{x_k}$ we identify $\xi_Z$, $L_Z$ and $\Wedge(T_Z^*X)$ to $\xi_{x_k}$, $L_{x_k}$ and $\Wedge(T_{x_k}^*X)$ by parallel transport with respect to $\n^\xi$, $\n^L$ and $\n^{\Wedge,LC}$ along the geodesic ray $t \in [0,1]\mapsto tZ$. We fixe for each $k=1,\dots,N$ an orthonormal basis $\{e_i\}_i$ of $T_{\R,x_k}X$ (without mentioning the dependence on $k$).

We denote by $\nabla_U$ the ordinary differentiation operator in the direction $U$ on $T_{x_k}X$.

We define the vector bundle $\E_p$ over $X$ by
\begin{equation}
\E_{p}:=\Lambda_{b_0}^\bullet(T^*_\R B)\otimes \left(\Wedge(T^*X)\otimes \xi \otimes L^p\right).
\end{equation}
Note here that $\Lambda_{b_0}^\bullet(T^*_\R B)$ is a trivial bundle over $X$.

Let $\{\varphi_k\}_k$ be a partition of unity subordinate to $\{U_{x_k}\}_k$. For $\ell \in \N$, we define a Sobolev norm $||\cdot||_{\sob^\ell(p)}$ on the $\ell$-th Sobolev space $\sob^\ell (X,\E_p)$ by
\begin{equation}
\label{defnormeH^l(p)}
||s||^2_{\sob^\ell(p)} = \sum_k \sum_{d=0}^\ell \sum_{i_1,\dots,i_d=1}^d ||\n_{e_{i_1}}\dots \n_{e_{i_d}}(\varphi_k s)||^2_{L^2}.
\end{equation}

\begin{lemme}
\label{estimeeelliptique(p)}
For any  $m\in \N$, there exists $C_m>0$  such that for any $p\in N$, $u>0$ and $s\in \sob^{2m+2}(X,\E_p)$,
\begin{equation}
\label{estimeeelliptique(p)-eq}
||s||^2_{\sob^{2m+2}(p)}\leq C_m  p^{4m+4} \sum_{j=0}^{m+1}p^{-4j}||B_{p}^{2j}s||_{L^2}.
\end{equation}
\end{lemme}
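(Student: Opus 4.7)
The plan is to combine Theorem~\ref{Lichnerowicz} (applied to $\xi\otimes L^p$ with $u=1$) with standard interior elliptic regularity, while carefully tracking the $p$-dependence of coefficients. The key structural observation is that \eqref{Lichnerowiczeq} presents $B_p^2$ as a fiberwise elliptic second-order differential operator with principal symbol $\tfrac{1}{2}|\xi|^2\cdot\Id$ (independent of $p$), but with subprincipal terms that grow in $p$: expanding $-\tfrac{1}{2}\sum_i(\n_{1,e_i})^2$ in a local $L^p$-frame obtained by parallel transport, the $L^p$-connection one-form has size $O(p)$, contributing a first-order term of size $O(p)$ and a zeroth-order term of size $O(p^2)$; the curvature term $L'^{\xi\otimes L^p}=L^\xi+pR^L\cdot\Id+\tfrac{1}{2}\tr(R^{TX})$ in \eqref{Lichnerowiczeq} adds a further zeroth-order term of size $O(p)$. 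The covariant norm $\|\cdot\|_{\sob^\ell(p)}$ in \eqref{defnormeH^l(p)} is built from $\n$ (which includes $\n^{L^p}$), and is adapted to these growing coefficients.

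\textbf{Base case.} First I would establish the $m=0$ estimate
\[
\|s\|^2_{\sob^2(p)}\leq C\bigl(p^4\|s\|^2_{L^2}+\|B_p^2 s\|^2_{L^2}\bigr).
\]
Classical $L^2$ elliptic regularity for the flat Laplacian on each patch $U_{x_k}$ yields $\|s\|^2_{\sob^2(p)}\leq C(\|\Delta s\|^2_{L^2}+p^4\|s\|^2_{L^2})$, where the $p^4$ arises when one converts flat derivatives into $\n^{L^p}$-derivatives. Writing $\Delta s=-2B_p^2s+A_1 s+A_0 s$ with $A_1$ first-order of size $O(p)$ and $A_0$ zeroth-order of size $O(p^2)$, the interpolation $p\|s\|_{\sob^1(p)}\leq\delta\|s\|_{\sob^2(p)}+C\delta^{-1}p^2\|s\|_{L^2}$ absorbs the $A_1$ contribution back into the left-hand side and yields the base estimate with the clean power $p^4$.

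\textbf{Inductive step and iteration.} I would then show
\[
\|s\|^2_{\sob^{2\ell+2}(p)}\leq C\bigl(p^4\|s\|^2_{\sob^{2\ell}(p)}+\|B_p^2 s\|^2_{\sob^{2\ell}(p)}\bigr)
\]
by applying the base estimate to $\n_{e_{i_1}}\cdots\n_{e_{i_{2\ell}}}(\varphi_k s)$ and commuting $B_p^2$ through the string of $\n$'s. Each commutator $[B_p^2,\n_{e_i}]$ has order $\leq 2$ with coefficients of size $O(p^2)$: differentiating the $L^p$-connection one-form produces the $p$-free curvature $R^L$, so repeated commutations do not accumulate further powers of $p$. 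Iterating this bound $m+1$ times from the base case gives
\[
\|s\|^2_{\sob^{2m+2}(p)}\leq C\sum_{j=0}^{m+1}p^{4(m+1-j)}\|B_p^{2j}s\|^2_{L^2}=Cp^{4m+4}\sum_{j=0}^{m+1}p^{-4j}\|B_p^{2j}s\|^2_{L^2},
\]
which is \eqref{estimeeelliptique(p)-eq} (reading the $L^2$-norms on the right as squared, consistent with the squared norm on the left).

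The main difficulty is the bookkeeping of powers of $p$ through the commutator step, and the careful choice of interpolation exponents so that each absorption contributes exactly the factor $p^4$ and no more. A secondary subtlety is that $B_p^2$ carries a nilpotent horizontal part in $\Lambda^{\geq 1}(T^*_{\R,b_0}B)$ and is therefore not self-adjoint; however, this part acts as bounded multiplication by the finitely many generators $f^\alpha$, uniformly in $p$, so the elliptic argument can be carried out componentwise on $\E_p$ without interference.
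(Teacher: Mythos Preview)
Your strategy---a base-case elliptic estimate followed by an induction that commutes $B_p^2$ through derivatives---is exactly the paper's approach, and the final bound and the iteration scheme are correct. However, several details in your explanation are misstated and should be fixed.

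First, the Sobolev norm in \eqref{defnormeH^l(p)} is built from the \emph{ordinary} (flat) derivative $\nabla$ in the local trivialization, not from a covariant derivative including $\nabla^{L^p}$. Consequently, classical elliptic regularity for the flat Laplacian gives $\|s\|_{\sob^2(p)}\leq C(\|\Delta s\|_{L^2}+\|s\|_{L^2})$ with \emph{no} $p$-dependence; there is no ``conversion of flat derivatives into $\nabla^{L^p}$-derivatives'' producing a $p^4$. The $p^2$ (hence $p^4$ after squaring) enters only when you substitute $\Delta=-2B_p^2+A_1+A_0$ with $A_1$ first-order of size $O(p)$ and $A_0$ zeroth-order of size $O(p^2)$, and then absorb via interpolation as you describe. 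The paper packages this by writing locally $B_p^2=D^{X,2}+R+p\,\0_1+p\,\0_0^1+p^2\,\0_0^2$, which makes the $p$-counting transparent.

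Second, your justification for the commutator step is confused. With $\nabla_{e_i}$ flat, $[B_p^2,\nabla_{e_i}]$ is obtained by differentiating the coefficients of $B_p^2$; differentiating $p\Gamma^L$ gives $p\,\partial\Gamma^L$, which is still $O(p)$, not the ``$p$-free curvature $R^L$''. The correct (and sufficient) observation is that the coefficient hierarchy---second-order $O(1)$, first-order $O(p)$, zeroth-order $O(p^2)$---is preserved under commutation with a flat derivative, so repeated commutations indeed do not accumulate extra powers of $p$. The paper carries this out with a differential operator $Q$ of order $2m$ with scalar principal symbol and records the commutator as $\|[B_p^2,Q]s\|_{L^2}\leq C(p\|s\|_{\sob^{2m+1}(p)}+p^2\|s\|_{\sob^{2m-1}(p)})$, then absorbs the $\sob^{2m+1}$-term by interpolation into $\sob^{2m+2}$ and $\sob^{2m}$.

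With these corrections, your argument coincides with the paper's.
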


\begin{proof}
Let $\widetilde{e_i}(Z)$ be the parallel transport of $e_i$ with respect to $\n^{T_\R X,LC}$ along the curve $t\in [0,1]\mapsto tZ$. Then $\{\widetilde{e_i}\}_i$ is an orthonormal frame of $T_\R X$.

Let $\Gamma^\xi$, $\Gamma^L$ and $\Gamma^{\Wedge,LC}$ be the corresponding connection form of $\n^\xi$, $\n^L$ and $\n^{\Wedge,LC}$ with respect to any fixed frame for $\xi$, $L$ and $\Wedge(T^*X)$ which is parallel along the curve $t\in [0,1]\mapsto tZ$ under the trivialization on $U_{x_k}$. Let $\n^p_1 = \n_1 \otimes 1 + 1\otimes \n^{L^p}$ be the connection on $\Wedge(T^*X)\otimes L^p \otimes \xi$ corresponding to $\n_u$ in \eqref{defnablau} (with $u=1$), replacing $\xi$ by $\xi \otimes L^p$. Then on $U_{x_k}$ we have 
\begin{multline}
\label{nupaveclesGamma}
\n^p_{1,\widetilde{e_i}} = \n_{\widetilde{e_i}}+(\Gamma^{\Wedge,LC}+\Gamma^\xi+p\Gamma^L)(\widetilde{e_i})+\frac{1}{\sqrt{2}} S(\widetilde{e_i},\widetilde{e_j},f_\alpha)c(\widetilde{e_j})f^\alpha \\
+ \frac{1}{2} S(\widetilde{e_i},f_\alpha,f_\beta) f^\alpha f^\beta +\frac{1}{2}\Big(i_{\widetilde{e_i}} \left( \db^M-\partial^M\right)i\omega\Big)^c.
\end{multline}

Let $D^X=\db^X + \db^{X,*}$ be the Dirac operator on $ \Wedge(T^*X)\otimes \xi$, and $B^\xi$ the superconnection on $B$ associated with $(\omega,\xi,h^\xi)$. Then on $U_{x_k}$, $D^X$ (resp. $B^\xi$) can be seen as an operator on $ \pi^*\Lambda^\bullet (T^*_\R B)\otimes \Wedge (T^*X)\otimes \xi \otimes L^p$ because the bundle $\pi^*\Lambda^\bullet (T^*_\R B) \otimes L^p$ (resp. $L^p$) is trivialized. Then, using \eqref{Lichnerowiczeq}, \cite[Thm. 1.4.7]{ma-marinescu} (which is \eqref{Lichnerowiczeq} in the case where the base $B$ is a point) and \eqref{nupaveclesGamma}, we find that locally,
\begin{align}
\label{Bup2=f(DX2,u,p)}
B_{p}^2 &= B^{\xi,2} + p \0_1 + p\0_0^1+ p^2\0_0^2 \\ 
&=  D^{X,2} + R + p \0_1 + p\0_0^1+ p^2\0_0^2 \notag
\end{align}
where $R$, $\0_1$ (resp. $\0_0^1$, $\0_0^2$) are operators of order $1$ (resp. $0$).

From \eqref{Bup2=f(DX2,u,p)}, there exists $C>0$ such that for $s\in \sob^\ell (X,\E_p)$,
\begin{equation}
\label{estimeeavecBp}
||s||_{\sob^2(p)} \leq C\left( ||B_{p}^2s||_{L^2}+p^2||s||_{L^2} \right).
\end{equation}

Let $Q$ be a differential operator of order $2m$ with scalar principal symbol and with compact support in $U_{x_j}$. Then \eqref{estimeeavecBp} implies 
\begin{equation}
\begin{aligned}
||Qs||_{\sob^2(p)} &\leq C\left( ||B_{p}^2Qs||_{L^2}+p^2||Qs||_{L^2}\right) \\
& \leq C'\left(||QB_{p}^2s||_{L^2}+p||s||_{\sob^{2m+1}(p)}+p^2||s||_{\sob^{2m-1}(p)}+p^2||Qs||_{L^2} \right).
\end{aligned}
\end{equation}
Hence we get \eqref{estimeeelliptique(p)-eq} by induction.
\end{proof}

Let $f\colon \R \to [0,1]$ be a smooth even function such that
\begin{equation}
f(t)=\left \{
\begin{aligned}
&1 \text{ for } |t|<\e/2, \\
& 0 \text{ for } |t|>\e.
\end{aligned}
\right.
\end{equation}

For $u>0$, $\varsigma\geq1$ and $a\in \C$, set
\begin{equation}
\label{defFuGuHu}
\begin{aligned}
&\F_u(a)=\int_\R e^{iv\sqrt{2}a}\exp(-v^2/2)f(\sqrt{u}v)\frac{dv}{\sqrt{2\pi}}, \\
&\G_u(a)=\int_\R e^{iv\sqrt{2}a}\exp(-v^2/2)(1-f(\sqrt{u}v))\frac{dv}{\sqrt{2\pi}}, \\
&\HH_{u,\varsigma}(a)=\int_\R e^{iv\sqrt{2}a}\exp(-v^2/2u)(1-f(\sqrt{\varsigma}v))\frac{dv}{\sqrt{2\pi}}.
\end{aligned}
\end{equation}

These functions are even holomorphic functions, thus there exist holomorphic functions $\tilde{\F}_u$, $\tilde{\G}_u$ and $\tilde{\HH}_{u,\varsigma}$ such that
\begin{equation}
\label{defFuGuHuavectilde}
\tilde{\F}_u(a^2)=\F_u(a)\, , \quad \tilde{\G}_u(a^2)=\G_u(a)\quad \text{and} \quad \tilde{\HH}_{u,\varsigma}(a^2)=\HH_{u,\varsigma}(a).
\end{equation}
Moreover, the restriction of $\tilde{\F}_u$ and $\tilde{\G}_u$  to $\R$ lies in the Schwartz space $\mathcal{S}(\R)$, and
\begin{equation}
\label{liensFGH}
\tilde{\G}_{\frac{u}{p}}\left(\frac{u}{p}a\right)=\tilde{\HH}_{\frac{u}{p},1}(a) \quad \text{and} \quad \tilde{\F}_u(vB_{p}^2)+\tilde{\G}_u(vB_{p}^2)=\exp\left( -vB_{p}^2\right) \text{ for }v>0.
\end{equation}

Let $\tilde{\G}_u(vB_{p}^2)(x,x')$ be the smooth kernel of $\tilde{\G}_u(vB_{p}^2)$ with respect to $dv_X(x')$. 

We still denote by $\pi$ the projection $\pi \colon X\times_B X\to B$ from the fiberwise product $X\times_B X$ to $B$. For $V$, $V'$ two bundle over $M$, we define the bundle $V\boxtimes V'$ on $X\times_B X$ by
\begin{equation}
(V\boxtimes V')_{(b,x,x')} = V_{(b,x)}\otimes V'_{(b,x')}
\end{equation}
for $b\in B$ and $x,x'\in X_b$. Then $\tilde{\G}_u(vB_{p}^2)(\cdot,\cdot)$ is a section of $ \E_p \boxtimes \E_p^*$ over $X\times_B X$. Let $\n^{\E_p}$ be the connection on $\E_p$ induced by $\n^{\Lambda^\bullet(T^*_\R B)}$, $\n^{\Wedge,LC}$, $\n^L$ and $\n^\xi$, and let $\nabla^{ \E_p \boxtimes \E_p^*}$ be the induced connection on $ \E_p \boxtimes \E_p^*$. In the same way, let $h^{\E_p}$ be the metric on $\E_p$  induced by $h^{\Lambda^\bullet(T^*_\R B)}$, $h^{\Wedge,LC}$, $h^L$ and $h^\xi$, and let $h^{ \E_p \boxtimes \E_p^*}$ be the induced metric on $ \E_p \boxtimes \E_p^*$.
\begin{prop}
\label{lepbestlocal}
For any $m\in \N$, $\e>0$, there exist  $C>0$ and $N\in \N$ such that for any $u> 0$ and any $p\in \N^*$,
\begin{equation}
\label{lepbestlocal-eq}
\left|   \tilde{\G}_{\frac{u}{p}} \Big(\frac{u}{p}B_{p}^2\Big)(\cdot\, ,\cdot) \right|_{\mathscr{C}^m} \leq Cp^N  \exp \left( -\frac{\e^2p}{16u} \right),
\end{equation}
Where the $\mathscr{C}^m$-norm is induced by  $\nabla^{ \E_p \boxtimes \E_p^*}$ and $h^{ \E_p \boxtimes \E_p^*}$.
\end{prop}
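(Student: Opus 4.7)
The plan is to bound the Schwartz kernel of the operator $\tilde{\G}_{u/p}((u/p)B_p^2) = \tilde{\HH}_{u/p,1}(B_p^2)$ (cf.\ \eqref{liensFGH}) by combining two ingredients: (i) rapid decay of the function $\tilde{\HH}_{u/p,1}$ and of all of its derivatives on $\R$, by a factor $e^{-\e^2 p/(16u)}$ up to polynomial factors in $p$; and (ii) the elliptic Sobolev estimate \eqref{estimeeelliptique(p)-eq}, which converts $L^2$-operator bounds on $B_p^{2j}\tilde{\HH}_{u/p,1}(B_p^2)$ into $\mathscr{C}^m$-bounds on the kernel, via Sobolev embedding on $X\times_B X$.

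For (i), in the integral \eqref{defFuGuHu} defining $\HH_{u/p,1}(a)$ the cutoff $1-f(v)$ is supported in $|v|\geq \e/2$, where $e^{-v^2 p/(2u)}\leq e^{-\e^2 p/(16u)}\,e^{-v^2 p/(4u)}$. Integrating by parts $N$ times against $e^{iv\sqrt{2}a}$ to gain a factor $(1+|a|)^{-N}$ --- with derivatives falling either on $1-f(v)$ (supported in $\e/2\leq|v|\leq \e$) or on $e^{-v^2 p/(2u)}$ (producing only polynomial-in-$p$ factors, once the remaining Gaussian weight $e^{-v^2 p/(4u)}$ is absorbed) --- yields for every $k, N \in \N$ a bound
\begin{equation*}
\sup_{a\in\R}\,(1+|a|)^{N}\,\bigl|\partial_a^{k}\tilde{\HH}_{u/p,1}(a)\bigr|\leq C_{k,N}\,p^{n_{k,N}}\,e^{-\e^2 p/(16u)},
\end{equation*}
uniformly in $u>0$ and $p\in \N^*$. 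Applied to the companion functions $\varphi_j(a^2):=a^{2j}\tilde{\HH}_{u/p,1}(a^2)$, the same type of decay will control the higher-order terms needed in step (ii).

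For (ii), I express $\tilde{\HH}_{u/p,1}(B_p^2)$ through the fiberwise Fourier representation
\begin{equation*}
\tilde{\HH}_{u/p,1}(B_p^2) = \frac{1}{\sqrt{2\pi}}\int_{\R} e^{iv\sqrt{2}\,B_p}\,e^{-v^2 p/(2u)}\,(1-f(v))\,dv,
\end{equation*}
in which $e^{iv\sqrt{2}B_p}$ is built via the Duhamel expansion around $D_p := B_p^{(0)}$, the fiberwise self-adjoint Dirac operator of \eqref{B^0=D}. The remainder $N_p := B_p - D_p$ raises degree in $\Lambda^\bullet(T^*_{\R,b}B)$ and therefore acts nilpotently there, so the Duhamel series contains at most $2\dim B + 1$ non-vanishing terms, each a finite product of unitary wave operators $e^{iv_\ell\sqrt{2}D_p}$ interlaced with bounded horizontal operators whose coefficients are polynomial in $p$ by \eqref{Bup2=f(DX2,u,p)}. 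After integrating against the Gaussian weight, which absorbs any polynomial growth in $|v|$ and $p$, this yields $\|B_p^{2j}\tilde{\HH}_{u/p,1}(B_p^2)\|_{L^2\to L^2}\leq C_j\,p^{N_j}\,e^{-\e^2 p/(16u)}$ for every $j\in \N$. Plugging these into \eqref{estimeeelliptique(p)-eq} (and its adjoint counterpart acting on the second variable), and then invoking Sobolev embedding on the fibered product $X\times_B X$, delivers \eqref{lepbestlocal-eq}. The main obstacle is the non-self-adjointness of $B_p^2$, which the terminating Duhamel expansion circumvents by reducing everything to the classical wave calculus of the self-adjoint piece $D_p$.
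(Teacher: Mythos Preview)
Your overall architecture—decay of $\tilde{\HH}_{u/p,1}$ plus elliptic Sobolev estimates—matches the paper's, but your step (ii) has a genuine gap. You propose a Duhamel expansion of $e^{iv\sqrt{2}B_p}$ around $e^{iv\sqrt{2}D_p}$ with perturbation $N_p=B_p-D_p$, claiming $N_p$ consists of ``bounded horizontal operators''. This is false: $B_p$ is a superconnection, and its positive-degree part in $\Lambda^\bullet(T^*_{\R,b}B)$ contains the horizontal connection $\nabla^{E_p,LC}$ (see Definition~\ref{Bu} and the formula for $A^{LC}$), which differentiates in the \emph{base} direction. Thus $N_p$ is not a bounded—or even well-defined—operator on the fiber $E_{p,b}$, and $e^{iv\sqrt{2}B_p}$ is not a fiberwise operator. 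Your citation of \eqref{Bup2=f(DX2,u,p)} concerns $B_p^2$, which \emph{is} fiberwise by the Lichnerowicz formula \eqref{Lichnerowiczeq}, not $B_p$ itself.

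A repair must work with $B_p^2$ directly. One could run Duhamel for the fiberwise wave equation $(\partial_v^2+2B_p^2)U=0$ around $D_p^2$; the expansion still terminates since $R_p=B_p^2-D_p^2$ raises horizontal degree. But $R_p$ is a \emph{first-order} fiberwise operator (see \eqref{Bp2etDp2}), so the terms $\frac{\sin((v-s)\sqrt{2}D_p)}{\sqrt{2}D_p}R_p$ are not obviously $L^2$-bounded without further energy estimates. This is exactly the difficulty the paper handles by switching to the resolvent: the contour representation $\phi(B_p^2)=\frac{1}{2i\pi}\int_{\mathcal{C}}\phi(\lambda)(\lambda-B_p^2)^{-1}d\lambda$ together with the terminating expansion \eqref{resolvantesBp2etDp2} lets the elliptic gain of $(\lambda-D_p^2)^{-1}$ absorb the derivative in $R_p$ (see \eqref{estimationRpresolvanteDp2}), yielding the $L^2$ bound of Lemma~\ref{f(Bp2)}. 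Your wave-operator route can be made to work, but not with $N_p$ bounded; you would need to supply the analogous regularity-gain argument for the wave propagator.
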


\begin{proof}
Observe first that by \eqref{liensFGH}
\begin{equation}
\label{onserameneau=1}
 \tilde{\G}_{\frac{u}{p}}  \left(\frac{u}{p}B_{p}^2\right) = \tilde{\HH}_{\frac{u}{p},1}(B_{p}^2).
\end{equation}

Moreover, as $i^ma^me^{iva} = \derpar{^m}{v^m}e^{iva}$, we can integrate by part the expression of $a^m\HH_{u,\varsigma}(a)$ given in \eqref{defFuGuHu} to obtain that for any $m\in \N$ and $c>0$, there is a $C_{m,c}>0$ such that $u>0$ and $\varsigma\geq 1$,
\begin{equation}
\label{sup(a^mH(a))}
\sup_{|\Im(a)|\leq c} |a^m\HH_{u,\varsigma}(a)| \leq C_{m,c} \varsigma^{\frac{m}{2}}\exp\left(-\frac{\e^2}{16u\varsigma}\right).
\end{equation}
For $c>0$, let $V_c$ be the image of $\{a\in \C \, : \, |\Im(a)|\leq c\}$ by the map $a\mapsto a^2$. Then
\begin{equation}
\label{Vc}
V_c = \Big\{a\in \C \, : \, \Re(a) \geq \frac{1}{4c^2} \Im(a)^2 - c^2\Big\}.
\end{equation}
Form \eqref{defFuGuHuavectilde} and \eqref{sup(a^mH(a))}, we deduce that
\begin{equation}
\label{sup(a^mHtilde(a))}
\sup_{a\in V_c} |a^m\tilde{\HH}_{u,\varsigma}(a)| \leq C_{m,c} \varsigma^{\frac{m}{2}}\exp\left(-\frac{\e^2}{16u\varsigma}\right).
\end{equation}

We will prove  Proposition~\ref{lepbestlocal} thanks to \eqref{onserameneau=1}, \eqref{sup(a^mHtilde(a))}  and Lemma \ref{estimeeelliptique(p)}. We first need the following lemma.
\begin{lemme}
\label{f(Bp2)}
Let $m\in \N$ and $\phi(a)=a^m\tilde{\HH}_{\frac{u}{p},1}(a)$,  then there exist $K_m>0$  and an integer $k_m\in \N$ such that
\begin{equation}
||\phi(B_{p}^2)s||_{L^2}\leq K_m p^{k_m}\exp \left( -\frac{\e^2p}{16u} \right)||s||_{L^2}.
\end{equation}
\end{lemme}

\begin{proof}
By Bismut's Lichnerowicz formula \eqref{Lichnerowiczeq}, \cite[Thm. 1.4.7]{ma-marinescu}  and \eqref{nupaveclesGamma}, we have
\begin{equation}
\label{Bp2etDp2}
\begin{aligned}
&B_{p}^2 = D_p^2+R_p,\\
&R_p \in \C[p] \otimes \Lambda^{\geq 1}(T^*_\R B) \otimes \mathrm{Op}^{\leq 1}_X\big(\Wedge(T^*X)\otimes \xi\big),
\end{aligned}
\end{equation}
where $\mathrm{Op}^{\leq 1}_X\big(\Wedge(T^*X)\otimes \xi \big)$ denotes the set of differential operators along the fiber $X$ on $\Wedge(T^*X)\otimes \xi $ of order $\leq 1$. We deduce the following fundamental fact:
\begin{equation}
\label{SpBp1}
\Sp(B_{p}^2)=\Sp(D_p^2).
\end{equation}
Here, $\Sp$ is our notation for the spectrum. Indeed, as $R_p$ has positive degree in $\Lambda^{\bullet}(T^*_\R B)$, we have for $\lambda \notin \Sp(D_p^2)$
\begin{equation}
\label{resolvantesBp2etDp2}
(\lambda - B_{p}^2)^{-1} = (\lambda - D_p^2)^{-1} + (\lambda - D_p^2)^{-1}R_p(\lambda - D_p^2)^{-1} + \dots \quad \text{(finite sum)}.
\end{equation}
Now, $(\lambda - D_p^2)^{-1}$ is elliptic of order 2, so increases the Sobolev regularity by 2, and $R_p$ is of order 1, thus $(\lambda - B_{p}^2)^{-1}$ is a bounded operator when acting on the Sobolev space of order 0. This proves that $\lambda \notin \Sp(B_{p}^2)$. Exchanging the role of $B_{p}^2$ and $D_p^2$, we also prove that if $\lambda \notin \Sp(B_{p}^2)$, then $\lambda \notin \Sp(D_p^2)$, which shows \eqref{SpBp1}.

By \cite[Thm 1.5.8]{ma-marinescu}, there exist $C_L>0$ and $\mu_0>0$ such that 
\begin{equation}
\label{spdeDp2}
\Sp(D_p^2)\subset \{0\} \cup ]2p\mu_0-C_L,+\infty[.
\end{equation}

\begin{figure}[!h]
\hfill
\setlength{\unitlength}{4cm}
\begin{picture}(2.5,1.125)
\put(0.25,0.5){\vector(1,0){1.85}}
\put(0.70,0.025){\vector(0,1){0.95}}
\thicklines \put(0.50,0.25){\line(0,1){0.5}}
\thicklines \put(0.50,0.25){\line(1,0){1,55}}
\thicklines \put(0.50,0,75){\line(1,0){1,55}}
\put(0.20,0.55){$-C_L$}
\put(0.50,0.5){\circle*{0.029}}
\put(1.25,0.25){${}_\blacktriangleright$}
\put(1.25,0.75){${}_\blacktriangleleft$}
\put(1.25,0.85){$\mathcal{C}$}
\put(0.72,0.78){$i$}
\put(0.72,0.15){$-i$}
\put(0.63,0.4){0}
\end{picture}
\hfill \hfill
\caption{}
\label{contour-C}

\end{figure}

Let $\mathcal{C}$ be the contour in $\C$ defined by Figure \ref{contour-C}. By \eqref{Vc}, $\mathcal{C} \subset V_c$ for $c$ big enough. 

Note that by \eqref{spdeDp2} and the self-adjointness of $D_p^2$, there exists $C>0$ such that for $\lambda \in \mathcal{C}$,
\begin{equation}
\label{estimation-resolvante-Dp2}
|| (\lambda - D_p^2)^{-1}s ||_{L^2} \leq C ||s||_{L^2}.
\end{equation}
Moreover, for $\lambda \in \mathcal{C}$ and $x\in \R_+$, we have $\frac{x}{|\lambda-x|} \leq \frac{|\lambda|}{|\lambda-x|}+1 \leq C|\lambda|$, where $C$ does not depend on $x\in \R_+$. In particular, we have
\begin{equation}
\label{estimation-Dp2-resolvante-Dp2}
||D_p^2(\lambda - D_p^2)^{-1}s||_{L^2}\leq C|\lambda| ||s||_{L^2}.
\end{equation}

Now by \cite[(1.6.8)]{ma-marinescu} --which is \eqref{estimeeelliptique(p)-eq} with $m=0$ in the case where $B$ is a point-- and by \eqref{estimation-resolvante-Dp2} and \eqref{estimation-Dp2-resolvante-Dp2}, there is $l\in \N$ and and $C'>0$ such that for $\lambda\in \mathcal{C}$,
\begin{equation}
\label{estimationRpresolvanteDp2}
|| R_p(\lambda - D_p^2)^{-1}s ||_{L^2} \leq Cp^l ||(\lambda - D_p^2)^{-1}s||_{\sob^2(p)}  \leq C'|\lambda|p^{l+2} ||s||_{L^2}.
\end{equation}
 Thus, by \eqref{resolvantesBp2etDp2}, and \eqref{estimationRpresolvanteDp2}, we find
\begin{equation}
\label{estimation-resolvante-Bp2}
|| (\lambda - B_p^2)^{-1}s ||_{L^2} \leq C|\lambda|^k p^{k'} ||s||_{L^2}.
\end{equation}

By \eqref{sup(a^mHtilde(a))}, we have $|\phi(\lambda)|\leq C_{m+k+2,c} \exp \left( -\frac{\e^2p}{16u} \right)|\lambda|^{-(k+2)}$ for $\lambda\in\mathcal{C} \subset V_c$. Moreover, \begin{equation}
\phi(B_{p}^2) =\frac{1}{2i\pi}\int_\mathcal{C} \phi(\lambda)(\lambda - B_{p}^2)^{-1}d\lambda,
\end{equation}
Using this facts,  we get Lemma \ref{f(Bp2)} from \eqref{estimation-resolvante-Bp2}.
\end{proof}

Let $Q$ be a differential operator of order $2m$, $m\in \N$ with scalar principal symbol and with compact support in $U_{x_i}$. Observe that Lemmas \ref{estimeeelliptique(p)} and \ref{f(Bp2)} are still true if we replace $B_p$ therein by $B_p^*$, because $B_p^{*,2}$ has the same structure as in \eqref{Bup2=f(DX2,u,p)} and is equal to $D_p^2 +R_p^*$. Thus, using Lemmas \ref{estimeeelliptique(p)} and \ref{f(Bp2)}, we find that for $m'\in 2\N$,
\begin{equation}
\begin{aligned}
\left| \langle B_{p}^{m'} \tilde{\HH}_{\frac{u}{p},1}(B_{p}^2)Q s, s'\rangle \right | &= \left| \langle s, Q^* \tilde{\HH}_{\frac{u}{p},1}(B_{p}^{*,2})B_{p}^{*,m'}s'\rangle \right|  \\
& \leq CK p^{4m+k_m} \exp \left( -\frac{\e^2p}{16u} \right) \|s\|_{L^2} \|s'\|_{L^2}.
\end{aligned}
\end{equation}
Thus,
\begin{equation}
\left\| B_{p}^{m'} \tilde{\HH}_{\frac{u}{p},1}(B_{p}^2)Q s \right \|_{L^2} \leq CK p^{4m+k_m} \exp \left( -\frac{\e^2p}{16u} \right) \|s\|_{L^2}.
\end{equation}
We deduce from this estimate -- and using once again Lemmas \ref{estimeeelliptique(p)} and \ref{f(Bp2)} -- that if $P,Q$ are differential operators of order $2m',2m$ respectively and with compact support in $U_{x_i},U_{x_j}$ respectively, then there is a positive constant $C_{m,m'}$ such that
\begin{equation}
\label{estimationPH(B)Q}
\left\| P \tilde{\HH}_{\frac{u}{p},1}(B_{p}^2)Q s \right \|_{L^2} \leq C_{m,m'} p^{4m+k_m} \exp \left( -\frac{\e^2p}{16u} \right) \|s\|_{L^2}.
\end{equation}
By the Sobolev inequality and \eqref{estimationPH(B)Q}, we get
\begin{equation}
\label{normeCm-fibre}
\left|   \tilde{\HH}_{\frac{u}{p},1} (B_{p}^2)(\cdot \,,\cdot) \right|_{\mathscr{C}^m(X\times X)} \leq Cp^N  \exp \left( -\frac{\e^2p}{16u} \right).
\end{equation}
With this estimate and \eqref{onserameneau=1}, we get \eqref{lepbestlocal-eq} for the $\mathscr{C}^m$-norm in the directions of the fiber $X$.

We now turn to the derivatives in the directions of the base $B$.

Let $k\in \N$. Using \eqref{sup(a^mHtilde(a))} (see \cite[(11.57)]{MR1316553}), we see that there is a unique holomorphic function $\tilde{\HH}_{u,\varsigma,k}$ defined  on a neighborhood of $V_c$ such that 
\begin{equation}
\frac{\tilde{\HH}_{u,\varsigma,k}^{(k-1)}(a)}{(k-1)!}= \tilde{\HH}_{u,\varsigma}(a)
\end{equation}
and for $u>0$ and $\varsigma\geq 1$,
\begin{equation}
\label{sup(a^mHtildek(a))}
\sup_{a\in V_c} |a^m\tilde{\HH}_{u,\varsigma,k}(a)| \leq C \varsigma^{\frac{m}{2}}\exp\left(-\frac{\e^2}{16u\varsigma}\right).
\end{equation}

For any $q,k\in \N$ and $U\in T_\R B$, we have
\begin{equation}
\label{repintegralle-derivation}
\big(\n^{\E_p\boxtimes\E_p^*}_{U^H}\big)^q \tilde{\G}_{\frac{u}{p}} \Big(\frac{u}{p}B_{p}^2\Big) = \frac{1}{2i\pi} \int_\mathcal{C} \tilde{\HH}_{\frac{u}{p},1,k}(\lambda) \big(\n^{\E_p\boxtimes\E_p^*}_{U^H}\big)^q (\lambda - B_p^2)^{-k}d\lambda,
\end{equation}
where $U^H$ denotes the horizontal lift of $U$ in $T_{B,\R}^HM$.

We now prove the analogue of Lemma \ref{f(Bp2)} for $\big(\n^{\E_p\boxtimes\E_p^*}_{U^H}\big)^q \tilde{\G}_{\frac{u}{p}} \Big(\frac{u}{p}B_{p}^2\Big)$:
\begin{lemme}
\label{f(Bp2)-derivation}
Let $q,m,m'\in \N$. There exist $K_{q,m,m'}>0$  and an integer $k_{q,m,m'}\in \N$ such that
\begin{equation}
\left\|B_p^{2m}\big(\n^{\E_p\boxtimes\E_p^*}_{U^H}\big)^q \tilde{\G}_{\frac{u}{p}} \Big(\frac{u}{p}B_{p}^2\Big)B_p^{2m'}s\right\|_{L^2}\leq K_{q,m,m'} p^{k_{q,m,m'}}\exp \left( -\frac{\e^2p}{16u} \right)||s||_{L^2}.
\end{equation}
\end{lemme}

\begin{proof}
We choose $k\in \N$ so that $k\geq 2(m+m')+q+1$. Then $B_p^{2m}\big(\n^{\E_p\boxtimes\E_p^*}_{U^H}\big)^q (\lambda - B_p^2)^{-k}B_p^{2m'}$ can be written as a sum of terms
\begin{equation}
A_1(\lambda - B_p^2)^{-1}A_2(\lambda - B_p^2)^{-1} \dots A_{k+1}(\lambda - B_p^2)^{-1},
\end{equation}
where 
\begin{equation}
A_i \in \Big\{1,B_p, \big(\n^{\E_p\boxtimes\E_p^*}_{U^H}\big)^{q'}B_p^2, \Big[ B_p,\big(\n^{\E_p\boxtimes\E_p^*}_{U^H}\big)^{q'}B_p^2\Big] \,: \, 0\leq q'\leq q\Big\}.
\end{equation}
In any case, $A_i$ is a polynomial in $p$ with values in the differential operators along the fiber of order less than 2 (for the last type of term in the above list, we use that $B_p$ is of order 1 and that $B_p^2$ as a scalar principal symbol). As a consequence, we find from \eqref{resolvantesBp2etDp2}, \eqref{estimation-Dp2-resolvante-Dp2}, \eqref{estimationRpresolvanteDp2} and \eqref{estimation-resolvante-Bp2} that
\begin{equation}
\label{estimationAresolvanteBp2}
|| A_i(\lambda - B_p^2)^{-1}s ||_{L^2} \leq Cp^l ||(\lambda - B_p^2)^{-1}s||_{\sob^2(p)}  \leq C|\lambda|^a p^b ||s||_{L^2}.
\end{equation}
By the decomposition indicated in the begging of the proof, this yields
\begin{equation}
\label{Bp-der-Bp}
\left\| B_p^{2m}\big(\n^{\E_p\boxtimes\E_p^*}_{U^H}\big)^q (\lambda - B_p^2)^{-k}B_p^{2m'} s \right\|_{L^2} \leq C|\lambda|^c p^d ||s||_{L^2}.
\end{equation}

From \eqref{sup(a^mHtildek(a))}, \eqref{repintegralle-derivation} and \eqref{Bp-der-Bp}, we deduce Lemma \ref{f(Bp2)-derivation}.
\end{proof}

Using Lemma \ref{f(Bp2)-derivation} in the same way as we used Lemma \ref{f(Bp2)} to prove \eqref{estimationPH(B)Q} and \eqref{normeCm-fibre}, we find
\begin{equation}
\label{normeCm-fibre+base}
\left| \big(\n^{\E_p\boxtimes\E_p^*}_{U^H}\big)^q  \tilde{\HH}_{\frac{u}{p},1} (B_{p}^2)(\cdot \,,\cdot) \right|_{\mathscr{C}^m(X\times X)} \leq Cp^N  \exp \left( -\frac{\e^2p}{16u} \right).
\end{equation}
Which completes the proof of Proposition~\ref{lepbestlocal}.
\end{proof}

\begin{cor}
 \label{lepbestlocal-cor}
For any $m\in \N$, $\e>0$, there exist  $C(u)>0$ a rational fraction in $\sqrt{u}$ and $N\in \N$ such that for any $u> 0$ and any $p\in \N^*$,
\begin{equation}
\left| \psi_{1/\sqrt{p}}  \tilde{\G}_{\frac{u}{p}} (B_{p,u/p}^2)(\cdot,\cdot) \right|_{\mathscr{C}^m} \leq C(u)p^N  \exp \left( -\frac{\e^2p}{16u} \right).
\end{equation}
\end{cor}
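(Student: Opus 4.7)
The approach is to reduce the statement to Proposition~\ref{lepbestlocal} by a rescaling argument, using the definition $B_{p,u}=\sqrt{u}\,\psi_{1/\sqrt{u}}B_{p}\psi_{\sqrt{u}}$. First I would substitute $u\mapsto u/p$ and square, using the elementary identity $\psi_{\sqrt{u/p}}\psi_{\sqrt{p/u}}=\mathrm{Id}$, to obtain
\begin{equation*}
B_{p,u/p}^{2}\;=\;\frac{u}{p}\,\psi_{\sqrt{p/u}}\,B_{p}^{2}\,\psi_{\sqrt{u/p}}.
\end{equation*}
Since the scaling factor $u/p$ is a scalar and $\psi_{a}$ is invertible, conjugation by $\psi_{\sqrt{p/u}}$ commutes with the holomorphic functional calculus, so
\begin{equation*}
\tilde{\G}_{u/p}(B_{p,u/p}^{2})\;=\;\psi_{\sqrt{p/u}}\,\tilde{\G}_{u/p}\!\left(\tfrac{u}{p}B_{p}^{2}\right)\psi_{\sqrt{u/p}}.
\end{equation*}
Applying $\psi_{1/\sqrt{p}}$ on the left and using $\psi_{1/\sqrt{p}}\psi_{\sqrt{p/u}}=\psi_{1/\sqrt{u}}$ gives the operator identity
\begin{equation*}
\psi_{1/\sqrt{p}}\,\tilde{\G}_{u/p}(B_{p,u/p}^{2})\;=\;\psi_{1/\sqrt{u}}\,\tilde{\G}_{u/p}\!\left(\tfrac{u}{p}B_{p}^{2}\right)\psi_{\sqrt{u/p}}.
\end{equation*}

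The middle factor is exactly the quantity estimated by Proposition~\ref{lepbestlocal}, which yields the required $\mathscr{C}^m$-bound with a constant $Cp^{N}\exp(-\varepsilon^{2}p/(16u))$. It remains to control the two flanking operators $\psi_{1/\sqrt{u}}$ and $\psi_{\sqrt{u/p}}$. Each $\psi_{a}$ acts as multiplication by $a^{q}$ on the subspace of $\Lambda^{q}(T^*_{\R,b_{0}}B)$-valued sections, so their combined action on a form produces a finite sum of terms with prefactors of the shape $u^{(a-b)/2}p^{-b/2}$ with $0\le a,b\le 2\dim B$. The powers $p^{-b/2}$ are bounded by $1$ for $p\ge 1$ and may be absorbed into the factor $p^{N}$, while the remaining $u^{(a-b)/2}$ terms assemble into a rational fraction $C(u)$ in $\sqrt{u}$.

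The only point that needs care is that the bound is taken in the $\mathscr{C}^{m}$-norm induced by $\nabla^{\E_{p}\boxtimes\E_{p}^{*}}$, which includes derivatives both along the fibers and along the base. Here one uses that $\psi_{a}$ acts only on the bundle factor $\Lambda^{\bullet}(T^{*}_{\R,b_{0}}B)$ (which is \emph{trivial} along $X\times_{B}X$ in the present local setup), hence is parallel with respect to the relevant connection; all covariant derivatives therefore commute with $\psi_{a}$ up to the same scalar prefactors. I expect no real obstacle in this argument: the main (and entirely formal) step is the conjugation identity above, after which the corollary is a direct consequence of Proposition~\ref{lepbestlocal}.
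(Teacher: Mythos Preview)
Your proposal is correct and follows exactly the same route as the paper: the paper's proof is the single sentence ``As $B_{p,u}=\sqrt{u}\,\psi_{1/\sqrt{u}}B_{p}\psi_{\sqrt{u}}$, Corollary~\ref{lepbestlocal-cor} follows from Proposition~\ref{lepbestlocal},'' and you have simply unpacked what that rescaling means at the level of kernels, including the bookkeeping of the $\psi_a$-factors that produces the rational fraction $C(u)$ in $\sqrt{u}$. Your remark that $\psi_a$ is parallel for the relevant connection (so that it commutes with the covariant derivatives defining the $\mathscr{C}^m$-norm) is the right justification for passing the $\psi$'s through the norm.
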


\begin{proof}
As $B_{p,u}=\sqrt{u} \psi_{1/\sqrt{u}}B_p\psi_{\sqrt{u}}$,  Corollary \ref{lepbestlocal-cor} follows from Proposition \ref{lepbestlocal}.
\end{proof}


\subsection{Rescaling $B_{p}$}
\label{rescalingBpu}

 Fix  $b_0 \in B$ and $x_0 \in X_{b_0}$. In this section, we will again work along the fiber $X_{b_0}$, which will be again  denoted simply by $X$. For the rest of this section, we fix $\{ w_j\}$  an orthonormal basis of $T_{x_0}^{(1,0)}X$, with dual basis  $\{ w^j\}$, and we construct an orthonormal basis $\{ e_i\}$ of $T_{\R,x_0}X$ from $\{ w_j\}$ as follows:
\begin{equation}
\label{defnbase}
e_{2j-1}=\frac{1}{\sqrt{2}} \left( w_j + \bw_j \right) \text{ and } e_{2j}=\frac{\ic}{\sqrt{2}} \left( w_j - \bw_j \right), \text{ for } 1\leq j\leq n.
\end{equation}

 For $\e>0$ small enough, we identify $ B^{T_{\R,x_0}X}(0,\e)$ and $ B^X(x_0,\e)$ as in Section \ref{localization}. Note that in this identification, the radial vector field $\rad =  \sum_i Z_i e_i$ becomes $\rad = Z$, so $Z$ can be viewed as a point or as a tangent vector.

Recall that $\n^p_1 = \n_1 \otimes 1 + 1\otimes \n^{L^p}$ is the connection on $\Lambda^\bullet(T^*_{\R} B)\otimes\Wedge(T^*X)\otimes L^p \otimes \xi$ corresponding to $\n_u$ in \eqref{defnablau}, replacing $\xi$ by $\xi \otimes L^p$ and taking $u=1$.

For $Z\in B^{T_{\R,x_0}X}(0,\e)$, we identify  $(\Wedge_Z(T^*X)\otimes \xi_Z, h_Z^{\Wedge\otimes \xi})$   with $(\Wedge_{x_0}(T^*X)\otimes \xi_{x_0},h^{E}_{x_0})$ and $(L_Z,h_Z^L)$ with $(L_{x_0},h^L_{x_0})$ by parallel transport  along the geodesic ray $t \in [0,1]\mapsto tZ$ with respect to the connection $\n_1$ and $\n^L$ respectively. We denote by $\Gamma_1 $ and $\Gamma^L$ the corresponding connection forms.

We denote by $\nabla_U$ the ordinary differentiation operator in the direction $U$ on $T_{x_0}X$.

Let $\rho \colon \R \to [0,1]$ be a smooth even function such that
\begin{equation}
\label{def-rho}
\rho(v)=\left \{
\begin{aligned}
&1 \text{ for } |v|<2, \\
& 0 \text{ for } |v|>4.
\end{aligned}
\right.
\end{equation}

On the trivial bundle 
\begin{equation}
\E_{p,x_0}=\Lambda^\bullet(T^*_{\R,b_0} B)\otimes \left(\Wedge(T^*X)\otimes \xi \otimes L^p\right)_{x_0}
\end{equation}
over $T_{x_0}X$, we define the connection
\begin{equation}
\label{nablax0}
\n^{\E_{p,x_0}} = \n + \rho(|Z|/\e)\left( p\Gamma^L +\Gamma_1\right),
\end{equation}
which is a Hermitian connection.
 
Let $g^{T_\R X_0}$ be a Riemannian metric on $X_0:=T_{\R,x_0}X=\R^{2n}$ such that
\begin{equation}
g^{T_\R X_0}=\left\{
\begin{aligned}
&g^{T_\R X} \text{ on } B^{T_{\R,x_0}X}(0,2\e),\\
&g^{T_{\R,x_0}X} \text{ outside of } B^{T_{\R,x_0}X}(0,4\e),
\end{aligned}
\right.
\end{equation}
and let $dv_{X_0}$ be the associated volume form. Let $dv_{TX}$ be the Riemannian volume form of $(T_{x_0}X, g^{T_{x_0}X})$, and $\kappa(Z)$ be the smooth positive function defined by $\kappa(0)=1$ and
\begin{equation}
\label{defkappa}
dv_{X_0} (Z) = \kappa(Z) dv_{TX}(Z).
\end{equation}

Let $\Delta^{\E_{p,x_0}}$ be the Bochner Laplacian associated with $\n^{\E_{p,x_0}}$ and $g^{T_\R X_0}$. By definition, if $\nabla^{T_\R X_0,LC}$ is the Levi-Civita connection on $(X_0,g^{T_\R X_0})$ and if  $(g^{ij}(Z))$ is the inverse of the matrix $(g_{ij}(Z))=(g^{T_\R X_0}_Z(e_i,e_j))$, we have
\begin{equation}
\label{deflaplaciendeBochner}
\Delta^{\E_{p,x_0}}= -g^{ij}(Z)  \left(\n^{\E_{p,x_0}}_ {e_i}\n^{\E_{p,x_0}}_ {e_j} -\n^{\E_{p,x_0}}_ {\nabla ^{T_\R X_0,LC}_{e_i}e_j} \right).
\end{equation}

Recall that $\{f_\alpha\}$ denotes a frame of $T_\R B$, with dual frame $\{f^\alpha\}$.  Let $\widetilde{e_i}(Z)$ be the parallel transport of $e_i$ with respect to $\n^{T_\R X_0,LC}$ along the curve $t\in [0,1]\mapsto tZ$. Then $\{\widetilde{e_i}\}_i$ is an orthonormal frame of $T_\R X_0$. Set
\begin{multline}
\label{defPhi}
\Phi = \frac{K^X}{8} + \frac{1}{4} c(\widetilde{e_i})c(\widetilde{e_j})L'^\xi(\widetilde{e_i},\widetilde{e_j})+\frac{1}{\sqrt{2}}c(\widetilde{e_i})f^\alpha L'^\xi(\widetilde{e_i},f_\alpha)+ \frac{f^\alpha f^\beta}{2}L'^\xi(f_\alpha,f_\beta) \\
-\Big( \db^M\partial^Mi\omega\Big)^c  - \frac{1}{16}\Big \| \left( \db^X-\partial^X\right)i\omega^X \Big\|^2_{\Lambda^\bullet(T^*_\R X)}
\end{multline}
and
\begin{multline}
\label{Mpu}
M_{p,x_0}  =  \frac{1}{2}\Delta^{\E_{p,x_0}} + \rho(|Z|/\e)  \Phi \\
+p\rho(|Z|/\e) \left ( \frac{1}{4} c(\widetilde{e_i})c(\widetilde{e_j})R^L(\widetilde{e_i},\widetilde{e_j})+\frac{1}{\sqrt{2}}c(\widetilde{e_i})f^\alpha R^L(\widetilde{e_i},f_\alpha)+ \frac{f^\alpha f^\beta}{2}R^L(f_\alpha,f_\beta)\right).
\end{multline}
Then $M_{p,x_0}$ is a second order elliptic differential operator acting on $\smooth\left(T_{\R,x_0}X,\E_{p,x_0}\right)$. Moreover, using Theorem \ref{Lichnerowicz}, \eqref{nablax0}, \eqref{deflaplaciendeBochner}, \eqref{defPhi} and \eqref{Mpu}, we see that $M_{p,x_0}$ and $B_{p}^2$ coincide over $B^{TX}(0,2\e)$.

Let $S_L$ be a unit vector of $L_{x_0}$. It gives an isometry $L^p_{x_0} \simeq \C$, which yields to an isometry 
\begin{equation}
\label{def-E-sans-p}
\E_{p,x_0} \simeq \Lambda^\bullet(T^*_{\R,b_0} B)\otimes (\Wedge (T^*X)\otimes \xi)_{x_0} =: \E_{x_0}.
\end{equation}
We endow $\E$ with the connection $\n^{\E}$ induce by $\n^{\Lambda^\bullet(T^*_\R B)}$, $\n^{\Wedge,LC}$ and $\n^\xi$ and with the metric $h^{\E}$ induce by $h^{\Lambda^\bullet(T^*_\R B)}$, $h^{\Wedge,LC}$ and $h^\xi$.

\begin{rem}
\label{independancedeSL}
In this trivialization, $B_p^2$ acts on $\E_{x_0}$, but this action may \emph{a priori} depends on the choice of $S_L$. However, thanks to Theorem \ref{Lichnerowicz} we see that the operator $B_{p}^2$ has it coefficients in $\End(\E_{p,x_0})$ which is canonically isomorphic to $\End(\E)_{x_0}$ (by the natural identification $\End(L^p) \simeq \C$), thus all our formulas do not depend on this choice. Under this identification, we will consider $M_{p,x_0}$ as an operator acting on $\smooth(T_{x_0}X, \E_{x_0})$. 
\end{rem}

Let $\exp(-B_{p}^2)(Z,Z')$ and $\exp(-M_{p,x_0})(Z,Z')$ be the smooth heat kernels of $B_{p}^2$ and $M_{p,x_0}$ with respect to $dv_{X_0}(Z')$.
\begin{lemme}
\label{noyauBupetMup}
For any $m\in \N$, $\e>0$, there exist $C>0$  and $N\in \N$ such that for  any $p\in \N^*$,
\begin{equation}
\left| \exp\Big(-\frac{u}{p}B_{p}^2\Big)(x_0,x_0) -  \exp\Big(-\frac{u}{p}M_{p,x_0}\Big)(0,0)\right|_{\mathscr{C}^m(M)} \leq Cp^N \exp(-\frac{\e^2p}{16u}),
\end{equation}
where $|\cdot|_{\mathscr{C}^m(M)}$ denotes the $\mathscr{C}^m$-norm in the parameters $b_0\in B$ and $x_0\in X$ induced by $\n^{\End(\E)}$ and $h^{\End(\E)}$.
\end{lemme}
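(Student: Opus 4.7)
The plan is to split via \eqref{liensFGH}: both heat kernels satisfy
\[
\exp(-\tfrac{u}{p}B_p^2) = \tilde{\F}_{u/p}(\tfrac{u}{p}B_p^2) + \tilde{\G}_{u/p}(\tfrac{u}{p}B_p^2),
\]
and likewise for $M_{p,x_0}$. I would handle the two $\tilde{\G}$-parts by the already-established decay estimates and the two $\tilde{\F}$-parts by finite propagation speed, using crucially that $B_p^2$ and $M_{p,x_0}$ coincide on $B^{TX}(0,2\e)$ by construction.

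For the $\tilde{\G}$-term involving $B_p^2$, the bound $|\tilde{\G}_{u/p}(\tfrac{u}{p}B_p^2)(x_0,x_0)|_{\mathscr{C}^m(M)} \leq Cp^N\exp(-\e^2 p/(16u))$ is Corollary~\ref{lepbestlocal-cor}, with the $\mathscr{C}^m$-derivatives in $b_0\in B$ included via Lemma~\ref{f(Bp2)-derivation} and the tangential derivatives in $x_0\in X$ treated in the same manner. Because $M_{p,x_0}$ is a fiberwise elliptic operator on $\R^{2n}$ with the same Lichnerowicz-type structure as $B_p^2$ (compare \eqref{Mpu} with \eqref{Lichnerowiczeq}), the scheme of Proposition~\ref{lepbestlocal} applies almost verbatim: the Bochner Laplacian underlying $M_{p,x_0}$ obeys a spectral lower bound analogous to \eqref{spdeDp2} (via \cite[Thm.~1.5.8]{ma-marinescu} applied on $X_0$), and the resolvent estimates \eqref{estimation-resolvante-Dp2}--\eqref{estimation-resolvante-Bp2} go through uniformly in $x_0$. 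This yields the same exponential bound for the $\tilde{\G}$-piece of the $M_{p,x_0}$ heat kernel and its parameter derivatives.

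For the $\tilde{\F}$-parts, the Fourier representation \eqref{defFuGuHu} combined with the decomposition $B_p^2 = D^{X,2}+R_p$ from \eqref{Bp2etDp2}, where $R_p$ takes values in $\Lambda^{\geq 1}T^*_\R B$ and is therefore nilpotent, expresses $\tilde{\F}_{u/p}(\tfrac{u}{p}B_p^2)$ as a \emph{finite} Duhamel-type sum of operators of the form (polynomial in $R_p$) times $\cos(v\sqrt{2}\sqrt{u/p}\,D^X)$ integrated against $e^{-v^2/2}f(\sqrt{u/p}\,v)\,dv$. Since $D^X$ is self-adjoint with scalar principal symbol $|\xi|/\sqrt{2}$, the standard finite propagation speed for its wave equation forces each such operator's Schwartz kernel at $x_0$ to be supported in $B^X(x_0,\sqrt{2}\e)\subset B^X(x_0,2\e)$, using $\supp(f)\subset(-\e,\e)$. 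The same decomposition and statement hold for $M_{p,x_0}$ on $X_0$ under the identification $B^{T_{\R,x_0}X}(0,2\e)\simeq B^X(x_0,2\e)$. Because $B_p^2$ and $M_{p,x_0}$ agree on $B^{TX}(0,2\e)$ and $dv_{X_0}=dv_X$ there, uniqueness of the wave equation with initial data supported near the diagonal forces
\[
\tilde{\F}_{u/p}(\tfrac{u}{p}B_p^2)(x_0,x_0) = \tilde{\F}_{u/p}(\tfrac{u}{p}M_{p,x_0})(0,0),
\]
as well as equality of all $\mathscr{C}^m$-derivatives in $b_0$ and $x_0$.

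Subtracting the two decompositions, the $\tilde{\F}$-contributions cancel and the claim reduces to the bound on the two $\tilde{\G}$-contributions. The main obstacle is handling the finite propagation speed argument for the non-self-adjoint superconnection $B_p$; the resolution, as just sketched, is to expand around the self-adjoint Dirac operator $D^X$ and exploit the nilpotence of the horizontal Grassmann variables so that the Duhamel expansion terminates after at most $\dim_\R B$ steps, each term inheriting finite propagation from $D^X$. A secondary, more routine, technical point is checking the uniform-in-$x_0$ resolvent and Sobolev estimates for $M_{p,x_0}$ needed to mimic Proposition~\ref{lepbestlocal} and Lemma~\ref{f(Bp2)-derivation} on $X_0=\R^{2n}$.
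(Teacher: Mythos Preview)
Your approach is the same as the paper's: split via \eqref{liensFGH}, bound both $\tilde{\G}$-parts by Proposition~\ref{lepbestlocal} (not Corollary~\ref{lepbestlocal-cor}) and its analogue for $M_{p,x_0}$, and match the $\tilde{\F}$-parts by finite propagation speed since $B_p^2$ and $M_{p,x_0}$ agree on $B^{TX}(0,2\e)$. The paper simply invokes \cite[Thm.~D.2.1]{ma-marinescu} directly for the $\tilde{\F}$-identification (finite propagation for the wave equation depends only on the scalar principal symbol, not on self-adjointness), so your Duhamel detour around the self-adjoint part---which in \eqref{Bp2etDp2} is $D_p^2$, not $D^{X,2}$---is correct but not needed.
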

\begin{proof}
By \eqref{Mpu}, $M_{p,x_0}$ has the same structure as $B_{p}^2$. Thus Lemma \ref{estimeeelliptique(p)} and Proposition \ref{lepbestlocal} are still true if we replace $B_{p}^2$  therein by $M_{p,x_0}$. From the fact that $M_{p,x_0}$ and $B_{p}^2$ coincide near 0 and the finite propagation speed of the wave equation (see e.g. \cite[Thm. D.2.1]{ma-marinescu}), we know that
\begin{equation}
\tilde{\F}_{\frac{u}{p}} \left( \frac{u}{p}B_{p}^2 \right)(x_0,\cdot) = \tilde{\F}_{\frac{u}{p}} \left( \frac{u}{p}M_{p,x_0} \right)(0,\cdot),
\end{equation}
so we get our Lemma by \eqref{liensFGH}.
\end{proof}

We will now make the change of parameter $t=\frac{1}{\sqrt{p}}\in \,]0,1]$. 

\begin{defn}
For $s\in \smooth(T_{\R,x_0}X, \E_{x_0})$ and $Z\in\R^{2n}$ set
\begin{equation}
\label{defnrescaled}
\begin{aligned}
&(S_ts)(Z)  = s(Z/t), \:\:\qquad \qquad \n_t = tS_t^{-1} \kappa^{1/2}  \nabla^{\E_{p,x_0}}\kappa^{-1/2} S_t, \\
& \n_0= \nabla + \frac{1}{2} R^L_{x_0}( Z, \cdot) , \quad\qquad \LL_{t}=   t^2 S_t^{-1}  \kappa^{1/2}M_{p,x_0}  \kappa^{-1/2} S_t, \\
&\LL_{0} = - \frac{1}{2}\sum_i \left(\n_{0,e_i}\right)^2+ \frac{1}{4} c(e_i)c(e_j)R^L_{i,j}(x_0)+\frac{1}{\sqrt{2}}c(e_i)f^\alpha R^L_{i,\alpha}(x_0)+ \frac{f^\alpha f^\beta}{2}R^L_{\alpha,\beta}(x_0).
\end{aligned}
\end{equation}
\end{defn}	

From now on we will denote $c(e_i)$ by $c^i$ to simplify the notation in the computations.

\begin{prop}
\label{asympL}
When $t\to0$, we have
\begin{equation}
\label{asympLeq}
\n_{t,e_i} = \n_{0,e_i} + O(t) \text{ and} \quad \LL_{t} = \LL_0 +O(t).
\end{equation}
\end{prop}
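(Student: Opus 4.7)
The plan is to conjugate each piece of $\n^{\E_{p,x_0}}$ and $M_{p,x_0}$ by $\kappa^{1/2}S_t$ and extract the leading order in $t$, using the crucial combination $t\cdot p = 1/t$ together with the vanishing of the connection form $\Gamma^L$ at $0$ in the radial gauge.

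First I would record the elementary scaling formulas: for a smooth matrix-valued function $A$ on $T_{\R,x_0}X$,
\begin{equation*}
S_t^{-1} A(Z) S_t = A(tZ), \qquad S_t^{-1}\nabla_{e_i}S_t = \tfrac{1}{t}\nabla_{e_i}.
\end{equation*}
Applied to $\n^{\E_{p,x_0}} = \nabla + \rho(|Z|/\e)\bigl(p\Gamma^L+\Gamma_1\bigr)$ with $p=1/t^2$, this gives
\begin{equation*}
\n_{t,e_i} = \kappa^{1/2}(tZ)\Bigl(\nabla_{e_i} + \tfrac{\rho(t|Z|/\e)}{t}\Gamma^L(e_i)(tZ) + t\rho(t|Z|/\e)\Gamma_1(e_i)(tZ)\Bigr)\kappa^{-1/2}(tZ).
\end{equation*}
Since $\kappa(0)=1$ is smooth, the conjugation by $\kappa^{1/2}(tZ)$ is the identity up to $O(t)$ terms. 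The $\Gamma_1$ contribution is $O(t)$ by smoothness. The key input for the middle term is that, in the radial gauge (parallel transport along $t\mapsto tZ$ with respect to $\n^L$), the connection form admits the standard expansion
\begin{equation*}
\Gamma^L(e_i)(Z) = \tfrac{1}{2}R^L_{x_0}(\rad,e_i) + O(|Z|^2) = \tfrac{1}{2}R^L_{x_0}(Z,e_i) + O(|Z|^2),
\end{equation*}
so that $\tfrac{1}{t}\Gamma^L(e_i)(tZ) = \tfrac{1}{2}R^L_{x_0}(Z,e_i)+O(t)$ uniformly on compact subsets of $Z$. For such $Z$ and $t$ small, $\rho(t|Z|/\e)=1$, yielding $\n_{t,e_i}=\n_{0,e_i}+O(t)$.

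For $\LL_t$ I would decompose $M_{p,x_0} = \tfrac{1}{2}\Delta^{\E_{p,x_0}} + \rho(|Z|/\e)\Phi + p\rho(|Z|/\e)\mathcal{R}$, where $\mathcal{R}$ is the sum of the three curvature terms on the last line of \eqref{Mpu}. By \eqref{deflaplaciendeBochner} and the previous step,
\begin{equation*}
t^2 S_t^{-1}\kappa^{1/2}\tfrac{1}{2}\Delta^{\E_{p,x_0}}\kappa^{-1/2}S_t
= -\tfrac{1}{2}g^{ij}(tZ)\bigl(\n_{t,e_i}\n_{t,e_j} - \n_{t,\nabla^{T_\R X_0,LC}_{e_i}e_j(tZ)}\bigr),
\end{equation*}
and as $t\to 0$ one has $g^{ij}(tZ)\to \delta^{ij}$ and the Christoffel symbols at $tZ$ vanish at $t=0$ (normal coordinates), so this converges to $-\tfrac{1}{2}\sum_i\n_{0,e_i}^2$ with $O(t)$ error. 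The zeroth order piece satisfies $t^2\rho(|Z|/\e)\Phi = t^2\rho(t|Z|/\e)\Phi(tZ) = O(t^2)$. Finally, using $t^2p=1$,
\begin{equation*}
t^2 S_t^{-1}\bigl(p\rho(|Z|/\e)\mathcal{R}\bigr)S_t = \rho(t|Z|/\e)\mathcal{R}(tZ) = \mathcal{R}(x_0) + O(t),
\end{equation*}
which matches the last three terms of $\LL_0$. Summing gives $\LL_t=\LL_0+O(t)$.

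The only genuinely nontrivial ingredient is the radial-gauge Taylor expansion of $\Gamma^L$, which balances the divergent factor $tp=1/t$ against the vanishing of $\Gamma^L$ at the origin; everything else is bookkeeping of smooth factors under $Z\mapsto tZ$. The $\mathscr{C}^\infty$ bounds on the $O(t)$ remainders in the parameters $(b_0,x_0)\in M$ follow from the smoothness and equivariance of all ingredients in the construction.
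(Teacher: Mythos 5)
Your proposal follows essentially the same route as the paper: it uses the radial-gauge Taylor expansion $\Gamma^L(e_i)(Z)=\tfrac12 R^L_{x_0}(Z,e_i)+O(|Z|^2)$ to balance $t\cdot p=1/t$, then scales $\Delta^{\E_{p,x_0}}$, $\rho\Phi$, and the curvature terms and collects the $O(t)$ remainders, exactly as in \eqref{natiup}--\eqref{dvlptAup}. One small slip: after rescaling, the Christoffel term in the Bochner Laplacian carries an explicit extra factor of $t$ (the paper's $t\n_{t,\nabla^{TX_0}_{e_i}e_j}$), which you omitted; your alternative justification via vanishing of the Christoffel symbols at the origin in normal coordinates is also valid, so the conclusion is unaffected.
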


\begin{proof}
By \eqref{nablax0} and \eqref{defnrescaled}, we have
\begin{equation}
\label{natiup}
\n_{t,e_i}(Z) = \kappa^{1/2}\left(tZ\right)\left\{ \n_{e_i} + \rho(t|Z|/\e)\left( t^{-1}\Gamma^L_{tZ}(e_i) +t\Gamma_{1,tZ}(e_i)\right)\right\}\kappa^{-1/2}\left(tZ\right).
\end{equation}

It is a well known fact (see for instance \cite[Lemma 1.2.4]{ma-marinescu}) that for if $\Gamma= \Gamma^L$ (resp. $\Gamma_1$) and $R=R^L$ (resp. $R_{1}$ the curvature of $\n_1$), then
\begin{equation}
\label{dvlptGamma}
\Gamma_Z(e_i) = \frac{1}{2}R_{x_0}(Z,e_i)+O(|Z|^2).
\end{equation}
Thus,
\begin{equation}
\label{dvlptGammaup}
\begin{aligned}
&t\Gamma_{1,tZ}(e_i) =    O\left( t^2 \right),  \\
&t^{-1}\Gamma^L_{tZ}(e_i) = \frac{1}{2}R^L_{x_0}(Z,e_i) +O\left( t \right).
\end{aligned}
\end{equation}

The first asymptotic development in Proposition \ref{asympL} follows from $\rho(0)=\kappa(0)=1$, \eqref{natiup}, \eqref{dvlptGamma} and \eqref{dvlptGammaup}. Moreover, with this asymptotic,  \eqref{deflaplaciendeBochner} and the fact that $g^{ij}(0)=\delta_{ij}$ we find
\begin{equation}
\label{dvptLB}
\begin{aligned}
t^2S_t^{-1} \kappa^{1/2} \Delta^{\E_{p,x_0}} \kappa^{-1/2} S_t &= -g^{ij}(tZ)  \left(\n_{t,e_i}\n_{t,e_j} -t\n_ {t,\nabla ^{TX_0}_{e_i}e_j} \right) \\
&=\sum_i \left(\n_{0,e_i}\right)^2+O(t) .
\end{aligned}
\end{equation}

On the other hand, by \eqref{Mpu}, we have
\begin{align}
\label{dvlptAup}
t^2S_t^{-1}  \kappa^{1/2}&\Big( M_{p,x_0}-\frac{1}{2}\Delta^{\E_{p,x_0}} \Big)   \kappa^{-1/2}S_t  \notag \\
& =\rho(t|Z|/\e))\left \{ \kappa^{1/2}\left(t^2\Phi+  \frac{1}{4} c(\widetilde{e_i})c(\widetilde{e_j})R^L(\widetilde{e_i},\widetilde{e_j})+\frac{1}{\sqrt{2}}c(\widetilde{e_i})f^\alpha R^L(\widetilde{e_i},f_\alpha)\right.\right.\notag \\
& \qquad \qquad \qquad \qquad \qquad \qquad \qquad\qquad\qquad\qquad\qquad\quad \left.\left.+ \frac{f^\alpha f^\beta}{2}R^L(f_\alpha,f_\beta)\right)\kappa^{-1/2} \right\}_{tZ}   \\
&=\frac{1}{4} c^ic^jR^L_{i,j}(x_0)+\frac{1}{\sqrt{2}}c^if^\alpha R^L_{i,\alpha}(x_0)+ \frac{f^\alpha f^\beta}{2}R^L_{\alpha,\beta}(x_0) + O\left( t \right).\notag
\end{align}

With \eqref{defnrescaled}, \eqref{dvptLB}, \eqref{dvlptAup}, and the first part of \eqref{asympLeq} that we have already proved, the proof of the proposition is completed.
\end{proof}


\subsection{Convergence of the heat kernel}
\label{sectioncvce}

In this section, we use the notations and definitions of Section \ref{rescalingBpu}. In particular,  $b_0 \in B$ and $x_0 \in X_{x_0}$ are fixed. Set 
\begin{equation}
\label{defOmegau}
\Omega_u =  uR^L(w_k,\bw_\ell)\bw^{\ell}\wedge i_{\bw_k} + \sqrt{\frac{u}{2}}c(e_i)f^\alpha R^L_{i,\alpha}+ \frac{f^\alpha f^\beta}{2}R^L_{\alpha,\beta}.
\end{equation}

The purpose of this section is to prove the following result:

\begin{thm}
\label{cvcenoyaudelachaleur}
Let $k\in \N$. Then there is  $\epsilon>0$ such that  as $p\to +\infty$, uniformly as $u$ varies in a compact subset of $\R^*_+$, we have the following asymptotic for the $\mathscr{C}^k$-norm on $\smooth(M,\End(\E))$:
\begin{multline}
\psi_{1/\sqrt{p}}\exp(-B_{p,u/p}^2) (x_0,x_0)= \\
(2\pi)^{-n} \exp(-\Omega_{u,x_0})\frac{\det(\dot{R}^{X,L}_{x_0})}{\det \big(1-\exp(-u\dot{R}^{X,L}_{x_0})\big)}\otimes \Id_{\xi} p^n + O(p^{n-\epsilon}).
\end{multline}
\end{thm}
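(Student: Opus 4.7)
The plan is to follow the standard three-step local index theory approach of Bismut--Lebeau (see also \cite{ma-marinescu}): localize near $x_0$, rescale to a harmonic-oscillator-type limit operator, and compute the limiting heat kernel via a Mehler-type formula. For the localization, Lemma~\ref{noyauBupetMup} immediately replaces $\exp(-(u/p)B_p^2)(x_0,x_0)$ by the model kernel $\exp(-(u/p)M_{p,x_0})(0,0)$ modulo an error that is $O(p^{-\infty})$, hence negligible. For the rescaling, the operator $\LL_t = t^2 S_t^{-1}\kappa^{1/2}M_{p,x_0}\kappa^{-1/2}S_t$ with $t = 1/\sqrt{p}$ transforms the heat kernel on the diagonal by the Jacobian of $Z\mapsto Z/t$:
\begin{equation}
\exp(-u\LL_t)(0,0) = t^{-2n}\exp\bigl(-(u/p)M_{p,x_0}\bigr)(0,0) = p^{n}\exp\bigl(-(u/p)M_{p,x_0}\bigr)(0,0).
\end{equation}
The $\psi_{1/\sqrt{p}}$ factor is accounted for by tracking the $\Lambda^\bullet(T^*_\R B)$-degree throughout the rescaling: the identity $B_{p,u/p}^2 = (u/p)\psi_{\sqrt{p/u}}B_p^2\psi_{\sqrt{u/p}}$ combined with the conjugation by $\psi_{1/\sqrt{p}}$ matches exactly the horizontal-form scaling already built into \eqref{defnrescaled}. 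The theorem thus reduces to proving that $\exp(-u\LL_t)(0,0) = \exp(-u\LL_0)(0,0) + O(t^{2\epsilon})$ in $\mathscr{C}^k$-norm in $(b_0,x_0)$ for some $\epsilon > 0$.

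The heart of the argument, which I expect to be the main obstacle, is upgrading the coefficient convergence of Proposition~\ref{asympL} to heat kernel convergence with a uniform rate. I would proceed by establishing uniform-in-$t$ elliptic estimates for $\LL_t$ in weighted Sobolev norms adapted to the polynomial growth in $Z$ of the coefficients of $\LL_0$ (the $\tfrac{1}{2}R^L_{x_0}(Z,\cdot)$ term in $\nabla_0$), then expressing the difference of semigroups via the resolvent representation
\begin{equation}
e^{-u\LL_t} - e^{-u\LL_0} = \frac{1}{2\pi\ic}\int_\Gamma e^{-u\lambda}(\lambda-\LL_t)^{-1}(\LL_t - \LL_0)(\lambda - \LL_0)^{-1}\,d\lambda
\end{equation}
along a suitable contour $\Gamma$ avoiding the spectra, and finally using Sobolev embedding at $0$ to extract pointwise kernel estimates. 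The principal technical subtlety is non-self-adjointness of $\LL_t$: its positive-degree-along-$B$ part $\LL_t^{\geq 1}$ is nilpotent, so $\LL_t$ has no spectral theorem. I would circumvent this by splitting $\LL_t = \LL_t^{(0)} + \LL_t^{\geq 1}$, expanding the resolvent as a finite sum as in \eqref{resolvantesBp2etDp2}, and reducing to self-adjoint estimates for the scalar-level oscillator $\LL_t^{(0)}$. Derivatives in the parameters $(b_0,x_0)$ are propagated through the same resolvent identity, commuting the derivatives inside at the cost of additional first-order terms that obey analogous estimates.

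To identify the value of $\exp(-u\LL_0)(0,0)$, I would choose the basis \eqref{defnbase} so that $\dot{R}^{X,L}_{x_0}$ is diagonal with eigenvalues $a_1,\dots,a_n>0$. Then $\LL_0$ decouples into $n$ independent one-dimensional harmonic oscillators in $Z$ with frequencies $a_j$, tensored with a finite-dimensional ``Clifford--form'' factor assembling the terms $\frac14 c^i c^j R^L_{i,j}$, $\frac{1}{\sqrt{2}}c^i f^\alpha R^L_{i,\alpha}$ and $\frac12 f^\alpha f^\beta R^L_{\alpha,\beta}$. Mehler's formula applied oscillator-by-oscillator yields the prefactor
\begin{equation}
\frac{1}{(2\pi)^n}\frac{\det\dot{R}^{X,L}_{x_0}}{\det\bigl(1-\exp(-u\dot{R}^{X,L}_{x_0})\bigr)}.
\end{equation}
For the nilpotent factor, converting the vertical Clifford variables into wedge/interior products via $c(w_k)c(\bw_\ell) = 2\,\bw^\ell\wedge i_{\bw_k} - 2\delta_{k\ell}$ and collecting the coupling terms (with the $u$-dependence inherited from the change of variables $Z\mapsto\sqrt{u}Z$ in the oscillator) reorganizes the exponential precisely into $\exp(-\Omega_{u,x_0})$ as defined in \eqref{defOmegau}. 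Combining the three steps produces the stated asymptotic with remainder $O(p^{n-\epsilon})$ inherited from Step~2.
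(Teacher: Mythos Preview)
Your proposal is correct and follows essentially the same route as the paper: localize via Lemma~\ref{noyauBupetMup}, rescale to $\LL_t$, prove heat-kernel convergence through resolvent estimates with the nilpotent part handled by the finite expansion \eqref{resolvantesBp2etDp2}, and identify the limit via Mehler's formula. The paper makes explicit two technical choices you left implicit: the weighted Sobolev norms are the $t$-dependent ones $\|\cdot\|_{t,m}$ of \eqref{defnorme(p,m)} built from $\nabla_{t,e_i}^{(0)}$ (which absorb the linear growth of $\frac{1}{2}R^L_{x_0}(Z,\cdot)$ automatically), and the pointwise rate $\epsilon = 1/(4n+2)$ comes not from a direct Sobolev embedding of the resolvent difference but from interpolating the $L^2$-estimate of Theorem~\ref{diffdesresolvantes-thm} against the uniform $\mathscr{C}^m$ bounds of Theorem~\ref{estimationderiveesnoyaudeLpu}, exactly as in \cite[Thm.~1.6.13]{ma-marinescu}.
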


To prove this theorem, we will adapt the method of \cite[Sect. 1.6]{ma-marinescu}.

\begin{rem}
In fact, this theorem holds without any positivity assumption on $L$. In this case, we have to take the convention that if an eigenvalue of $\dot{R}^{X,L}_{x_0}$ is zero, then its contribution to $\frac{\det(\dot{R}^{X,L}_{x_0})}{\det \big(1-\exp(-u\dot{R}^{X,L}_{x_0})\big)}$ is $\frac{1}{2u}$ and we have to use \cite[(E.2.5)]{ma-marinescu} in addition to \cite[(E.2.4)]{ma-marinescu} to get \eqref{noyaudelachaleurLinfini}.
\end{rem}

\begin{rem}
\label{rem-o(p^n)=>O(p^n-1)}
 As pointed out in \cite[Thm. 4.2.3 and Rem. 4.2.4]{ma-marinescu}, we can use the results of this section combined with the techniques of \cite[Sect. 4.1]{ma-marinescu} to get $O(p^{n-1})$ instead of $O(p^{n-\epsilon})$ in Theorem \ref{cvcenoyaudelachaleur}. However, we do not need this improvement and leave it to the reader.
\end{rem}

The following Lemma is an easy consequence of the Arzel\`{a}-Ascoli theorem, which we will use several time.
\begin{lemme}
\label{Ascoli}
Let $Y$ be a compact manifold and let $(E,h^E,\n^E)$ be a Hermitian bundle with connection over $Y$. We can then define, for $k\in \N$, the $\mathscr{C}^k$-norm $|\cdot|_{\mathscr{C}^k}$ on $\smooth (Y,E)$. Let $f_n \in \smooth (Y,E)$ be a sequence converging weakly to some distribution $f$. If for any $k\in \N$ there is $C_k>0$ such that $\sup_n |f_n|_{\mathscr{C}^k} \leq C_k$, then $f$ is smooth and $f_n$ converges in the $\smooth$ topology to $f$.
\end{lemme}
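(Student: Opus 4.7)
The plan is to combine the classical Arzelà–Ascoli theorem with a diagonal extraction, then identify the limit via uniqueness of distributional limits, and finally upgrade subsequential convergence to convergence of the full sequence by a standard contradiction argument.

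First, I would unpack the $\mathscr{C}^k$-norm: cover the compact $Y$ by finitely many trivializing charts for $E$, so that $|f_n|_{\mathscr{C}^k}$ controls the sup-norms of all partial derivatives of the component functions up to order $k$. The hypothesis $\sup_n |f_n|_{\mathscr{C}^{k+1}} \leq C_{k+1}$ then gives a uniform Lipschitz bound on the $k$-th derivatives, so the family $\{f_n\}$ together with all its derivatives up to order $k$ is equicontinuous and uniformly bounded. Arzelà–Ascoli, applied componentwise in each chart, yields relative compactness of $(f_n)$ in $\mathscr{C}^k(Y,E)$.

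Second, by a diagonal extraction across $k\in\N$ I obtain a subsequence $(f_{n_j})$ that converges in every $\mathscr{C}^k$ to a limit $g$. Since $g$ lies in $\mathscr{C}^k$ for all $k$, it is smooth, and the convergence takes place in the $\smooth$ topology. Smooth convergence implies convergence as distributions, so $f_{n_j} \to g$ weakly; but by hypothesis $f_{n_j}\to f$ weakly, so $f=g$ as distributions and $f$ is smooth.

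Third, to promote this to convergence of the full sequence, I argue by contradiction. Suppose $f_n \not\to f$ in $\smooth$: then there exist $k\in\N$, $\varepsilon>0$ and a subsequence $(f_{n_j})$ with $|f_{n_j}-f|_{\mathscr{C}^k} \geq \varepsilon$. This subsequence still satisfies the uniform bounds $\sup_j |f_{n_j}|_{\mathscr{C}^\ell}\leq C_\ell$ for every $\ell$ and still converges weakly to $f$. Applying the previous step to $(f_{n_j})$, I extract a further subsequence converging to $f$ in $\smooth$, in particular in $\mathscr{C}^k$, which contradicts $|f_{n_j}-f|_{\mathscr{C}^k}\geq\varepsilon$. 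The main (and only real) obstacle is ensuring Arzelà–Ascoli is applied cleanly to sections of the bundle $E$ rather than scalar functions; this is routine once one works in a finite covering by trivializing charts and uses the connection $\n^E$ to express higher derivatives intrinsically.
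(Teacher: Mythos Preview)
Your proof is correct and is precisely the standard argument the paper has in mind: the paper does not spell out a proof but simply states that the lemma is an easy consequence of the Arzel\`a--Ascoli theorem, which is exactly what you have carried out (compactness in each $\mathscr{C}^k$, diagonal extraction, identification via uniqueness of weak limits, then the subsequence-of-a-subsequence contradiction to get full convergence).
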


In the sequel, when we add a superscript $(0)$ to the objects introduced above, we mean their part of degree 0 in $\Lambda^\bullet(T^*_{\R,b_0} B)$.

Let $||\cdot ||_{0}$ be the $L^2$ norm on $\smooth(T_{\R,x_0}X,\E_{x_0})$ induced by $h^{\Lambda^\bullet(T^*_\R B)}_{x_0}$, $h^{\Wedge}_{x_0}$, $h^{\xi}_{x_0}$ and the volume form $dv_{TX}(Z)$. For $s\in\smooth(X_0,\E_{x_0})$, $m\in \N^*$, and $t\geq 0$, set
\begin{equation}
\label{defnorme(p,m)}
\begin{aligned}
&||s||_{t,0}^2=||s||_{0}^2, \\
&||s||_{t,m}^2=\sum_{\ell\leq m} \sum_{i_1,\dots,i_\ell}  ||\n^{(0)}_{t,e_{i_1}}\cdots\n^{(0)}_{t,e_{i_\ell}}s||_{0}^2.
\end{aligned}
\end{equation}
We denote by $\sob^m_t$ the Sobolov space $\sob^m(X_0,\E_{x_0})$ endowed with the norm $||\cdot ||_{t,m}$, and by $\sob^{-1}_t$ the Sobolev space of order $-1$ endowed with the norm
\begin{equation}
||s||_{t,-1}=\sup_{s'\in\sob^1_p\setminus\{0\}} \frac{\langle s,s'\rangle_{t,0}}{||s'||_{t,0}}\, .
\end{equation}

Finally, if $A\in \LL(\sob^k_t, \sob^m_t)$, we denote by $||A||^{k,m}_t$ the operator norm of $A$ associated with $||\cdot||_{t,k}$ and $||\cdot||_{t,m}$.

Let
\begin{equation}
\mathscr{R}_{t} = \LL_{t} -\LL_{t}^{(0)}. 
\end{equation}

\begin{prop}
\label{estimationLupprop}
There exist $C_1,C_2,C_3>0$ such that for any $t>0$ and any $s,s'\in\smooth(X_0,\E_{x_0})$,
\begin{equation}
\label{estimationsLup}
\begin{aligned}
&\langle \LL^{(0)}_{t} s, sÊ\rangle_{t,0} \geq C_1||s||_{t,1}^2 - C_2||s||_{t,0}^2, \\
& \left|\langle \LL^{(0)}_{t} s, s'Ê\rangle_{t,0}\right| \leq C_3 ||s||_{t,1} ||s'||_{t,1}, \\
& \left \| \mathscr{R}_{t} s\right \|_{t,0} \leq C_4 ||s||_{t,1}.
\end{aligned}
\end{equation}
\end{prop}

\begin{proof}
By \eqref{B^0=D}, the operators $\n_t^{(0)}, \LL_{t}^{(0)}$ are the operators corresponding to $\n_t,\LL_{t}$ in the case where $B$ is a point, thus the first two lines of \eqref{estimationsLup} are proved in \cite[Thm. 1.6.7]{ma-marinescu}. 

By \eqref{defnablau}, \eqref{natiup} and \eqref{dvlptGammaup}, we have
\begin{equation}
\label{diffnatietnati(0)}
\n_{t,e_i}-\n_{t,e_i}^{(0)} = O_0\left( t^2\right),
\end{equation}
where by $O_0(t^\alpha)$ we mean an operator of order 0 which is a $O(t^\alpha)$. Thus, by \eqref{dvptLB}, \eqref{dvlptAup} and \eqref{diffnatietnati(0)}, we have
\begin{equation}
\label{dvlpmt-Rt}
\mathscr{R}_{t} =  \n_{t,e_i} O_0\left( t\right) + O_0(1).
\end{equation}
This immediately yields to the last estimate of \eqref{estimationsLup}.
\end{proof}

Let $\Gamma$ be the contour in $\C$ defined in Figure \ref{contour-Gamma}.

\begin{figure}[!h]
\hfill
\setlength{\unitlength}{4cm}
\begin{picture}(2.5,1.125)
\put(0.25,0.5){\vector(1,0){1.85}}
\put(0.70,0.025){\vector(0,1){0.95}}
\thicklines \put(0.50,0.25){\line(0,1){0.5}}
\thicklines \put(0.50,0.25){\line(1,0){1,55}}
\thicklines \put(0.50,0,75){\line(1,0){1,55}}
\put(0.20,0.55){$-2C_2$}
\put(0.50,0.5){\circle*{0.029}}
\put(1.25,0.25){${}_\blacktriangleright$}
\put(1.25,0.75){${}_\blacktriangleleft$}
\put(1.25,0.85){$\Gamma$}
\put(0.72,0.78){$i$}
\put(0.72,0.15){$-i$}
\put(0.63,0.4){0}
\end{picture}
\hfill \hfill
\caption{}
\label{contour-Gamma}
\end{figure}

\begin{prop}
\label{estimationsresoprop}
There exist $C>0$, $a,b \in \N$ such that for any $t>0$ and any $\lambda \in \Gamma$, the resolvant $\left( \lambda-\LL_{t}\right)^{-1}$ exists and
\begin{equation}
\label{estimationsreso}
\begin{aligned}
&\left\| \left( \lambda-\LL_{t}\right)^{-1} \right\|^{0,0}_t \leq C(1+|\lambda|^2)^a, \\
&\left\| \left( \lambda-\LL_{t}\right)^{-1} \right\|^{-1,1}_t \leq C(1+|\lambda|^2)^b.
\end{aligned}
\end{equation}
\end{prop}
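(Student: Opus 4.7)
The key structural fact I would exploit is that by Proposition \ref{estimationLupprop}, the operator $\LL_t^{(0)}$ satisfies a G\r{a}rding inequality and $\LL_t^{(0)}$ is the scalar (in $\Lambda^\bullet T^*_\R B$) degree-$0$ part, so that $\mathscr{R}_t = \LL_t - \LL_t^{(0)}$ takes values in $\Lambda^{\geq 1}(T^*_\R B) \otimes \End(\E_{x_0})$. Since $\dim_\R B$ is finite, $\mathscr{R}_t$ is \emph{nilpotent} of order at most $N_0 := 2\dim_\R B + 1$. This reduces the problem to resolvent estimates for the scalar operator $\LL_t^{(0)}$ together with a finite Neumann-type expansion. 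Note that $\LL_t^{(0)}$ is essentially the rescaled Kodaira Laplacian of \cite[Thm.~1.6.8]{ma-marinescu} (the case where $B$ is a point), so the method will follow the strategy of that reference adapted to our horizontal nilpotent part.

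First I would treat $\LL_t^{(0)}$. On the vertical segment $\{\Re \lambda = -2C_2,\ |\Im \lambda| \leq 1\}$, the first estimate in \eqref{estimationsLup} yields
\begin{equation*}
\Re\langle (\LL_t^{(0)} - \lambda) s, s\rangle_{t,0} \geq C_1 \|s\|_{t,1}^2 + (- C_2 - \Re \lambda)\|s\|_{t,0}^2 \geq C_1 \|s\|_{t,1}^2 + C_2 \|s\|_{t,0}^2,
\end{equation*}
which by Cauchy--Schwarz gives $\|s\|_{t,0} \leq C\|(\LL_t^{(0)} - \lambda)s\|_{t,0}$ and also $\|s\|_{t,1}^2 \leq C\|(\LL_t^{(0)} - \lambda)s\|_{t,0}\|s\|_{t,0}$. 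On the horizontal part $\{|\Im\lambda| = 1,\ \Re\lambda \geq -2C_2\}$, the self-adjointness of $\LL_t^{(0)}$ gives $\Im\langle(\LL_t^{(0)} - \lambda)s,s\rangle_{t,0} = -\Im\lambda\, \|s\|_{t,0}^2$, hence $\|s\|_{t,0} \leq \|(\LL_t^{(0)} - \lambda)s\|_{t,0}$, while combining with the G\r{a}rding inequality and the elementary bound $\Re\langle \LL_t^{(0)}s,s\rangle \leq \Re\langle(\LL_t^{(0)}-\lambda)s,s\rangle + (|\lambda|+C_2)\|s\|_{t,0}^2$ yields $\|s\|_{t,1}^2 \leq C(1+|\lambda|)\|(\LL_t^{(0)}-\lambda)s\|_{t,0}^2$. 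In summary, on all of $\Gamma$ the resolvent $(\lambda - \LL_t^{(0)})^{-1}$ exists and
\begin{equation*}
\|(\lambda - \LL_t^{(0)})^{-1}\|_t^{0,0} \leq C,\qquad \|(\lambda - \LL_t^{(0)})^{-1}\|_t^{0,1} \leq C(1+|\lambda|)^{1/2},
\end{equation*}
and by duality (since the adjoint has the same structure) $\|(\lambda - \LL_t^{(0)})^{-1}\|_t^{-1,0} \leq C(1+|\lambda|)^{1/2}$, and by composing, $\|(\lambda - \LL_t^{(0)})^{-1}\|_t^{-1,1} \leq C(1+|\lambda|)$.

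Finally, I would promote these estimates from $\LL_t^{(0)}$ to $\LL_t$ using the nilpotence of $\mathscr{R}_t$. Writing formally
\begin{equation*}
(\lambda - \LL_t)^{-1} = \sum_{k=0}^{N_0-1} (\lambda - \LL_t^{(0)})^{-1} \bigl[\mathscr{R}_t (\lambda - \LL_t^{(0)})^{-1}\bigr]^k,
\end{equation*}
which is a \emph{finite} sum since $\mathscr{R}_t^{N_0} = 0$ and the sum produces $1 + \mathscr{R}_t(\lambda-\LL_t^{(0)})^{-1} + \dots$, whose product with $\lambda - \LL_t = (\lambda - \LL_t^{(0)}) - \mathscr{R}_t$ telescopes to $\Id$. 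The third estimate of \eqref{estimationsLup} gives $\|\mathscr{R}_t\|_t^{1,0} \leq C_4$, and by duality $\|\mathscr{R}_t\|_t^{0,-1} \leq C_4$. Thus each factor $\mathscr{R}_t (\lambda-\LL_t^{(0)})^{-1}$ is bounded on $L^2$ by $C(1+|\lambda|)^{1/2}$, proving the first estimate with $a = (N_0-1)/4$ (or any convenient integer). For the second estimate, I apply $(\lambda-\LL_t^{(0)})^{-1}$ on the left (gaining one derivative with cost $(1+|\lambda|)^{1/2}$) and observe that the initial $(\lambda-\LL_t^{(0)})^{-1}$ in each term maps $\sob^{-1}_t$ to $L^2$ with cost $(1+|\lambda|)^{1/2}$, so the whole expression is bounded from $\sob^{-1}_t$ to $\sob^1_t$ by $C(1+|\lambda|^2)^b$ for some $b$ depending only on $N_0$. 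The main technical obstacle is the bookkeeping of Sobolev gains/losses along the finite Neumann expansion, but the crucial fact that makes everything work is precisely that $\mathscr{R}_t$ is nilpotent, so no convergence issue arises and only polynomial growth in $|\lambda|$ is generated.
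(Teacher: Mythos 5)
Your proposal is correct and follows essentially the same route as the paper: first establish the resolvent bounds for the degree-$0$ part $\LL_t^{(0)}$, then promote them to $\LL_t$ via a finite Neumann expansion in $\mathscr{R}_t$, which terminates because $\mathscr{R}_t$ has positive degree in $\Lambda^\bullet(T^*_\R B)$. The only difference is cosmetic: the paper simply cites \cite[Thm.~1.6.8]{ma-marinescu} for the $\LL_t^{(0)}$ estimates (they follow from the first two lines of \eqref{estimationsLup}, exactly as in your derivation), whereas you reproduce that argument in detail.
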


\begin{proof}
The fact that \eqref{estimationsreso} holds for $\LL_t^{(0)}$ is proved in \cite[Thm. 1.6.8]{ma-marinescu} as a consequence of the first two lines of \eqref{estimationsLup}. For $\lambda \in \Gamma$, we have
\begin{equation}
\label{resolvantesLp2etLp(0)2}
(\lambda - \LL_{t})^{-1} = (\lambda - \LL_{t}^{(0)})^{-1} + (\lambda - \LL_{t}^{(0)})^{-1}\mathscr{R}_{t}(\lambda - \LL_{t}^{(0)})^{-1} + \dots \quad \text{(finite sum)}.
\end{equation}
Moreover, by the third estimate of \eqref{estimationsLup}, we know that $\| \mathscr{R}_{t}\|_t^{1,-1} \leq C_4$. Thus, \eqref{estimationsreso} follows from \eqref{estimationsreso} for $\LL_t^{(0)}$ and \eqref{resolvantesLp2etLp(0)2}.
\end{proof}

\begin{prop}
\label{estimationcommutateursprop}
Take $m\in \N^*$. Then there exists a contant $C_m>0$ such that for any $t>0$,  $Q_1,\dots,Q_m \in \left\{  \n^{(0)}_{t,e_i},Z_i\right\}_{i=1}^{2n}$ and  $s,s'\in \smooth_c(T_{\R,x_0}X, \E_{x_0})$,
\begin{equation}
\label{estimationcommutateurs}
\left| \Big\langle [Q_1,[Q_2,\dots[Q_m,\LL_{t}]\dots]]s,s'\Big\rangle_{t,0} \right| \leq C_m||s||_{t,1}||s'||_{t,1}.
\end{equation}
\end{prop}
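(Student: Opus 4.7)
The plan is to reduce the statement to a structural lemma saying that iterated commutators of $\LL_t$ with elements of $\{\n^{(0)}_{t,e_i},Z_i\}$ remain second-order differential operators in the $\n^{(0)}_{t,e_i}$'s with coefficients bounded uniformly in $t\in(0,1]$. Once this is established, the Sobolev-type estimate will follow from an integration by parts.

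First I would write $\LL_t$ in a useful normal form. Combining \eqref{Mpu}, \eqref{defnrescaled}, \eqref{dvptLB} and \eqref{dvlptAup}, and using $\n_{t,e_i}-\n_{t,e_i}^{(0)}=O_0(t^2)$ from \eqref{diffnatietnati(0)}, one can write, on the whole of $X_0=\R^{2n}$,
\begin{equation*}
\LL_t \;=\; A^{ij}_t(Z)\,\n^{(0)}_{t,e_i}\n^{(0)}_{t,e_j} \;+\; B^{i}_t(Z)\,\n^{(0)}_{t,e_i} \;+\; C_t(Z),
\end{equation*}
where each of $A^{ij}_t,B^i_t,C_t$ is of the form $\phi(tZ,t)$ with $\phi$ smooth in its first argument and polynomial in $t$, taking values in $\End(\E_{x_0})$ (so only polynomially in the nilpotent Grassmann variables $f^\alpha$, which is harmless). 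In particular all these coefficient functions and all their $Z$-derivatives are bounded uniformly in $t\in(0,1]$ on $X_0$, and crucially, $\partial_{Z^j}\phi(tZ,t)=t(\partial_j\phi)(tZ,t)$, so each differentiation in $Z$ costs a factor of $t$.

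Next I would record the following basic commutation relations, which follow from $\n^{(0)}_{t,e_i}=\partial_i + O_0(t)$ after the rescaling (cf.\ \eqref{natiup}--\eqref{dvlptGammaup}):
\begin{equation*}
[Z_j,\n^{(0)}_{t,e_i}] \;=\; -\delta_{ij} + t\,\Theta^{(1)}_{ij}(tZ,t), \qquad
[\n^{(0)}_{t,e_i},\n^{(0)}_{t,e_j}] \;=\; \Theta^{(2)}_{ij}(tZ,t),
\end{equation*}
where $\Theta^{(1)},\Theta^{(2)}$ are again of the form $\phi(tZ,t)$, hence bounded uniformly in $t$ together with all their derivatives (the second relation essentially produces the curvatures $R^L,R^\xi,R^{TX}$ evaluated at $tZ$, which are bounded on compact pieces). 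Likewise for any coefficient $\phi(tZ,t)$ we have $[Z_j,\phi]=0$ and $[\n^{(0)}_{t,e_i},\phi]=t(\partial_i\phi)(tZ,t)$.

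The main step is then an induction on $m\ge1$ proving the following claim: the iterated commutator $[Q_1,[Q_2,\dots[Q_m,\LL_t]\dots]]$ is again of the form $A^{ij}_{t,m}\n^{(0)}_{t,e_i}\n^{(0)}_{t,e_j}+B^{i}_{t,m}\n^{(0)}_{t,e_i}+C_{t,m}$, with coefficients of the type $\phi(tZ,t)$ whose sup-norms are bounded independently of $t\in(0,1]$ (and whose order may even decrease when more $Z_j$'s are used). The base case $m=1$ is done by a direct computation using the relations above: $[Z_j,\LL_t]$ is at most first-order and $[\n^{(0)}_{t,e_i},\LL_t]$ is at most second order, each with bounded coefficients. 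The inductive step is the same computation applied to the output of the previous step; the bookkeeping is that each commutator with $\n^{(0)}_{t,e_i}$ either does nothing to the symbol order (when commuting past another $\n^{(0)}_{t,e_j}$, producing a bounded curvature term) or differentiates a coefficient (gaining a factor of $t$), and each commutator with $Z_j$ decreases the differential order by one (up to a bounded factor $t\Theta^{(1)}$).

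Once the claim is established, the estimate \eqref{estimationcommutateurs} follows. Writing the $m$-fold commutator in the form $T_m = A^{ij}_{t,m}\n^{(0)}_{t,e_i}\n^{(0)}_{t,e_j}+B^{i}_{t,m}\n^{(0)}_{t,e_i}+C_{t,m}$, integration by parts against the $L^2$ product $\langle\cdot,\cdot\rangle_{t,0}$ gives
\begin{equation*}
\bigl|\langle T_m s,s'\rangle_{t,0}\bigr|
\leq C_m\bigl(\|s\|_{t,1}\|s'\|_{t,1}+\|s\|_{t,1}\|s'\|_{t,0}+\|s\|_{t,0}\|s'\|_{t,0}\bigr)
\leq C_m'\,\|s\|_{t,1}\|s'\|_{t,1},
\end{equation*}
the boundary terms produced by integration by parts being controlled by the $t$-derivatives of $A^{ij}_{t,m}$, which are uniformly bounded because of the factor $t$ created by every $Z$-derivative.

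The main obstacle I expect is the bookkeeping in the induction: one has to keep track simultaneously of the differential order in the $\n^{(0)}_{t,e_i}$'s, of the powers of $t$ that appear (and cancel) when $Z$-differentiating coefficients of the form $\phi(tZ,t)$, and of the nilpotent components in $\Lambda^\bullet(T^*_{\R,b_0}B)$ coming from $\mathscr{R}_t$. The rest of the argument is structurally identical to the scalar case \cite[Thm.~1.6.10]{ma-marinescu}; the novelty here is simply that the horizontal forms $f^\alpha$ appear in the coefficients, but since they are Grassmann, they produce only finitely many non-zero terms and preserve all the required boundedness properties.
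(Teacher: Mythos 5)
Your proof follows essentially the same route as the paper: write $\LL_t$ in the normal form $a_{ij}\n^{(0)}_{t,e_i}\n^{(0)}_{t,e_j}+b_i\n^{(0)}_{t,e_i}+c$ with coefficients of the type $\phi(tZ,t)$ having uniformly bounded $Z$-derivatives, observe that commutation with $Z_j$ and with $\n^{(0)}_{t,e_i}$ preserves this structure (the latter producing a rescaled curvature evaluated at $tZ$, the former killing a derivative), induct on $m$, and conclude by integration by parts using that the adjoint of $\n^{(0)}_t$ is $-\n^{(0)}_t$ up to a bounded multiplication term. One tiny imprecision: since $\n^{(0)}_{t,e_i}=\partial_{e_i}+\text{(multiplication operator)}$, the commutator $[Z_j,\n^{(0)}_{t,e_i}]$ is exactly $-\delta_{ij}$ with no $t\Theta^{(1)}$ correction (a multiplication operator commutes with $Z_j$); this only strengthens your claim and does not affect the argument.
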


\begin{proof}
First, note that $[\n^{(0)}_{t,e_i},Z_j]=\delta_{ij}$. Thus by \eqref{dvptLB} and \eqref{dvlptAup},  $[Z_j,\LL_{t}]$ satisfies \eqref{estimationcommutateurs}.

Let $R_{1,\rho}$ and $R_{\rho}^L$ be the curvatures of the connections $\n + \rho(|Z|/\e)\Gamma_1$ and $\n + \rho(|Z|/\e)\Gamma^L$. Then by \eqref{nablax0} and \eqref{defnrescaled}, we have
\begin{equation}
\label{comavecnati}
\big[\n^{(0)}_{t,e_i},\n^{(0)}_{t,e_j}\big] = \big( R_{\rho}^L+ t^2R_{1,\rho} \big)^{(0)}_{tZ}(e_i,e_j).
\end{equation}

By \eqref{dvptLB}, \eqref{dvlptAup} and \eqref{comavecnati}, we find that $\big[\n^{(0)}_{t,e_i},\LL_{t}\big]$ has the same structure as $\LL_{t}$ for $t\in \, ]0,1]$, by which we mean that it is of the form
\begin{equation}
\label{structure}
\sum_{i,j} a_{ij}(t,tZ) \n^{(0)}_{t,e_i}\n^{(0)}_{t,e_i} + \sum_{i}  b_{i}(t,tZ) \n^{(0)}_{t,e_i} + c(t,tZ),
\end{equation}
where $a_{ij}$, $b_i$, $c$ are polynomials in the first variable, and have all their derivatives in the second variable uniformly bounded for $Z\in T_{\R,x_0}X$ and $t\in [0,1]$.

The adjoint connection $(\n^{(0)}_t)^*$ of $\n^{(0)}_t$ with respect to $\langle \cdot \, , \cdot \rangle_{t,0}$ is given by
\begin{equation}
\label{adjointdenati}
(\n^{(0)}_t)^* = - \n^{(0)}_t -t \big(\kappa^{-1} \n \kappa\big)(tZ).
\end{equation}
Note that the last term of \eqref{adjointdenati} and all its derivative in $Z$ are uniformly bounded for $Z\in T_{\R,x_0}X$ and $t\in [0,1]$. Thus, by \eqref{structure} and \eqref{adjointdenati}, we find that \eqref{estimationcommutateurs} holds when $m=1$.

Finally, we can prove by induction that $[Q_1,[Q_2,\dots[Q_m,\LL_{t}]\dots]]$ has also the same structure as in \eqref{structure}, and thus satisfies \eqref{estimationcommutateurs} thanks to \eqref{adjointdenati}.
\end{proof}

\begin{prop}
\label{norme(m,m+1)resolvante}
For any $t>0$, $\lambda \in \Gamma$ and $m\in \N$,
\begin{equation}
(\lambda-\LL_{t})^{-1}\big( \sob^m_t\big) \subset \sob^{m+1}_t.
\end{equation}

Moreover, for any $\alpha \in \N^{2n}$, there exist $K\in \N$ and $C_{\alpha,m}>0$ such that for any $t\in \, ]0,1]$, $\lambda \in \Gamma$ and $s\in \smooth_c(T_{\R,x_0}X,\E_{x_0})$,
\begin{equation}
\left\| Z^\alpha (\lambda-\LL_{t})^{-1}s\right\|_{t,m+1} \leq C_{\alpha,m}(1+|\lambda|^2)^K\sum_{\alpha'\leq \alpha} ||Z^{\alpha'}s||_{t,m}.
\end{equation}
\end{prop}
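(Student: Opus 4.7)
The plan is to establish both assertions by iterative use of the estimates from Propositions \ref{estimationsresoprop} and \ref{estimationcommutateursprop}. First I would prove the inclusion $(\lambda-\LL_{t})^{-1}(\sob^m_t) \subset \sob^{m+1}_t$ together with a quantitative bound $\|(\lambda-\LL_t)^{-1}\|^{m,m+1}_t \leq C_m(1+|\lambda|^2)^{K_m}$ by induction on $m$. The base case $m=0$ is the $\|\cdot\|^{-1,1}_t$ estimate of Proposition \ref{estimationsresoprop}, combined with the trivial inequality $\|s\|_{t,-1}\leq\|s\|_{t,0}$ obtained from Cauchy-Schwarz. For the inductive step, set $u = (\lambda-\LL_t)^{-1}s$ and fix a product $Q = \n^{(0)}_{t,e_{i_1}}\cdots \n^{(0)}_{t,e_{i_{m+1}}}$ of $m+1$ first-order operators. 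Applying $Q$ to $(\lambda-\LL_t)u=s$ yields $(\lambda-\LL_t)Qu = Qs - [\LL_t,Q]u$, hence
\begin{equation*}
\|Qu\|_{t,1} \leq C(1+|\lambda|^2)^b\bigl(\|Qs\|_{t,-1} + \|[\LL_t,Q]u\|_{t,-1}\bigr)
\end{equation*}
by the $-1,1$ resolvent estimate. The first term is trivially dominated by $\|s\|_{t,m+1}$, and summing the resulting bounds over all such $Q$ controls $\|u\|_{t,m+2}$.

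Next, I would handle the weighted estimate by a secondary induction on $|\alpha|$, with $|\alpha|=0$ given by the first step. For $|\alpha|\geq 1$, write $\alpha = \beta+e_j$ and use the resolvent commutation identity
\begin{equation*}
Z_j(\lambda-\LL_t)^{-1} = (\lambda-\LL_t)^{-1}Z_j + (\lambda-\LL_t)^{-1}[Z_j,\LL_t](\lambda-\LL_t)^{-1}.
\end{equation*}
The structure formula \eqref{structure} for $\LL_t$ together with $[\n^{(0)}_{t,e_i},Z_j]=\delta_{ij}$ shows that $[Z_j,\LL_t]$ is a first-order differential operator whose coefficients are polynomials in $(t, tZ)$ evaluated along $Z$, with bounds uniform in $t\in\,]0,1]$; in particular it maps $\sob^{m+1}_t$ continuously into $\sob^m_t$ with norm independent of $t$. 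Multiplying both sides of the identity by $Z^\beta$, applying the first step to the first term and the combined inductive hypothesis to the second, and using that each commutation of $Z^\beta$ with $[Z_j,\LL_t]$ only produces lower-weight terms $Z^{\alpha'}$ with $\alpha'\leq\alpha$, yields the claimed sum over $\alpha'\leq\alpha$ on the right-hand side.

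The main technical obstacle is controlling $\|[\LL_t,Q]u\|_{t,-1}$ uniformly in $t$ during the inductive step above. Expanding $[\LL_t,Q_1\cdots Q_{m+1}]$ by the Leibniz rule gives a sum of terms containing exactly one second-order commutator $[\LL_t,\n^{(0)}_{t,e_{i_k}}]$, which has the same structural form \eqref{structure} as $\LL_t$ itself, flanked by $m$ first-order operators, so the total differential order is $m+2$. Pairing against a test function $v$ with $\|v\|_{t,1}=1$ and integrating by parts to move the leftmost first-order factors onto $v$, then using iterated commutators of Proposition \ref{estimationcommutateursprop} to rearrange the remaining factors so that $u$ carries at most $m+1$ derivatives and $v$ at most one, produces a bound of the form $C\|u\|_{t,m+1}\|v\|_{t,1}$. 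The induction hypothesis then controls $\|u\|_{t,m+1}$ by $C(1+|\lambda|^2)^{K_m}\|s\|_{t,m}\leq C(1+|\lambda|^2)^{K_m}\|s\|_{t,m+1}$, closing the induction with $K_{m+1}:=K_m+b$; uniformity in $t\in\,]0,1]$ is ensured throughout by the uniform coefficient bounds in \eqref{structure}.
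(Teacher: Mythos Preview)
Your proposal is correct and follows essentially the same approach as the paper, which simply refers to \cite[Thm.~1.6.10]{ma-marinescu}: the induction on $m$ via the commutator expansion $[\LL_t,Q]$ controlled by Proposition~\ref{estimationcommutateursprop}, and then the induction on $|\alpha|$ via the resolvent identity with $[Z_j,\LL_t]$, is exactly the Ma--Marinescu argument the paper invokes. Your write-up spells out the details that the paper leaves to the citation, and the uniformity in $t$ is correctly attributed to the structural form~\eqref{structure}.
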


\begin{proof}
Proposition \ref{norme(m,m+1)resolvante}  follows from Propositions \ref{estimationsresoprop} and  \ref{estimationcommutateursprop} exactly as \cite[Thm. 1.6.10]{ma-marinescu} follows from \cite[Thm. 1.6.8 and Prop. 1.6.9]{ma-marinescu}, the horizontal part of $\LL_t$ making no difference.
\end{proof}

Let $e^{-\LL_{t}}(Z,Z')$ be the smooth kernel of the operator $e^{-\LL_{t}}$ with respect to $dv_{TX}(Z')$. Let $\mathrm{pr}_M \colon T_\R X\times_M T_\R X\to M$ be the projection from the fiberwise product $T_\R X\times_M T_\R X$ onto $M$, then $e^{-\LL_{t}}(\cdot,\cdot)$ is a section of $\mathrm{pr}_M^*\left(\End\left( \E \right)\right)$ over $T_\R X\times_M T_\R X$. Recall that $\n^{\End(\E)}$ and $h^{\End(\E)}$ have been defined below \eqref{def-E-sans-p}, and let $\nabla^{\mathrm{pr}_M^*\End(\E)}$ (resp. $h^{\mathrm{pr}_M^*\End(\E)}$) be the induced connection (resp. metric) on $\mathrm{pr}_M^*\End(\E)$. 
\begin{thm}
\label{estimationderiveesnoyaudeLpu}
Let $u>0$ be fixed. For any $m,m'\in \N$, there is $C>0$ such that for any $t>0$, $Z,Z'\in T_{x_0}X$ with $|Z|,|Z'|\leq 1$,
\begin{equation}
\label{estimationderiveesnoyaudeLpu:eq}
\sup_{|\alpha|,|\alpha'|\leq m} \left| \frac{\partial^{|\alpha|+|\alpha'|}}{\partial {Z}^{\alpha}\partial Z'^{\alpha'}} e^{-u\LL_{t}}(Z,Z')\right|_{\mathscr{C}^{m'}(M)} \leq C,
\end{equation}
where $|\cdot|_{\mathscr{C}^{m'}(M)}$ denotes the $\mathscr{C}^{m'}$ norm with respect to the parameters $b_0$ and $x_0\in X_{b_0}$ induced by $\nabla^{\mathrm{pr}_M^*\End(\E)}$ and $h^{\mathrm{pr}_M^*\End(\E)}$.
\end{thm}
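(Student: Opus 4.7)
The plan is to derive this from the resolvent estimates of Propositions \ref{estimationsresoprop} and \ref{norme(m,m+1)resolvante} combined with Sobolev embedding, following the scheme of \cite[Thm.~1.6.10]{ma-marinescu}. I start from the contour representation
\begin{equation*}
e^{-u\LL_{t}}=\frac{1}{2\pi\ic}\int_\Gamma e^{-u\lambda}(\lambda-\LL_{t})^{-1}\,d\lambda,
\end{equation*}
and integrate by parts in $\lambda$ to rewrite, for any $q\in\N^*$,
\begin{equation*}
e^{-u\LL_{t}}=\frac{(-1)^{q-1}(q-1)!}{2\pi\ic\,u^{q-1}}\int_\Gamma e^{-u\lambda}(\lambda-\LL_{t})^{-q}\,d\lambda.
\end{equation*}
Iterating Proposition \ref{norme(m,m+1)resolvante} gives that $(\lambda-\LL_{t})^{-q}\colon\sob^0_{t}\to\sob^q_{t}$ with operator norm $\leq C(1+|\lambda|^2)^{Kq}$, together with the polynomially-weighted refinement $\|Z^\alpha(\lambda-\LL_{t})^{-q}s\|_{t,q}\leq C_{\alpha,q}(1+|\lambda|^2)^{Kq}\sum_{\alpha'\leq\alpha}\|Z^{\alpha'}s\|_{t,0}$. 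For $q$ large enough, the Sobolev embedding $\sob^q_{t}\hookrightarrow\mathscr{C}^{m}$ holds uniformly in $t\in(0,1]$ over bounded regions (because by \eqref{diffnatietnati(0)} the connection $\n^{(0)}_{t,e_i}$ differs from $\n_{e_i}$ by a uniformly bounded operator), and the $\lambda$-integral converges absolutely on $\Gamma$ since $|e^{-u\lambda}|$ decays exponentially on the horizontal rays.

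To obtain the derivatives in $Z,Z'$, I use \eqref{nablax0}--\eqref{defnrescaled} to write $\partial_{Z_i}=\n^{(0)}_{t,e_i}+P_i(t,Z)$ where $P_i$ is polynomial of degree one in $Z$ with uniformly bounded coefficients in $t$. A symmetric expression holds for $\partial_{Z'}$ via the standard duality $\partial_{Z'_i}e^{-u\LL_{t}}(Z,Z')=-\bigl((\n^{(0)}_{t,e_i})^{*}e^{-u\LL_{t}}\bigr)(Z,Z')$ combined with \eqref{adjointdenati}. Composing $q$ copies of the resolvent with such operators on both sides and invoking Sobolev embedding yields uniform pointwise bounds on $\partial_Z^\alpha\partial_{Z'}^{\alpha'}(\lambda-\LL_{t})^{-q}(Z,Z')$ for $|Z|,|Z'|\leq 1$; re-integrating along $\Gamma$ produces \eqref{estimationderiveesnoyaudeLpu:eq} for $m'=0$.

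To incorporate the $\mathscr{C}^{m'}(M)$-norm in the base parameters $(b_0,x_0)$, I differentiate the contour formula under the integral sign. A derivative $\partial_V(\lambda-\LL_{t})^{-1}$ in a direction $V$ on $M$ produces $(\lambda-\LL_{t})^{-1}(\partial_V\LL_{t})(\lambda-\LL_{t})^{-1}$, and higher derivatives expand into finite sums of products of resolvents interleaved with operators of the form $\partial^\gamma\LL_{t}$. The main obstacle lies here: I must verify that each $\partial^\gamma\LL_{t}$ retains the structural form \eqref{structure}, namely a second-order fiberwise differential operator whose coefficients and all their $Z$-derivatives are bounded uniformly in $(t,b_0,x_0)$ on the relevant compact set. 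This follows from the smoothness of $g^{T_\R M}$, $h^L$, $h^\xi$, $\omega$ and of the associated connection forms, once one has chosen the Fermi-type trivialization of Section \ref{rescalingBpu} depending smoothly on the base point (this is possible on any compact subset of $M$ by covering it with finitely many coordinate charts). Granted this structural preservation, Propositions \ref{estimationsresoprop}, \ref{estimationcommutateursprop} and \ref{norme(m,m+1)resolvante} apply to each term in the expansion with constants uniform in $(t,b_0,x_0)$, and the Sobolev embedding argument of the previous paragraph yields the claimed $\mathscr{C}^{m'}(M)$ bound.
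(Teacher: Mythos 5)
Your proposal is correct and follows the same route the paper takes, namely the contour-integral representation combined with the weighted resolvent estimates of Proposition \ref{norme(m,m+1)resolvante} and a $t$-uniform Sobolev embedding, followed by differentiation under the integral sign for the base parameters, precisely the Ma-Marinescu \cite[Thm.~1.6.11]{ma-marinescu} template the paper cites. The only nitpick is that the uniform comparison of $\n^{(0)}_{t,e_i}$ with $\n_{e_i}$ on $\{|Z|\leq 1\}$ should be justified from \eqref{natiup} together with \eqref{dvlptGammaup} rather than from \eqref{diffnatietnati(0)}, which compares $\n_{t,e_i}$ with its degree-zero part $\n^{(0)}_{t,e_i}$ instead.
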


\begin{proof}
By \eqref{estimationsreso}, we know that for $k\in \N^*$,
\begin{equation}
\label{e(-Lup)=integralledecontour}
e^{-u\LL_{t}} = \frac{(-1)^{k-1}(k-1)!}{2i\pi u^{k-1}} \int_\Gamma e^{-u\lambda}(\lambda-\LL_{t})^{-k}d\lambda.
\end{equation}
Thus, Theorem \ref{estimationderiveesnoyaudeLpu} can be proved from Proposition \ref{norme(m,m+1)resolvante} exactly as \cite[Thm. 1.6.11]{ma-marinescu} is proved from \cite[Thm. 1.6.10]{ma-marinescu}.
\end{proof}

\begin{thm}
\label{diffdesresolvantes-thm}
There are constants $C>0$ and $M\in \N^*$ such that for $t>0$, 
\begin{equation}
\label{diffdesresolvantes}
\left\| \big((\lambda-\LL_{t})^{-1} - (\lambda-\LL_{0}
)^{-1}\big)s \right\|_{0,0} \leq Ct (1+|\lambda|^2)^M\sum_{|\alpha|\leq3}||Z^\alpha s||_{0,0}.
\end{equation}
\end{thm}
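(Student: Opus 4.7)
The plan is to combine the resolvent identity with the weighted Sobolev bounds of Propositions \ref{estimationsresoprop} and \ref{norme(m,m+1)resolvante}, together with a careful first-order Taylor expansion of $\LL_{t} - \LL_{0}$. Keeping track of the remainders in the computations \eqref{natiup}, \eqref{dvlptGamma}, \eqref{dvlptGammaup}, \eqref{dvptLB} and \eqref{dvlptAup}, I would establish a decomposition
\begin{equation*}
\LL_{t} - \LL_{0} = t\, \mathscr{R}_{t}, \qquad \mathscr{R}_{t} = \sum_{|\alpha|\leq 3} Z^{\alpha}\, \mathcal{Q}_{t,\alpha},
\end{equation*}
in which each $\mathcal{Q}_{t,\alpha}$ is a differential operator of order at most $2$ in the vector fields $\n^{(0)}_{0,e_{i}}$, with coefficients (and all their derivatives in $Z$) bounded uniformly in $(t,Z)\in [0,1]\times T_{\R,x_{0}}X$.

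Given this structure, the resolvent identity
\begin{equation*}
(\lambda-\LL_{t})^{-1} - (\lambda-\LL_{0})^{-1} = (\lambda-\LL_{0})^{-1}(\LL_{t}-\LL_{0})(\lambda-\LL_{t})^{-1}
\end{equation*}
reduces the theorem to bounding $\|\mathscr{R}_{t}(\lambda-\LL_{t})^{-1}s\|_{0,-1}$ uniformly in $t$. Indeed, by Proposition \ref{estimationsresoprop} the operator $(\lambda-\LL_{0})^{-1}$ is bounded from $\sob^{-1}_{0}$ to $\sob^{1}_{0}\hookrightarrow L^{2}$ with norm $C(1+|\lambda|^{2})^{b}$. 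For the inner estimate, Proposition \ref{norme(m,m+1)resolvante} with $m=0$ yields the weighted bound $\|Z^{\alpha'}(\lambda-\LL_{t})^{-1}s\|_{t,1}\leq C(1+|\lambda|^{2})^{K}\sum_{\alpha''\leq \alpha'}\|Z^{\alpha''}s\|_{0,0}$. Expanding each $\mathcal{Q}_{t,\alpha}$ into its first- and second-order parts, pairing $Z^{\alpha}\mathcal{Q}_{t,\alpha}(\lambda-\LL_{t})^{-1}s$ against a test vector $v\in \sob^{1}_{0}$ and integrating by parts to move one of the derivatives of $\mathcal{Q}_{t,\alpha}$ onto $Z^{\alpha}v$ via the adjoint formula \eqref{adjointdenati} (commuting $Z^{\alpha}$ across the remaining $\n^{(0)}_{t,e_{i}}$ with the help of Proposition \ref{estimationcommutateursprop}, each commutator lowering the $Z$-degree by one) then produces the required $\sob^{-1}_{0}$-bound after summing over $|\alpha|\leq 3$.

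The main obstacle is the cutoff $\rho$ appearing in \eqref{nablax0}: the Taylor expansions of $\Gamma^{L}$ and $\Gamma_{1}$ in \eqref{dvlptGammaup} are valid only in the support of $\rho$, whereas $\n_{0,e_{i}}$ contains the unbounded term $\tfrac{1}{2}R^{L}_{x_{0}}(Z,e_{i})$ that persists everywhere. In the region $\{|Z|\geq 2\e/t\}$ where $\rho$ vanishes, the formal difference $\n_{t,e_{i}}-\n_{0,e_{i}}$ reduces to $-\tfrac{1}{2}R^{L}_{x_{0}}(Z,e_{i})$, which is linear in $Z$ and carries no power of $t$; it is the elementary inequality $\chi_{\{|Z|\geq 2\e/t\}}\leq \e^{-1}t|Z|$ that recovers the missing factor of $t$ at the price of an extra power of $|Z|$. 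Combining this trick with the Taylor remainders (polynomial of degree $2$ in $Z$ for the second-order coefficients) and the linear-in-$Z$ factor hidden inside $\n_{0,e_{i}}$ is precisely what forces the bound $|\alpha|\leq 3$. All estimates must be kept uniform with respect to $\lambda\in \Gamma$, $t\in ]0,1]$ and, ultimately, in the basepoint $x_{0}\in X$.
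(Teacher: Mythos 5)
Your proposal follows the paper's route (which defers to Ma--Marinescu, Thm.~1.6.12): Taylor-expand $\LL_{t}-\LL_{0}$ to exhibit a factor of $t$ at the price of $Z$-weights, then feed this through the resolvent identity and Propositions~\ref{estimationsresoprop} and~\ref{norme(m,m+1)resolvante}. Your observation about the cutoff $\rho$ is the essential point: off the set where $\rho\equiv1$ the Taylor remainder no longer controls $\n_{t}-\n_{0}$, and the trade $1\leq \e^{-1}t|Z|$ there is precisely what recovers the factor of $t$ while introducing extra powers of $Z$. The one place the argument needs tightening is the declared structure of $\mathscr{R}_{t}$: you say each $\mathcal{Q}_{t,\alpha}$ is built from the $\n^{(0)}_{0,e_{i}}$, but then invoke the adjoint formula \eqref{adjointdenati} (which concerns $\n^{(0)}_{t}$) and speak of ``the remaining $\n^{(0)}_{t,e_{i}}$.'' With your chosen ordering of the resolvent identity these cannot all be the same connection: the derivative moved onto $v$ must be $\n^{(0)}_{0,e_{i}}$ so that it lands in $\|v\|_{0,1}$, while the derivative remaining on $u=(\lambda-\LL_{t})^{-1}s$ must be $\n^{(0)}_{t,e_{i}}$, because Proposition~\ref{norme(m,m+1)resolvante} controls $\|Z^{\alpha}(\lambda-\LL_{t})^{-1}s\|_{t,1}$, not its $\|\cdot\|_{0,1}$-analogue. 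Since $\n^{(0)}_{t,e_{i}}-\n^{(0)}_{0,e_{i}}=O\bigl(t(1+|Z|^{2})\bigr)$, these two first-order Sobolev norms are not uniformly comparable, and converting between them costs extra powers of $Z$, giving $|\alpha|\leq5$ rather than $|\alpha|\leq3$. A mixed decomposition, e.g.\ $(\n^{(0)}_{t})^{2}-(\n^{(0)}_{0})^{2}=\n^{(0)}_{0}\,\delta+\delta\,\n^{(0)}_{t}$ with $\delta=\n^{(0)}_{t}-\n^{(0)}_{0}$ (and similarly for the remaining terms of $\LL_t-\LL_0$), puts $\n^{(0)}_{0}$ on the factor that goes to $v$ and $\n^{(0)}_{t}$ on the factor that stays with $u$, and gives the stated bound; the rest of your argument then goes through. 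A minor inaccuracy: $\rho(t|Z|/\e)$ vanishes only on $\{|Z|>4\e/t\}$, not on $\{|Z|\geq2\e/t\}$, though your indicator trick is needed on all of $\{|Z|\geq2\e/t\}$ where the Taylor estimate degrades, so this does not affect the argument.
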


\begin{proof}
This is proved from \eqref{natiup}, \eqref{dvptLB}, \eqref{dvlptAup} and \eqref{defnorme(p,m)} using a Taylor expansion as done in \cite[Thm. 1.6.12]{ma-marinescu}.
\end{proof}

\begin{thm}
\label{cvcenoyauLup->noyauLuinfini}
For $u>0$ fixed, there exists $C>0$ such that for $t>0$ and $Z,Z'\in T_{x_0}X$ with $|Z|,|Z'|\leq 1$,
\begin{equation}
\label{cvcenoyauLup->noyauLuinfini:eq}
\left| \left(e^{-u\LL_{t}}-e^{-u\LL_{0}
}\right)(Z,Z') \right| \leq Ct^{1/(2n+1)}.
\end{equation}
\end{thm}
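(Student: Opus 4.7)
The strategy is a standard interpolation: I will combine an $L^2$-type estimate of order $t$ for the difference $e^{-u\LL_t} - e^{-u\LL_0}$, arising from Theorem \ref{diffdesresolvantes-thm}, with the uniform $\smooth$-bounds on the heat kernel given by Theorem \ref{estimationderiveesnoyaudeLpu}. The interpolation is done by testing the operator against bump functions that approximate Dirac masses at the two arguments of the kernel, and optimizing over the size of the bumps.

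\textbf{Step 1 (operator $L^2$-estimate).} I start from the resolvent representation \eqref{e(-Lup)=integralledecontour} and insert the telescoping identity
\begin{equation*}
(\lambda-\LL_t)^{-k} - (\lambda-\LL_0)^{-k} = \sum_{j=0}^{k-1}(\lambda-\LL_t)^{-(k-j-1)}\bigl[(\lambda-\LL_t)^{-1} - (\lambda-\LL_0)^{-1}\bigr](\lambda-\LL_0)^{-(j+1)}.
\end{equation*}
Applying Theorem \ref{diffdesresolvantes-thm} to the middle factor and iterating Proposition \ref{norme(m,m+1)resolvante} on the outer resolvent powers to control the weighted norms $\|Z^\alpha\,\cdot\|_{0,0}$ propagated by Theorem \ref{diffdesresolvantes-thm}, together with Proposition \ref{estimationsresoprop}, I bound the integrand of \eqref{e(-Lup)=integralledecontour} by $Ct\,(1+|\lambda|^2)^N\sum_{|\alpha|\leq K}\|Z^\alpha s\|_{0,0}$ for some integers $N, K$. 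Taking $k$ large enough so that $e^{-u\lambda}$ kills the polynomial growth along $\Gamma$, I obtain
\begin{equation*}
\bigl\|(e^{-u\LL_t} - e^{-u\LL_0})s\bigr\|_{0,0} \leq Ct \sum_{|\alpha|\leq K}\|Z^\alpha s\|_{0,0}.
\end{equation*}

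\textbf{Step 2 (from operator norm to the kernel via bump approximations).} Fix $Z_0, Z_0'\in T_{x_0}X$ with $|Z_0|,|Z_0'|\leq 1$, and let $\phi$ be a fixed nonnegative smooth bump on $\R^{2n}$ of integral one. For $\delta\in(0,1)$, set $\phi_\delta^{W}(Z) = \delta^{-2n}\phi\bigl((Z-W)/\delta\bigr)$. These bumps are supported in the unit ball, so the weighted norms $\|Z^\alpha\phi_\delta^{W}\|_{0,0}$ are bounded by $C\|\phi_\delta^{W}\|_{0,0} = O(\delta^{-n})$. By Step 1,
\begin{equation*}
\bigl|\langle (e^{-u\LL_t}-e^{-u\LL_0})\phi_\delta^{Z_0'},\ \phi_\delta^{Z_0}\rangle_{0,0}\bigr| \leq Ct\,\delta^{-2n}.
\end{equation*}
Writing this inner product as the double integral of $(e^{-u\LL_t}-e^{-u\LL_0})(Z,Z')$ against $\phi_\delta^{Z_0}(Z)\phi_\delta^{Z_0'}(Z')$ and using the uniform $\mathscr{C}^1$-bound on the kernel difference provided by Theorem \ref{estimationderiveesnoyaudeLpu}, the error between this double integral and the value $(e^{-u\LL_t}-e^{-u\LL_0})(Z_0,Z_0')$ is $O(\delta)$.

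\textbf{Step 3 (optimization).} Combining the two estimates yields
\begin{equation*}
\bigl|(e^{-u\LL_t}-e^{-u\LL_0})(Z_0,Z_0')\bigr| \leq C\bigl(t\,\delta^{-2n} + \delta\bigr),
\end{equation*}
and balancing by choosing $\delta = t^{1/(2n+1)}$ gives \eqref{cvcenoyauLup->noyauLuinfini:eq}. The main technical point will be Step 1, where one must carefully track the polynomial $\lambda$-dependence from Proposition \ref{estimationsresoprop} (absorbed by choosing $k$ large) and the propagation of the weighted norms $\|Z^\alpha\,\cdot\|$ through several resolvent factors via Proposition \ref{norme(m,m+1)resolvante}. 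Steps 2 and 3 are standard and only use that $\dim_\R T_{\R,x_0}X = 2n$, which is precisely what produces the exponent $1/(2n+1)$.
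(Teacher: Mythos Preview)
Your proposal is correct and is precisely the argument the paper intends: the paper's proof just says ``follows from Theorems \ref{estimationderiveesnoyaudeLpu} and \ref{diffdesresolvantes-thm} exactly as \cite[Thm.~1.6.13]{ma-marinescu} follows from \cite[Thms.~1.6.11, 1.6.12]{ma-marinescu}'', and you have reproduced that Ma--Marinescu interpolation argument (resolvent $L^2$-estimate of order $t$, uniform $\mathscr{C}^1$ kernel bounds, testing against $\delta$-bumps, optimizing $t\delta^{-2n}+\delta$) faithfully.
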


\begin{proof}
This theorem follows from Theorems \ref{estimationderiveesnoyaudeLpu} and \ref{diffdesresolvantes-thm} exactly as \cite[Thm. 1.6.13]{ma-marinescu} follows from \cite[Thms. 1.6.11, 1.6.12]{ma-marinescu}.
\end{proof}

We can now prove Theorem \ref{cvcenoyaudelachaleur}.

By \eqref{defkappa} and \eqref{defnrescaled}, we have 
\begin{equation}
\label{noyauxdeMupetdeLup}
e^{-u\LL_{t}}(Z,Z')=p^{-n} e^{-\frac{u}{p}M_{p,x_0}}(tZ,tZ')\kappa^{1/2}(tZ)\kappa^{-1/2}(tZ').
\end{equation}

Define 
\begin{equation}
\label{defLinfiniu}
\LL_{0,u} = u\psi_{1/\sqrt{u}} \LL_{0}\psi_{\sqrt{u}}.
\end{equation}
Then by the last line of \eqref{defobjetsavecp}, Lemmas \ref{noyauBupetMup} and \ref{Ascoli}, Theorem \ref{estimationderiveesnoyaudeLpu} and \ref{cvcenoyauLup->noyauLuinfini} and \eqref{noyauxdeMupetdeLup} we get that  for every fixed $u>0$ and for the $\mathscr{C}^k$-norm on $\smooth(M,\End(\E))$,
\begin{equation}
\label{BupetLinfini}
p^{-n}\psi_{1/\sqrt{p}} e^{-B_{p,u/p}^2}(x_0,x_0)=p^{-n} \psi_{1/\sqrt{u}}e^{-\frac{u}{p}M_{p,x_0}}(0,0)=e^{-\LL_{0,u}
}(0,0) + O(p^{-\epsilon}),
\end{equation}
with $\epsilon = \frac{1}{4n+2}$.

Finally, using the fact that
\begin{equation}
\label{relationaveccicj}
\frac{1}{4}\sum_{ij}c(e_i)c(e_j)R^L(e_i,e_j) = \sum_{l,m}R^L(w_l,\bw_m)\bw^m\wedge i_{\bw_l}- \frac{1}{2}\sum_jR^L(w_j,\bw_j)
\end{equation}
and \eqref{defRpoint}, \eqref{defnrescaled}, \eqref{defOmegau} and \eqref{defLinfiniu}  we find
\begin{align}
\LL_{0,u}
=&-\frac{u}{2}\sum_i \left(\n+\frac{1}{2}R^L_{x_0}(Z,e_i)\right)^2+u\left( \sum_{l,m}R_{x_0}^L(w_l,\bw_m)\bw^m\wedge i_{\bw_l}- \frac{1}{2}\sum_jR_{x_0}^L(w_j,\bw_j) \right) \notag \\
&+\sqrt{\frac{u}{2}}c(e_i)f^\alpha R^L_{i,\alpha}(x_0)+ \frac{f^\alpha f^\beta}{2}R^L_{\alpha,\beta}(x_0)  \\
=&-\frac{u}{2}\sum_i \left(\n+\frac{1}{2}\langle{\dot{R}}^{X,L}_{x_0}Z,e_i\rangle\right)^2 +\Omega_u(x_0)-\frac{u}{2}\tr({\dot{R}}^{X,L}_{x_0}). \notag
\end{align} 
The formula for the heat kernel of a harmonic oscillator (see \cite[(E.2.4)]{ma-marinescu} for instance) gives
\begin{equation}
\label{noyaudelachaleurLinfini}
e^{-\LL_{0,u}
}(0,0) = (2\pi)^{-n} \exp(-\Omega_{u,x_0})\frac{\det(\dot{R}^{X,L}_{x_0})}{\det \big(1-\exp(-u\dot{R}^{X,L}_{x_0})\big)}\otimes \Id_{\xi},
\end{equation}
which implies Theorem \ref{cvcenoyaudelachaleur} by \eqref{BupetLinfini}.


\subsection{Asymptotic of the torsion forms}
\label{Asymptoticofthetorsionforms}

Let $b_0 \in B$ be fixed. Again we denote $X_{b_0}$ by $X$. Recall that $\omega^{H}$ and $N_u$ are defined respectively in \eqref{decompo-omega} and \eqref{defNueq}. let $d=\dim M$.

For $x\in X$, set
\begin{equation}
\label{defRu}
\begin{aligned}
&\Lambda_u(x) = (2\pi)^{-n} \exp(-\Omega_{u,x})\frac{\det(\dot{R}^{X,L}_{x})}{\det \big(\Id-\exp(-u\dot{R}^{X,L}_{x})\big)},\\ 
&\RR_u(x)=  \tr_s\left[ N_u\Lambda_u(x)\right].
\end{aligned}
\end{equation}

Let $A_j\in \smooth\big(X, \End(\Lambda^\bullet(T^*_{\R,b_0} B)\otimes\Wedge(T^*X))\big)$ be such that $A_{-d-1} = 0$ and as $u\to 0$
\begin{equation}
\label{defAj}
\Lambda_u (x) = \sum_{j=-d}^k A_j(x)u^j + O(u^{k+1}).
\end{equation}

\begin{thm}
\label{dvlptuniformedunoyaudelachaleur}
There exist $A_{p,j}\in \smooth\big(X,\Lambda(T_{\R}^*B)\otimes\End(\Wedge(T^*X)\otimes \xi)\big)$ such that for any $k,\ell\in \N$, there exist $C>0$ such that for any $u\in ]0,1]$ and $p\geq 1$, 
\begin{equation}
\label{dvlptuniformedunoyaudelachaleureq1}
\left|p^{-n}\psi_{1/\sqrt{p}} \exp\left( -B_{p,u/p}^2 \right)(x,x) - \sum_{j=-d}^k A_{p,j}(x)u^j\right|_{\mathscr{C}^\ell(M)}\leq Cu^{k+1}.
\end{equation}
Here,  $\mathscr{C}^\ell(M)$ denotes the $\mathscr{C}^\ell$-norm in the parameter $(b,x)\in M$.

Moreover, as $p\to+\infty$, we have for any $j\geq-d$
\begin{equation}
\label{dvlptuniformedunoyaudelachaleureq2}
A_{p,j}(x)=A_j(x)\otimes \Id_\xi + O\left( \frac{1}{\sqrt{p}}\right),
\end{equation}
where the convergence is in the $\smooth$ topology on $M$.
\end{thm}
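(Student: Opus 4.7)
The plan is to refine the argument of Subsection \ref{sectioncvce} by combining an iterated Duhamel expansion with the explicit Mehler formula for the limit harmonic oscillator $\LL_0$. By Lemma \ref{noyauBupetMup} and the identity \eqref{noyauxdeMupetdeLup} together with the definition \eqref{defLinfiniu}, up to an error that is exponentially small in $p$,
\begin{equation*}
p^{-n}\psi_{1/\sqrt{p}}\exp(-B_{p,u/p}^2)(x_0,x_0) \;=\; \psi_{1/\sqrt{u}}\, e^{-u \LL_t}(0,0), \qquad t = 1/\sqrt{p}.
\end{equation*}
Hence it is enough to produce a small-$u$ expansion of the right-hand side, uniform in $t \in [0,1]$ and in the $\mathscr{C}^\ell$-norm with respect to the parameter $(b_0, x_0) \in M$.

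I would then push the Taylor expansions \eqref{natiup}--\eqref{dvlptAup} one order further than in Proposition \ref{asympL}, to get for every $N \in \N$
\begin{equation*}
\LL_t = \LL_0 + \sum_{r=1}^{N} t^r\, \LL^{(r)} + t^{N+1}\, \mathscr{O}_{N+1}(t,Z),
\end{equation*}
where each $\LL^{(r)}$ is a second-order differential operator whose coefficients are polynomials in $Z$ with values in $\End(\E_{x_0})$, smooth in $(b_0, x_0)$, and the remainder has similarly controlled Taylor-type bounds. The iterated Duhamel/Volterra formula then yields
\begin{equation*}
e^{-u\LL_t} = \sum_{r=0}^{N} t^r J_r(u) + t^{N+1}\, \mathcal{R}_{t,N}(u),
\end{equation*}
with each $J_r(u)$ a finite sum of iterated integrals $\int_{\Delta_s^u} e^{-s_0 \LL_0} \LL^{(r_1)} e^{-s_1 \LL_0} \cdots \LL^{(r_s)} e^{-s_s \LL_0}\, ds$ indexed by compositions $r_1 + \cdots + r_s = r$.

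Since $\LL_0$ is a harmonic oscillator, its heat kernel $e^{-u\LL_0}(Z,Z')$ is given explicitly by Mehler's formula as in \eqref{noyaudelachaleurLinfini}: a Gaussian in $Z,Z'$ with coefficients that are explicit in $u$ and in the fiberwise constants $\dot{R}^{X,L}_{x_0}$, $R^L_{i,\alpha}(x_0)$, $R^L_{\alpha,\beta}(x_0)$. Combined with the polynomial-in-$Z$ coefficients of the $\LL^{(r)}$, this allows one to evaluate each $J_r(u)(0,0)$ exactly; after applying $\psi_{1/\sqrt{u}}$ the result admits a Laurent expansion $\sum_{j\geq -d} c_{r,j}(x_0)\, u^j$, the $u^{-d}$ lower bound coming from the Gaussian diagonal together with the horizontal form factors $f^\alpha$. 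Substituting $t = 1/\sqrt{p}$ and regrouping by powers of $u$ defines $A_{p,j}(x) := \sum_r p^{-r/2}\, c_{r,j}(x)$, yielding \eqref{dvlptuniformedunoyaudelachaleureq1}. For $r = 0$, one has $\psi_{1/\sqrt{u}}\, e^{-u \LL_0}(0,0) = \Lambda_u(x)\otimes \Id_\xi$ by \eqref{noyaudelachaleurLinfini} and \eqref{defRu}, whose coefficients in $u$ are exactly $A_j(x)\otimes \Id_\xi$ by \eqref{defAj}, which proves \eqref{dvlptuniformedunoyaudelachaleureq2}.

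The main obstacle is to control $|\mathcal{R}_{t,N}(u)(0,0)|_{\mathscr{C}^\ell(M)}$ uniformly in $(t,u) \in [0,1] \times (0,1]$. I plan to handle this via the contour-integral representation $e^{-u\LL_t} = \frac{(-1)^{k-1}(k-1)!}{2i\pi\, u^{k-1}} \int_\Gamma e^{-u\lambda} (\lambda - \LL_t)^{-k}\, d\lambda$ already used in Theorem \ref{estimationderiveesnoyaudeLpu}, choosing $k$ large enough both to compensate the $u^{-(k-1)}$ factor and to ensure the required regularity in $Z$. The Volterra remainder is treated by an analogous representation in which some resolvent factors $(\lambda - \LL_t)^{-1}$ are replaced by products $(\lambda - \LL_t)^{-1}\, \mathscr{O}_{N+1}\, (\lambda - \LL_0)^{-1}$. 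The required resolvent and Sobolev estimates on $(\lambda - \LL_t)^{-1}$, on $Z^\alpha (\lambda - \LL_t)^{-1}$ and on their derivatives in $(b_0, x_0)$ are of the type already established in Propositions \ref{estimationsresoprop}, \ref{norme(m,m+1)resolvante} and Theorem \ref{estimationderiveesnoyaudeLpu}; their polynomial growth in $|\lambda|$ is absorbed by the Gaussian decay of $e^{-u\lambda}$ on the contour $\Gamma$ once $k$ is chosen sufficiently large.
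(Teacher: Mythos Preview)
Your Duhamel expansion is organized around the wrong variable. You expand $\LL_t$ in powers of $t=1/\sqrt p$ and then, for each fixed $r$, expand $\psi_{1/\sqrt u}J_r(u)(0,0)$ in powers of $u$. With $A_{p,j}:=\sum_{r=0}^N t^r c_{r,j}$ the error in \eqref{dvlptuniformedunoyaudelachaleureq1} splits as
\[
\sum_{r=0}^N t^r\Big(\psi_{1/\sqrt u}J_r(u)(0,0)-\sum_{j=-d}^k c_{r,j}u^j\Big)\;+\;t^{N+1}\,\psi_{1/\sqrt u}\mathcal{R}_{t,N}(u)(0,0).
\]
The first block is indeed $O(u^{k+1})$ with a constant depending on $N$. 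The second block, however, is only small in $t$: for each fixed $t>0$, $\psi_{1/\sqrt u}\mathcal{R}_{t,N}(u)(0,0)$ has the same leading singularity $u^{-d}$ as any heat kernel on the diagonal, so the product is of size $t^{N+1}u^{-d}$ and cannot be absorbed into $Cu^{k+1}$ uniformly in $p$. Your plan to bound $\mathcal{R}_{t,N}(u)(0,0)$ via the representation $e^{-u\LL_t}=\frac{(-1)^{k-1}(k-1)!}{2i\pi u^{k-1}}\int_\Gamma e^{-u\lambda}(\lambda-\LL_t)^{-k}d\lambda$ does not help here: the prefactor $u^{-(k-1)}$ gets \emph{worse} with $k$, and on $\Gamma$ the factor $e^{-u\lambda}$ provides no compensating decay as $u\to0$. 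In short, a Volterra expansion in $t$ controls the $t$-dependence, not the small-$u$ asymptotics uniformly in $t$.

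The paper avoids this by never expanding in $t$. It compactifies the rescaled problem: one embeds $B^{T_{\R,x_0}X}(0,2\e)$ into a sphere bundle $V_{x_0}$, extends $\LL_{t,x_0}$ to a generalized Laplacian $\widetilde{\LL}_{t,x_0}$ on the compact fiber, and invokes the standard small-time heat-kernel expansion on compact manifolds, whose coefficients and remainders depend smoothly on the parameters $(b_0,x_0,t)\in M\times[0,1]$. This gives \eqref{dvlptuniformedunoyaudelachaleureq1} with a constant uniform in $t$ for free; evaluating at $t=0$ and comparing with \eqref{noyaudelachaleurLinfini} yields \eqref{dvlptuniformedunoyaudelachaleureq2}. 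If you prefer to stay on $T_{\R,x_0}X$, the correct analogue is the paper's proof of Theorem~\ref{dvlptuniformedunoyaudelachaleur2}: perform a \emph{second} rescaling by $v=\sqrt u$ to an operator $\LL_{3,t}^v$ with $\LL_{3,t}^0=\Delta^{T_{\R,x_0}X}$, and Taylor-expand $e^{-\LL_{3,t}^v}$ in $v$ uniformly in $t$; then $e^{-u\LL_t}(0,0)=u^{-n}e^{-\LL_{3,t}^v}(0,0)$ converts this directly into the desired uniform $u$-expansion.
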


\begin{proof}
 Theorem \ref{dvlptuniformedunoyaudelachaleur} is proved using the same techniques as \cite[Thm. 5.5.9]{ma-marinescu}. Let us give the mains ideas of the proof, in which it is clear that the part in positive degree of $\LL_t$ has no incidence.
 
 First, we localize the problem near $x_0\in X$ with the same method as in Section \ref{localization}, in particular Proposition~\ref{lepbestlocal}. Then we rescale the superconnection as in Section \ref{rescalingBpu} to get an operator, denoted here by $\LL_{t,x_0}$ to make the dependance in $x_0$ clearer.
 
 By the finite propagation speed of the wave operator \cite[Thm D.2.1]{ma-marinescu}, for $t$ small, $\tilde{\F}_u(u\LL_{t,x_0}(0,\cdot))$ only depend on the restriction of $\LL_{t,x_0}$ on $B^{T_{\R,x_0}X}(0,2\e)$ and is supported in $B^{T_{\R,x_0}X}(0,2\e)$.

Now consider a sphere bundle $V=\{(z,c)\in T_{\R}X\times \R\, : \, |z|^2+c^2=1\}$ over $X$. We embed $B^{T_{\R,x_0}X}(0,2\e)$ in $V_{x_0}$ by sending $z$ to $(z,\sqrt{1-|z|^2})$ and we extend $\LL_{t,x_0}$ to a generalized Laplacian $\widetilde{\LL}_{t,x_0}$ on $V_{x_0}$ with values in $\mathrm{pr}_M^*\left(\End\left( \E \right)\right)$. Then, similarly as Lemma \ref{noyauBupetMup}, we have for $0<u\leq 1$
\begin{equation}
\label{aveclelaplaciensurlefibrecompact}
\left| e^{- u\LL_{t,x_0} }(0,0)- e^{ -u\widetilde{\LL}_{t,x_0} }(0,0) \right|_{\mathscr{C}^m(M\times [0,1])} \leq C\exp \left( -\frac{\e^2}{32u} \right).
\end{equation}

Finally, as the total space of $V$ is compact, the heat kernel $\exp\left(- u\widetilde{\LL}_{t,x_0} \right)(0,0)$ has an asymptotic expansion (starting with $u^{-n}$) when $u\to 0$ which depends smoothly on the parameters $x_0$ and $t$ (see for instance \cite[(D.1.24)]{ma-marinescu}). Thus, thanks to Lemma \ref{noyauBupetMup}, \eqref{BupetLinfini} and \eqref{aveclelaplaciensurlefibrecompact} we find \eqref{dvlptuniformedunoyaudelachaleureq1} and  $A_{p,j}=A_{\infty,j}+O(1/\sqrt{p})$. Moreover, we get $A_{\infty,j}=A_j\otimes \Id_\xi$ from \eqref{noyaudelachaleurLinfini}.
\end{proof}

For $j \geq -d-1$, set
\begin{equation}
\label{defAtildej}
\tilde{A}_j(x) =  \tr_s\left[ N_VA_j(x)+i\omega^{H}A_{j+1}(x)\right].
\end{equation}
Then by \eqref{defNueq}, \eqref{defRu} and \eqref{defAj}, we have
\begin{equation}
\label{Atildej}
\RR_u (x) = \sum_{j=-d-1}^k \tilde{A}_j(x)u^j + O(u^{k+1}).
\end{equation}
 Set also
 \begin{equation}
 \label{defBpj/Bj}
 \begin{aligned}
 &B_{p,j}=\int_X\tr_s\left[N_VA_{p,j}(x)+i\omega^{H}A_{p,j+1}(x)\right]dv_X(x), \\
 &B_j=\int_X\tilde{A}_j(x)dv_X(x).
 \end{aligned}
 \end{equation}
 
 \begin{cor}
 \label{cordvlptuniforme}
 For any $k,\ell\in \N$, there exists $C>0$ such that for any $u\in ]0,1]$ and $p\geq 1$, 
\begin{equation}
\left|p^{-n}\psi_{1/\sqrt{p}}\tr_s \left[ N_{u/p}\exp\left( -B_{p,u/p}^2 \right)\right] - \sum_{j=-d-1}^k B_{p,j}u^j\right|_{\mathscr{C}^\ell(B)} \leq Cu^{k+1}.
\end{equation}
Moreover, as $p\to+\infty$, we have for any $j\geq-d-1$
\begin{equation}
B_{p,j}=\mathrm{rk}(\xi)B_j + O\left( \frac{1}{\sqrt{p}}\right),
\end{equation}
where the convergence is in the $\smooth$ topology on $B$.
 \end{cor}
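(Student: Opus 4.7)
My plan is to deduce this corollary directly from Theorem~\ref{dvlptuniformedunoyaudelachaleur} by carefully keeping track of how $\psi_{1/\sqrt{p}}$ interacts with $N_{u/p}$ and the supertrace. The first step is to observe that $\psi_a$ is an algebra automorphism of $\Lambda(T_\R^*B)$, hence of $\Lambda(T_\R^*B)\otimes\End(E_p)$, and it commutes with the supertrace (since $\tr_s$ is fiberwise and $\Lambda(T_\R^*B)$-linear). Since $\omega^H$ is a $(1,1)$-form on $M$ with values in $\pi^*\Lambda^2(T_\R^*B)$, it has degree $2$ in $\Lambda(T_\R^*B)$, and therefore
\[
\psi_{1/\sqrt{p}}\!\left( N_{u/p}\right) =\psi_{1/\sqrt{p}}\!\left( N_V+\tfrac{ip}{u}\omega^H\right) = N_V + \tfrac{ip}{u}\cdot\tfrac{1}{p}\omega^H = N_u.
\]
Combined with the automorphism property, this yields
\[
p^{-n}\psi_{1/\sqrt{p}}\tr_s\!\left[ N_{u/p}\exp(-B_{p,u/p}^2)\right] = \tr_s\!\left[ N_u\cdot p^{-n}\psi_{1/\sqrt{p}}\exp(-B_{p,u/p}^2)\right].
\]

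Next, I apply Theorem~\ref{dvlptuniformedunoyaudelachaleur} to the diagonal of the heat kernel and integrate over the fiber $X$. Since the supertrace of a fiberwise smoothing operator is the integral of its diagonal kernel, and since the $\mathscr{C}^\ell(M)$ estimate \eqref{dvlptuniformedunoyaudelachaleureq1} is uniform in $x\in X_b$ (so that $\mathscr{C}^\ell(B)$-derivatives may be interchanged with $\int_{X_b}dv_{X_b}$ by compactness of the fiber), this gives
\[
p^{-n}\psi_{1/\sqrt{p}}\tr_s\!\left[ N_{u/p}\exp(-B_{p,u/p}^2)\right] = \sum_{j=-d}^{k}\!\left(\int_X \tr_s\!\left[ N_u\, A_{p,j}(x)\right]\! dv_X(x)\right)\! u^j + O(u^{k+1}),
\]
with the remainder measured in $\mathscr{C}^\ell(B)$. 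I now expand $N_u = N_V + i\omega^H/u$ and regroup: multiplication by $i\omega^H/u$ shifts the summation index by $1$, so
\[
\tr_s\!\left[ N_u A_{p,j}\right] u^j = \tr_s\!\left[ N_V A_{p,j}\right] u^j + \tr_s\!\left[ i\omega^H A_{p,j+1}\right] u^j\cdot\frac{u\cdot u^{-1}}{1},
\]
and reindexing the second sum from $j=-d-1$ to $j=k-1$ (with a boundary term $\tr_s[i\omega^H A_{p,k+1}]u^k$ absorbed into the remainder after invoking \eqref{dvlptuniformedunoyaudelachaleureq1} for $k+1$) produces exactly
\[
\sum_{j=-d-1}^{k}\int_X\tr_s\!\left[ N_V A_{p,j}(x)+i\omega^H A_{p,j+1}(x)\right]\!dv_X(x)\, u^j +O(u^{k+1}) = \sum_{j=-d-1}^{k} B_{p,j}u^j+O(u^{k+1}).
\]

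Finally, the convergence $A_{p,j}(x) = A_j(x)\otimes \Id_\xi + O(1/\sqrt{p})$ in the $\smooth$ topology from \eqref{dvlptuniformedunoyaudelachaleureq2}, combined with the trivial fact that $\tr_s[(\cdot)\otimes \Id_\xi] = \mathrm{rk}(\xi)\tr_s[\cdot]$ on $\End\big(\Lambda(T_{\R,b_0}^*B)\otimes\Wedge(T^*X)\big)$, yields
\[
B_{p,j} = \mathrm{rk}(\xi)\int_X \tr_s\!\left[ N_V A_j(x)+i\omega^H A_{j+1}(x)\right]dv_X(x)+O(1/\sqrt{p}) = \mathrm{rk}(\xi)B_j+O(1/\sqrt{p}),
\]
where smoothness in $b\in B$ and the form of the remainder follow from the $\smooth$-topology statement in Theorem~\ref{dvlptuniformedunoyaudelachaleur}. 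There is no substantial obstacle here: the only genuine point requiring care is the bookkeeping between $\psi_{1/\sqrt{p}}$ and $N_{u/p}$ (to verify that the factor $p$ in $N_{u/p}$ is exactly cancelled by $\psi_{1/\sqrt{p}}$ acting on the $2$-form $\omega^H$), and the index shift when distributing $N_u$, both of which account for the appearance of the degree $-d-1$ term in the expansion.
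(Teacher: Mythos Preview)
Your proof is correct and follows essentially the same approach as the paper: the paper's proof is the one-line observation that the corollary follows from Theorem~\ref{dvlptuniformedunoyaudelachaleur} together with the definitions \eqref{defAtildej}--\eqref{defBpj/Bj} and the identity $\psi_{1/\sqrt{p}}N_{u/p}=N_u$, which is exactly what you have unpacked in detail. The only cosmetic issue is the display with the factor $\frac{u\cdot u^{-1}}{1}$, which is confusingly written (the equality as stated relates $A_{p,j}$ on the left to $A_{p,j+1}$ on the right for a single $j$); it would be cleaner to write the reindexing directly on the sums, but the underlying argument is sound.
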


\begin{proof}
This is a consequence of Theorem \ref{dvlptuniformedunoyaudelachaleur}, using \eqref{defAtildej}-\eqref{defBpj/Bj} and  $\psi_{1/\sqrt{p}}N_{u/p} = N_u$.
\end{proof}

\begin{thm}
\label{dominationugrand}
There exists $C>0$ such that for $u\geq 1$ and $p\geq 1$,
\begin{equation}
\label{dominationeq}
\left|p^{-n} \psi_{1/\sqrt{p}} \tr_s \left[ N_{u/p} \exp\left( -B_{p,u/p}^2 \right) \right] \right|_{\mathscr{C}^\ell(B)}\leq \frac{C}{\sqrt{u}}.
\end{equation}
\end{thm}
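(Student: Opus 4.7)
The starting point is the rescaling identity
\begin{equation*}
B_{p,u/p}^{2}=\psi_{\sqrt{p/u}}\bigl((u/p)B_p^{2}\bigr)\psi_{\sqrt{u/p}},
\end{equation*}
combined with the decomposition $N_{u/p}=N_V+(ip/u)\omega^H$ and the basic identity $\trs[\psi_a A\psi_{1/a}]=\psi_a\trs[A]$ (which holds because $\psi_a$ merely scales each form-degree-$k$ component of $A$ by $a^k$). Together these transform the quantity to be estimated into
\begin{equation*}
p^{-n}\psi_{1/\sqrt p}\trs\bigl[N_{u/p}e^{-B_{p,u/p}^{2}}\bigr]
= p^{-n}\psi_{1/\sqrt u}\trs\bigl[N_V e^{-(u/p)B_p^{2}}\bigr]
+ \tfrac{i}{u}\,\omega^H \wedge p^{-n}\psi_{1/\sqrt u}\trs\bigl[e^{-(u/p)B_p^{2}}\bigr].
\end{equation*}
The second summand already carries a factor $1/u\le 1/\sqrt u$, so it is enough to show that $p^{-n}\psi_{1/\sqrt u}\trs[e^{-(u/p)B_p^{2}}]$ is uniformly bounded in $\mathscr{C}^\ell(B)$ for $u\ge 1$, $p\ge 1$. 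For $1\le u\le u_0 p$ this follows from a strengthening of Theorem~\ref{dvlptuniformedunoyaudelachaleur} (the relevant coefficients stay bounded as long as $u/p$ is bounded), and for $u\ge u_0 p$ it is obtained from the spectral-gap contour-integral argument described below, which compares $e^{-(u/p)B_p^{2}}$ to its generalized $0$-eigenspace projection (whose supertrace is of order $p^n$ by Riemann--Roch).

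For the first summand, decompose $B_p^{2}=D_p^{2}+R_p$ as in \eqref{Bp2etDp2}. Since $R_p$ has strictly positive form degree in $\Lambda(T_\R^*B)$, it is nilpotent of order $\le\dim B+1$, and Duhamel's formula produces the finite expansion
\begin{equation*}
e^{-vB_p^{2}}=\sum_{k=0}^{\dim B}(-1)^k\int_{\Delta_k^{v}} e^{-s_0 D_p^{2}}R_p\, e^{-s_1 D_p^{2}}R_p\cdots R_p\, e^{-s_k D_p^{2}}\,ds,\qquad v=u/p,
\end{equation*}
with $\Delta_k^v = \{(s_0,\dots,s_k)\in\R_+^{k+1}:s_0+\dots+s_k=v\}$. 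Let $P_p^{0}$ denote the orthogonal projection onto $\ker D_p^{2}$; by Kodaira vanishing (Remark~\ref{positivite->liberte}) its image lies in fiber degree $0$, so $N_V P_p^{0}=0$ and $\trs[N_V e^{-vB_p^{2}}]=\trs[N_V(I-P_p^{0})e^{-vB_p^{2}}]$. Since $I-P_p^{0}$ commutes with every $e^{-sD_p^{2}}$, one can slide it into a single heat factor of length $\ge v/(2\dim B+2)$ in each Duhamel term; by \eqref{spdeDp2}, the operator norm of $e^{-s D_p^{2}}(I-P_p^{0})$ is at most $e^{-s(2p\mu_0-C_L)}$, yielding exponential decay $e^{-cvp}$. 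The trace-class norms of the remaining heat factors, and the polynomial-in-$p$ growth of the $R_p$'s, are bounded by Lemma~\ref{estimeeelliptique(p)}-type Sobolev estimates together with a Weyl-type count of eigenvalues of $D_p^{2}$ of size $\le Cp$, producing a polynomial prefactor $Cp^N$. Integrating over $\Delta_k^v$ yields $|\trs[N_V e^{-vB_p^{2}}]|_{\mathscr{C}^\ell(B)}\le Cp^N e^{-cvp}$; substituting $v=u/p$ and dividing by $p^n$ produces the bound $Ce^{-cu}\le C/\sqrt u$ for $u\ge 1$.

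\textbf{Main obstacle.} The central difficulty is that $B_p^{2}$ is not self-adjoint, so the spectral projection is only available at the level of $D_p^{2}$ and must be transported through each term of the Duhamel expansion. One must therefore control uniformly in $p$ the trace-class norms of products of the form $e^{-s_0 D_p^{2}}R_p\cdots R_p\, e^{-s_k D_p^{2}}$, where the $R_p$'s grow polynomially in $p$; the exponential decay $e^{-cvp}$ has to be produced by a single carefully chosen heat factor, while the remaining factors contribute only polynomial-in-$p$ prefactors via Weyl's law. Uniform $\mathscr{C}^\ell(B)$ bounds in the base variables are obtained from the resolvent-integral formula \eqref{repintegralle-derivation} combined with the uniform derivative-of-resolvent bounds of Lemma~\ref{f(Bp2)-derivation}.
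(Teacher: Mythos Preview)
Your Duhamel-based strategy for the $N_V$ piece has a genuine gap at the key step. You claim that since $I-P_p^{0}$ commutes with every $e^{-sD_p^{2}}$, one can slide it into a heat factor of length $\ge v/(2\dim B+2)$. But the Duhamel term is $N_V(I-P_p^{0})e^{-s_0D_p^{2}}R_p\,e^{-s_1D_p^{2}}\cdots R_p\,e^{-s_kD_p^{2}}$, and $I-P_p^{0}$ does \emph{not} commute with $R_p$, so it cannot be pushed past the first $R_p$. Trace cyclicity together with $[N_V,e^{-sD_p^{2}}]=0$ at best lets you merge the two outermost heat factors and obtain $(I-P_p^{0})e^{-(s_0+s_k)D_p^{2}}$; for $k\ge 2$ the time $s_0+s_k$ can still be arbitrarily small on $\Delta_k^{v}$ (the inner times $s_1,\dots,s_{k-1}$ may absorb almost all of $v$), so the claimed decay $e^{-cvp}$ does not follow. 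The remaining trace-norm bookkeeping (small-time heat factors are not uniformly trace class) and the uniform boundedness you need for the $\omega^H$ term are also too sketchy as written.

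The paper avoids this obstacle by a contour-integral splitting rather than Duhamel. With $C_p=p^{-1}B_p^{2}$ one writes $\psi_{1/\sqrt u}e^{-uC_p}\psi_{\sqrt u}=\PP_{p,u}+\K_{p,u}$: the piece $\K_{p,u}$ comes from a contour $\Delta$ bounded away from $0$ and inherits exponential decay $e^{-au}$ directly, with trace-norm control coming from uniform resolvent estimates reducing to $D_p^{2}/p$ plus the nilpotent perturbation $R_p/p$ (Lemma~\ref{estimeeaveclambda0}, Proposition~\ref{termeavecKpu}). The subtle piece $\PP_{p,u}$ is handled algebraically: expanding $(\lambda-uC_p^{(0)})^{-1}=\lambda^{-1}P_p+(\lambda-uC_p^{(0)})^{-1}P_p^{\perp}$ and iterating the nilpotent $u\tilde R_p$ produces a \emph{finite} sum of monomials in $P_p$, $(uC_p^{(0)})^{-(1+j)}$ and $\tilde R_p$ (see \eqref{grossesomme}). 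The identity $P_p R_p^{(1)}P_p=P_p[B_p^{(1)},D_p]P_p=0$, combined with a degree count for $R_p^{(\ge 2)}$, shows that $p^{-n}\trs[N_u\PP_{p,u}]$ is a \emph{polynomial in $1/\sqrt u$} with uniformly bounded coefficients, and its constant term vanishes because the $u\to\infty$ limit is zero by Theorem~\ref{thmutoinf} and Kodaira vanishing. This algebraic mechanism is what replaces your sliding argument.
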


Theorem \ref{dominationugrand} will be proved in Section \ref{demothm2}.

Recall that we assumed in the introduction that there is a $p_0\in \N$ such that the direct image $R^i\pi_*(\xi\otimes L^p)$ is locally free for all $p\geq p_0$ and $i\in \{1,\dots,n\}$, and vanishes for $i>0$. In particular, for $p\geq p_0$,
\begin{equation}
H^i\left(X,(\xi\otimes L^p)|_{X}\right)=0 \quad \text{for} \quad i>0.
\end{equation}

For $p\geq p_0$, set
\begin{equation}
\tilde{\zeta}_p(s) = -\frac{p^{-n}}{\Gamma(s)}\int_0^{+\infty} u^{s-1}\psi_{1/\sqrt{p}}\Phi\left\{ \trs \left[N_{u/p} \exp(-B_{p,u/p}^2) \right]   \right\}du.
\end{equation}
Here we make an abuse of notation: we should split the integral in two part as in \eqref{defzeta}. Clearly, if $\zeta_p$ denotes the zeta function \eqref{defzeta} associated with $B_{p,u}$, we have
\begin{equation}
p^{-n}\psi_{1/\sqrt{p}}\zeta_p(s) = p^{-s}\tilde{\zeta}_p(s).
\end{equation}
We deduce that
\begin{equation}
\label{zetaetzetatilde}
p^{-n}\psi_{1/\sqrt{p}}\zeta'_p(0) =  \log(p)B_{p,0}+\tilde{\zeta}'_p(0).
\end{equation}

On the other hand, we have for $p\geq p_0$,
\begin{equation}
\label{expressionzetatilde'}
\begin{aligned}
\tilde{\zeta}'_p(0)=& - \int^1_0p^{-n}\Phi \left\{ \psi_{1/\sqrt{p}} \trs \left[N_{u/p} \exp(-B_{p,u/p}^2) \right] -\sum_{j=-d-1}^0B_{p,j}u^j \right\} \frac{du}{u} \\
&-\int^{+\infty}_1 p^{-n}\Phi\psi_{1/\sqrt{p}} \trs \left[N_{u/p} \exp(-B_{p,u/p}^2) \right]   \frac{du}{u} -\sum_{j=-d-1}^{-1}\frac{B_{p,j}}{j} +\Gamma'(1)B_{p,0}.\\
\end{aligned}
\end{equation}

Let $\tilde{\zeta}(s)$ be the Mellin transform of $u\mapsto - \int_X \RR_u(x)dv_X(x)$, i.e., for $\Re(s)>n$:
\begin{equation}
\tilde{\zeta}(s)=-\frac{1}{\Gamma(s)}\int_0^{+\infty}\int_X \RR_u(x)dv_X(x)u^{s-1}du.
\end{equation}
Then $\tilde{\zeta}$ has a holomorphic extension near 0.

By Theorem \ref{dvlptuniformedunoyaudelachaleur} and Theorem \ref{dominationugrand}, we can apply the dominated convergence theorem to \eqref{expressionzetatilde'}, and with Theorem \ref{cvcenoyaudelachaleur} we find 

\begin{equation}
\label{zetatildeptozetatilde}
\tilde{\zeta}'_p(0) \limarrow{p}{+\infty} \mathrm{rk}(\xi)\Phi\tilde{\zeta}'(0).
\end{equation}

\begin{thm}
\label{expressionzetatilde'(0)thm}
Let $T^{H'}M$ be the orthonormal complement of $TX$ with respect to $R^L$ and let $R^{L,H'}=R^L|_{T_\R^{H'} M\times T_\R^{H'} M}$. Then
\begin{equation}
\label{expressionzetatilde'(0)}
\tilde{\zeta}'(0) =\frac{1}{2} \int_X \det\left( \frac{\dot{R}^{X,L}}{2\pi}\right) \log \left[ \det\left( \frac{\dot{R}^{X,L}}{2\pi}\right) \right] e^{-R^{L,H'}}dv_X.
\end{equation}
\end{thm}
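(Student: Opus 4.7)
The plan is to compute $\tilde{\zeta}'(0)$ via explicit pointwise diagonalization of $\dot R^{X,L}$ combined with a fermionic ``completion of the square'' and the Mellin transform of the Riemann zeta. At $x\in X$, choose an orthonormal basis $(w_k)$ of $T^{(1,0)}_xX$ with $\dot R^{X,L} w_k = a_k w_k$, $a_k>0$, and decompose $\Omega_u = \Omega_u^{(0)}+\Omega_u^{(1)}+\Omega_u^{(2)}$ by degree in $\Lambda T_\R^*B$. With the strand operators $b_k^+ = \bw^k\wedge$ and $b_k^- = i_{\bw_k}$, one has $\Omega_u^{(0)}=u\sum_k a_k b_k^+ b_k^-$, while $\Omega_u^{(1)}=\sqrt{u/2}\,c(e_i)f^\alpha R^L_{i,\alpha}$ is linear in the fermions with Grassmann-odd ($f^\alpha$-valued) coefficients, and $\Omega_u^{(2)}=\tfrac12 R^L_{\alpha\beta}f^\alpha f^\beta$ is a central $2$-form on the base.

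The key step is a fermionic completion of the square: conjugate $\Omega_u$ by an inner automorphism $U$ of the Clifford algebra that implements Grassmann shifts $b_k^\pm \to b_k^\pm + (\text{scalar $1$-form on }B)$, chosen so that $U^{-1}\Omega_u U = u\sum_k a_k b_k^+ b_k^- + C$ with $C$ a scalar $2$-form on $B$. A direct computation of the scalar correction shows $C=R^{L,H'}$: writing the $R^L$-orthogonal horizontal lift as $f_\alpha^{H'}=f_\alpha^H+V_\alpha$ with $V_\alpha\in T_\C X$ determined by $R^L(V_\alpha,U)=-R^L(f_\alpha^H,U)$ for vertical $U$, one has the algebraic identity $R^L(f_\alpha^{H'},f_\beta^{H'}) = R^L(f_\alpha^H,f_\beta^H) - R^L(V_\alpha,V_\beta)$, and the shift correction matches $-\tfrac12R^L(V_\alpha,V_\beta)f^\alpha f^\beta$ precisely, so that $C=R^{L,H'}$. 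By invariance of the supertrace under inner conjugation and standard strand-by-strand computation, one obtains
\[
\trs\bigl[e^{-\Omega_u}\bigr]=e^{-R^{L,H'}}\prod_k(1-e^{-ua_k}),
\]
together with an analogous formula for $\trs\bigl[N_V e^{-\Omega_u}\bigr]$ containing the leading term $-e^{-R^{L,H'}}\sum_k \tfrac{e^{-ua_k}}{1-e^{-ua_k}}\prod_j(1-e^{-ua_j})$ plus $1/u$-corrections arising from the conjugated $N_V$.

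Substituting into $\RR_u=\trs[N_u\Lambda_u]$, integrating over $X$, and inserting into the Mellin integral defining $\tilde\zeta(s)$, one applies the classical identity $\int_0^\infty u^{s-1}/(e^{ua}-1)\,du=a^{-s}\Gamma(s)\zeta_R(s)$ to get
\[
\tilde\zeta(s)=\zeta_R(s)\int_X(2\pi)^{-n}\det(\dot R^{X,L})\,e^{-R^{L,H'}}\sum_k a_k^{-s}\,dv_X.
\]
Using $\zeta_R(0)=-\tfrac12$ and $\zeta_R'(0)=-\tfrac12\log(2\pi)$, differentiating at $s=0$ yields $\tilde\zeta'(0)=\tfrac12\int_X\det(\dot R^{X,L}/(2\pi))\log\det(\dot R^{X,L}/(2\pi))\,e^{-R^{L,H'}}\,dv_X$, the asserted formula. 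The remaining $1/u$-contributions (from $i\omega^H/u$ in $N_u$ and from the conjugated $N_V$) produce Mellin integrands of the form $\int_0^\infty u^{s-2}\prod_k(1-e^{-ua_k})\cdot(\text{const})\,du$; since $\prod_k(1-e^{-ua_k})\sim u^n$ at $u=0$ with $n\geq 1$, these integrals are holomorphic at $s=0$, and multiplied by $1/\Gamma(s)=s+O(s^2)$ they vanish there.

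The principal technical obstacle is the algebraic verification $C=R^{L,H'}$ from the Grassmann-valued Bogoliubov shift, which requires careful sign-tracking in the completion of the square and the explicit description of $V_\alpha$ via the $R^L$-orthogonality condition. The fermionic bookkeeping in the strand decomposition makes this identification explicit, after which the remaining steps are formal Mellin/zeta-function calculus.
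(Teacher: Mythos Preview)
Your approach is essentially the same as the paper's: complete the square in the Clifford/fermionic variables to decouple the horizontal coupling (the paper does this via the Mathai--Quillen lemma and a parameter trick rather than an explicit Bogoliubov conjugation, but the algebra is identical), identify the resulting horizontal correction with $R^{L,H'}$, and then reduce to the Riemann zeta via the Mellin transform. Your identification $C=R^{L,H'}$ is exactly the paper's verification that $\mathscr F^H=e^{-R^{L,H'}}$.

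There is, however, a genuine gap in your treatment of the $1/u$ contributions. You claim these produce Mellin integrands of the form $\int_0^\infty u^{s-2}\prod_k(1-e^{-ua_k})\,du$, but you have forgotten the denominator $\det(1-e^{-u\dot R^{X,L}})^{-1}=\prod_k(1-e^{-ua_k})^{-1}$ coming from $\Lambda_u$: after that cancellation the $1/u$ part of $\RR_u$ is a \emph{pure} $C/u$ with $C$ independent of $u$ (this is exactly the paper's $\tilde A_{-1}/u$). So the integrand is $u^{s-2}\cdot\text{const}$, which converges nowhere. Moreover, even under your claimed integrand, the argument ``$I(s)$ is holomorphic at $0$ and $1/\Gamma(s)=s+O(s^2)$, hence the product vanishes at $0$'' only shows that this piece of $\tilde\zeta$ vanishes at $s=0$; its \emph{derivative} at $0$ is $I(0)$, which you have not shown to be zero (and for your claimed integrand it is visibly nonzero).

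The correct disposal of the $1/u$ piece is the scaleless-integral argument: for $C/u$ one has, by analytic continuation, $\int_0^1 u^{s-2}\,du=\tfrac1{s-1}$ (valid for $\Re s>1$) and $\int_1^\infty u^{s-2}\,du=-\tfrac1{s-1}$ (valid for $\Re s<1$), so the regularized Mellin transform of $1/u$ is identically $0$ in $s$ and contributes nothing to $\tilde\zeta'(0)$. Equivalently, in the paper's language, the $\tilde A_{-1}/u$ term of $\RR_u$ is killed by the Mellin regularization, and one is left with exactly the $\tr\big[(1-e^{u\dot R^{X,L}})^{-1}\big]$ contribution whose Mellin transform is $\zeta_R(s)\sum_k a_k^{-s}$. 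Once this point is fixed, the rest of your computation is correct.
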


\begin{proof}
This Theorem is the analogue of \cite[(53)]{bismut-vasserot-89} (see also \cite[(5.5.60)]{ma-marinescu}) in the family setting. The main new feature here is the presence in the exponential of terms $c(e_i)f^\alpha$ coupling horizontal and vertical variables. This terms make the computations of the super-traces much more complicated. To deal with them, we draw our inspiration form \cite{MR836726}.

We first compute
\begin{equation}
\label{RuavectrsOmegau}
\RR_u = (2\pi)^{-n} \trs \left( N_ue^{-\Omega_{u}}\right)\frac{\det(\dot{R}^{X,L})}{\det \big(\Id-\exp(-u\dot{R}^{X,L})\big)}.
\end{equation}

Let
\begin{equation}
\widetilde{\Omega}_u = \frac{u}{4}c(e_i)c(e_j)R^L_{ij}+\sqrt{\frac{u}{2}}c(e_i)f^\alpha R^L_{i\alpha}.
\end{equation}
Then by \eqref{defOmegau} and \eqref{relationaveccicj}, we have
\begin{equation}
\label{trsOmegauetOmegatildeu}
\begin{aligned}
&\Omega_u=\widetilde{\Omega}_u+ \frac{f^\alpha f^\beta}{2}R^L_{\alpha\beta}+\frac{u}{2}\tr(\dot{R}^{X,L}),\\
&\trs(N_ue^{-\Omega_u})=\trs(N_ue^{-\widetilde{\Omega}_u}) e^{-\frac{f^\alpha f^\beta}{2}R^L_{\alpha\beta}-\frac{u}{2}\tr(\dot{R}^{X,L})}.
\end{aligned}
\end{equation}

As $c(e_i)c(e_j)\omega_{ij} = 2\ic (\bw^j\wedge i_{\bw_j}-i_{\bw_j}\bw^j)$  , we have (see \cite[(2.15)]{MR929147})
\begin{equation}
\label{formuleNV}
N_V=\frac{n}{2}-\frac{\ic}{4}c(e_i)c(e_j) \omega_{ij}
\end{equation}

Recall that $\omega^X$ is defined in \eqref{defomegaX}.  Set
\begin{equation}
\label{defomegaub}
\begin{aligned}
&R_u(b)=-\frac{1}{2}uR^L-\frac{\ic b}{2}\omega^X, \\
&\omega_u(b) = - \widetilde{\Omega}_u -\frac{ib}{2}\omega^X=\frac{1}{2}c(e_i)c(e_j)R_u(b)_{ij}-\sqrt{\frac{u}{2}}c(e_i)f^\alpha R^L_{i\alpha}.
\end{aligned}
\end{equation}
Then by \eqref{defNueq}, \eqref{formuleNV} and \eqref{defomegaub} we have 
\begin{equation}
\label{trsOmegatildeu}
\trs(N_ue^{-\widetilde{\Omega}_u})=\left( \frac{n}{2} + \frac{\ic \omega^{H}}{u}\right)\trs(e^{\omega_u(0)}) + \left.\derpar{}{b}\right|_{b=0}\trs(e^{\omega_u(b)}).
\end{equation}

Note that the matrix $\big( R_u(b)_{ij} \big)_{ij}$ is invertible for $b$ small enough. We denote the coefficients of its inverse by $R_u(b)^{ij}$. Let
\begin{equation}
\label{defVi}
\begin{aligned}
&V_i=\sum_\alpha f^\alpha R^L_{i\alpha} \, , \qquad V_{u,i}=\sqrt{\frac{u}{2}}V_i, \\
&\widetilde{V}_{u,i} = \sum_k R_u(b)^{ik}V_{u,k}.
\end{aligned}
\end{equation}
A computation shows that
\begin{equation}
\begin{aligned}
\omega_u(b) &= \frac{1}{2}c(e_i)c(e_j)R_u(b)_{ij} + V_{u,i}c(e_i) \\
&= \frac{1}{2}\sum_{ij} (c(e_i)-\widetilde{V}_{u,i})R_u(b)_{ij}(c(e_j)-\widetilde{V}_{u,j})+ \frac{1}{2}\sum_{ij}V_{u,i}V_{u,j}R_u(b)^{ij}.
\end{aligned}
\end{equation}
Hence,
\begin{equation}
\trs(e^{\omega_u(b)})=\trs\left(e^{\frac{1}{2} (c(e_i)-\widetilde{V}_{u,i})R_u(b)_{ij}(c(e_j)-\widetilde{V}_{u,j})}\right)e^{\frac{1}{2}V_{u,i}V_{u,j}R_u(b)^{ij}}.
\end{equation}
Using this equation and \cite[Lem. 2.12]{MR836726}, we find
\begin{equation}
\label{trsexpomegaub}
\trs(e^{\omega_u(b)})=\trs\left(e^{\frac{1}{2} c(e_i)c(e_j)R_u(b)_{ij}}\right)e^{\frac{1}{2}V_{u,i}V_{u,j}R_u(b)^{ij}}.
\end{equation}

We now compute the term $\trs\left(e^{\frac{1}{2} c(e_i)c(e_j)R_u(b)_{ij}}\right)$. We may assume that $\dot{R}^{X,L}$ (see \eqref{defRpoint}) is the diagonal matrix $\mathrm{diag}(a_1,\dots,a_n)$ in the basis $\{w_j\}_j$. Then
\begin{align}
\label{calcultrsavecRub}
\trs\left(e^{\frac{1}{2} c(e_i)c(e_j)R_u(b)_{ij}}\right)&= \trs\left(\exp\left(-\frac{u}{4} c(e_i)c(e_j)R^L_{ij} -\frac{\ic b}{4}c(e_i)c(e_j)\omega_{ij}\right) \right) \notag \\
&= \trs\left(e^{-u \sum_j a_j\bw^j\wedge i_{\bw_j} +bN_V} \right) e^{\frac{u}{2}\tr(\dot{R}^{X,L})-\frac{nb}{2}}  \\
&=\trs\left( e^{\sum_j(b-u) a_j\bw^j\wedge i_{\bw_j}  } \right) e^{\frac{u}{2}\tr(\dot{R}^{X,L})-\frac{nb}{2}}.\notag
\end{align}
We have
\begin{equation}
\trs\left( e^{\sum_j(b-u a_j)\bw^j\wedge i_{\bw_j}  } \right)= \sum_{I\subset\{1,\dots,n\}} (-1)^{|I|}e^{\sum_{i\in I}(b-ua_i)} =\det \left( \Id-e^{b\Id-u\dot{R}^{X,L}}\right),
\end{equation}
hence \eqref{trsexpomegaub} and \eqref{calcultrsavecRub} give
\begin{equation}
\label{trsexpomegaubprecis}
\trs(e^{\omega_u(b)}) = \det \left( \Id-e^be^{-u\dot{R}^{X,L}}\right)e^{\frac{u}{2}\tr(\dot{R}^{X,L})-\frac{nb}{2}}e^{\frac{1}{2}V_{u,i}V_{u,j}R_u(b)^{ij}}.
\end{equation}

We now turn to the computation of the derivative at $b=0$ of \eqref{trsexpomegaubprecis}. Set
\begin{equation}
\begin{aligned}
&T_{\mathrm{I}}=\left( \left. \derpar{}{b}\right|_{b=0}\det \left( \Id-e^be^{-u\dot{R}^{X,L}}\right) \right)e^{-\frac{1}{2}V_{i}V_{j}(R^L)^{ij}}, \\
&T_{\mathrm{II}}=\det \left( \Id-e^{-u\dot{R}^{X,L}}\right)\left( \left. \derpar{}{b}\right|_{b=0}e^{\frac{1}{2}V_{u,i}V_{u,j}R_u(b)^{ij}} \right).
\end{aligned}
\end{equation}
Here $(R^L)^{ij}$ denotes the coefficients of the inverse of the matrix $(R^L_{ij})_{ij}$. 

By \eqref{trsexpomegaubprecis} we have
\begin{equation}
\label{deriveetrsavecTIetTII}
\left. \derpar{}{b}\right|_{b=0}\trs(e^{\omega_u(b)}) = -\frac{n}{2}\trs(e^{\omega_u(0)}) + \left( T_{\mathrm{I}}+T_{\mathrm{II}} \right)e^{\frac{u}{2}\tr(\dot{R}^{X,L})}.
\end{equation}
First, we get easily
\begin{equation}
\label{TI}
T_{\mathrm{I}}= \det\left(\Id-e^{-u\dot{R}^{X,L}}\right)\tr\left[\left(\Id-e^{u\dot{R}^{X,L}}\right)^{-1}\right]e^{-\frac{1}{2}V_{i}V_{j}(R^L)^{ij}}.
\end{equation}
Secondly, if we define
\begin{equation}
\left(\omega^X_{R^L}\right)_{ij} = \sum_{kl} (R^L)^{ik}\omega_{kl}(R^L)^{kj} \quad \text{and} \quad \omega^X_{R^L}(V,V)=V_iV_j\left(\omega^X_{R^L}\right)_{ij},
\end{equation}
then we have
\begin{equation}
\label{TII}
T_{\mathrm{II}}= \frac{\ic}{2u}\det\left(\Id-e^{-u\dot{R}^{X,L}}\right) \omega^X_{R^L}(V,V)e^{-\frac{1}{2}V_{i}V_{j}(R^L)^{ij}}.
\end{equation}

Finally, using \eqref{RuavectrsOmegau}, \eqref{trsOmegauetOmegatildeu}, \eqref{trsOmegatildeu}, \eqref{trsexpomegaubprecis}, \eqref{deriveetrsavecTIetTII}, \eqref{TI} and \eqref{TII}, and defining
\begin{equation}
\FF^H = e^{-\frac{1}{2}\big(f^\alpha f^\beta R^L_{\alpha\beta}+V_{i}V_{j}(R^L)^{ij}\big)},
\end{equation}
we find
\begin{equation}
\label{expressionRu}
\RR_u = \left\{\frac{\ic}{u}\Big(\omega^{H} +\frac{1}{2}\omega^X_{R^L}(V,V)\Big) + \tr\left[ \big(\Id-e^{u\dot{R}^{X,L}}\big)^{-1} \right]\right\}\det\left( \frac{\dot{R}^{X,L}}{2\pi}\right)\FF^H.
\end{equation}

In the sequel, we will denote with a subscript $\{*\}$ the objects corresponding to the objects defined above in the case where $B$ is a point (e.g. $\RR_u^{\{*\}}$, $\tilde{A}_j^{\{*\}}$, ...). This objects are in fact the ones appearing in \cite{bismut-vasserot-89} and \cite[Sect. 5.5.4]{ma-marinescu}, and are the part of degree 0 of our objects. By \eqref{Atildej}, \eqref{expressionRu} and \cite[(5.5.37)-(5.5.40)]{ma-marinescu} we have
\begin{equation}
\label{liensaveclecasdupoint1}
\begin{aligned}
&\RR_u = \left\{ \frac{\ic}{u}\Big(\omega^{H} +\frac{1}{2}\omega^X_{R^L}(V,V)\Big)\det\left( \frac{\dot{R}^{X,L}}{2\pi}\right) + \RR_u^{\{*\}} \right\} \FF^H \\
& \tilde{A}_j = \tilde{A}_j^{\{*\}}\FF^H \text{ for } j\neq-1\, , \\
& \tilde{A}_{-1}=\left\{\tilde{A}_{-1}^{\{*\}}+\ic\Big(\omega^{H} +\frac{1}{2}\omega^X_{R^L}(V,V)\Big)\det\left( \frac{\dot{R}^{X,L}}{2\pi}\right)\right\} \FF^H. \\
\end{aligned}
\end{equation}
In particular,
\begin{equation}
\label{liensaveclecasdupoint2}
\begin{aligned}
&\tilde{A}_j = 0 \text{ for } j\leq-2, \\
& \RR_u - \frac{\tilde{A}_{-1}}{u}-\tilde{A}_0 = \left\{\RR_u^{\{*\}} - \frac{\tilde{A}^{\{*\}}_{-1}}{u}-\tilde{A}^{\{*\}}_0\right\} \FF^H.
\end{aligned}
\end{equation}

Since $\dot{R}^{X,L} \in \End(T ^{(1,0)}X)$ has positive eigenvalues, we find using \eqref{liensaveclecasdupoint1}, \eqref{liensaveclecasdupoint2} and $\RR_u^{\{*\}}=\tr\left[ \big(\Id-e^{u\dot{R}^{X,L}}\big)^{-1} \right]\det\left( \frac{\dot{R}^{X,L}}{2\pi}\right)$ that for $\Re(z) > 1$,
\begin{equation}
\label{expressionzetatilde-riemann}
\tilde{\zeta}(z)= \left( \int_X  \det\left( \frac{\dot{R}^{X,L}}{2\pi}\right) \tr\left[\big(\dot{R}^{X,L}\big)^{-z}\right] \FF^Hdv_X\right)\frac{1}{\Gamma(z)}\int_{0}^{+\infty}u^{z-1}\frac{e^{-u}}{1-e^{-u}}du.
\end{equation}

Let $\zeta(z)=\sum_{n=0}^{+\infty} \frac{1}{n^z}$ be the Riemann zeta function. Then classically, we have
\begin{equation}
\label{zetadeRiemann}
\begin{aligned}
& \zeta(z)=\frac{1}{\Gamma(z)}\int_{0}^{+\infty}u^{z-1}\frac{e^{-u}}{1-e^{-u}}du, \\
& \zeta(0)=-\frac{1}{2} , \quad \zeta'(0)=-\frac{1}{2}\log(2\pi). 
\end{aligned}
\end{equation}

Finally, \eqref{expressionzetatilde-riemann} and \eqref{zetadeRiemann} yields to
\begin{equation}
\begin{aligned}
\tilde{\zeta}'(0) & = -\zeta(0) \int_X  \det\left( \frac{\dot{R}^{X,L}}{2\pi}\right) \tr\left[\log\big(\dot{R}^{X,L}\big)\right] \FF^Hdv_X+ n\zeta'(0)\int_X\det\left( \frac{\dot{R}^{X,L}}{2\pi}\right) \FF^Hdv_X \\
&=\frac{1}{2} \int_X \det\left( \frac{\dot{R}^{X,L}}{2\pi}\right) \log \left[ \det\left( \frac{\dot{R}^{X,L}}{2\pi}\right) \right] \FF^Hdv_X.
\end{aligned}
\end{equation}

To prove \eqref{expressionzetatilde'(0)}, we now  have to prove that $\FF^H=e^{-R^{L,H'}}$, i.e.
\begin{equation}
\label{independancedeTHM}
f^\alpha f^\beta R^L_{\alpha\beta}+V_{i}V_{j}(R^L)^{ij} = f'^\alpha f'^\beta R^L(f'_\alpha, f'_\beta)
\end{equation}
for some basis $\{f'_\alpha\}_\alpha$ of $T_\R^{H'}M$ (the right hand side does not depend on the choice of $\{f'_\alpha\}_\alpha$).

We choose $f'_\alpha$ so that $f'_\alpha-f_\alpha = u_\alpha \in T_\R X$. Recall that $f^\alpha \in T^*_\R M$ is in fact $f^{\alpha,H}$ with $(\cdot)^H \colon T_\R^*B \isom T^{H,*}_\R M$. On the other hand, if we extend $f'^\alpha \in T^{H',*}_\R M$ to $T_\R^* M = T^*_\R X \oplus T^{H',*}_\R M$ in the obvious way. Then we obtain easily 
\begin{equation}
\label{f'alpha=falpha}
f'^\alpha=f^\alpha \in T^*_\R M.
\end{equation}

Write $u_\alpha=  \sum_i u_\alpha^ie_i$. By \eqref{f'alpha=falpha}, we have on the one hand
\begin{equation}
\label{RH'}
\begin{aligned}
R^L(f'_\alpha, f'_\beta)f'^\alpha f'^\beta &=R^L(f'_\alpha,f_\beta+u_\beta^j e_j)f^\alpha f^\beta \\
&=R^L(f'_\alpha,f_\beta)f^\alpha f^\beta  =\left( R^L_{\alpha\beta}+ u^i_\alpha R^L_{i\beta}\right)f^\alpha f^\beta.
\end{aligned}
\end{equation}
 On the other hand,
 \begin{equation}
 R^L_{i,\beta}=R^L(e_i,f'_\beta-u^k_\beta e_k)=- u^k_\beta R^L_{ik},
 \end{equation}
so we have by \eqref{defVi}
\begin{equation}
\label{sommeViVjRij}
\begin{aligned}
V_{i}V_{j}(R^L)^{ij}&=R^L_{i\alpha}R^L_{j\beta}(R^L)^{ij}f^\alpha f^\beta \\
&=u^k_\alpha R^L_{ik}R^L_{j\beta}(R^L)^{ij}f^\alpha f^\beta =u^j_\alpha R^L_{j\beta}f^\alpha f^\beta.
\end{aligned}
\end{equation}

By \eqref{RH'} and \eqref{sommeViVjRij}, we get \eqref{independancedeTHM}. Theorem \ref{expressionzetatilde'(0)thm} is proved.
\end{proof}

We can now finish the proof of Theorem \ref{mainthm}. Recall that $\Theta^X$ is defined in \eqref{ThetaM-X}. Then
\begin{equation}
\label{detdvx}
\det\left( \frac{\dot{R}^{X,L}}{2\pi} \right)dv_X = \frac{\Theta^{X,n}}{n!}.
\end{equation}

By  \eqref{expressionRu}  we have
\begin{equation}
\label{Atilde0}
\tilde{A}_0  = \frac{n}{2} \det\left( \frac{\dot{R}^{X,L}}{2\pi} \right)\FF^H.
\end{equation}

Now by Corollary \ref{cordvlptuniforme},  \eqref{zetaetzetatilde}, \eqref{zetatildeptozetatilde}, Theorem \ref{expressionzetatilde'(0)thm}, \eqref{detdvx} and \eqref{Atilde0}, we have in the smooth topology on $B$ as $p\to +\infty$
\begin{align}
\label{ccldumainthm}
\psi_{1/\sqrt{p}}\zeta'_p(0) &=  \log(p)p^nB_{0}+p^n\Phi\tilde{\zeta}'(0) +o(p^n) \notag \\
&= \frac{\mathrm{rk}(\xi)}{2}\Phi \left\{  \int_X  \log \left[ \det\left( \frac{p\dot{R}^{X,L}}{2\pi}\right) \right] e^{-R^{L,H'}} \frac{(p\Theta^X)^n}{n!} \right\}+o(p^n)  \\
&=\frac{\mathrm{rk}(\xi)}{2}  \int_X  \log \left[ \det\left( \frac{p\dot{R}^{X,L}}{2\pi}\right) \right] \exp\left(\frac{\ic}{2\pi}R^{L,H'}+p\Theta^X\right)  +o(p^n),\notag
\end{align}
which is \eqref{mainthm-eq}. Thanks to Corollary \ref{cordvlptuniforme}, Theorem \ref{dominationugrand}, \eqref{zetaetzetatilde}and \eqref{expressionzetatilde'}, we can apply  Lemma \ref{Ascoli} to get Theorem \ref{mainthm}.


\subsection{Proof of Theorem \ref{dominationugrand}}
\label{demothm2}

We will use here the notations of Section \ref{localization} and in particular of \eqref{Bp2etDp2}. Let
\begin{equation}
C_p = \frac{1}{p}B_{p}^2=  \frac{1}{p} (D_p^2+R_p).
\end{equation}
By the last line of \eqref{defobjetsavecp}, we have
\begin{equation}
\label{termeadomineravecCp}
p^{-n}\psi_{1/\sqrt{p}}\trs \left[ N_{u/p}e^{-B_{p,u/p}^2} \right] = p^{-n} \trs \left[ N_u \psi_{1/\sqrt{u}}e^{-uC_p}\psi_{\sqrt{u}}\right].
\end{equation}

By \eqref{SpBp1} and \eqref{spdeDp2}, there exists  $\nu>0$ such that for $p$ large
\begin{equation}
\label{SpCp}
\begin{aligned}
&\Sp(D_p/\sqrt{p}) \subset \, ]\!-\infty,-\sqrt{\nu}\,] \cup \{0\} \cup [\sqrt{\nu},+\infty[,  \\
&\Sp(C_p) \subset \{0\} \cup [\nu,+\infty[.
\end{aligned}
\end{equation}
In the sequel, we will assume that \eqref{SpCp} holds for $p\geq 1$. Let $\delta$ be the counterclockwise oriented circle in $\C$ centered at 0 and of radius $\nu/2$, and let $\Delta$ be the contour in $\C$ defined in Figure \ref{contours-deltas}.

\begin{figure}[!h]
\hfill
\setlength{\unitlength}{4cm}
\begin{picture}(2.5,1.125)
\put(0.25,0.5){\vector(1,0){1.95}}
\put(0.60,0.125){\vector(0,1){0.75}}
\multiput(0.60,0.25)(0.09,0){4}{\line(1,0){0.05}}
\multiput(0.60,0.75)(0.09,0){4}{\line(1,0){0.05}}
\thicklines \put(0.90,0.25){\line(1,0){1.25}}
\thicklines \put(0.90,0,75){\line(1,0){1.25}}
\thicklines \put(0.90,0.25){\line(0,1){0.5}}
\put(0.925,0.405){$\frac{3\nu}{4}$}
\put(0.90,0.5){\circle*{0.029}}
\put(1.5,0.25){${}_\blacktriangleright$}
\put(1.5,0.75){${}_\blacktriangleleft$}
\put(1.5,0.35){$\Delta$}
\put(0.4,0.61){$\delta$}
\put(0.54,0.42){0}
\put(0.5,0.22){-1}
\put(0.52,0.72){1}
\put(0.6,0.5){\circle{0.5}}
\put(0.775,0.5){\circle*{0.029}}
\put(0.775,0.55){$\frac{\nu}{2}$}
\end{picture}
\hfill \hfill
\caption{}
\label{contours-deltas}
\end{figure}

Set
\begin{equation}
\begin{aligned}
& \PP_{p,u} = \frac{1}{2i\pi} \psi_{1/\sqrt{u}} \int_\delta e^{-u\lambda}(\lambda-C_p)^{-1}d\lambda, \\
& \K_{p,u} = \frac{1}{2i\pi} \psi_{1/\sqrt{u}} \int_\Delta e^{-u\lambda}(\lambda-C_p)^{-1}d\lambda.
\end{aligned}
\end{equation}
Then 
\begin{equation}
\label{trsavecBupenfctiondePpuetKpu}
p^{-n}\psi_{1/\sqrt{p}}\trs \left[ N_{u/p}e^{-B_{p,u/p}^2} \right] = p^{-n} \trs \left[ N_u (\PP_{p,u}+\K_{p,u})\right].
\end{equation}
We will deal separately with the terms $\PP_{p,u}$ and $\K_{p,u}$.

In the rest of this section, we will work on a subset of $B$ small enough so that we can assume that $M=B\times X$.

\subsubsection*{The term involving $\K_{u,p}$}

\begin{defn}
\label{defnomreendo}
For $A \in \Lambda^\bullet (T^*_\R B)\otimes \End \left( \Omega^{0,\bullet}(X,\xi\otimes L^p) \right)$, let $||A||_\infty$ be the norm of operator of $A$ viewed as an endomorphism of $ L^2(X,\E_p) $ and for $q\in \N^*$, let
\begin{equation}
||A||_q=\left( \tr\left[ (A^*A)^{q/2} \right] \right)^{1/q}.
\end{equation}
\end{defn}

Note that if $||A||_q$ and $||A'||_\infty$ exist, then
\begin{equation}
\label{normeqetinfinie}
||AA'||_q \leq ||A||_q ||A'||_\infty.
\end{equation}

\begin{rem}
We do not  specify the dependance in $b\in B$ or $p\in \N^*$ of the norm $||\cdot||_q$ to make the notations lighter.
\end{rem}

\begin{lemme}
\label{estimeeaveclambda0}
Let $\lambda_0\in \R_-^*$. Then there exists $q_0$ such that for $q\geq q_0$, for $U\in T_\R B$ and $\ell \in \N$, there is a $C>0$ such that for $p\geq 1$
\begin{equation}
\label{estimeeaveclambda0eq}
p^{-n}\left\| \big(\n^{\End(\E_p)}_{U}\big)^\ell(\lambda_0-C_p)^{-q} \right\|_1 \leq C.
\end{equation}
\end{lemme}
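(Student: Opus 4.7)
The plan is to reduce the estimate to a scalar-type trace estimate for the resolvent of $D_p^2/p$, exploiting the nilpotent structure of $C_p$ in the horizontal form degree.

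\medskip

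First, I would decompose $C_p = D_p^2/p + R_p/p$, where by \eqref{Bp2etDp2} the operator $R_p$ has strictly positive degree in $\Lambda^\bullet(T^*_\R B)$, hence is nilpotent of order $N\le 2\dim B$. Since $D_p^2/p$ is self-adjoint non-negative and, by \eqref{spdeDp2}, $\Sp(D_p^2/p) \subset \{0\}\cup[\mu_0, +\infty[$ for $p$ large (so $\lambda_0<0$ stays at positive distance from the spectrum), the Neumann expansion
\begin{equation*}
(\lambda_0-C_p)^{-1}=\sum_{k=0}^{N}(\lambda_0-D_p^2/p)^{-1}\bigl[(R_p/p)(\lambda_0-D_p^2/p)^{-1}\bigr]^k
\end{equation*}
is a \emph{finite} sum, and differentiating $q-1$ times in $\lambda$ expresses $(\lambda_0-C_p)^{-q}$ as a finite sum of products of factors $(\lambda_0-D_p^2/p)^{-a_i}$ (with $\sum a_i=q+k$) separated by $R_p/p$-factors.

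\medskip

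The key trace bound is
\begin{equation*}
\tr\bigl[(\lambda_0-D_p^2/p)^{-q_0}\bigr]\le Cp^n\qquad\text{for }q_0>n,
\end{equation*}
which I would prove via the heat-kernel representation
$$(\lambda_0-D_p^2/p)^{-q_0}=\frac{1}{(q_0-1)!}\int_0^{+\infty}t^{q_0-1}e^{\lambda_0 t}e^{-tD_p^2/p}\,dt$$
combined with the uniform bound $\tr[e^{-tD_p^2/p}]\le Cp^n(1+t^{-n})$, which follows on the one hand from the standard short-time heat kernel asymptotic (rescaling $s=t/p$ on the twisted Kodaira Laplacian $D_p^2$, whose leading symbol is independent of $p$) and on the other hand from the spectral gap \eqref{spdeDp2} together with $\dim\ker D_p^2=O(p^n)$ by Riemann--Roch. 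Interpolating, this also gives Schatten bounds $\|(\lambda_0-D_p^2/p)^{-a}\|_r\le Cp^{n/r}$ whenever $ar>n$.

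\medskip

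Next, I would apply $(\n^{\End(\E_p)}_U)^\ell=(\mathrm{ad}_{\n^{\E_p}_U})^\ell$ to the finite-sum expansion. The Leibniz rule produces a finite sum of products in which some resolvent factors $(\lambda_0-D_p^2/p)^{-a_i}$ and some $R_p/p$ factors are replaced by iterated commutators $[\n^{\E_p}_U,\cdots[\n^{\E_p}_U,D_p^2/p]\cdots]$ or $[\n^{\E_p}_U,\cdots[\n^{\E_p}_U,R_p/p]\cdots]$. Using \eqref{nupaveclesGamma} and the explicit Lichnerowicz formula \eqref{Lichnerowiczeq}, each such iterated commutator is a fibrewise differential operator of order at most $2$ with coefficients of polynomial growth in $p$ (the $p$-growth coming from the $L^p$-coupling curvature terms $pR^L$). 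Each term is then estimated by Hölder's inequality for Schatten norms
$$\bigl\|A_1\cdots A_k\bigr\|_1\le \prod_i\|A_i\|_{r_i},\qquad \sum_i r_i^{-1}=1,$$
distributing the trace norm among the resolvents, which contribute $p^{n/r_i}$, while the commutator factors are uniformly bounded once absorbed into adjacent resolvent factors using fibrewise elliptic regularity (Lemma \ref{estimeeelliptique(p)}). Choosing $q_0$ (and hence $q$) large enough so that the sum of resolvent-power exponents is sufficient to ensure $\sum n/r_i=n$ while the Sobolev gain compensates the $p$-growth of each commutator, the total estimate is $Cp^n$ and \eqref{estimeeaveclambda0eq} follows.

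\medskip

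The main obstacle is the bookkeeping in the last step: one has to verify that the polynomial $p$-growth of iterated commutators $[\n^{\E_p}_U,\cdots,D_p^2/p]$ (which also raise the fibre differential order) is \emph{exactly} compensated by the extra resolvents $(\lambda_0-D_p^2/p)^{-1}$ used as Sobolev smoothers, so that after taking Schatten $r_i$-norms the global exponent of $p$ remains $n$. This is routine but tedious; it relies crucially on the fact that the scalar principal symbol of $D_p^2$ is the metric tensor on $T_\R^*X$ and is independent of $p$.
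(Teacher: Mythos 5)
Your skeleton matches the paper's proof: decompose $C_p=D_p^2/p+R_p/p$, exploit the nilpotency of $R_p$ in horizontal degree to expand $(\lambda_0-C_p)^{-q}$ as a \emph{finite} sum of resolvent chains \eqref{termes-ds-resolvante^a}, establish a $p^n$-trace bound for a high power of the self-adjoint resolvent $(\lambda_0-D_p^2/p)^{-1}$, and control the remaining factors in operator norm. Your Schatten--H\"older bookkeeping is a viable alternative to the paper's slightly cleaner observation that, once $q$ is large, some index $k_{j_0}$ in \eqref{termes-ds-resolvante^a} satisfies $k_{j_0}\ge q/(\dim_\R B+1)\ge q_0$, so that a single factor carries the trace norm while all others are uniformly bounded in $\|\cdot\|_\infty$.

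Two points, however, are substantive gaps rather than bookkeeping. The lesser one: the uniform-in-$p$ bound $\tr[e^{-tD_p^2/p}]\le Cp^n(1+t^{-n})$ is not a ``standard'' short-time heat-kernel estimate, because the subleading heat coefficients of $D_p^2$ depend on curvatures of $L^p$ and grow with $p$; the uniformity only appears after the spatial rescaling $Z\mapsto Z/\sqrt p$, i.e.\ it is precisely what Theorems \ref{cvcenoyaudelachaleur} and \ref{dvlptuniformedunoyaudelachaleur} establish, and you need to cite those rather than invoke generic heat asymptotics. The more serious gap is in your treatment of the commutator $\n^{\End(\E_p)}_U D_p^2/p$. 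In the local trivialization this operator has a first-order part of the form $\sqrt p\,c_j(Z)\cdot\frac{1}{\sqrt p}\n^{p,(0)}_{e_j}$, with $c_j$ produced by $p\,(\n_U\Gamma^L)$; fibrewise elliptic regularity does control $\frac{1}{\sqrt p}\n^{p,(0)}$ against the resolvent, but the leftover factor $\sqrt p\|c_j\|_\infty$ is \emph{not} bounded in $p$, so ``absorbing into adjacent resolvents'' as you propose does not close the estimate. What makes the paper's bound work is the lower-order cancellation $c_j(0)=0$ (a consequence of the radial-gauge normal-form \eqref{dvlptGamma} and the fact that $\n_U$ only differentiates the base parameter): after the spatial rescaling, $t^{-1}c_j(tZ)=O(|Z|)$ is bounded, and the resulting linear growth in $Z$ is controlled by the harmonic-oscillator weighted estimates of Propositions \ref{estimationsresoprop}--\ref{norme(m,m+1)resolvante}, giving \eqref{norme-de-A_t}--\eqref{A_p(lambda_0)-bornee}. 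The principal-symbol argument you appeal to at the end does not see this lower-order vanishing, so your sketch of the $\ell\ge 1$ case as stated would give a bound that is polynomial in $p$ rather than uniform.
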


\begin{proof}
Set
\begin{equation}
H_p =   D_p^2/p - \lambda_0 .
\end{equation}
Then $H_p$ is a self-adjoint positive generalized Laplacian on $X$. By \cite[Thm. 2.38]{berline-getzler-vergne}, we know that for $k> 1+\frac{\dim_\R X + r}{2}$, the operator $H_p^{-k}$ has a $\mathscr{C}^r$ kernel given for $(x,x')\in X\times X$ by
\begin{equation}
\label{noyauHp-k}
H_p^{-k}(x,x') = \frac{1}{(k-1)!} \int _0^{+\infty}e^{-tH_p}(x,x')t^{k-1}dt.
\end{equation}
Thus,
\begin{equation}
\label{tr(Hp-k)-integral}
\begin{aligned}
\tr\left[H_p^{-k}\right]&= \frac{1}{(k-1)!}\int_X \int _0^{+\infty} \tr\left[e^{-tH_p}(x,x)\right]t^{k-1}dtdv_X(x) \\
&= \frac{1}{(k-1)!} \int _0^{+\infty} \tr\left[e^{-tH_p}\right]t^{k-1}dt.
\end{aligned}
\end{equation}

Now, using the degree 0 of Theorem \ref{cvcenoyaudelachaleur} we find that $p^{-n}\tr\left[e^{-\frac{1}{p}D_p^2}\right]$ converges (along with its derivatives) when $p\to +\infty$. In particular, $p^{-n}\tr\left[e^{-\frac{1}{p}D_p^2}\right]$ and its derivative are bounded. Moreover, $D_p^2$ is  positive. Thus, for $\ell \in \N$, there is  $C>0$ such that for $t\geq 1$ and $p\in \N^*$,
\begin{equation}
\label{tr(Hp-k)-t-grand}
\begin{aligned}
p^{-n} \left|\tr\left[e^{-tH_p}\right]\right|_{\mathscr{C}^\ell(B)} &= p^{-n} \left|\tr\left[e^{-\frac{t}{p}D_p^2}\right]\right|_{\mathscr{C}^\ell(B)}e^{\lambda_0t}\\
& = p^{-n} \left|\tr\left[e^{-\frac{t-1}{p}D_p^2}e^{-\frac{1}{p}D_p^2}\right]\right|_{\mathscr{C}^\ell(B)}e^{\lambda_0t} \\
& \leq p^{-n} \left|\tr\left[e^{-\frac{1}{p}D_p^2}\right]\right|_{\mathscr{C}^\ell(B)}e^{\lambda_0t} \leq Ce^{\lambda_0t}.
\end{aligned}
\end{equation}
Moreover, using the part of degree 0 in Theorem \ref{dvlptuniformedunoyaudelachaleur}, we find  that for any $k,\ell\in \N$, there exist $a_{p,j}\in \R$ and  $C>0$ such that for any $t\in ]0,1]$ and $p\geq 1$, 
\begin{equation}
\label{tr(Hp-k)-t-petit}
\left|p^{-n}\tr \left[ \exp\left( -\frac{t}{p}D_p^2 \right)\right] - \sum_{j=-n-1}^k a_{p,j}t^j\right|_{\mathscr{C}^\ell(B)} \leq Ct^{k+1}.
\end{equation}
To remove the $N_V$ operator in the trace in the above equation, we used that $D_p^2$ preserves the vertical degree.

Splitting the integral in \eqref{tr(Hp-k)-integral} at $t=1$ and using \eqref{tr(Hp-k)-t-grand} and \eqref{tr(Hp-k)-t-petit}, we find that for $k$ large enough,
\begin{equation}
p^{-n}\left| \tr\left[H_p^{-k}\right] \right|_{\mathscr{C}^\ell(B)}\leq C.
\end{equation}

Thus, there exists $q_0\in \N$ such that for $q\geq q_0$ there is $C>0$ such that 
\begin{equation}
\label{estimeedeg0}
p^{-n}\left\| ( \lambda_0-D_p^2/p)^{-q} \right\|_1  =  p^{-n}\tr\left[  H_p^{-q} \right]  \leq C.
\end{equation}

Moreover, by \eqref{SpCp}  there is a $C'>0$ such that for $p\geq 1$,
\begin{equation}
\label{estimee-lambda'0}
\left\| ( \lambda_0-D_p^2/p)^{-1} \right\|_\infty \leq C'.
\end{equation}

A closer look at Bismut's Lichnerowicz formula \eqref{Lichnerowiczeq} and \eqref{convention} enables us to sharpen \eqref{Bp2etDp2}: locally, under the trivialization on $U_{x_k}$ (see Section \ref{localization}), we have
\begin{equation}
\label{structure-Rp/p}
\frac{1}{p}R_p = \frac{1}{p} \0_1 +\0_0,
\end{equation}
were $\0_k$ is a differential operator of order $k$ (which does not depend on $p$). Moreover, in the same way as in Lemma \ref{estimeeelliptique(p)}, we can easily prove from \eqref{nupaveclesGamma} (when $B$ is a point) that
\begin{equation}
||s||_{\sob^1(p)} \leq C\big(||D_ps||_{L^2}+ p||s||_{L^2}\big).
\end{equation}
Consequently, if $s$ is an eigenfunction of $D_p/\sqrt{p}$ for the eigenvalue $\mu$,
\begin{align}
\label{normeinfinede1/pRp}
\frac{1}{p}||R_ps||_{L^2} &\leq \frac{1}{p}||s||_{\sob^1(p)} + ||s||_{L^2} \notag \\
& \leq C\frac{1}{p}||D_ps||_{L^2}+ C'||s||_{L^2}  \\
&\leq C\Big(1+\frac{|\mu|}{\sqrt{p}}\Big) ||s||_{L^2} \leq C(1+|\mu|) ||s||_{L^2} .\notag
\end{align}
This estimate yields to
\begin{equation}
\label{estimation1/pRp}
\frac{1}{p}\left\|R_p(\lambda_0-D_p^2/p)^{-1}\right\|_\infty  \leq C \sup_{\mu\in [\sqrt{\nu},+\infty[} \frac{1+\mu}{|\lambda_0 - \mu^2|}\leq C'.
\end{equation}

As in \eqref{resolvantesBp2etDp2}, we have
\begin{equation}
\label{resolvantesCpetDp2}
(\lambda_0 - C_p)^{-1} = (\lambda_0 - D_p^2/p)^{-1} + (\lambda_0 - D_p^2/p)^{-1}(R_p/p)(\lambda_0 - D_p^2/p)^{-1}+\: \cdots,
\end{equation}
with only finitely many terms (as $R_p$ is sum of elements of positive degree in $\Lambda^\bullet(T^*_\R B)$). Thus, for $q\in \N^*$, $(\lambda_0 - C_p)^{-q}$ is a sum of terms of the form
\begin{equation}
\label{termes-ds-resolvante^a}
(\lambda_0 - D_p^2/p)^{-k_0}R_p/p \cdots R_p/p (\lambda_0 - D_p^2/p)^{-k_i},
\end{equation}
with $0\leq i \leq \dim_\R B$, $k_j\geq 1$ and $\sum_j k_j = q+i$. In particular, there exist $j_0$ such that $k_{j_0}\geq \frac{q}{\dim_\R B+1}$. Thus, if $q$ is large enough, then $(\lambda_0 - C_p)^{-q}$ is a sum of product of terms of the form \eqref{termes-ds-resolvante^a} -- which are bounded for $\|\cdot\|_\infty$ by \eqref{estimee-lambda'0} and \eqref{estimation1/pRp} -- and of $(\lambda_0 - D_p^2/p)^{-q_0}$. Thus, form \eqref{normeqetinfinie} and \eqref{estimeedeg0},  we get Lemma \ref{estimeeaveclambda0} for $\ell=0$.

Using \eqref{resolvantesCpetDp2}, we find that $\n^{\End(\E_p)}_{U}(\lambda_0 - C_p)^{-q}$ is a sum of terms
\begin{equation}
\label{termes-ds-der-resolvante^a}
(\lambda_0 - D_p^2/p)^{-k_0}A_{k_1}(p)\cdots A_{k_i}(p) (\lambda_0 - D_p^2/p)^{-k_i},
\end{equation}
with $0\leq i \leq \dim_\R B+1$, $k_j\geq 1$, $\sum_j k_j = q+i$ and
\begin{equation}
A_{k_j}(p) \in \left\{ R_p/p, \n^{\End(\E_p)}_{U} R_p/p,\n^{\End(\E_p)}_{U} D_p^2/p \right\}.
\end{equation}

Thus, using the same reasoning as above with $R_p/p$ replaced by $A_{k_j}(p)$, to prove Lemma \ref{estimeeaveclambda0} for $\ell=1$, we only have to show that there exists $C>0$ such that for any $p\in \N^*$
\begin{equation}
\label{normeinfinie-Aij(p)xresolvante}
\left \| A_{k_j}(p)(\lambda_0 - D_p^2/p)^{-1} s\right\|_{L^2} \leq C\|s\|_{L^2}.
\end{equation}
By \eqref{estimation1/pRp}, estimation \eqref{normeinfinie-Aij(p)xresolvante} holds if $A_{k_j}(p) =R_p/p$. Also, as $\n^{\End(\E_p)}_{U} R_p/p$ has the same structure as $R_p/p$ in \eqref{structure-Rp/p}, we can show that \eqref{normeinfinie-Aij(p)xresolvante} holds if $A_{k_j}(p) =\n^{\End(\E_p)}_{U}R_p/p$. We only have the case $A_{k_j}(p)=\n^{\End(\E_p)}_{U} D_p^2/p$ left to treat.

First, observe that for any operator $A$, it is equivalent to show that $\|As\|_{L^2} \leq C \|s\|_{L^2}$ for any section or for any section supported in a ball of radius $\e>0$. We fix $x_0 \in X$, and $\e>0$ as in Section \ref{rescalingBpu}, and we consider a section $s$ supported in $B^X(x_0,\e)$. We will use here all the notations, identifications an trivializations of Section \ref{rescalingBpu}. We extend $s$ by 0 to get an element of $\smooth_c(T_{\R,x_0}X, \E_{x_0})$. To simplify, let us denote $\n^{\End(\E_p)}_{U} D_p^2/p(\lambda_0 - D_p^2/p)^{-1}$ by $A_p(\lambda_0)$. Let $\sigma_t = S_t^{-1}\kappa^{1/2}s$ and $\mathscr{A}_t(\lambda_0) = S_t^{-1} \kappa^{1/2} A_p(\lambda_0) \kappa^{-1/2} S_t$. We have
\begin{equation}
\| A_p(\lambda_0)s\|_{L^2}^2 = t^{2n} \int_{\R^{2n}} \left | \kappa^{1/2}(A_p(\lambda_0)s)\right|^2(tZ)dv_{TX}(Z) = t^{2n} \int_{\R^{2n}} \left | \mathscr{A}_t(\lambda_0)\sigma_t \right|^2(Z) dv_{TX}(Z).
\end{equation}
Thus, if we prove that
\begin{equation}
\label{norme-de-A_t}
\|  \mathscr{A}_t(\lambda_0) \|_{t}^{0,0} \leq C,
\end{equation}
we will find
\begin{equation}
\label{A_p(lambda_0)-bornee}
\| A_p(\lambda_0)s\|_{L^2}^2 \leq C t^{2n} \int_{\R^{2n}} \left |\sigma_t \right|^2(Z) dv_{TX}(Z) = C \int_X |s|^2(x)dv_X(x) = C \|s\|_{L^2}^2,
\end{equation}
which is the estimate we needed. To prove \eqref{norme-de-A_t}, observe that over $B^{T_{\R,x_0}X}(0,\e)$ and under the identification $\E_p \simeq \E$, we have
 \begin{equation}
 \begin{aligned}
&\n^{\End(\E_p)} = \n^{\End(\E)} = \n + [\Gamma_1, \cdot],\\
&\n^{\End(\E_p)} (\n^p_{e_i}) = p (\n_U\Gamma^L)(e_i)+ R^\E(U,e_i).
\end{aligned}
\end{equation}
Hence, $\n^{\End(\E_p)}D_p^2/p$ has the form  
\begin{equation}
\label{structure-derivee-Dp2/p}
\n^{\End(\E_p)}D_p^2/p  = a_{i,j}(Z)\frac{1}{p}\n^{p,(0)}_{e_i}\n^{p,(0)}_{e_j} + \Big(\frac{1}{\sqrt{p}}b_j(Z)+\sqrt{p}c_j(Z)\Big)\frac{1}{\sqrt{p}}\n^{p,(0)}_{e_j}+ \frac{1}{p}d(Z)+e(Z),
\end{equation}
where $a_{i,j}$, $b_j$, $c_j$, $d$ and $e$ are bounded (along with their derivatives). Moreover, observe that $(\n_U\Gamma^L)(e_i)(Z)=O(|Z|)$ (apply \cite[(1.2.30)]{ma-marinescu} and observe that $\n_U$ only differentiate the parameter of the basis $B$), and that $c_j(Z)$ comes from the terms $(\n_U\Gamma^L)(e_i)$, so we have $c_j(0)=0$. Using this fact and \eqref{structure-derivee-Dp2/p}, we find that $t^{-1}c_j(tZ)$ is bounded as $t\to 0$ and that
\begin{multline}
\label{structure-derivee-Dp2/p-rescaled}
S_t^{-1} \kappa^{1/2}\Big(\n^{\End(\E_p)}D_p^2/p\Big)\kappa^{-1/2} S_t = \\
a_{i,j}(tZ)\n^{(0)}_{t,e_i}\n^{(0)}_{t,e_j} + \big(b_j(tZ)+t^{-1}c_j(tZ)\big)\n^{(0)}_{t,e_j}+t^2d(tZ)+e(tZ).
\end{multline}
Using this structure, the fact that $ \mathscr{A}_t(\lambda_0)=S_t^{-1} \kappa^{1/2}\big(\n^{\End(\E_p)}D_p^2/p\big)\kappa^{-1/2} S_t\big(\lambda_0-\LL_t^{(0)}\big)^{-1}$ and arguments similar to those in the proof of  Propositions \ref{estimationsresoprop}-\ref{norme(m,m+1)resolvante} (see \cite[Thms. 1.6.8-1.6.10]{ma-marinescu}), we find \eqref{norme-de-A_t}.

We have proved Lemma \ref{estimeeaveclambda0} for $\ell=1$. The case $\ell\geq 1$ is similar.
\end{proof}

\begin{prop}
\label{termeavecKpu}
For any $\ell \in \N$, there exist $a,C>0$ such that for $p\geq 1$ and $u\geq 1$,
\begin{equation}
\label{termeavecKpu-eq}
p^{-n}\big| \trs\left[N_u \K_{p,u}\right] \big|_{\mathscr{C}^\ell(B)} \leq Ce^{-au}.
\end{equation}
\end{prop}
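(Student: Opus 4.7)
The plan is to exploit that $\Delta$ lies in the half-plane $\Re(\lambda) \geq 3\nu/4$, so $|e^{-u\lambda}| \leq e^{-3u\nu/4}$ for $\lambda \in \Delta$. The strategy is to factor out an operator of the form $(\lambda_0 - C_p)^{-q}$, with $\lambda_0 \in \R_-^*$ fixed and $q$ large, whose trace-norm is controlled by Lemma \ref{estimeeaveclambda0}, and to absorb the remaining resolvent in operator norm.

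First, I would iterate the identity $(\lambda_0 - C_p)(\lambda - C_p)^{-1} = I + (\lambda_0 - \lambda)(\lambda - C_p)^{-1}$ to obtain
\begin{equation*}
(\lambda - C_p)^{-1} = \sum_{k=0}^{q-1}(\lambda_0 - C_p)^{-(k+1)}(\lambda_0 - \lambda)^k + (\lambda_0 - \lambda)^q(\lambda_0 - C_p)^{-q}(\lambda - C_p)^{-1}.
\end{equation*}
Each polynomial term $e^{-u\lambda}(\lambda_0 - \lambda)^k$ is entire, and since $e^{-u\lambda}$ decays rapidly as $\Re(\lambda) \to +\infty$, closing $\Delta$ to the right shows that these terms do not contribute to the integral defining $\K_{p,u}$. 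Hence
\begin{equation*}
\K_{p,u} = \frac{\psi_{1/\sqrt{u}}}{2i\pi}(\lambda_0 - C_p)^{-q}\int_\Delta e^{-u\lambda}(\lambda_0-\lambda)^q(\lambda - C_p)^{-1}\,d\lambda.
\end{equation*}

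Next, I would establish a polynomial operator-norm bound on $(\lambda - C_p)^{-1}$ over $\Delta$, uniform in $p$. Since $D_p^2/p$ is self-adjoint with spectrum in $\{0\} \cup [\nu,+\infty[$, the distance from $\Delta$ to this spectrum is bounded below, hence $\|(\lambda - D_p^2/p)^{-1}\|_\infty \leq C$ uniformly in $\lambda \in \Delta$. Expanding nilpotently,
\begin{equation*}
(\lambda - C_p)^{-1} = \sum_{k=0}^{\dim_\R B}(\lambda - D_p^2/p)^{-1}\big[(R_p/p)(\lambda - D_p^2/p)^{-1}\big]^k,
\end{equation*}
and combining with the eigenfunction-type estimate \eqref{normeinfinede1/pRp} (which yields $\|(R_p/p)(\lambda - D_p^2/p)^{-1}\|_\infty \leq C(1+|\lambda|^{1/2})$), one deduces $\|(\lambda - C_p)^{-1}\|_\infty \leq C(1+|\lambda|)^N$ for some integer $N$ independent of $p$.

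The conclusion for $\ell = 0$ then follows from cyclicity of $\trs$, the inequality $|\tr[AB]| \leq \|A\|_1\|B\|_\infty$, the bound $p^{-n}\|(\lambda_0-C_p)^{-q}\|_1 \leq C$ of Lemma \ref{estimeeaveclambda0}, and the trivial bounds $\|N_u\|_\infty, \|\psi_{1/\sqrt{u}}\|_\infty \leq C$ for $u \geq 1$; they give
\begin{equation*}
p^{-n}|\trs[N_u\K_{p,u}]| \leq C\int_\Delta e^{-u\Re(\lambda)}|\lambda_0-\lambda|^q(1+|\lambda|)^N\,|d\lambda|,
\end{equation*}
which is $\leq Ce^{-3u\nu/4}$ for $u \geq 1$ (the two horizontal legs contribute $\int_{3\nu/4}^{+\infty}e^{-ua}(1+a)^{q+N}\,da \leq C'e^{-3u\nu/4}$, the vertical part is of bounded length). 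For $\ell \geq 1$, the derivatives $\nabla^{\End(\E_p)}_U$ act either on $(\lambda_0-C_p)^{-q}$, whose trace-norm remains $\leq Cp^n$ by the $\ell$-version of Lemma \ref{estimeeaveclambda0}, or on $(\lambda - C_p)^{-1}$, where they produce additional resolvents and factors of $\nabla_U C_p$ that are controlled by the same rescaling/eigenfunction arguments used in the proof of Lemma \ref{estimeeaveclambda0}. The main obstacle I expect is purely bookkeeping: one must verify that each such differentiation increases the polynomial degree in $|\lambda|$ only by a fixed finite amount, which holds since the nilpotent expansion terminates at level $\dim_\R B$ and the structural identity \eqref{structure-derivee-Dp2/p} ensures $\nabla_U(D_p^2/p)$ has the same form as $D_p^2/p$ modulo lower-order factors.
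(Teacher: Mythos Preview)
Your argument is correct and follows essentially the same strategy as the paper: factor out a trace-class piece $(\lambda_0-C_p)^{-q}$ controlled by Lemma~\ref{estimeeaveclambda0}, bound the remaining resolvent in operator norm polynomially in $|\lambda|$ along $\Delta$, and let the exponential $e^{-u\Re(\lambda)}$ absorb the polynomial. The only cosmetic difference is the algebraic route: the paper passes to the $q$-th power representation $\K_{p,u}=\frac{(q-1)!}{2i\pi(-u)^{q-1}}\psi_{1/\sqrt{u}}\int_\Delta e^{-u\lambda}(\lambda-C_p)^{-q}d\lambda$ and then uses the factorization $(\lambda-C_p)^{-q}=(\lambda_0-C_p)^{-q}\big(1-(\lambda-\lambda_0)(\lambda-C_p)^{-1}\big)^q$, whereas you stay with $(\lambda-C_p)^{-1}$, iterate the resolvent identity, and kill the polynomial tail by closing $\Delta$ to the right. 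Your variant is arguably a bit cleaner since it avoids the extra $u^{-(q-1)}$ factor; both rely on the same two inputs and the treatment of derivatives for $\ell\geq 1$ is identical.
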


\begin{proof}
First, note that  \eqref{estimee-lambda'0} is still true if we replace $\lambda_0$ by $\lambda\in\delta\cup \Delta$, and that the constant in the right hand side can be chosen independently of $\lambda_0\in \Delta$, that is: there exists $C>0$ such that 
\begin{equation}
\label{estimee-lambda}
\left\| ( \lambda-D_p^2/p)^{-1} \right\|_\infty \leq C, \quad \forall \lambda \in \delta\cup\Delta.
\end{equation}
In the same way,  \eqref{estimation1/pRp} is also true if we replace $\lambda_0$ by $\lambda\in \Delta$ and we have $\sup_{\mu\geq \sqrt{c}} \frac{1+\mu}{|\lambda - \mu^2|}\leq C|\lambda|$, hence there exists $C>0$ such that for $\lambda \in \delta\cup\Delta$,
\begin{equation}
\label{estimation1/pRp-lambda}
\frac{1}{p}\left\|R_p(\lambda-D_p^2/p)^{-1}\right\|_\infty  \leq C |\lambda|.
\end{equation}
 Thus by \eqref{resolvantesCpetDp2}, \eqref{estimee-lambda} and \eqref{estimation1/pRp-lambda}, there exists $C>0$ such that for $p\geq 1$ and $\lambda \in \delta\cup\Delta$,
\begin{equation}
\label{estimeenormeinfinieunif}
\left\| (\lambda - C_p)^{-1} \right\|_\infty \leq C|\lambda|.
\end{equation}

For $\lambda \in \Delta$ and $\lambda_0\in \R_{-}^*$, we have
\begin{equation}
\label{lambdaetlambda02-puissanceq}
\begin{aligned}
&(\lambda - C_p)^{-1}=(\lambda_0 - C_p)^{-1}-(\lambda-\lambda_0)(\lambda_0 - C_p)^{-1}(\lambda - C_p)^{-1}, \\
&(\lambda - C_p)^{-q}=(\lambda_0 - C_p)^{-q}\left(1-(\lambda-\lambda_0)(\lambda - C_p)^{-1}\right)^q.
\end{aligned}
\end{equation}
From \eqref{normeqetinfinie}, \eqref{estimeeaveclambda0eq} \eqref{estimeenormeinfinieunif} and \eqref{lambdaetlambda02-puissanceq} we find that for $\lambda \in \delta\cup\Delta$,
\begin{equation}
\label{estimationresolvanteCp^q}
\begin{aligned}
\left\| (\lambda-C_p)^{-q} \right\|_1 &\leq \left\| (\lambda_0-C_p)^{-q} \right\|_1  \left\| \left(1-(\lambda-\lambda_0)(\lambda - C_p)^{-1}\right)^q \right\|_\infty \\
&\leq C|\lambda|^{2q}\left\| (\lambda_0-C_p)^{-1} \right\|_q \leq C|\lambda|^{2q}p^{n}.
\end{aligned}
\end{equation}

On the other hand,  we have
\begin{equation}
\label{autreecrituredeKpu}
\K_{p,u} = \frac{1}{2i\pi} \psi_{1/\sqrt{u}} \int_\Delta \frac{(q-1)!}{(-u)^{q-1}}e^{-u\lambda}(\lambda-C_p)^{-q}d\lambda,
\end{equation}
and there exist $\kappa,K>0$ such that for $\lambda \in \delta\cup\Delta$,
\begin{equation}
\label{Relambda}
\Re(\lambda)\geq K |\lambda|\geq \kappa.
\end{equation}
From \eqref{estimationresolvanteCp^q}, \eqref{autreecrituredeKpu} and \eqref{Relambda} we deduce that there exist $a,C>0$ such that for $p\in\N^*, u\geq 1$,
\begin{equation}
\begin{aligned}
p^{-n}\big| \trs\left[N_u \K_{p,u}\right] \big| &\leq p^{-n}C(1+\sqrt{u}^{-n/2}) \left\| \int_\Delta \frac{(q-1)!}{(-u)^{q-1}}e^{-u\lambda}(\lambda-C_p)^{-q}d\lambda \right\|_1 \notag \\
& \leq p^{-n}C \int_\Delta |\lambda|^{2q}e^{-uK|\lambda|}\left\| (\lambda-C_p)^{-q} \right\|_1d\lambda  \leq Ce^{-au}.
\end{aligned}
\end{equation}
Proposition \ref{termeavecKpu} is proved in the case where $\ell=0$.

We now turn to the case $\ell=1$. Equation \eqref{lambdaetlambda02-puissanceq} implies
\begin{multline}
\label{lambdaetlambda02-puissanceq-derivee}
\n^{\End(\E_p)}_U(\lambda - C_p)^{-q}=\left[\n^{\End(\E_p)}_U(\lambda_0 - C_p)^{-q}\right]\left(1-(\lambda-\lambda_0)(\lambda - C_p)^{-1}\right)^q \\
+ (\lambda_0 - C_p)^{-q}\left[\n^{\End(\E_p)}_U \left(1-(\lambda-\lambda_0)(\lambda - C_p)^{-1}\right)^q\right].
\end{multline}

We claim that there is $C,N>0$ such that for $\lambda \in \delta\cup\Delta$
\begin{equation}
\label{norme-infinie-autre-bout}
\left\|\n^{\End(\E_p)}_U \left(1-(\lambda-\lambda_0)(\lambda - C_p)^{-1}\right)^q\right\|_\infty \leq C|\lambda|^N. 
\end{equation}
Indeed, the arguments of Propositions \ref{estimationsresoprop}-\ref{norme(m,m+1)resolvante} that enables us to prove \eqref{norme-de-A_t} from \eqref{structure-derivee-Dp2/p-rescaled} also shows that \eqref{norme-de-A_t} is still true if we replace therein $\lambda_0$ by $\lambda \in \delta\cup\Delta$ and that moreover there exists $N>0$ such that $\|  \mathscr{A}_t(\lambda) \|_{t}^{0,0} \leq C |\lambda|^N$. Hence, as in \eqref{A_p(lambda_0)-bornee}, we have $\|  A_p(\lambda) \|_\infty \leq C |\lambda|^N$, i.e.,
\begin{equation}
\label{norme-de-A_p-lambda}
\left\|\n^{\End(\E_p)}_{U} D_p^2/p(\lambda_0 - D_p^2/p)^{-1}\right\|_\infty \leq C |\lambda|^N.
\end{equation}
Thus, decomposing $\n^{\End(\E_p)}_U \left(1-(\lambda-\lambda_0)(\lambda - C_p)^{-1}\right)^q$ as a polynomial in $\lambda$ whose coefficients have the form \eqref{termes-ds-der-resolvante^a}, and using \eqref{estimee-lambda}, \eqref{estimation1/pRp-lambda} and \eqref{norme-de-A_p-lambda}, we find \eqref{norme-infinie-autre-bout}.

Then, by \eqref{normeqetinfinie}, \eqref{estimeeaveclambda0eq}, \eqref{lambdaetlambda02-puissanceq-derivee} and \eqref{norme-infinie-autre-bout}, we find that there is $N'>0$ such that
\begin{equation}
p^{-n} \left\| \n^{\End(\E_p)} (\lambda-C_p)^{-q} \right\|_1 \leq C|\lambda|^{N'}.
\end{equation}
Hence,
\begin{equation}
\begin{aligned}
 p^{-n}\left|\n^{\Lambda^\bullet(T^*_\R B)} \trs\left[N_u \K_{p,u}\right] \right| & = p^{-n}\left| \trs\left[\n^{\End(\E_p)}(N_u \K_{p,u})\right]\right| \\
& \leq p^{-n}C \int_\Delta e^{-uK|\lambda|}\left\| \n^{\End(\E_p)} (\lambda-C_p)^{-q} \right\|_1d\lambda  \\
&\leq Ce^{-au}.
\end{aligned}
\end{equation}
This proves \eqref{termeavecKpu-eq} for $\ell=1$.

The proof of Proposition \ref{termeavecKpu} for $\ell\geq1$ relies on similar arguments.
\end{proof}

\subsubsection*{The term involving $\PP_{p,u}$} 

\begin{prop}
\label{termeavecPpu}
For any $\ell \in \N$, there is a $C>0$ such that for any $p\geq 1$ and $u\geq 1$, 
\begin{equation}
\label{termeavecPpueq}
p^{-n}\big| \trs\left[N_u \PP_{p,u}\right] \big|_{\mathscr{C}^\ell(B)}\leq \frac{C}{\sqrt{u}}.
\end{equation}
\end{prop}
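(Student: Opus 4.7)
I would identify $\PP_{p,u}$ with the restriction of the heat operator to the generalized $0$-eigenspace of $C_p$, observe that this eigenspace is (essentially) $H^0(X,\xi\otimes L^p)$ sitting in vertical degree $0$, where $N_V=0$, and then read off the $O(1/\sqrt u)$ bound from the cancellation of the leading terms. Concretely: by \eqref{SpCp}, $\Sp(C_p)\cap\delta=\{0\}$, so $P:=\frac{1}{2i\pi}\int_\delta(\lambda-C_p)^{-1}d\lambda$ is the corresponding spectral projector and $\PP_{p,u}=\psi_{1/\sqrt u}(e^{-uC_p}P)$. Expanding $(\lambda-C_p)^{-1}$ as a finite Neumann series in $R_p/p$ via \eqref{Bp2etDp2} gives $P=P_0+\widetilde P$, with $P_0$ the $L^2$-orthogonal projector onto $\ker D_p^2\simeq H^{\bullet}(X,\xi\otimes L^p)$ and $\widetilde P\in\Lambda^{\geq 1}(T_{\R}^*B)\otimes\End$. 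Under the hypothesis $H^i(X,\xi\otimes L^p)=0$ for $i>0$, $\mathrm{Im}(P_0)$ lies in vertical degree $0$, hence $N_V P_0=0$.

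\textbf{Step 2 (polynomial exponential).} Set $W:=PC_pP$. The $\Lambda^0$-part of $PD_p^2P$ is $P_0D_p^2P_0=0$, and $R_p/p\in\Lambda^{\geq 1}\otimes\mathrm{Op}^{\leq 1}_X$ by \eqref{Bp2etDp2}, so $W\in\Lambda^{\geq 1}(T_{\R}^*B)\otimes\End$. As $\dim_{\R}B<\infty$, $W$ is nilpotent and
\begin{equation*}
e^{-uC_p}P=\sum_{k=0}^{\dim_{\R}B}\frac{(-u)^k}{k!}W^k,
\end{equation*}
a polynomial in $u$ of bounded degree. The contour representation and \eqref{estimeenormeinfinieunif} applied on $\delta$ give $\|P\|_\infty,\|W\|_\infty\leq C$ uniformly in $p$, while $\dim\mathrm{Im}(P)=\dim H^0(X,\xi\otimes L^p)=O(p^n)$.

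\textbf{Step 3 (trace and conclusion).} Writing $N_u=\psi_{1/\sqrt u}(\widetilde N)$ with $\widetilde N:=N_V+i\omega^H$ independent of $u$, and using that $\psi_{1/\sqrt u}$ is an algebra morphism commuting with $\trs$,
\begin{equation*}
\trs[N_u\,\PP_{p,u}]=\psi_{1/\sqrt u}\trs\bigl[\widetilde N\,e^{-uW}P\bigr].
\end{equation*}
The right-hand side is $\psi_{1/\sqrt u}$ of a polynomial in $u$ of bounded degree, with trace coefficients of size $O(p^n)$; its $\Lambda^{2m}$-component thus becomes $\sum_{k=0}^{2m}c_{k,p}^{(2m)}u^{k-m}$, with $p^{-n}|c_{k,p}^{(2m)}|_{\mathscr{C}^\ell(B)}\leq C$ uniformly, the $\mathscr{C}^\ell(B)$-bounds following by differentiating the contour integral and invoking Lemma \ref{estimeeaveclambda0}. \emph{The main obstacle} is to show $c_{k,p}^{(2m)}=0$ for $k\geq m$, which leaves only the powers $u^{-m},\dots,u^{-1}$, each $\leq C/\sqrt u$ for $u\geq 1$. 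This vanishing rests on bidegree/parity considerations: odd $\Lambda$-degree components of $R_p$ in \eqref{Lichnerowiczeq} involve an odd number of Clifford generators $c(e_i)$, and under the sandwich by projectors onto vertical bidegree $(0,0)$ they vanish, giving $W\in\Lambda^{\geq 2}\otimes\End$; combined with the identification of $W$ on $\mathrm{Im}(P)\simeq\Lambda^{\bullet}(T_{\R}^*B)\otimes H^0(X,\xi\otimes L^p)$ with a Chern--Weil $2$-form for $\nabla^{H(X,\xi\otimes L^p)}$, one recognizes $\psi_{1/\sqrt u}\trs[\widetilde N\,e^{-uW}P]$ as converging, when $u\to\infty$, to $\trs[N_V e^{-(\nabla^{H(X,\xi\otimes L^p)})^2}]=0$, following the Bismut--K\"ohler argument underlying Theorem \ref{thmutoinf}, adapted to uniformity in $p$ via the rescaling by $1/p$ and the uniform spectral gap.
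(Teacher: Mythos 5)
Your argument is correct and is in substance the same as the paper's: both exploit that, after passing to the zero-spectral-subspace $P$, the operator $W=PC_pP$ is nilpotent of bounded $\Lambda$-degree so the supertrace is a Laurent polynomial in $u$ (the paper organizes this as the finite sum \eqref{grossesomme}), both control the coefficients by the uniform operator-norm bounds \eqref{estimeenormeinfinieunif}/\eqref{normeinftermespossibles} together with $\dim\ker D_p^2=O(p^n)$, and both kill the constant coefficient via Theorem \ref{thmutoinf} and Proposition \ref{termeavecKpu}. Your Clifford-parity argument for $W\in\Lambda^{\geq 2}$ is equivalent to the paper's identity $P_p[B_p^{(1)},D_p]P_p=0$ (the latter being the cleaner formulation, using only $D_pP_p=P_pD_p=0$), and your derivative bounds are handled as briefly as the paper handles them.
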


\begin{proof}
We first rewrite $\PP_{p,u}$. As $C_p$ has no eigenvalues between the two circles $\delta$ and $\delta/u$, we have 
\begin{equation}
\begin{aligned}
\PP_{p,u} &= \frac{1}{2i\pi} \psi_{1/\sqrt{u}} \int_{\delta/u} e^{-u\lambda}(\lambda-C_p)^{-1}d\lambda  \\
&=\frac{1}{2i\pi} \psi_{1/\sqrt{u}} \int_{\delta} e^{-\lambda}(\lambda-uC_p)^{-1}d\lambda.
\end{aligned}
\end{equation}

We now use the technique of \cite[Sect. 9.13]{MR1623496}. Let $C_p^{(0)} = \frac{1}{p}D_p^2$ be the part of $C_p$ of degree 0 in $\Lambda^\bullet(T^*_\R B)$. We denote by $P_p$ the orthogonal projection form $\Omega^{0,\bullet}(X,\xi\otimes L^p)$ to the kernel of $D_p^2$, and $P_p^\perp=1-P_p$. We will make the abuse of notation $(C_p^{(0)})^{-1} = P_p^\perp(C_p^{(0)})^{-1}P_p^\perp$. Finally, we denote $R_p/p$ by $\tilde{R}_p$. Then for $\lambda\in \delta$,
\begin{align}
\label{dvlpe-lambdaXresolvante}
e^{-\lambda}(\lambda-uC_p)^{-1}&= \left( \sum_{k\geq0} \frac{(-1)^k}{k!}\lambda^k \right) \left( \sum_{\ell\geq0} (\lambda-uC^{(0)}_p)^{-1}(u\tilde{R}_p)\dots(u\tilde{R}_p)(\lambda-uC^{(0)}_p)^{-1} \right), \notag\\
(\lambda-uC^{(0)}_p)^{-1}& = \frac{1}{\lambda}P_p + (\lambda-uC^{(0)}_p)^{-1}P_p^\perp.
\end{align}
Moreover, $\lambda \mapsto (\lambda-uC^{(0)}_p)^{-1}P_p^\perp$ is an holomorphic function on the interior of $\delta$, so \eqref{dvlpe-lambdaXresolvante} yields to
\begin{equation}
\label{grossesomme}
\PP_{p,u}=\psi_{1/\sqrt{u}} \sum_{\ell=0}^{\dim_\R B} \sum\limits_{\substack{1\leq i_0 \leq \ell+1 \\ j_1,\dots,j_{\ell+1-i_0}\geq0  \\ \sum_{m=1}^{\ell+1-i_0}j_m\leq i_0-1 }} \frac{(-1)^{\ell-\sum_m j_m}}{(i_0-1-\sum_m j_m)!} T_{p,1}(u\tilde{R}_p)T_{p,2}\dots(u\tilde{R}_p)T_{p,\ell+1},
\end{equation}
where $P_p$ appears $i_0$ times among the $T_{p,j}$'s and the other terms are given respectively by $(uC^{(0)}_p)^{-(1+j_1)}$,\dots,$(uC^{(0)}_p)^{-(1+j_{\ell+1-i_0})}$.

As $R_p$ is the part of positive degree of $B_p^2$ and $B_p^{(0)}=D_p$ (see \eqref{B^0=D}),   we can decompose $R_p$ with respect to the degree in $\Lambda^\bullet(T^*_\R B)$:
\begin{equation}
R_p = R_p^{(1)}+R_p^{(\geq2)} \quad\text{with} \quad R_p^{(1)} = \left[B_p^{(1)},D_p\right].
\end{equation}

We can rewrite the sum \eqref{grossesomme} as a sum of products of terms
\begin{equation}
\label{A_ipossible}
\begin{aligned}
&A_1(u\psi_{1/\sqrt{u}}\tilde{R}^{(1)}_p)A_2 \quad \text{or} \quad A_1(u\psi_{1/\sqrt{u}}\tilde{R}^{(\geq2)}_p)A_2, \\
&  A_i \in \{P_p, (uC^{(0)}_p)^{-(1+j)},(uC^{(0)}_p)^{-(1+j)/2}\}.
\end{aligned}
\end{equation}
 Moreover, observe that 
\begin{equation}
P_p\left[B_p^{(1)},D_p\right]P_p = 0.
\end{equation}
As a consequence, the possible degrees in $u$ of a term $A_1(u\psi_{1/\sqrt{u}}\tilde{R}^{(1)}_p)A_2=A_1(\sqrt{u}\tilde{R}^{(1)}_p)A_2$ are:
\begin{equation}
\left\{
\begin{aligned}
&\deg_u P_p\sqrt{u}\tilde{R}^{(1)}_pP_p = -\infty ,\\
&\deg_u  P_p\sqrt{u}\tilde{R}^{(1)}_p(uC^{(0)}_p)^{-r} = \deg_u (uC^{(0)}_p)^{-r}\sqrt{u}\tilde{R}^{(1)}_pP_p  = \frac{1}{2} -r,  \\
&\deg_u (uC^{(0)}_p)^{-r}\sqrt{u}\tilde{R}^{(1)}_p(uC^{(0)}_p)^{-r'}=\frac{1}{2}-r-r'.
\end{aligned}
\right.
\end{equation}
In any case, by \eqref{A_ipossible}, these terms are polynomials in $1/\sqrt{u}$. 

Concerning the terms $A_1(u\psi_{1/\sqrt{u}}\tilde{R}^{(\geq2)}_p)A_2$, as $R^{(\geq2)}_p$ is a sum of terms of degree greater than 2 in $\Lambda^\bullet(T^*_\R B)$ we find that the powers of $u$ appearing are: 
\begin{equation}
\left\{
\begin{aligned}
 u^{-j/2} \quad &\text{in } \quad P_p(u\psi_{1/\sqrt{u}}\tilde{R}^{(\geq2)}_p)P_p,  \\
  u^{-r-j/2} \quad &\text{in } \quad P_p(u\psi_{1/\sqrt{u}}\tilde{R}^{(\geq2)}_p)(uC^{(0)}_p)^{-r} \text{ or } (uC^{(0)}_p)^{-r}(u\psi_{1/\sqrt{u}}\tilde{R}^{(\geq2)}_p)P_p ,   \\
 u^{-r-r'-j/2} \quad &\text{in } \quad (uC^{(0)}_p)^{-r}(u\psi_{1/\sqrt{u}}\tilde{R}^{(\geq2)}_p)(uC^{(0)}_p)^{-r'},
\end{aligned}
\right.
\end{equation}
where $r,r'\in \frac{1}{2}\N^*$ and $2\leq j \leq \dim_\R B$. This shows that $\PP_{p,u}$  is  in  $\C_N\Big[\frac{1}{\sqrt{u}}\Big]$ for some uniform $N\in \N$. Furthermore, in each term of the sum \eqref{grossesomme} $i_0\geq 1$ so $P_p$ --which is a projector on a finite dimensional space--   appears at least one time. Hence there exist $c_k(p)\in \Omega^\bullet(B)$ such that
\begin{equation}
\label{polynomial}
p^{-n}\trs\left[N_u \PP_{p,u}\right] = \sum_{k=0}^K c_k(p) u^{-k/2}.
\end{equation}

Moreover, by \eqref{normeinfinede1/pRp}, we have for $r,r'\geq \frac{1}{2}$
\begin{equation}
\label{normeinftermespossibles}
\left\{
\begin{aligned}
& ||P_p\tilde{R}_pP_p||_\infty \leq C  ,\\
&  ||P_p\tilde{R}_p(C^{(0)}_p)^{-r}||_\infty \: , \: ||(C^{(0)}_p)^{-r}\tilde{R}_pP_p||_\infty  \leq C \sup_{\mu \geq \sqrt{\nu}} \Big( (1+\mu) \mu^{-2r}\Big) \leq C',  \\
& ||(C^{(0)}_p)^{-r}\tilde{R}_p(C^{(0)}_p)^{-r'}||_\infty \leq C''.
\end{aligned}
\right.
\end{equation}
Therefore, each term in the sum \eqref{grossesomme} is a product of uniformly bounded terms, in which $P_p$ appears at least once (because $i_0\geq 1$). Thus,
\begin{equation}
\label{coeffbornŽs}
|c_k(p)| \leq p^{-n}C \dim \ker (D_p^2)=p^{-n}C\dim H^0(X,\xi\otimes L^p) \leq C.
\end{equation}
For the last inequality we have used Riemann-Roch-Hirzebruch theorem (see e.g. \cite[Thm. 1.4.6]{ma-marinescu}) and Kodaira vanishing theorem.

Finally, using Theorem \ref{thmutoinf}, \eqref{trsavecBupenfctiondePpuetKpu} and Proposition \ref{termeavecKpu} we have for $p$ large fixed
\begin{equation}
p^{-n}\trs\left[N_u \PP_{p,u}\right] \limarrow{u}{+\infty} p^{-n}\psi_{1/\sqrt{p}} \trs \left[ N_V \exp(-(\n^{H(X,\xi\otimes L^p|_{X})})^2) \right] =0.
\end{equation}
Thus $c_0(p)=0$ and by \eqref{polynomial} and \eqref{coeffbornŽs} we find \eqref{termeavecPpueq} in the case $\ell=0$.

We now turn to the case $\ell=1$. By decomposing as above $\PP_{p,u}$ in a sum of product of polynomial in $1/\sqrt{u}$, and then differentiating in the direction $U\in T_\R B$, we find that $\n^{\End(\E_p)}_U\PP_{p,u}$ is also a sum of product of polynomial in $1/\sqrt{u}$. Thus, here again there exist $c'_k(p)\in \Omega^\bullet(B)$ such that
\begin{equation}
\label{polynomial-der}
p^{-n}\n^{\Lambda^\bullet(T^*_\R B)}_U\trs\left[N_u \PP_{p,u}\right] =p^{-n}\trs\left[\n^{\End(\E_p)}_U N_u \PP_{p,u}\right]= \sum_{k=0}^K c'_k(p) u^{-k/2}.
\end{equation}
To conclude the proof as above, we need not only the uniform bounds given in \eqref{normeinftermespossibles}, but also of the derivative of the terms appearing therein. To obtain these bounds, we use similar reasonings that those undertaken in Propositions \ref{estimeeaveclambda0} and \ref{termeavecKpu} (in particular the proof of \eqref{norme-de-A_p-lambda}) to handle the derivatives.

 For $\ell\geq 1$, the reasoning is similar.
\end{proof}

With \eqref{trsavecBupenfctiondePpuetKpu} and Propositions \ref{termeavecKpu} and \ref{termeavecPpu}, we have proved Theorem \ref{dominationugrand}.


\section{Torsion forms associated with a direct image.}
\label{Sectionimagedirecte}

The purpose of this section is to prove Theorem \ref{mainthm2}.

We recall some notations. Let $N$, $M$ and $B$ be three complex manifolds. Let $\pi_1 \colon N \to M$ and $\pi_2 \colon M \to B$ be holomorphic fibrations with compact fiber $Y$ and $X$ respectively.  Then $\pi_3:=\pi_2\circ\pi_1\colon N \to B$ is a holomorphic fibration, whose compact fiber is denoted by $Z$. We denote by $n_X$ (resp. $n_Y$, $n_Z$) the complex dimension of $X$ (resp. $Y$, $Z$). Note that $\pi_1|_{Z}\colon Z\to X$ is a holomorphic fibration with fiber $Y$. This is summarized in the following diagram:
$$  \xymatrix{
 Y\ar[r]  & Z\ar[r] \ar[d]^{\pi_1} & N \ar[d]^{\pi_1} \ar[rd]^{\pi_3} \\
 & X \ar[r]& M\ar[r]_{\pi_2} & B
 }$$

Let $(\pi_2,  \omega^M)$ be a structure of Hermitian  fibration (see Section \ref{Sect-Hermitian fibration}). We denote by $T^H_BM$ the corresponding horizontal space.

Let $(\xi, h^\xi)$ be a holomorphic Hermitian vector bundle on $M$, and let $(\eta, h^\eta)$ be a holomorphic Hermitian vector bundle on $N$. Let $(L,h^L)$ be a holomorphic Hermitian line bundle on $N$. We denotes its Chern connection by $\n^L$, and the corresponding curvature by $R^L$. By Assumption \ref{RLpositive2}, $L$ is positive along the fibers of $\pi_3$. In particular, $\frac{\ic}{2\pi}R^L$  defines metric  $g^{T_\R Z}$ on  $T_\R Z$,  by the formula
\begin{equation}
\label{def-metrci-fibre}
g^{T_\R Z}(U,V) = \frac{\ic}{2\pi}R^L (U,J^{T_\R Z}V) \: , \quad U,V \in T_\R Z.
\end{equation}
Similarly, we get a metric $g^{T_\R Y}$ on $T_\R Y$.
  
Recall that
\begin{equation}
\label{def-THN}
T^H_BN = (TZ)^\perp , \quad T^H_MN = (TY)^\perp, \quad T^H_XZ=T^H_MN\cap TZ,
\end{equation}
where the orthogonal complements are taken with respect to $R^L$.  Also,  $\dot{R}^{X,L}\in  \pi_3^*\End(TX)$  is the Hermitian matrix such that for any $U,V\in TX$, if we denote their horizontal lifts by $U^H,V^H \in T^H_XZ$, then
\begin{equation}
\label{defRpoint2}
R^L(U^H,\overline{V}^H) = \langle \dot{R}^{X,L} U, V \rangle _{h^{TX}}.
\end{equation}
By Assumption \ref{RLpositive2}, $\dot{R}^{X,L}$ is positive definite. Finally, set
\begin{equation}
\Theta^N = \frac{\ic}{2\pi}R^L \quad \text{and} \quad \Theta^Z = \frac{\ic}{2\pi}R^L|_{T_{\R}Z\times T_{\R}Z}.
\end{equation}
We extend $\Theta^Z$ to $T_\R N=T_\R Z\oplus (T_\R Z)^{\perp,\Theta^N}$ by zero. 

Recall that we have assumed that (for $p$ large) the direct image both $R^\bullet\pi_{1*}(\eta\otimes L^p)$ is locally free. Let $F_p:=H^\bullet\big(Y,(\eta\otimes L^p)|_{Y}\big)$ the corresponding bundle, endowed with the $L^2$ metric $h^{F_p}$ induced by $h^\eta$, $h^L$ and $g^{T_\R Y}$. 

We have also assumed that (for $p$ large) $R^\bullet \pi_{2*}(\xi\otimes F_p)$ of  is locally free and that we have $R^\bullet \pi_{2*}(\xi\otimes F_p)\simeq R^\bullet \pi_{3*}(\pi^*_1\xi\otimes\eta\otimes L^p)$.

The objects corresponding to this situation will be denoted by
\begin{equation}
\begin{aligned}
& E_{p,b}^k = \smooth \left( X_b, \left(\Lambda^{0,k}(T^*X)\otimes \xi\otimes F_p \right)|_{X_b} \right),\\
& \n^p = \n^{E_p,LC}, \\
& \db^p = \text{Dolbeault operator of } E_p, \\ 
& D_p = \db^p + \db^{p,*},\\
& B_p\, , \, B_{p,u} = \text{associated superconnections as in \eqref{defBu}},\\
&\n^p_u = \text{connection corresponding to \eqref{defnablau} associated  with } \xi\otimes F_p=\n_u\otimes1 +1\otimes\n^{F_p}.
\end{aligned}
\end{equation}
Then we can construct as in Section \ref{torsion} the holomorphic analytic torsion forms $\mathscr{T}(\omega^M, h^{\xi\otimes F_p})$ associated with $\omega^M$ and $(\xi\otimes F_p,h^{\xi\otimes F_p})$.

The strategy of proof of Theorem \ref{mainthm2} will be formally the same as for Theorem \ref{mainthm}. However, the main difficulty is that in the case of a line bundle (that is $Y=\{*\}$), $F_p=L^p$ is of constant dimension 1 so locally the operators have their coefficients in a fixed space (see Remark \ref{independancedeSL}), whereas it is not the case here. To overcome this issue, we will use an approach inspired by \cite{BMZweb,MR2838248}, that is we will consider all the operators depending on $p$ at once with the formalism of Toeplitz operators of \cite{ma-marinescu}. More precisely, we will consider the family $\{B_{p,u}^2,p\in\N\}$ as a differential operators with coefficient in the Toeplitz algebra (see \eqref{Bup2=Toeplitz}). A crucial point is to use the operator norm on matrices to have boundedness properties of Toeplitz operators. Here, the first difficulty is that  there is no longer a limiting operator (as the space changes), but we can show that instead there is an asymptotic operator with Toepltiz coefficients. The problem is then that we cannot compute its heat kernel explicitly (with comparison to \eqref{noyaudelachaleurLinfini}), but using the properties of operator with Toeplitz coefficients developed in Section \ref{operatorwithToeplitzcoeff}, we can nonetheless give an asymptotic formula. An other difficulty comes from the fact that we cannot use the same method to prove the uniform  development of the heat kernel as $u\to 0$ as we did before (see the proofs of Theorems  \ref{dvlptuniformedunoyaudelachaleur} and  \ref{dvlptuniformedunoyaudelachaleur2}), and we cannot hope to prove that the coefficients converges. Instead, we prove that the coefficients are asymptotic to certain Toepltiz operators.

Once again, to simplify the statements in the following, we will assume that $B$ is compact. However, the reader should be aware of the fact that the constants appearing in the sequel depends on the compact subset of $B$ we are working on.

This section is organized as follows.  In Subsections \ref{toeplitzop} and \ref{Infinitedimensionalbundles}, we recall the formalism of Toepltiz operators. In Subsection \ref{operatorwithToeplitzcoeff}, we introduce operators with Toeplitz coefficients and show some properties of their Schwartz kernels. In Subsection \ref{localization2}, we show that our problem is local. In Subsection \ref{sectioncvce2}, we rescale the Bismut superconnection and compute the limit operator, then we obtain the convergence of the heat kernel in Theorem \ref{cvcenoyaudelachaleur2}. Then, in Subsection \ref{Asymptoticofthetorsionforms2}, we prove our main theorem, using two results which are proved in Subsections \ref{demothm1-2} and \ref{demothm2-2}.

\subsection{The algebra of Toeplitz operators}
\label{toeplitzop}

In this subsection, we describe the formalism of Toeplitz operators introduced by Berezin \cite{MR0395610} and Boutet de Monvel-Guillemin \cite{MR620794}, and developed by Bordemann-Meinrenken-Schlichenmaier \cite{MR1301849}, Schlichenmaier \cite{MR1805922} and Ma-Marinescu \cite{ma-marinescu}, \cite{MR2393271}.

We fix $m\in M$ for  this subsection, and we denote  $Y_m$ simply by $Y$.

Thus, we are given a complex manifold $Y$ of dimension $n_Y$, endowed with an Hermitian vector bundle $(\eta, h^\eta)|_Y$ and with a positive line bundle $(L,h^L)|_Y$. Recall that $R^L$ is the Chern curvature of $L$ and that 
\begin{equation}
\Theta^Y = \frac{\ic}{2\pi}R^L|_{T_{\R}Y\times T_{\R}Y},
\end{equation}
and $g^{T_\R Y}= \Theta^Y(\cdot,J\cdot)$ is the associated metric.

Let
\begin{equation}
\A = \smooth\big(Y, \End(\eta)\big),
\end{equation}
which we endow $\A$ with the $L^2$-metric induced by $g^{T_\R Y}$, $h^L$ and $h^\eta$.

For $p\in \N$ and $A\in \End( L^2(Y,\eta\otimes L^p))$, we will use the same notations as in Definition \ref{defnomreendo}, i.e.,  $||A||_\infty$ denotes the operator norm of $A$ and $||A||_1$ its trace norm (if $A$ is trace class).

Let $P_p$ be the orthogonal projection form $L^2(Y,\eta\otimes L^p)$ onto $H^0(Y,\eta\otimes L^p)$. By Riemann-Roch-Hirzebruch theorem and Kodaira vanishing theorem, we now that $\dim F_p \leq Cp^{n_Y}$, thus if $A\in \End( L^2(Y,\eta\otimes L^p))$ is such that $P_pAP_p=A$, we have
\begin{equation}
||A||_1\leq C ||A||_\infty p^{n_Y}.
\end{equation}

If $(V,h^V)$ is any finite dimensional Hermitian vector space and if $u\in \End(V)$, we denote by $\|u \|$ the operator norm of $u$.

For $f\in \A$, set
\begin{equation}
\|f\|_{\mathscr{C}^0} = \sup_{y\in Y} \|f(y)\|.
\end{equation}
This defines a metric on $\A$. 

For $f\in \A$, we denote by $T_{f,p}$ the \emph{Berezin-Toeplitz quantization} of $f$, that is
\begin{equation}
T_{f,p}=P_pfP_p.
\end{equation}
Observe that
\begin{equation}
\label{||Tfp||infinie}
||T_{f,p}||_\infty \leq\|f\|_{\mathscr{C}^0}.
\end{equation}

 Moreover, by \cite[(4.1.84), Lem. 7.2.4]{ma-marinescu}, as $p\to+\infty$, we have
\begin{equation}
\label{traceToeplitz}
\tr^{F_p}[T_{f,p}]=p^{n_Y} \int_Y \tr^\eta[f]e^{\Theta^Y} + O(p^{n_Y-1}).
\end{equation}

Recall that Toeplitz operators are defined in Definition \ref{def-op-de-Toepltiz}. As in \cite{ma-marinescu},  for a Toeplitz operator $T_p$ with corresponding sections $f_r$, we will use the notation
\begin{equation}
\label{notation-toeplitz}
T_p = \sum_{r=0}^{+\infty} p^{-r} T_{f_r,p} +O(p^{-\infty}).
\end{equation}
We denote by $\toep$ the space of Toeplitz operators on $Y$.

It follows from the above references that $\toep$ is an algebra. More precisely, it is proved in \cite[Thm. 0.3 Rem. 0.5]{MR2876259} that there are bidifferential operators  $C_r$ such that for $f,g\in \A$,
\begin{equation}
\begin{aligned}
\label{coeff-composition-Toep}
&T_{f,p}\circ T_{g,p} = \sum_{r=0}^{+\infty} p^{-r} T_{C_r(f,g),p} +O(p^{-\infty}),\\
&C_0(f,g)=fg.
\end{aligned}
\end{equation}
In particular,
\begin{equation}
\label{commutateurtoeplitz}
\begin{aligned}
&T_{f,p}\circ T_{g,p}=T_{fg,p}+O(p^{-1}),\\
&[T_{f,p}, T_{g,p}]=T_{[f,g],p}+O(p^{-1}),\\
&[T_{f,p}, T_{g,p}]_+=T_{[f,g]_+,p}+O(p^{-1}),
\end{aligned}
\end{equation}
where $[\,\cdot\,,\cdot\,]_+$ denotes the anti-commutator.

\subsection{Infinite dimensional bundles.}
\label{Infinitedimensionalbundles}

From now on, we will consider $\A$ and $\toep$ as infinite dimensional bundles of algebra on $M$: for $m\in M$,
\begin{equation}
\begin{aligned}
&\A_m = \smooth\big(Y_m, \End(\eta|_{Y_m})\big),\\
&\toep_m=\{\text{Toeplitz operators on the fiber }Y_m\}. 
\end{aligned}
\end{equation}
In particular, an element of $\toep$ define a family of elements of $\End(F_p)$, $p\in \N$. Moreover, $\|\cdot\|_{\mathscr{C}^0}$ defines a metric on the bundle $\A$, and $||\cdot||_\infty$ and $||\cdot||_1$ define two metrics on the bundle $\toep$. 

In the sequel, for any hermitian bundle $(\mathcal{V},h^{\mathcal{V}})$ on $M$, we will still denote by $||\cdot||_\infty$ and $||\cdot||_1$ the induced metrics on $\mathcal{V}\otimes \toep$.

We define a connection on $\A$ as follows: if $f\in \smooth(M,\A)=\smooth\big(N,\End(\eta)\big)$ and $U\in T_\R M$, then
\begin{equation}
\n^\A_U f= \n^\eta_{U^H} f,
\end{equation}
where $U^H$ is the horizontal lift of $U$ in $T^H_{M,\R}N$ (see \eqref{def-THN}).

Define also $\Fp$ as the infinite dimensional bundle:
\begin{equation}
\Fp{}_{,m} = \smooth\big(Y_m,(\eta\otimes L^p)|_{Y_m}\big).
\end{equation}
Then $F_p$ is a sub-bundle of $\Fp$ and $\Fp$ is endowed with the connection $\n^\Fp$ defined by
\begin{equation}
\n^\Fp_U s= \n^{\eta\otimes L^p}_{U^H} s,
\end{equation}
where $U^H$ is the horizontal lift of $U\in T_\R M$ in $T^H_{M,\R}N$.

Finally, $\A$ and $\Fp$ are equipped  with the $L^2$ metrics $h^\A$ and $h^\Fp$ associated to $g^{T_\R Y}$, $h^\eta$ and $h^L$. By Remark \ref{N->Mkahler} and  \cite[Thm. 1.5]{MR929147}, we know that $\n^\A$ and $\n^\Fp$ preserve the metrics $h^\A$ and $h^\Fp$. Furthermore, if  $\n^{F_p}$ is the Chern connection on $(F_p,h^{F_p})$, then by \eqref{defnablaEeq} and \eqref{nablaHetnablaEkahler}, we have
\begin{equation}
\n^{F_p} = P_p\n^\Fp P_p.
\end{equation}

Let $R^{F_p}$ be the curvature of $\n^{F_p}$. We denote again by $P_p$ the projection from $\Lambda^\bullet(T^*_\R M)\otimes \Fp$ onto $\Lambda^\bullet(T^*_\R M)\otimes F_p$. The following theorem of Ma-Zhang \cite[Thm 2.1]{MR2286586} is the cornerstone of our approach.
\begin{thm}
\label{RFpToeplitz}
Let $f\in\smooth\big(M,\A\big)$. The forms $\n^{F_p}T_{f,p}$ and $\frac{1}{p}R^{F_p}$  are Toeplitz operators valued  form, which means that there are $\varphi_r(f)\in \smooth\big( M, T^*_\R M\otimes \A\big) $ and $R_r\in \smooth \big( M, \Lambda^2(T^*_\R M)\otimes \A\big)$ such that
\begin{equation}
\begin{aligned}
&\n^{F_p}T_{f,p} = \sum_{r=0}^{+\infty} T_{\varphi_r(f),p} p^{-r} + O(p^{-\infty}), \\
&\frac{1}{p}R^{F_p} = \sum_{r=0}^{+\infty} T_{R_r,p} p^{-r} + O(p^{-\infty}).
\end{aligned}
\end{equation}
Moreover, For $U,V \in T_\R M$, we have
\begin{equation}
\begin{aligned}
&\varphi_0(f)(U)=\n^{\eta}_{U^H}f, \\
& R_0(U,V) = R^L(U^H,V^H).
\end{aligned}
\end{equation}
\end{thm}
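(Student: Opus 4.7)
The plan is to reduce both assertions to properties of the Bergman projector $P_p$ and its covariant derivatives along $M$, using the off-diagonal asymptotic expansion of the Bergman kernel from \cite[Thm.~4.2.1]{ma-marinescu} together with the Toeplitz calculus of \cite[Chap.~7]{ma-marinescu}. My starting point is that on sections of the subbundle $F_p\subset\Fp$ the Chern connection factors as $\nabla^{F_p}=P_p\nabla^{\Fp}P_p$, and differentiating $P_p^2=P_p$ yields the algebraic identity $P_p(\nabla^{\Fp}_U P_p)P_p=0$, which I will use repeatedly.

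For the first statement I would first establish, by direct manipulation based on these identities, that as operators on $F_p$
\begin{equation*}
\nabla^{\End(F_p)}_U T_{f,p} \;=\; T_{\nabla^{\eta}_{U^H} f,\, p} \;+\; P_p(\nabla^{\Fp}_U P_p)(I-P_p) f P_p \;+\; P_p f (I-P_p)(\nabla^{\Fp}_U P_p) P_p.
\end{equation*}
Since $\nabla^{\Fp}_U$ acts on sections of $\Fp$ as $\nabla^{\eta\otimes L^p}_{U^H}$ and the $L^p$-part is a scalar derivation that cancels in the commutator with multiplication by $f$, the first term is exactly $T_{\nabla^{\eta}_{U^H}f,p}$, which identifies the announced $\varphi_0(f)(U)=\nabla^{\eta}_{U^H}f$. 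The two remaining pieces have the form $P_p Q_p P_p$ with $Q_p$ containing an off-diagonal factor $(I-P_p)\nabla^{\Fp}_U P_p$; substituting the Bergman kernel asymptotic and using that $(I-P_p)$ kills the holomorphic leading part will save one power of $p$, so these pieces will be Toeplitz of order $O(p^{-1})$. Iterating along the lines of \cite[Lem.~7.2.4]{ma-marinescu} will then produce the full expansion $\sum_r p^{-r}T_{\varphi_r(f),p}+O(p^{-\infty})$.

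For the curvature, the same algebraic manipulation yields
\begin{equation*}
R^{F_p}(U,V) \;=\; P_p R^{\Fp}(U,V) P_p \;+\; P_p\bigl[(\nabla^{\Fp}_U P_p)(\nabla^{\Fp}_V P_p)-(\nabla^{\Fp}_V P_p)(\nabla^{\Fp}_U P_p)\bigr]P_p,
\end{equation*}
and at the bundle level $R^{\Fp}(U,V)=p\,R^L(U^H,V^H)\,\mathrm{Id}_{\eta}+(\text{terms of order }O(1)\text{ in }p)$. Dividing by $p$ and applying the Toeplitz calculus to each piece will give, at leading order, $T_{R^L(U^H,V^H),p}$, which matches $R_0$; the second-fundamental-form contribution and the subleading pieces of the first term will be Toeplitz of lower order by the same argument. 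The main obstacle will be the rigorous expansion of the off-diagonal operator $(I-P_p)\nabla^{\Fp}_U P_p$ as a formal Toeplitz series, which rests on the uniform off-diagonal Agmon-type estimates for $P_p(y,y')$ and its first $M$-derivatives provided by \cite[Chap.~4--7]{ma-marinescu}; once these structural inputs are in place, both statements reduce to a bookkeeping of coefficients.
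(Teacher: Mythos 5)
The paper does not actually prove Theorem~\ref{RFpToeplitz}: it is imported wholesale from Ma--Zhang (cited as \cite[Thm.~2.1]{MR2286586}, ``\ldots the cornerstone of our approach''), so there is no in-paper proof to compare your sketch against. With that caveat, your algebraic reductions are essentially the right (and presumably Ma--Zhang's) ones, and they are correct: the identity for $\nabla^{\End(F_p)}_UT_{f,p}$ is exactly what one gets from $\nabla^{F_p}=P_p\nabla^{\Fp}P_p$, Leibniz, and $P_p(\nabla^{\Fp}_UP_p)P_p=0$, and your Gauss--Codazzi-type formula for $R^{F_p}$ (including the sign) follows the same way.

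There is, however, one genuine gap in the curvature part. You write $R^{\Fp}(U,V)=p\,R^L(U^H,V^H)\,\Id_\eta+O(1)$, but $\Fp$ is an infinite-rank bundle and its curvature contains a contribution from the torsion of the horizontal distribution of $\pi_1$. Since $[U^H,V^H]$ and $[U,V]^H$ differ by a vertical vector field $T^{U,V}=-P^{T_\R Y}[U^H,V^H]$, one finds
\begin{equation*}
R^{\Fp}(U,V)=R^{\eta\otimes L^p}(U^H,V^H)-\nabla^{\eta\otimes L^p}_{T^{U,V}},
\end{equation*}
and the extra term is a first-order vertical differential operator whose $L^p$-part is of order $p$ in any local frame; it is not ``$O(1)$'' as you claim. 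To make your argument work one must show that $\tfrac{1}{p}P_p\,\nabla^{\eta\otimes L^p}_{T^{U,V}}\,P_p$ is itself a Toeplitz operator of order $O(p^{-1})$. This does hold, but it requires a separate argument: splitting $T^{U,V}$ into $(1,0)$ and $(0,1)$ parts, using that $\nabla^{(0,1)}$ kills holomorphic sections, and integrating by parts along $Y$ to trade the $(1,0)$-derivative for a zeroth-order (divergence) term. Without this step your statement that ``the remaining pieces \ldots will give, at leading order, $T_{R^L(U^H,V^H),p}$'' is not justified by what you wrote.

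Beyond that, your handling of the off-diagonal terms $P_p(\nabla^{\Fp}_UP_p)(I-P_p)fP_p$ is in the right spirit but remains a promissory note: extracting a \emph{full} $O(p^{-\infty})$ Toeplitz expansion (not just a one-power gain) from the Bergman kernel asymptotics requires the near-diagonal expansion of the kernel of $\nabla^{\Fp}_UP_p$ together with the criterion in \cite[Thm.~7.3.1]{ma-marinescu}; you correctly flag this as the main technical obstacle, so as a sketch this is acceptable, but it is where the real analytic content of the Ma--Zhang theorem lives.
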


Using the Lichnerowicz formula \eqref{Lichnerowiczeq} and Theorem \ref{RFpToeplitz}, we deduce that for $b\in B$,
\begin{equation}
\label{Bup2=Toeplitz}
B_{p,u}^2{}|_{X_b} \in \mathrm{Op}(X_b)\otimes \Lambda^\bullet(T^*_bB) \otimes \End\left(\Wedge(T^*X_b)\otimes\xi|_{X_b}\right)\otimes \C[p]\otimes \toep|_{X_b},
\end{equation}
where $\mathrm{Op}(X_b)$ is the algebra of scalar differential operators on $X_b$.

\subsection{Operators with Toeplitz coefficients.}
\label{operatorwithToeplitzcoeff}

In this section, we extend the results of \cite[Sects. 7.2-7.4]{ma-marinescu} to the case of Toeplitz operators with value in the algebra of bounded operator on a fixed Hilbert space. We use the notations of Sections \ref{toeplitzop} and \ref{Infinitedimensionalbundles}, and we work on a single fiber $Y_m$, which will be simply denoted by $Y$.

Let $(\mathcal{H},\langle\cdot ,\cdot\rangle_{\mathcal{H}})$ be a Hilbert space and $\mathcal{B(H)}$ the algebra of bounded operators on $\mathcal{H}$.  We denote again by $P_{p}$ the orthogonal projection 
\begin{equation}
P_{p}\otimes \Id_{\mathcal{H}} \colon L^2(Y, L^p\otimes\eta)\otimes \mathcal{H}=L^2(Y, L^p\otimes\eta\otimes \mathcal{H}) \to H^0(Y,L^p\otimes\eta)\otimes \mathcal{H},
\end{equation}
and for every smooth family $A(y) \in \End(\eta_y)\otimes \mathcal{B(H)}$, $y\in Y$, we can define the operator 
\begin{equation}
T_{A,p}=P_{p} A(\cdot) P_{p} \colon L^2(Y, L^p\otimes\eta\otimes \mathcal{H}) \to L^2(Y, L^p\otimes\eta\otimes \mathcal{H}).
\end{equation}

Here again, we denote by $\|\cdot \|_\infty$ the operator norm for bounded operators acting on the Hilbert space $L^2(Y, L^p\otimes\eta\otimes \mathcal{H})$.

We extend the definition of Toeplitz operators  to this situation: here again we call Toeplitz operator a family of operators $T_p\in\End( L^2(Y,L^p\otimes\eta\otimes \mathcal{H}))$ satisfying the two properties of Definition \ref{def-op-de-Toepltiz}, with $f_r \in \smooth\big(Y, \End(\eta)\otimes \mathcal{B(H)}\big)$.

The results of \cite[Sects. 7.2-7.4]{ma-marinescu} can be easily extended to the present situation, and the proofs of results below proceed as of the proofs of \cite{ma-marinescu}, replacing therein $\End(E_{x_0})$ by $\End(\eta_{y_0})\otimes \mathcal{B(H)}$ endowed with the operator norm. The important point is the we use the operator norm here, which has similar properties in finite and infinite dimensions. We will thus not give details of the proofs in the rest of this section.

\begin{lemme}
\label{toeplitztenseurhilbert1}
The operator $T_{A,p}$ has a smooth  Schwartz kernel 
\begin{equation}
T_{A,p}(y,y') \in (L^p\otimes\eta)_y\otimes (L^p\otimes\eta)^*_{y'}\otimes\mathcal{B(H)}
\end{equation}
  with respect to $dv_Y(y')$. 

For $\e>0$, $\ell,m\in \N$, there is  $C_{\ell,m,\e}>0$ such that for all $p\geq 1$ and $y,y'\in Y $ with $d(y,y')>\e$,
\begin{equation}
\label{awayfromdiag}
\left \| T_{A,p}(y , y') \right \|_{\mathscr{C}^m(Y\times Y)} \leq C'_{\ell,m,\e}p^{-\ell},
\end{equation}
where the $\mathscr{C}^m$-norm is induced by $\n^L$, $\n^\eta$, the usual derivation on $\mathcal{H}$ and $h^L$, $h^\eta$, $\| \cdot \|_{\mathcal{H}}$. 
\end{lemme}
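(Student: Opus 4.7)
The first assertion reduces to writing down an explicit integral formula for the kernel. Denoting by $P_{p}(y,y')$ the (smooth) Schwartz kernel of the finite-rank orthogonal projection $L^{2}(Y,L^{p}\otimes\eta)\to H^{0}(Y,L^{p}\otimes\eta)$ with respect to $dv_{Y}(y')$, and observing that $P_{p}\otimes\mathrm{Id}_{\mathcal H}$ is represented by the same kernel tensored with $\mathrm{Id}_{\mathcal H}$, I would record that
\[
T_{A,p}(y,y')\;=\;\int_{Y}P_{p}(y,z)\,A(z)\,P_{p}(z,y')\,dv_{Y}(z),
\]
the product being taken in $\End(\eta)\otimes\mathcal{B}(\mathcal{H})$. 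Since $A$ is smooth, $P_{p}$ is smooth on $Y\times Y$, and $Y$ is compact, the kernel $T_{A,p}(\cdot,\cdot)$ is smooth and derivatives in $(y,y')$ may be taken under the integral sign; no convergence issue arises because, at each fixed $p$, the Hilbert space $\mathcal{H}$ only appears as a spectator through the coefficient $A(z)$.

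The key analytic input for the decay statement is the off-diagonal Bergman kernel estimate of Dai--Liu--Ma recorded in \cite[Thm.~4.2.1]{ma-marinescu}: for every $\e'>0$ and $\ell,m\in\N$ there exists $C>0$ with
\[
\|P_{p}(y,y')\|_{\mathscr{C}^{m}}\;\leq\;C\,p^{-\ell}\qquad\text{whenever }d(y,y')\geq \e',
\]
together with the uniform bound $\|P_{p}(y,y')\|_{\mathscr{C}^{m}}\leq C p^{n_{Y}+m/2}$ on $Y\times Y$. Given $y,y'$ with $d(y,y')\geq\e$, every $z\in Y$ satisfies $d(y,z)\geq\e/2$ or $d(z,y')\geq\e/2$, so I would introduce a smooth cut-off $\chi$ equal to $1$ where $d(y,z)\geq\e/2$ and to $0$ where $d(y,z)\leq\e/4$, split $T_{A,p}(y,y')$ into two integrals, and on each piece apply the off-diagonal decay to the Bergman factor that is far from the diagonal, while bounding the other Bergman factor by $C p^{n_{Y}+m/2}$ and $A$ by its uniform operator norm. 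Choosing $\ell'$ large enough in the off-diagonal estimate absorbs the $p^{n_{Y}+m/2}$ loss and the $p^{n_{Y}}$ coming from the $z$-integration (by the $O(p^{-n_{Y}})$-decay of $\operatorname{vol}$-normalised contributions), leaving the desired $p^{-\ell}$. The $\mathscr{C}^{m}$ version is obtained by distributing derivatives on the integrand: those falling on $A$ are harmless by smoothness, those falling on $P_{p}$ are handled by the $\mathscr{C}^{m}$-version of the same decay.

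The one conceptual point — and the only reason this lemma is not a direct quotation from \cite[Sects.~7.2--7.4]{ma-marinescu} — is the passage from $\End(\eta)$-valued to $\End(\eta)\otimes\mathcal{B}(\mathcal{H})$-valued coefficients. Since we work throughout with the operator norm on $\mathcal{B}(\mathcal{H})$, which is sub-multiplicative and agrees, on any finite-dimensional factor such as $\End(\eta_{y_{0}})$, with the usual endomorphism norm, and since the Bergman kernel $P_{p}(y,z)$ acts only on $L^{p}\otimes\eta$ (i.e.\ commutes with $\mathrm{Id}_{\mathcal{H}}$), the estimates of \cite[Sects.~7.2--7.4]{ma-marinescu} translate verbatim after replacing $\End(E_{x_{0}})$ by $\End(\eta_{y_{0}})\otimes\mathcal{B}(\mathcal{H})$. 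I therefore do not anticipate any genuine analytical obstacle; the only step requiring a little care is the bookkeeping in the splitting argument, to make sure the operator-norm bounds compose in the right order and that the large power of $p$ coming from the near-diagonal Bergman bound is beaten by the off-diagonal gain.
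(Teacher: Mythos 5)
Your proof is correct and follows the same route as the paper, which does not prove Lemma~\ref{toeplitztenseurhilbert1} explicitly: at the start of Section~\ref{operatorwithToeplitzcoeff} it simply remarks that the arguments of \cite[Sects.~7.2--7.4]{ma-marinescu} carry over verbatim once $\End(E_{x_0})$ is replaced by $\End(\eta_{y_0})\otimes\mathcal{B(H)}$ with the operator norm, and your write-up is precisely an unpacking of that reference (the proof of \cite[Lem.~7.2.2]{ma-marinescu}: kernel integral, cut-off splitting, off-diagonal Bergman decay). Two small inaccuracies that do not affect the argument: the far-from-diagonal Bergman decay you invoke is \cite[Prop.~4.1.5]{ma-marinescu} rather than Thm.~4.2.1 (the latter is the near-diagonal expansion), and the parenthetical about a $p^{n_Y}$ loss ``coming from the $z$-integration'' is spurious --- $dv_Y$ is $p$-independent, so the integral over compact $Y$ contributes only a fixed constant; the $p^{n_Y+m/2}$ loss comes entirely from the uniform near-diagonal bound on the Bergman factor and is, as you correctly conclude, absorbed by taking the off-diagonal power large.
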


Recall that $TY$ is endowed with the Hermitian structure induced by $R^L|_{TY\times TY}$. For $y_0 \in Y$, we choose $\{v_i\}_{i=1}^{n_Y}$ an orthonormal basis of $T_{y_0}Y$. Then $u_{2j-1}=\frac{1}{\sqrt{2}}(v_j+\ol{v}_j)$ and $u_{2j} =  \frac{\ic}{\sqrt{2}}(v_j-\ol{v}_j)$, $j=1,\dots,n_Y$, forms an orthonormal basis of $T_{\R,y_0}Y$, which gives use an isomorphism $T_{\R,y_0}Y\simeq \R^{2n_Y}$. We denote the dependence on the base point $y_0$ by adding a superscript $y_0$.
 
On $\R^{2n_Y}\simeq \C^{n_Y}$, we denote the coordinates by $(W_1,\dots,W_{2n_Y})$ or $(w_1,\dots,w_{n_Y})$, with $w_j = W_{2j-1}+\ic W_{2j}$. Let $\mathscr{P}$ be the operator on $L^2(\R^{2n_Y} )$ defined by its kernel with respect to $dW$:
 \begin{equation}
\mathscr{P}(W,W') = \frac{1}{(2\pi)^m} \exp\left(-\frac{1}{4}(|w|^2+|w'|^2-2w\cdot w')\right).
\end{equation}
Then $\mathscr{P}$ is the usual Bergman kernel on $\C^{n_Y}$.

 We fix $y_0\in Y$. As usually, for $\e>0$ small enough, we identify the geodesic ball $B^Y(y_0,4\e)$ with the ball $B^{T_{\R,y_0}Y}(0,4\e)$ in $T_{\R,y_0}Y$ via the exponential map. The various bundles appearing here on $B^{T_{\R,y_0}Y}(0,4\e)$ are trivialized by mean of orthonormal  frames at $y_0$ and of parallel transport for the corresponding connections along the rays $u\in [0,1]\mapsto uW$. Let $dv_{T_\R Y}$ be the volume form on $(T_{\R,y_0}Y,g^{T_{\R,y_0}Y})$, we denote by $\tau_{y_0}$ the function satisfying
\begin{equation}
dv_Y(W) = \tau_{y_0}(W) dv_{T_\R Y}(W), \quad \tau_{y_0}(0)=1.
\end{equation}

 Let $\mathrm{pr}_Y$  be the natural projection from the fiberwise product $T_\R Y\times_Y T_\R Y$ to $Y$. Consider an operator $\Xi_p\colon L^2(Y, L^p\otimes\eta)\otimes \mathcal{H} \to  L^2(Y, L^p\otimes\eta)\otimes \mathcal{H}$ which as a smooth kernel $\Xi_p(y,y')$ with respect to $dv_Y(y')$. Under our trivialization, this kernel induces a smooth section $\Xi_p^{y_0}(W,W')$ of $\mathrm{pr}_Y^*(\End{\eta})\otimes \mathcal{B(H)}$ over $\{(y,W,W') \,: \, |W|,|W'|\leq 4\e\}\subset T_\R Y\times_Y T_\R Y$.
 
  Let $Q_{r,y_0} \in \End(\eta_{y_0})\otimes \mathcal{B(H)}[W,W']$, $r\in \N$, be polynomials in $W,W'$ with values in $\End(\eta_{y_0})$ which depends smoothly on $y_0\in Y$.  In the sequel, we denote
 \begin{equation}
 \label{def-de-cong}
p^{-n_Y} \Xi_p^{y_0} (W,W') \cong \sum_{r=0}^k\big(Q_{r,y_0}\mathscr{P}\big)(\sqrt{p}W,\sqrt{p}W')p^{-\frac{r}{2}} + O(p^{-\frac{k+1}{2}})
\end{equation}
if there exist $0<\e'<4\e$ and $C_0>0$ such that for any $\ell \in \N$, there exist $C_{k,\ell},M>0$ such that for any $W,W' \in T_{\R,y_0}Y$, $|W|,|W'|<\e'$ and any $p$, we have 
\begin{multline}
 \label{def-de-cong-bis}
\left \|p^{-n_Y} \Xi_p^{y_0} (W,W')\tau_{y_0}^{1/2}(W)\tau_{y_0}^{1/2}(W')- \sum_{r=0}^k\big(Q_{r,y_0}\mathscr{P}\big)(\sqrt{p}W,\sqrt{p}W')p^{-\frac{r}{2}}\right\|_{\mathscr{C}^\ell(Y)} \\
\leq C_{k,\ell}p^{-\frac{k+1}{2}}(1+\sqrt{p}|W|+\sqrt{p}|W'|)^M e^{-\sqrt{C_0p}|W-W'|} + O(p^{-\infty}).
\end{multline}
Here, $\mathscr{C}^\ell(Y)$ denotes the $\mathscr{C}^\ell$-norm for the parameter $y_0\in Y$ induced by the operator norms on $\End(\eta_{y_0})$ and $\mathcal{B(H)}$, and by $O(p^{-\infty})$ we mean a term such that for any $\ell,\ell_1\in \N$, there exists $C_{\ell,\ell_1}>0$ such that its $\mathscr{C}^{\ell_1}$-norm is dominated by $C_{\ell,\ell_1} p^{-\ell}$.

Recall that by \cite[Lem. 7.2.3]{ma-marinescu}, there exist $J_{r,y_0} \in \End(\eta_{y_0})[W,W']$ polynomials in $W,W'$ with values in $\End(\eta_{y_0})$ with the same parity as $r$ and with $J_{0,y_0} = \Id_{\eta_{y_0}}$, such that
\begin{equation}
\label{expansionPpy0(W,W')}
p^{-n_Y} P_p^{y_0} (W,W') \cong \sum_{r=0}^k\big(J_{r,y_0}\mathscr{P}\big)(\sqrt{p}W,\sqrt{p}W')p^{-\frac{r}{2}} + O(p^{-\frac{k+1}{2}})
\end{equation}

\begin{lemme}
\label{toeplitztenseurhilbert2}
Let $A\in \smooth(Y,\End(\eta) \otimes \mathcal{B(H)})$. Then there exist a family of $\End(\eta_{y_0})\otimes \mathcal{B(H)}$-valued polynomials $\{ Q_{r,y_0}(A)\}_{r\in \N, y_0 \in Y}$ with the same parity as $r$ and smooth in $y_0\in Y$ such that for any $k\in \N$, $|Z|,|Z'| < \e/2$,
\begin{equation}
\label{expansionTA,p,y0}
p^{-n_Y} T_{A,p}^{y_0}(W,W') \cong \sum_{r=0}^k\big(Q_{r,y_0}(A)\mathscr{P}\big)(\sqrt{p}W,\sqrt{p}W')p^{-\frac{r}{2}} + O(p^{-\frac{k+1}{2}}),
\end{equation}
and moreover,
\begin{equation}
\label{Q0}
Q_{0,y_0}(A) = A(y_0).
\end{equation}
\end{lemme}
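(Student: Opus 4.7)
The plan is to mimic the argument of \cite[Lem.~7.2.4]{ma-marinescu}: that proof treats the scalar case and every one of its estimates is phrased in terms of the operator norm on the coefficient algebra, so the extension to $\End(\eta_{y_0})\otimes\mathcal{B(H)}$ (equipped with its operator norm) should come at essentially no analytic cost. This is exactly the observation the author emphasizes at the start of the subsection, so the main work will be notational bookkeeping rather than new analysis.

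First I would write
\[
T_{A,p}(y,y') = \int_Y P_p(y,u)\, A(u)\, P_p(u,y')\, dv_Y(u).
\]
By Lemma \ref{toeplitztenseurhilbert1} applied to $P_p$ (i.e.\ to $A=\Id$), the integrand is $O(p^{-\infty})$ in every $\mathscr{C}^\ell$-norm as soon as $d(y,u)$ or $d(u,y')$ exceeds some small $\e'$, so after shrinking the domain I may take the $u$-integration over a geodesic ball around $y_0$ and work in the normal coordinates/trivialization already fixed there. Then I Taylor expand
\[
A(U) = \sum_{|\alpha|\leq N} \frac{U^\alpha}{\alpha!}(\partial^\alpha A)(y_0) + R_N(U),\qquad \|R_N(U)\|\leq C_N |U|^{N+1},
\]
with coefficients in $\End(\eta_{y_0})\otimes\mathcal{B(H)}$, and substitute the expansion \eqref{expansionPpy0(W,W')} into both Bergman factors. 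After the change of variable $U=V/\sqrt{p}$ each summand becomes
\[
p^{-(r+r'+|\alpha|)/2}\!\!\int_{T_{\R,y_0}Y}\!\!(J_{r,y_0}\mathscr{P})(\sqrt{p}W,V)\, V^{\alpha}(\partial^{\alpha}A)(y_0)\,(J_{r',y_0}\mathscr{P})(V,\sqrt{p}W')\, dV,
\]
and the polynomial--Gaussian calculus used throughout \cite[Sect.~7.2]{ma-marinescu} collapses such integrals to $\tilde Q_{r,r',\alpha,y_0}(A)(\sqrt{p}W,\sqrt{p}W')\,\mathscr{P}(\sqrt{p}W,\sqrt{p}W')$ for an explicit polynomial $\tilde Q_{r,r',\alpha,y_0}(A)$ in $(W,W')$. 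Grouping terms by total weight in $p^{-1/2}$ defines
\[
Q_{r,y_0}(A) \;=\; \sum_{r_1+r_2+|\alpha|=r}\tilde Q_{r_1,r_2,\alpha,y_0}(A),
\]
smooth in $y_0$; its parity in $(W,W')$ will coincide with that of $r$ because $J_{r_i,y_0}$ has parity $r_i$, $V^\alpha$ has parity $|\alpha|$, and $\mathscr{P}\,dV$ is even.

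The remainder estimate \eqref{def-de-cong-bis} will then follow from the Taylor remainder $R_N$ together with the Gaussian decay already built into \eqref{expansionPpy0(W,W')}. The only genuine subtlety, and the main (though mild) obstacle, is to verify that every one of the bounds in \cite[Sect.~7.2]{ma-marinescu} still holds when the coefficient algebra is $\End(\eta_{y_0})\otimes\mathcal{B(H)}$ rather than $\End(\eta_{y_0})$; this reduces to checking that only submultiplicativity and the triangle inequality of the operator norm are ever invoked, which is the case. For the leading coefficient only the term $r_1=r_2=|\alpha|=0$ contributes, $J_{0,y_0}=\Id_{\eta_{y_0}}$, and the reproducing property
\[
\int_{T_{\R,y_0}Y} \mathscr{P}(\tilde W,V)\,\mathscr{P}(V,\tilde W')\,dV \;=\; \mathscr{P}(\tilde W,\tilde W')
\]
of the flat Bergman kernel on $\C^{n_Y}$ immediately yields $Q_{0,y_0}(A) = A(y_0)$, establishing \eqref{Q0}.
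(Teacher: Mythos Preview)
Your proposal is correct and follows exactly the route the paper indicates: the paper does not prove this lemma in detail but simply states that the proofs of \cite[Sects.~7.2--7.4]{ma-marinescu} carry over verbatim once $\End(E_{x_0})$ is replaced by $\End(\eta_{y_0})\otimes\mathcal{B(H)}$ with the operator norm. Your sketch is precisely a faithful unpacking of the argument of \cite[Lem.~7.2.4]{ma-marinescu}, and your identification of the only point to check---that nothing beyond the triangle inequality and submultiplicativity of the norm is used---is exactly the observation the paper highlights before stating the lemma.
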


We now state the analogue of \cite[Thm 7.3.1]{ma-marinescu}, which gives a criterion for being a Toeplitz operator.
\begin{thm}
\label{critere-Toeplitz}
 Let $T_p \colon L^2(Y, L^p\otimes\eta\otimes \mathcal{H}) \to L^2(Y, L^p\otimes\eta\otimes \mathcal{H})$ be a family of bounded linear operators which satisfies the following three conditions:
 \begin{enumerate}[label=\emph{(\roman*)}]
 \item for any $p\in \N$, $P_pT_pP_p=T_p$;
 \item for any $\e_0>0$ and $\ell,m \in \N$, there exists $C_{\ell,m}>0$ such tat for all $p\geq 1$ and all $y,y' \in Y$ with $d(y,y')>\e_0$,
 \begin{equation}
\|T_p(y,y')\|_{\mathscr{C}^m(Y\times Y)}\leq C_{\ell,m}p^{-\ell};
\end{equation}
\item there exists a family of polynomial $\mathcal{Q}_{r,y_0}\in \End(\eta_{y_0})\otimes \mathcal{B(H)}[W,W']$ with the same parity as $r$ and depending smoothly in $y_0$  such that in the sense of \eqref{def-de-cong} and \eqref{def-de-cong-bis},
 \begin{equation}
p^{-n_Y} T_p^{y_0} (W,W') \cong \sum_{r=0}^k\big(\mathcal{Q}_{r,y_0}\mathscr{P}\big)(\sqrt{p}W,\sqrt{p}W')p^{-\frac{r}{2}} + O(p^{-\frac{k+1}{2}}).
\end{equation}
  \end{enumerate}
  Then $\{T_p\}_{p\geq 1}$ is a Toeplitz operator.
\end{thm}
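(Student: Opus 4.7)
The plan is to mimic the proof of \cite[Thm.~7.3.1]{ma-marinescu}, using the iterative construction of the symbols $f_r$, with the key observation that all estimates there are stated in terms of operator norms on $\End(\eta_{y_0})$, so they transcribe verbatim to the operator norm on $\End(\eta_{y_0})\otimes\mathcal{B(H)}$. In particular, Lemma \ref{toeplitztenseurhilbert1} gives the off-diagonal decay for Toeplitz operators valued in $\mathcal{B(H)}$, Lemma \ref{toeplitztenseurhilbert2} gives the near-diagonal expansion with leading coefficient $Q_{0,y_0}(A)=A(y_0)$, and \eqref{||Tfp||infinie} (which extends immediately since $P_p\otimes\Id_{\mathcal{H}}$ is still a projection) controls $\|T_{f,p}\|_\infty$ by $\|f\|_{\mathscr{C}^0}$.

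The first step is to extract $f_0$. One would like to set $f_0(y_0):=\mathcal{Q}_{0,y_0}(0,0)\in\End(\eta_{y_0})\otimes\mathcal{B(H)}$, which is smooth in $y_0$ by hypothesis. Using (i), the fact that $P_p^{y_0}$ has the expansion \eqref{expansionPpy0(W,W')} with $J_{0,y_0}=\Id$, and the algebraic identity $P_p T_p P_p=T_p$ applied on the kernel level near the diagonal, one shows that $\mathcal{Q}_{0,y_0}(W,W')$ is forced to be independent of $(W,W')$ and hence equal to $f_0(y_0)$; this step is the direct analogue of the ``reproducing'' argument of \cite[Lem.~7.2.4, Thm.~7.3.1]{ma-marinescu}, and uses only the Gaussian convolution structure of $\mathscr{P}$, not finite-dimensionality of the coefficients. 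Comparing with Lemma \ref{toeplitztenseurhilbert2} applied to $A=f_0$, the operator $T_p^{(1)}:=p\bigl(T_p-T_{f_0,p}\bigr)$ still satisfies (i) and (ii); thanks to the parity condition on the $\mathcal{Q}_{r,y_0}$ and $J_{r,y_0}$, the hypothetical coefficient of $p^{-1/2}$ in $T_p^{(1)}$ vanishes, so (iii) also holds for $T_p^{(1)}$ with new polynomials $\mathcal{Q}_{r,y_0}^{(1)}$.

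Iterating this procedure produces a sequence $f_r\in\smooth(Y,\End(\eta)\otimes\mathcal{B(H)})$ with $P_p\bigl(T_p-\sum_{r=0}^{k}p^{-r}T_{f_r,p}\bigr)P_p$ satisfying the expansion (iii) starting at order $p^{-(k+1)}$. To convert this pointwise kernel bound into the required operator norm estimate $\bigl\|T_p-\sum_{r=0}^{k}p^{-r}T_{f_r,p}\bigr\|_\infty\le C_k p^{-k-1}$, one splits into near-diagonal and off-diagonal contributions: off-diagonal one uses \eqref{awayfromdiag} and the analogous statement for $T_p$ via condition (ii); near-diagonal one uses the pointwise bound \eqref{def-de-cong-bis} integrated against $dv_Y$, which gives an $L^1\to L^\infty$ bound, and converts this to an operator norm bound by a standard Schur-type test. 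All of these estimates work with $\|\cdot\|_{\End(\eta_{y_0})\otimes\mathcal{B(H)}}$ in place of $\|\cdot\|_{\End(\eta_{y_0})}$ because the operator norm on $\mathcal{B(H)}$ satisfies submultiplicativity and the $C^*$ identity.

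The main obstacle will be the inductive step: verifying that $T_p^{(r+1)}=p(T_p^{(r)}-T_{f_r,p})$ retains the near-diagonal expansion (iii) with polynomials smooth in $y_0$, and that the remainder terms genuinely improve by a factor $p^{-1/2}$ at each step. This requires a careful bookkeeping of the parities of $\mathcal{Q}_{r,y_0}^{(r)}$ and $J_{s,y_0}$ to kill the half-integer powers, exactly as in the proof of \cite[Thm.~7.3.1]{ma-marinescu}. Once this is established, the Schur-test argument in the previous paragraph closes the proof; no new analytic ingredient beyond those already provided by Lemmas \ref{toeplitztenseurhilbert1}--\ref{toeplitztenseurhilbert2} and the expansion \eqref{expansionPpy0(W,W')} is required.
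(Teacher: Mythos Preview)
Your proposal is correct and follows essentially the same approach as the paper: the paper does not give a detailed proof of this theorem but simply notes that the results of \cite[Sects.~7.2--7.4]{ma-marinescu} extend verbatim upon replacing $\End(E_{x_0})$ by $\End(\eta_{y_0})\otimes\mathcal{B(H)}$ endowed with the operator norm, the key point being that the operator norm behaves identically in finite and infinite dimensions. Your outline of the iterative extraction of the $f_r$ and the parity bookkeeping is precisely the content of \cite[Thm.~7.3.1]{ma-marinescu}, so you have in fact supplied more detail than the paper itself.
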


The main result of this section (at least as far as the rest of this paper is concerned) provides an analogue of \eqref{coeff-composition-Toep}. It is proved in the same way as \cite[Thm. 7.4.1]{ma-marinescu}, using Lemma \ref{toeplitztenseurhilbert2} and Theorem \ref{critere-Toeplitz}.

\begin{thm}
\label{prodtoeplitztenseurhilbert}
For any $A,B\in \smooth (Y, \End(\eta) \otimes\mathcal{B(H)})$, the product of $T_{A,p}$ and $T_{B,p}$ is a Toeplitz operator. More precisely, there are bidifferential operators $C_r$ such that in the sense of \eqref{notation-toeplitz},
\begin{equation}
\label{prodtoeplitztenseurhilbert,eq}
T_{A,p}T_{B,p}=\sum_{r=0}^{+\infty} p^{-r} T_{C_r(A,B),p} +O(p^{-\infty}),
\end{equation}
and we have
\begin{equation}
\label{prodtoeplitztenseurhilbert,eq-2}
C_0(A,B) = AB.
\end{equation}
\end{thm}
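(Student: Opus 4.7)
My plan is to mimic the proof of \cite[Thm. 7.4.1]{ma-marinescu}: I will verify the three hypotheses of Theorem \ref{critere-Toeplitz} for the composition $T_p := T_{A,p} T_{B,p}$, and then read off the leading coefficient from Lemma \ref{toeplitztenseurhilbert2}. The only nontrivial feature compared to the scalar case is that coefficients now take values in the (possibly infinite dimensional) algebra $\End(\eta_{y_0}) \otimes \mathcal{B(H)}$, but every estimate in the cited proofs is written in terms of the operator norm, so it transposes verbatim.

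Condition (i) of Theorem \ref{critere-Toeplitz} is immediate: since $A$ and $B$ are multiplication operators sandwiched between $P_p$'s, we have
\begin{equation}
P_p T_{A,p} T_{B,p} P_p = P_p (P_p A P_p)(P_p B P_p) P_p = T_{A,p} T_{B,p}.
\end{equation}
For condition (ii), I will write the Schwartz kernel of the product as
\begin{equation}
(T_{A,p} T_{B,p})(y,y') = \int_Y T_{A,p}(y,y'') \, T_{B,p}(y'',y') \, dv_Y(y''),
\end{equation}
and, given $\e_0 > 0$ with $d(y,y') > \e_0$, split the integral according to whether $d(y,y'') > \e_0/2$ or $d(y'',y') > \e_0/2$. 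In each piece one of the two kernels is controlled by Lemma \ref{toeplitztenseurhilbert1}, which, combined with the $L^2$-boundedness of the other Toeplitz operator (via \eqref{||Tfp||infinie}), gives the desired $O(p^{-\ell})$ off-diagonal bound together with its $\mathscr{C}^m$ versions in $(y,y')$.

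The heart of the argument is condition (iii), the diagonal expansion. Fix $y_0 \in Y$ and work in the normal coordinates of Section \ref{operatorwithToeplitzcoeff}. By Lemma \ref{toeplitztenseurhilbert2},
\begin{equation}
p^{-n_Y} T_{A,p}^{y_0}(W,W') \cong \sum_{r \geq 0} \bigl( Q_{r,y_0}(A) \mathscr{P} \bigr) (\sqrt{p}W, \sqrt{p}W')\, p^{-r/2},
\end{equation}
and similarly for $B$. Inserting both expansions into the convolution, making the change of variables $W'' = U/\sqrt{p}$ and using the remainder estimate \eqref{def-de-cong-bis} (with its Gaussian off-diagonal decay) to truncate the integration to $|U| \leq p^{1/4}$, I will obtain, for $|W|, |W'| < \e/2$,
\begin{equation}
p^{-n_Y} (T_{A,p} T_{B,p})^{y_0}(W,W') \cong \sum_{r \geq 0} p^{-r/2} \bigl( \mathcal{Q}_{r,y_0} \mathscr{P} \bigr)(\sqrt{p}W, \sqrt{p}W'),
\end{equation}
where each $\mathcal{Q}_{r,y_0}$ is given by the convolution-via-$\mathscr{P}$ formula (cf. \cite[(7.2.18)]{ma-marinescu})
\begin{equation}
\mathcal{Q}_{r,y_0}(W,W') = \sum_{r_1 + r_2 = r} \bigl( Q_{r_1,y_0}(A) \mathscr{P} \bigr) \ast_{\mathscr{P}} \bigl( Q_{r_2,y_0}(B) \mathscr{P} \bigr)(W,W').
\end{equation}
The parity of $\mathcal{Q}_{r,y_0}$ in $(W,W')$ follows from those of $Q_{r_1,y_0}(A)$ and $Q_{r_2,y_0}(B)$ because $\mathscr{P}$ is even and the convolution preserves parity modulo the reproducing identity $\mathscr{P} \ast \mathscr{P} = \mathscr{P}$.

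With the three conditions satisfied, Theorem \ref{critere-Toeplitz} yields that $T_{A,p}T_{B,p}$ is a Toeplitz operator. To extract $C_0(A,B) = AB$, I note that the $r=0$ coefficient from Lemma \ref{toeplitztenseurhilbert2} is $Q_{0,y_0}(A) = A(y_0)$ and similarly for $B$, so by the reproducing property of $\mathscr{P}$ the leading term at $(W,W')=(0,0)$ is $A(y_0) B(y_0)$; by the criterion for identifying the symbol of a Toeplitz operator via its diagonal expansion (the reverse direction of Lemma \ref{toeplitztenseurhilbert2}, obtained by matching coefficients and inducting on $r$), this forces $C_0(A,B) = AB$. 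The principal technical obstacle is to justify the convolution expansion and the cutoff of the $U$-integration with uniform operator norm estimates in the Hilbert space $\mathcal{H}$; this is where the passage from the scalar setting of \cite[Sect. 7.4]{ma-marinescu} to our setting must be carried out, but since all pointwise bounds are written in $\End(\eta_{y_0}) \otimes \mathcal{B(H)}$-operator norm, the arguments of \cite{ma-marinescu} apply unchanged.
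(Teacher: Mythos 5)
Your proposal is correct and follows exactly the strategy the paper indicates: verify the three conditions of Theorem \ref{critere-Toeplitz} (condition (i) is trivial, condition (ii) from Lemma \ref{toeplitztenseurhilbert1} and off-diagonal splitting, condition (iii) from the near-diagonal expansion of Lemma \ref{toeplitztenseurhilbert2}), then extract $C_0(A,B)=AB$ from the $r=0$ coefficients. The paper simply cites \cite[Thm.~7.4.1]{ma-marinescu} and remarks that the arguments carry over verbatim because all estimates are phrased in operator norm, which is precisely the observation you make.
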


\subsection{Localization.}
\label{localization2}

Fix  $b_0 \in B$. We use the same notations and trivializations that in Section \ref{localization}, except that we change therein $L^p$ by $F_p$, so that now
\begin{equation}
\begin{aligned}
\E_p&=\Lambda_{b_0}^\bullet(T^*_\R B)\otimes \left(\Wedge(T^*X)\otimes \xi \otimes F_p\right), \\
\E &=\Lambda_{b_0}^\bullet(T^*_\R B)\otimes \left(\Wedge(T^*X)\otimes \xi\right).
\end{aligned}
\end{equation}
Once again, we want to emphasize that the curtail difference with Section \ref{Sectionlinebundle} is that the dimension of $\E_p$ is not constant but grows to infinity. This is why we have to use  the operator norm on $\End(F_p)$ and Toeplitz operators (notably their boundedness and the properties of their derivatives).

We first prove that Lemma \ref{estimeeelliptique(p)} still holds in the present situation.
\begin{lemme}
\label{estimeeelliptique(p)2}
For any  $k\in \N$, there exists $C_k>0$  such that for any $p\geq 1$, $u>0$ and $s\in \sob^{2k+2}(X,\E_p)$,
\begin{equation}
||s||^2_{\sob^{2k+2}(p)}\leq C_k  p^{4k+4} \sum_{j=0}^{k+1}p^{-4j}||B_{p}^{2j}s||_{L^2}.
\end{equation}
\end{lemme}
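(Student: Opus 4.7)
My plan is to follow the same template as the proof of Lemma~\ref{estimeeelliptique(p)}, but with two adaptations that account for the fact that $F_p$ has growing dimension: (1) replacing the pointwise norms on $\End(L^p) \simeq \C$ by the fiberwise operator norm on $\End(F_p)$, and (2) using Theorem~\ref{RFpToeplitz} (together with \eqref{||Tfp||infinie}) to obtain \emph{uniform in $p$} bounds on the zeroth order coefficients that involve $\frac{1}{p}R^{F_p}$ and its covariant derivatives.

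First, I would trivialize $E_p$ over each $U_{x_k}$ by identifying $\xi$, $\Wedge(T^*X)$ with their fibers at $x_k$ via parallel transport for $\nabla^\xi$ and $\nabla^{\Wedge, LC}$, exactly as in Section~\ref{localization}. The bundle $F_p$ however is left intact, since its fibers have growing dimension. Writing Bismut's Lichnerowicz formula (Theorem~\ref{Lichnerowicz}) for $B_p^2$ with $\xi$ replaced by $\xi\otimes F_p$ and using $R^{\xi\otimes F_p} = R^\xi\otimes\Id + \Id\otimes R^{F_p}$ together with Theorem~\ref{RFpToeplitz}, the local structure of $B_p^2$ should be
\begin{equation}
\label{proof-struct-Bp2}
B_p^2 = D^{X,2}\otimes\Id_{F_p} + R + p\,\mathcal{O}_1 + p\,\mathcal{O}_0^1 + p^2\,\mathcal{O}_0^2,
\end{equation}
where $D^{X,2}$ is the fiberwise Dirac operator squared on $\Wedge(T^*X)\otimes\xi$, $R$ is of order $1$, and the $\mathcal{O}_i^j$ are of order $i$ with coefficients that are $p$-dependent sections of $\End(F_p)$. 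The point is that each such coefficient is either $R^{F_p}/p$ or a covariant derivative of it, hence by Theorem~\ref{RFpToeplitz} is a Toeplitz operator (in the sense of Definition~\ref{def-op-de-Toepltiz}) with principal symbol independent of $p$, and therefore by \eqref{||Tfp||infinie} is uniformly bounded in the operator norm on $F_p$ for $p\geq 1$.

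From \eqref{proof-struct-Bp2} and these uniform operator-norm bounds on the coefficients, I get the basic $\sob^2$-estimate
\begin{equation}
\label{proof-basic-H2}
\|s\|_{\sob^2(p)} \leq C\bigl(\|B_p^2 s\|_{L^2} + p^2\|s\|_{L^2}\bigr),
\end{equation}
with $C$ independent of $p$, by a standard G\aa{}rding-type argument applied locally: one tests against $s$, uses the ellipticity of $D^{X,2}\otimes\Id_{F_p}$ (whose G\aa{}rding constant is controlled by the geometry of the fiber $X$ alone, the tensoring with $F_p$ preserving the estimate), and absorbs the lower-order Toeplitz-coefficient terms using Cauchy--Schwarz and the uniform operator-norm bounds. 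The key observation is that $\|\mathcal{O}_0^2 s\|_{L^2} \leq C\|s\|_{L^2}$ uniformly in $p$ precisely because the coefficients of $\mathcal{O}_0^2$ (after dividing by the explicit powers of $p$ displayed in \eqref{proof-struct-Bp2}) are Toeplitz operators, and a Toeplitz operator $T_{f,p}$ acting fiberwise satisfies $\|T_{f,p}\|_\infty \leq \|f\|_{\mathscr{C}^0}$.

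The induction step then proceeds verbatim as in the proof of Lemma~\ref{estimeeelliptique(p)}. Given a compactly supported differential operator $Q$ of order $2k$ with scalar principal symbol, one commutes $Q$ through $B_p^2$, picking up commutators that are of order $\leq 2k$ and whose coefficients are again bounded uniformly by the Toeplitz argument; applying \eqref{proof-basic-H2} to $Qs$ and using the induction hypothesis on $\|s\|_{\sob^{2k+1}(p)}$ and $\|s\|_{\sob^{2k-1}(p)}$, one obtains Lemma~\ref{estimeeelliptique(p)2}.

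The main conceptual obstacle compared to Lemma~\ref{estimeeelliptique(p)} is that in \eqref{proof-struct-Bp2} the operators $\mathcal{O}_i^j$ act \emph{nontrivially} on the fibers of $F_p$, whose dimension blows up with $p$; the crucial point that makes the original induction still work is that their norms are uniformly controlled by Theorem~\ref{RFpToeplitz} and the boundedness estimate \eqref{||Tfp||infinie}. This is exactly the reason why the paper sets up the Toeplitz formalism at infinite-dimensional-bundle level in Section~\ref{Infinitedimensionalbundles}: it replaces the (now unavailable) finite-rank trivialization of $F_p$ used in Section~\ref{Sectionlinebundle}.
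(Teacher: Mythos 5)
Your strategy is essentially the same as the paper's: trivialize locally, write $B_p^2$ via the Lichnerowicz formula in the form $D^{X,2}+R+p\0_{1}+p\0_{0}^1+p^2\0_{0}^2$, observe that the $p$-dependent coefficients are Toeplitz operators uniformly bounded in operator norm by \eqref{||Tfp||infinie}, and then run the elliptic estimate and the induction verbatim from Lemma~\ref{estimeeelliptique(p)}. Two details, however, deviate from what is actually needed, and both should be repaired. First, the Sobolev norm $\|\cdot\|_{\sob^\ell(p)}$ in the statement is the one of \eqref{defnormeH^l(p)}, built from the \emph{ordinary} derivative $\nabla$ on each $U_{x_k}$, and this only makes sense once \emph{all} the bundles — including $F_p$ — have been trivialized over $U_{x_k}$; the paper does trivialize $F_p$ by radial parallel transport exactly as it did $L^p$ in Section~\ref{localization}, rather than ``leave it intact.'' Second, once trivialized, the first- and zeroth-order $p$-dependent coefficients are governed by the connection form $\Gamma^{F_p}$ (and its square), not by $R^{F_p}$ itself or a ``covariant derivative of $R^{F_p}/p$''; the paper controls $\Gamma^{F_p}$ via the radial-gauge identity $\lie_Z\Gamma^{F_p}=i_Z R^{F_p}$, which combined with Theorem~\ref{RFpToeplitz} gives $\Gamma^{F_p}(U)=pT_{\Gamma(U^H),p}+O(1)$ with an $\|\cdot\|_\infty$-uniform remainder. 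That integrated statement — not a derivative of $R^{F_p}/p$ — is the precise ingredient making the zeroth- and first-order coefficients in your structural decomposition uniformly operator-norm-bounded, so you should state and use it explicitly for the argument to close.
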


\begin{proof}
As in the proof of Lemma \ref{estimeeelliptique(p)}, we work locally on one of the $U_{x_j}$'s and trivialize $\E_p$ in the way indicated at the beginning of Section \ref{localization}.

Let $\widetilde{e_i}(Z)$ be the parallel transport of $e_i$ with respect to $\nabla^{T_\R X}$ along the curve $t \in [0,1]\mapsto tZ$. Let $\Gamma^\xi$, $\Gamma^{F_p}$ and $\Gamma^{\Wedge,LC}$ be the connection form of $\n^\xi$, $\n^{F_p}$ and $\n^{\Wedge,LC}$ with respect to any fixed frame for $\xi$, $F_p$ and $\Wedge(T^*X)$ which is parallel along the curve $t\in [0,1]\mapsto tZ$ under the trivialization on $U_{x_k}$.

 Then 
\begin{multline}
\label{nupaveclesGamma2}
\n^p_{1,\widetilde{e_i}} = \n_{\widetilde{e_i}}+(\Gamma^{\Wedge,LC}+\Gamma^\xi+\Gamma^{F_p})(\widetilde{e_i})+\frac{1}{\sqrt{2}} S(\widetilde{e_i},\widetilde{e_j},f_\alpha)c(\widetilde{e_j})f^\alpha \\
+ \frac{1}{2} S(\widetilde{e_i},f_\alpha,f_\beta) f^\alpha f^\beta +\frac{1}{2}\Big(i_{\widetilde{e_i}} \left( \db^M-\partial^M\right)i\omega\Big)^c.
\end{multline}

Moreover, we know that the Lie derivative $\lie_Z \Gamma^{F_p}$ of $\Gamma^{F_p}$ is given by $\lie_Z \Gamma^{F_p}=i_ZR^{F_p}$ (see \cite[(1.2.32)]{ma-marinescu} for instance). Similarly, $\lie_Z \Gamma^L=i_ZR^L$. This, together with Theorem \ref{RFpToeplitz}, implies that $\Gamma^{F_p}$ is a Toeplitz operator and that there is a $\Gamma \in \smooth(Z_{b_0},T^*_\R N\otimes \C )$ such that
\begin{equation}
\label{devlpmtGammaFp}
\Gamma^{F_p}(U) = pT_{\Gamma(U^H),p} +O(1).
\end{equation}
 Hence, \eqref{nupaveclesGamma2} become
\begin{multline}
\label{nupaveclesGamma3}
\n^p_{u,e_i} = \n_{e_i}+\Gamma^{\Wedge,LC}+\Gamma^\xi+pT_{\Gamma(e_i^H),p}+\frac{1}{\sqrt{2u}} S_{i,j,\alpha}c(e_j)f^\alpha + \frac{1}{2u} S_{i,\alpha,\beta}f^\alpha f^\beta\\
+\frac{1}{2}\psi_{1/\sqrt{u}}\Big(i_{e_i} \left( \db^M-\partial^M\right)i\omega\Big)^c\psi_{\sqrt{u}} +O(1).
\end{multline}

We now prove that $B_{p}^2$ has a similar structure as in \eqref{Bup2=f(DX2,u,p)}. By \eqref{||Tfp||infinie}, we know that for $s\in \sob^{1}(U_{x_j},\E_{p,x_j})$, 
\begin{equation}
\label{normeSobetToep}
||T_{\Gamma(e_i^H),p}s||_{L^2}\leq C ||s||_{L^2} \quad \text{and} \qquad ||T_{\Gamma^H,p}\n_U s||_{L^2}\leq C ||s||_{\sob^{1}(p)}.
\end{equation}
Moreover, using \eqref{commutateurtoeplitz}, Theorem \ref{RFpToeplitz} and \eqref{devlpmtGammaFp}, we find
\begin{equation}
\label{[nablaU,TGamma]}
\n_U T_{\Gamma(e_i^H),p}= T_{\Gamma(e_i^H),p}\n_U +T_{U^H(\Gamma(e_i^H)),p} +O_0(1),
\end{equation}
where  $O_0(1)$ denotes a bounded family of operators of degree 0 acting on $F_p$. As a consequence, we have for $s\in \sob^{1}(U_{x_j},\E_{p,x_0})$, 
\begin{equation}
\label{normeSobetToep2}
 ||\n_UT_{\Gamma^H,p} s||_{L^2}\leq C ||s||_{\sob^{1}(p)}.
\end{equation}

Let $D^X=\db^X + \db^{X,*}$ be the Dirac operator on $ \Wedge(T^*X)\otimes \xi$. Using \eqref{Lichnerowiczeq}, \cite[Thm. 1.4.7]{ma-marinescu}, \eqref{nupaveclesGamma3}, \eqref{normeSobetToep} and \eqref{normeSobetToep2}, we find as in \eqref{Bup2=f(DX2,u,p)}:
\begin{equation}
\label{Bup2=f(DX2,u,p)2}
B_{p}^2 =  D^{X,2} + R + p \0_{p,1} + p\0_{p,0}^1+ p^2\0_{p,0}^2
\end{equation}
where $R$ is a  differential operators acting on $\Lambda_{b_0}^\bullet(T^*_\R B)\otimes \left(\Wedge(T^*X)\otimes \xi \right)_{x_j}$, and $\0_{p,1}$, $\0_{p,0}^1$ and $\0_{p,0}^2$ are  differential operators acting on $\E_{p,x_j}$ such that there is  $C>0$  such that for $p\geq 1$ and $s\in \sob^{k+1}(U_{x_j},\E_{p,x_0})$:
\begin{equation}
\begin{aligned}
\label{estimations0pij}
& || \0_{p,1}s||_{\sob^{k}(p)}\leq C ||s||_{\sob^{k+1}(p)}, \\
& || \0_{p,0}^is||_{\sob^{k}(p)}\leq C ||s||_{\sob^{k}(p)}, \quad i=1,2.
\end{aligned}
\end{equation}

The proof of Lemma \ref{nupaveclesGamma2} follows from \eqref{Bup2=f(DX2,u,p)2} and \eqref{estimations0pij} exactly in the same way as Lemma \ref{estimeeelliptique(p)} follows from \eqref{Bup2=f(DX2,u,p)}.
\end{proof}

Now, we want to prove an analogue of Proposition \ref{lepbestlocal}. The main ingredient in the proof of this proposition is the spectral gap of the Dirac operator. Thus, we begin with the following lemma. Recall that $D_p$ is the Dirac operator on $ \Wedge(T^*X)\otimes \xi\otimes F_p$.
\begin{lemme}
\label{spdeDp22}
There exist $C_0,C_L>0$ and $\mu_0>0$ such that 
\begin{equation}
\label{spdeDp22eq}
\Sp(D_p^2)\subset \{0\} \cup ]C_0 p-C_L,+\infty[.
\end{equation}
\end{lemme}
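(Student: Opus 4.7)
The plan is to adapt the standard spectral gap argument \cite[Thm.~1.5.8]{ma-marinescu}, which treats the case of $L^p$ twisted by an auxiliary bundle, to our situation where $L^p$ is replaced by $F_p$. The key new input is that by Theorem \ref{RFpToeplitz} and Assumption \ref{RLpositive2}, the curvature $R^{F_p}$ grows linearly in $p$ and, to leading order, is a positive definite matrix of Toeplitz operators along the fibers of $\pi_2$.

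First I would apply the fiberwise Bochner--Kodaira--Nakano formula (i.e.\ the degree-zero component of Theorem \ref{Lichnerowicz}, applied to the bundle $\xi\otimes F_p$ instead of $\xi$; see also \cite[Thm.~1.4.7]{ma-marinescu}) to express $D_p^2$ acting on $\Omega^{0,\bullet}(X_b,\xi\otimes F_p)$ as
\begin{equation*}
D_p^2 = \Delta^{\xi\otimes F_p} + 2\,\omega_{F_p} + \mathcal{R},
\end{equation*}
where $\Delta^{\xi\otimes F_p}$ is the non-negative Bochner Laplacian on $\Wedge(T^*X)\otimes\xi\otimes F_p$, $\omega_{F_p}=\sum_{j,k}R^{F_p}(w_j,\bw_k)\,\bw^k\wedge i_{\bw_j}$ is the Nakano curvature endomorphism induced by $F_p$, and $\mathcal{R}$ collects the remaining contributions (involving $R^\xi$, $R^{TX}$, the scalar curvature of $X_b$, etc.), all of which are bounded uniformly in $p$.

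Secondly I would bound $\omega_{F_p}$ from below using the Toeplitz formalism. By Theorem \ref{RFpToeplitz}, for $j,k\in\{1,\dots,n_X\}$ the endomorphism $R^{F_p}(w_j,\bw_k)/p$ of $F_p$ is a Toeplitz operator with leading symbol $R^L(w_j^H,\bw_k^H)=\langle\dot{R}^{X,L}w_j,w_k\rangle_{h^{TX}}$, cf.\ \eqref{defRpoint2}. By Assumption \ref{RLpositive2} this matrix of symbols is Hermitian positive definite at every $x\in X_b$, and its smallest eigenvalue is bounded below by a positive constant $c_0$ on the compact fiber $X_b$. Using the positivity of the Toeplitz quantization of this symbol together with the control on the $O(p^{-1})$-remainders coming from Theorem \ref{RFpToeplitz} and the composition formulas \eqref{commutateurtoeplitz}, a direct pointwise computation yields, on $\Omega^{0,q}(X_b,\xi\otimes F_p)$ with $q\geq 1$,
\begin{equation*}
2\,\omega_{F_p} \geq (2c_0\,p - C_1)\,\Id.
\end{equation*}

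Combining these estimates with $\Delta^{\xi\otimes F_p}\geq 0$ and the uniform boundedness of $\mathcal{R}$ then yields $D_p^2\geq (C_0 p - C_L)\Id$ on $\Omega^{0,\geq 1}(X_b,\xi\otimes F_p)$ for suitable $C_0,C_L>0$. This bound propagates to the full complex because $\db^p$ restricts to an eigenvalue-preserving isomorphism from $(\ker\db^p)^\perp\cap\Omega^{0,q}$ onto $(\ker\db^{p,*})^\perp\cap\Omega^{0,q+1}$, so the non-zero spectrum of $D_p^2$ coincides across all form degrees; in particular, the same bound holds on $\Omega^{0,0}$, yielding \eqref{spdeDp22eq}. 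The main technical obstacle is the second step, namely producing a sharp positivity lower bound for the matrix of Toeplitz operators $\bigl(R^{F_p}(w_j,\bw_k)/p\bigr)_{j,k}$ using only the positivity of its principal symbol; this can be achieved by quantizing a smooth square root of the symbol matrix and invoking the composition rule \eqref{prodtoeplitztenseurhilbert,eq} for Toeplitz operators.
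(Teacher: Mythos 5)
The overall plan---a Nakano/Lichnerowicz lower bound for $D_p^2$ on positive form degree, Toeplitz positivity for the curvature term, and propagation to degree $0$ via $\db^p$---matches the paper's strategy, and the third step is handled identically. However, your first step contains a genuine gap.

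You decompose $D_p^2 = \Delta^{\xi\otimes F_p} + 2\omega_{F_p} + \mathcal{R}$ with $\Delta^{\xi\otimes F_p}\geq 0$ the Bochner Laplacian, $\omega_{F_p}=\sum_{j,k}R^{F_p}(w_j,\bw_k)\bw^k\wedge i_{\bw_j}$, and $\mathcal{R}$ uniformly bounded in $p$. This is not what the degree-zero component of Theorem~\ref{Lichnerowicz} (or \cite[Thm.~1.4.7]{ma-marinescu}) gives. By \eqref{relationaveccicj}, the Clifford curvature term there is
\begin{equation*}
\tfrac{1}{4}\,c(e_i)c(e_j)R^{F_p}(e_i,e_j) = \omega_{F_p} - \tfrac{1}{2}\sum_j R^{F_p}(w_j,\bw_j),
\end{equation*}
and by Theorem~\ref{RFpToeplitz} the trace $\sum_j R^{F_p}(w_j,\bw_j)$ is a Toeplitz operator of size $\sim p\,\tr\,\dot{R}^{X,L}$, i.e.\ $O(p)$, not $O(1)$. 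So $\mathcal{R}$ secretly contains a negative $O(p)$ contribution, and your asserted lower bound on $\Omega^{0,q}$, $q\geq 1$, fails: the operator $\omega_{F_p}-\tfrac{1}{2}\tr\dot{R}^{F_p}$ has eigenvalues $\sim p\big(\sum_{j\in J}\lambda_j - \tfrac{1}{2}\sum_i\lambda_i\big)$ with $|J|=q$, which are typically negative already for $q=1$. The paper avoids this precisely by invoking Nakano's inequality for the \emph{twisted} bundle $F_p\otimes\det(TX)^*$ (see \eqref{Nakanotilde} and \cite[Cor.~1.4.17]{ma-marinescu}): the $\det(TX)^*$ twist turns $(0,q)$-forms into $(n_X,q)$-forms, so the resulting curvature term on $\Omega^{0,q}$ has eigenvalues $\sim p\sum_{j\in J}\lambda_j\geq qc_0p$ (the trace is cancelled), and the remaining Hermitian torsion operator is genuinely uniformly bounded. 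You either need this twist, or an equivalent workaround such as first deducing $\Delta^{\xi\otimes F_p}\geq\tfrac{p}{2}\tr\dot{R}^{X,L}-C$ on all degrees from the non-negativity of $D_p^2$ on $\Omega^{0,0}$.

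On the Toeplitz positivity step, your quantized-square-root argument would work but is heavier than necessary: if a symbol $A$ satisfies $A(y)\geq c_0\Id$ pointwise, then $T_{A,p}-c_0P_p=P_p(A-c_0\Id)P_p\geq 0$ directly, and the $O(p^{-1})$ remainders from Theorem~\ref{RFpToeplitz} are absorbed via the operator-norm bound \eqref{||Tfp||infinie}; this is all the paper uses.
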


\begin{proof}
As done in \cite[Cor. 1.4.17]{ma-marinescu}, we can apply Nakano's inequality to the bundle $F_p\otimes \det(TX)^*$ and obtain that for $s\in \Omega^{(0,\bullet)}(X,F_p)$,
\begin{equation}
\label{Nakanotilde}
\frac{3}{2}\langle D_p^2s,s\rangle \geq \langle  R^{F_p\otimes \det(TX)^*}(w_j,\bw_k)\bw^k\wedge i_{\bw_j}s,s\rangle -C||s||^2_{L^2}.
\end{equation}
Here $C$ is independent on $p$ as is comes from the norm of the so-called \emph{Hermitian torsion operator} of $X$ (see \cite[(1.4.10)]{ma-marinescu}). From Theorem \ref{RFpToeplitz} and \eqref{||Tfp||infinie}, \eqref{Nakanotilde} we deduce that here are $C_0,C_L>0$ such that for $p\geq 1$ and $s\in \Omega^{(0,>0)}(X,F_p)$,
\begin{equation}
\label{estimationDppardessous}
||D_ps||^2_{L^2} \geq (C_0p-C_L)||s||^2_{L^2}.
\end{equation}
Finally, if $s\in \Omega^{(0,0)}(X,F_p)$ satisfies $D_p^2s=\lambda s$ for some $\lambda\neq 0$, then $0\neq D_ps \in \Omega^{(0,1)}(X,F_p)$ is still an eigenvector of $D_p^2$ for the eigenvalue $\lambda$, hence $\lambda \geq C_0p-C_L$. The proof of Lemma \ref{spdeDp22} is completed.
\end{proof}

Recall that the functions $\F_u$, $\G_u$ and $\HH_u$ and their tilded versions have been defined in \eqref{defFuGuHu} and \eqref{defFuGuHuavectilde}. 

We still denote by $\pi$ the projection $\pi \colon X\times_B X\to B$ be the projection from the fiberwise product $X\times_B X$ to $B$. Then $\tilde{\G}_u(vB_{p}^2)(\cdot,\cdot)$ is a section of $ \E_p \boxtimes \E_p^*$ over $X\times_B X$. Let $\n^{\E_p}$ be the connection on $\E_p$ induced by $\n^{\Lambda^\bullet(T^*_\R B)}$, $\n^{\Wedge,LC}$, $\n^{F_p}$ and $\n^\xi$, and let $\nabla^{ \E_p \boxtimes \E_p^*}$ be the induced connection on $ \E_p \boxtimes \E_p^*$. In the same way, let $h^{\E_p}$ be the metric on $\E_p$  induced by $h^{\Lambda^\bullet(T^*_\R B)}$, $h^{\Wedge,LC}$, $h^L$ and $h^\xi$, and let $h^{ \E_p \boxtimes \E_p^*}$ be the induced metric on $ \E_p \boxtimes \E_p^*$. Note that this metric restricts to the operator norm on the bundle $\End(\E_p)$ over $M\simeq \{(b,x,x')\in X\times_B X \, : \, x=x'\}$.
 We can now prove the analogue of Proposition \ref{lepbestlocal}:
\begin{prop}
\label{lepbestlocal2}
For any $m\in \N$, $\e>0$, there exist  $C>0$  and $N\in \N$ such that for any $u> 0$ and any $p\in \N^*$,
\begin{equation}
\left\| \tilde{\G}_{\frac{u}{p}} \Big(\frac{u}{p}B_{p}^2\Big)(\cdot,\cdot) \right\|_{\mathscr{C}^m} \leq Cp^N  \exp \left( -\frac{\e^2p}{16u} \right).
\end{equation}
Where the $\mathscr{C}^m$-norm is induced by  $\nabla^{ \E_p \boxtimes \E_p^*}$ and $h^{ \E_p \boxtimes \E_p^*}$.
\end{prop}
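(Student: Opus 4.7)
The plan is to follow the strategy of Proposition \ref{lepbestlocal} almost verbatim, substituting Lemma \ref{estimeeelliptique(p)2} for Lemma \ref{estimeeelliptique(p)} and Lemma \ref{spdeDp22} for the spectral gap \eqref{spdeDp2}. The identity \eqref{liensFGH} reduces the problem to estimating $\tilde{\HH}_{u/p,1}(B_{p}^{2})$, and the scalar bounds \eqref{sup(a^mH(a))}--\eqref{sup(a^mHtilde(a))} carry over unchanged. One then represents $\tilde{\HH}_{u/p,1}(B_{p}^{2})$ as a contour integral over the curve $\mathcal{C}$ of Figure \ref{contour-C}, which, for $p$ large, lies in the resolvent set of $B_{p}^{2}$ thanks to Lemma \ref{spdeDp22}.

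Next I would verify that the spectral identity $\Sp(B_{p}^{2})=\Sp(D_{p}^{2})$ of \eqref{SpBp1} still holds: its proof only uses the decomposition $B_{p}^{2}=D_{p}^{2}+R_{p}$ (now provided by \eqref{Bup2=f(DX2,u,p)2}), the smoothing action of $(\lambda-D_{p}^{2})^{-1}$, and the fact that $R_{p}$ has positive form-degree in $\Lambda^{\bullet}(T^{*}_{\R}B)$, which makes the Neumann series \eqref{resolvantesBp2etDp2} terminate. From Lemma \ref{spdeDp22} and self-adjointness of $D_{p}^{2}$ one obtains the $L^{2}$-operator-norm bounds $\|(\lambda-D_{p}^{2})^{-1}\|_{\infty}\leq C$ and $\|D_{p}^{2}(\lambda-D_{p}^{2})^{-1}\|_{\infty}\leq C|\lambda|$ for $\lambda\in\mathcal{C}$; Lemma \ref{estimeeelliptique(p)2} then upgrades this, as in \eqref{estimationRpresolvanteDp2}, to $\|R_{p}(\lambda-D_{p}^{2})^{-1}\|_{\infty}\leq C|\lambda|p^{l+2}$, and hence $\|(\lambda-B_{p}^{2})^{-1}\|_{\infty}\leq C|\lambda|^{k}p^{k'}$. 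Plugging this into the contour integral and using \eqref{sup(a^mHtilde(a))} yields the analogue of Lemma \ref{f(Bp2)}, namely $\|\phi(B_{p}^{2})s\|_{L^{2}}\leq K_{m} p^{k_{m}} \exp(-\e^{2}p/(16u)) \|s\|_{L^{2}}$.

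The passage from this $L^{2}$ estimate to the pointwise bound requires more care than before because the fiber dimension of $\E_{p}$ now grows like $p^{n_{Y}}$. I would insert cutoff operators $P,Q$ of order $\leq 2m, 2m'$ supported in the $U_{x_{k}}$, iterate Lemma \ref{estimeeelliptique(p)2} and the adjoint trick to produce the estimate $\|P\tilde{\HH}_{u/p,1}(B_{p}^{2})Q s\|_{L^{2}}\leq C p^{N}e^{-\e^{2}p/(16u)}\|s\|_{L^{2}}$, and then apply Sobolev embedding vector-by-vector: for each unit test vector $v\in(\E_{p})_{x'}$, the section $x\mapsto \tilde{\HH}_{u/p,1}(B_{p}^{2})(x,x')v$ of $\E_{p}$ is controlled in $\sob^{k}(X,\E_{p})$ by iterated applications of $B_{p}^{2m}$, and combining its sup-norm with the dual estimate in $w\in(\E_{p})_{x}$ bounds the operator norm of the kernel at $(x,x')$. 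The polynomial loss in $p$ from the Sobolev embedding and from Lemma \ref{estimeeelliptique(p)2} is absorbed into the exponential factor. This growing-dimension Sobolev step is the main technical obstacle, since the constants in the section-wise embedding must be kept uniform in $p$ once one passes from Hermitian to operator norms on $\End(\E_{p})$.

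Finally, to handle the base derivatives I would mimic Lemma \ref{f(Bp2)-derivation}: using the primitive $\tilde{\HH}_{u,\varsigma,k}$ satisfying \eqref{sup(a^mHtildek(a))}, write $\bigl(\nabla^{\E_{p}\boxtimes\E_{p}^{*}}_{U^{H}}\bigr)^{q}\tilde{\HH}_{u/p,1}(B_{p}^{2})$ as a contour integral of $(\lambda-B_{p}^{2})^{-k}$ with commutators $[\nabla^{\E_{p}\boxtimes\E_{p}^{*}}_{U^{H}}, B_{p}^{2}]$ inserted. Theorem \ref{RFpToeplitz}, together with \eqref{devlpmtGammaFp} and the commutator computation leading to \eqref{normeSobetToep2}, ensures that these commutators retain the same polynomial behaviour in $p$ and the same first-order differential structure as in the original proof, so the resolvent argument of Lemma \ref{f(Bp2)-derivation} combined with the Sobolev step above yields the full $\mathscr{C}^{m}$ bound.
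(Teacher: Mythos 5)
Your proposal is correct and follows essentially the same route as the paper: reduce to $\tilde{\HH}_{u/p,1}(B_p^2)$, combine Lemma \ref{estimeeelliptique(p)2} and Lemma \ref{spdeDp22} with the decomposition $B_p^2=D_p^2+R_p$ where $R_p$ now has coefficients in $\C[p]\otimes\toep$ (so that \eqref{||Tfp||infinie} supplies the uniform operator-norm bounds needed for the analogues of \eqref{estimationRpresolvanteDp2} and \eqref{estimationAresolvanteBp2}), then run the resolvent/contour argument and pass to the kernel via Sobolev. You are right to flag the growing-rank Sobolev step as the new wrinkle; the paper only addresses it explicitly later (Lemma \ref{Sobolevuniforme}, valid for the operator norm on matrix-valued functions), which makes your vector-by-vector argument rigorous, and in any case the polynomial loss $p^N$ in the statement would absorb any polynomial dependence of the Sobolev constant on $\dim F_p$.
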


\begin{proof}
This proposition follows from Lemmas \ref{estimeeelliptique(p)2} and \ref{spdeDp22} exactly as Proposition \ref{lepbestlocal} follows from Lemma  \ref{estimeeelliptique(p)} and  \eqref{spdeDp2}. The only difference is that here we decompose $B_p^2$ as
\begin{equation}
\label{Bp2etDp2-2}
\begin{aligned}
&B_{p}^2 = D_p^2+R_p,\\
&R_p \in  \Lambda^{\geq 1}(T^*_\R B) \otimes \mathrm{Op}^{\leq 1}_X\big(\Wedge(T^*X)\otimes \xi\big)\otimes \C[p] \otimes \toep,
\end{aligned}
\end{equation}
and thus to obtain the analogues of \eqref{estimationRpresolvanteDp2} and \eqref{estimationAresolvanteBp2}, we also have to use the fact that Toeplitz operators are uniformly bounded for the operator norm (see \eqref{||Tfp||infinie}).
\end{proof}

\begin{cor}
 \label{lepbestlocal-cor2}
For any $m\in \N$, $\e>0$, there exist  $C(u)>0$ a rational fraction in $\sqrt{u}$ and $N\in \N$ such that for any $u> 0$ and any $p\in \N^*$,
\begin{equation}
\left \| \psi_{1/\sqrt{p}}  \tilde{\G}_{\frac{u}{p}} (B_{p,u/p}^2)(\cdot,\cdot) \right\|_{\mathscr{C}^m} \leq C(u)p^N  \exp \left( -\frac{\e^2p}{16u} \right).
\end{equation}
\end{cor}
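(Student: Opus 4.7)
The plan is to deduce Corollary \ref{lepbestlocal-cor2} from Proposition \ref{lepbestlocal2} in exactly the same way Corollary \ref{lepbestlocal-cor} is deduced from Proposition \ref{lepbestlocal}, namely by unwinding the definition of the rescaled superconnection $B_{p,u}$ and commuting the function $\tilde{\G}_{u/p}$ with the grading automorphism $\psi_a$. The only point to verify is that the additional $\psi$-factors produced along the way are harmless, contributing at worst a rational fraction in $\sqrt{u}$.

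More precisely, from $B_{p,u}=\sqrt{u}\,\psi_{1/\sqrt{u}}B_p\psi_{\sqrt{u}}$ we obtain, by squaring and specializing to $u/p$,
\begin{equation*}
B_{p,u/p}^2=\frac{u}{p}\,\psi_{\sqrt{p/u}}\,B_p^2\,\psi_{\sqrt{u/p}}.
\end{equation*}
Since $\psi_a$ is an algebra automorphism acting only on the $\Lambda^\bullet(T_\R^*B)$-factor, conjugation by $\psi$ commutes with any holomorphic functional calculus; applying this to $\tilde{\G}_{u/p}$ yields
\begin{equation*}
\tilde{\G}_{u/p}\bigl(B_{p,u/p}^2\bigr)=\psi_{\sqrt{p/u}}\,\tilde{\G}_{u/p}\!\Bigl(\tfrac{u}{p}B_p^2\Bigr)\,\psi_{\sqrt{u/p}},
\end{equation*}
so that
\begin{equation*}
\psi_{1/\sqrt{p}}\,\tilde{\G}_{u/p}\bigl(B_{p,u/p}^2\bigr)=\psi_{1/\sqrt{u}}\,\tilde{\G}_{u/p}\!\Bigl(\tfrac{u}{p}B_p^2\Bigr)\,\psi_{\sqrt{u/p}}.
\end{equation*}
The same identity holds for the associated Schwartz kernels, viewed as sections of $\E_p\boxtimes\E_p^*$.

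Now I would apply Proposition \ref{lepbestlocal2} directly to $\tilde{\G}_{u/p}\bigl(\tfrac{u}{p}B_p^2\bigr)(\cdot,\cdot)$, which gives a bound $Cp^{N}\exp(-\e^2 p/(16u))$ in the $\mathscr{C}^m$-norm for the connection $\nabla^{\E_p\boxtimes\E_p^*}$ and the metric $h^{\E_p\boxtimes\E_p^*}$. Since $\psi_a$ multiplies a form of horizontal degree $q$ by $a^q$, and the total horizontal degrees appearing in $\E_p\boxtimes\E_p^*$ are bounded by $2\dim B$, the two extra factors $\psi_{1/\sqrt{u}}$ and $\psi_{\sqrt{u/p}}$ introduce at most a multiplicative error which is a polynomial in $\sqrt{u}$, $1/\sqrt{u}$ and $1/\sqrt{p}$; absorbing the negative powers of $p$ into the $p^N$ factor (after possibly enlarging $N$), one obtains precisely a constant $C(u)$ which is a rational fraction in $\sqrt{u}$, and the stated bound follows. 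The same argument applies after taking any number of derivatives in base and fiber directions, since $\psi$-conjugation acts trivially on $\nabla^{\E_p\boxtimes\E_p^*}$ modulo bounded multiplicative factors. There is no real obstacle in this deduction: the nontrivial analytic content is already contained in Proposition \ref{lepbestlocal2}, and the present corollary is essentially a bookkeeping statement about the $\psi_a$-rescaling.
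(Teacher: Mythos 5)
Your proof is correct and follows exactly the route the paper itself intends (the paper merely notes the rescaling relation $B_{p,u}=\sqrt{u}\,\psi_{1/\sqrt{u}}B_p\psi_{\sqrt{u}}$ and says the corollary follows from the proposition). You supply precisely the bookkeeping details the paper omits: the conjugation identity $B_{p,u/p}^2=\psi_{\sqrt{p/u}}\,\tfrac{u}{p}B_p^2\,\psi_{\sqrt{u/p}}$, the compatibility with holomorphic functional calculus, and the observation that the resulting $\psi$-factors contribute only powers $u^{\pm q/2}p^{-q/2}$ with $0\le q\le\dim_\R B$, which are absorbed into a rational fraction $C(u)$ in $\sqrt{u}$ and (harmlessly, since they are non-positive powers) into $p^N$.
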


\subsection{Convergence of the heat kernel.}
\label{sectioncvce2}

Here, we get the analogue of the results of Sections \ref{rescalingBpu} and \ref{sectioncvce}, and we prove Theorem \ref{cvcenoyaudelachaleur2}. By comparison to Section \ref{Sectionlinebundle}, the difficulty is twofold. Firstly, as above in Section \ref{localization2}, we have to take into account the fact that the dimension of $F_p$ grows to infinity, which is done thanks to Toeplitz operators. Secondly, if we can prove the convergence of the heat kernel of the rescaled operator to the heat kernel of some asymptotic operators in the vein Section \ref{sectioncvce}, we can no longer compute the ``limiting" heat kernel explicitly. However, using the results of Section \ref{operatorwithToeplitzcoeff}, we can give the asymptotic of this heat kernel, which will enable us to conclude.

Fix $u>0$, $b_0 \in B$ and $x_0\in X_{b_0}$. We use the same notations and trivializations that in Section \ref{rescalingBpu}, changing therein $L^p$ by $F_p$, and thus $p\Gamma^L$ by $\Gamma^{F_p}$. We get a connexion 
\begin{equation}
\label{nablax02}
\n^{\E_{p,x_0}} = \n + \rho(|Z|/\e)\left( \Gamma^{F_p} +\Gamma_1\right),
\end{equation}
on the trivial bundle  
\begin{equation}
\E_{p,x_0}=\Lambda^\bullet(T^*_{\R,b_0} B)\otimes \left(\Wedge(T^*X)\otimes \xi \otimes F_p\right)_{x_0}
\end{equation}
over $T_{x_0}X$, as well as a Laplacian $\Delta^{\E_{p,x_0}}$.

Recall that $\{f_\alpha\}$ denotes a frame of $T_\R B$, with dual frame $\{f^\alpha\}$.  Let $\widetilde{e_i}(Z)$ be the parallel transport of $e_i$ with respect to $\n^{T_\R X_0,LC}$ along the curve $t\in [0,1]\mapsto tZ$. Then $\{\widetilde{e_i}\}_i$ is an orthonormal frame of $T_\R X_0$.

Set
\begin{multline}
\label{defPhi2}
\Phi = \frac{K^X}{8} + \frac{1}{4} c(\widetilde{e_i})c(\widetilde{e_j})L'^\xi(\widetilde{e_i},\widetilde{e_j})+\frac{1}{\sqrt{2}}c(\widetilde{e_i})f^\alpha L'^\xi(\widetilde{e_i},f_\alpha)+ \frac{f^\alpha f^\beta}{2}L'^\xi(f_\alpha,f_\beta) \\
-\Big( \db^M\partial^Mi\omega\Big)^c  - \frac{1}{16}\Big \| \left( \db^X-\partial^X\right)i\omega^X \Big\|^2_{\Lambda^\bullet(T^*_\R X)}
\end{multline}
and
\begin{multline}
\label{Mpu2}
M_{p,x_0}  =  \frac{1}{2}\Delta^{\E_{p,x_0}} + \rho(|Z|/\e)  \Phi \\
+\rho(|Z|/\e) \left ( \frac{1}{4} c(\widetilde{e_i})c(\widetilde{e_j})R^{F_p}(\widetilde{e_i},\widetilde{e_j})+\frac{1}{\sqrt{2}}c(\widetilde{e_i})f^\alpha R^{F_p}(\widetilde{e_i},f_\alpha)+ \frac{f^\alpha f^\beta}{2}R^{F_p}(f_\alpha,f_\beta)\right).
\end{multline}
Then $M_{p,x_0}$ is a second order elliptic differential operator acting on $\smooth\left(T_{\R,x_0}X,\E_{p,x_0}\right)$.  Moreover, if  $\B_{x_0}$ is the algebra:
\begin{equation}
\B_{x_0}=\mathrm{Op}(T_{\R,x_0} X)\otimes \Lambda^\bullet(T_{\R,b_0}^*B) \otimes \End\left(\Wedge(T_{x_0}^*X)\otimes\xi_{x_0}\right)\otimes \C(\sqrt{p}) \otimes \toep_{x_0},
\end{equation}
then Theorem \ref{RFpToeplitz},  $\{(M_{p})_{_Z}\}_{p\geq 1}$ is  in  $\B_{x_0}$. Finally, near 0, $\n^{\E_p,x_0}=\n^p$ and $M_{p,x_0}=B_{p}^2$.

\begin{rem}
Working on $\E_{p,x_0}$ amount to replace the fibration $Z\overset{Y}{\to}X$ by the trivial fibration $T_{\R,x_0}X\times Y \to T_{\R,x_0}X$. However, as pointed out earlier, we cannot substitute $\E_{p,x_0}$ here by some fixed $\E_{x_0}$ as in Section \ref{rescalingBpu}.

\end{rem}

Let $\exp(-B_{p}^2)(Z,Z')$ and $\exp(-M_{p,x_0})(Z,Z')$ be the smooth heat kernel of $B_{p}^2$ and $M_{p,x_0}$ with respect to $dv_{X_0}(Z')$.
\begin{lemme}
\label{noyauBupetMup2}
For any $m\in \N$, $\e>0$, there exist $C>0$  and $N\in \N$ such that for  any $p\in \N^*$,
\begin{equation}
\left\| \exp\Big(-\frac{u}{p}B_{p}^2\Big)(x_0,x_0) -  \exp\Big(-\frac{u}{p}M_{p,x_0}\Big)(0,0)\right\|_{\mathscr{C}^m(M)} \leq Cp^N \exp(-\frac{\e^2p}{16u}),
\end{equation}
where $\|\cdot\|_{\mathscr{C}^m(M)}$ denotes the $\mathscr{C}^m$-norm in the parameters $b_0\in B$ and $x_0\in X$ induced by $\n^{\End(\E_p)}$ and the operator norm $h^{\End(\E_p)}$.
\end{lemme}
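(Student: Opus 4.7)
The plan is to follow the same strategy as in Lemma \ref{noyauBupetMup}, combining finite propagation speed of the wave equation with the exponential decay estimates of Proposition \ref{lepbestlocal2}. The three ingredients I will need are: (i) that $M_{p,x_0}$ and $B_p^2$ agree on a neighborhood of $x_0$; (ii) that Proposition \ref{lepbestlocal2} applies not only to $B_p^2$ but also to $M_{p,x_0}$; and (iii) the decomposition $e^{-vA} = \tilde{\F}_v(vA) + \tilde{\G}_v(vA)$ from \eqref{liensFGH}.

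For (i), note that from Theorem \ref{Lichnerowicz}, \eqref{nablax02}, \eqref{deflaplaciendeBochner}, \eqref{defPhi2} and \eqref{Mpu2}, the operator $M_{p,x_0}$ is constructed so that on $B^{T_{\R,x_0}X}(0,2\e)$ the cutoff $\rho(|Z|/\e)$ is identically $1$; under the parallel-transport trivialization used to define $\n^{\E_{p,x_0}}$ the two operators then have identical expressions. For (ii), I will observe that $M_{p,x_0}$ has exactly the same structural form as $B_p^2$: its coefficients lie in the Toeplitz algebra (since they come from the curvatures $R^L$ and $R^{F_p}$, and the latter is a Toeplitz operator by Theorem \ref{RFpToeplitz}), and it decomposes as a Dirac-square plus a part of positive exterior degree along the base. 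Consequently Lemma \ref{estimeeelliptique(p)2}, Lemma \ref{spdeDp22} (applied to the Dirac piece of $M_{p,x_0}$) and the resolvent argument of Proposition \ref{lepbestlocal2} go through verbatim, yielding a bound of the same form on $\tilde{\G}_{u/p}((u/p)M_{p,x_0})(0,0)$.

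Now, by the finite propagation speed of the solution to the hyperbolic equation associated with $B_p^2$ (see \cite[Thm.~D.2.1]{ma-marinescu}) and the fact that $\tilde{\F}_{u/p}$ is the Fourier transform of a function supported in $[-\e,\e]$, the kernel $\tilde{\F}_{u/p}((u/p)B_p^2)(x_0,\cdot)$ is supported in $B^X(x_0,\e)$ and only depends on the restriction of $B_p^2$ to $B^X(x_0,2\e)$; the same holds for $M_{p,x_0}$. Combined with (i) this gives
\[
\tilde{\F}_{u/p}\Big(\tfrac{u}{p}B_p^2\Big)(x_0,x_0) = \tilde{\F}_{u/p}\Big(\tfrac{u}{p}M_{p,x_0}\Big)(0,0),
\]
so by \eqref{liensFGH} the difference $\exp(-\tfrac{u}{p}B_p^2)(x_0,x_0) - \exp(-\tfrac{u}{p}M_{p,x_0})(0,0)$ equals the difference of the two $\tilde{\G}$-terms, which is controlled by Proposition \ref{lepbestlocal2} and its $M_{p,x_0}$-analogue. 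The $\mathscr{C}^m(M)$-norm in the parameters $(b_0,x_0)$ is obtained via the derivation estimates already built into the proof of Proposition \ref{lepbestlocal2} (through the integral representation of $\big(\n^{\E_p\boxtimes\E_p^*}_{U^H}\big)^q\tilde{\G}_{u/p}$ against $\tilde{\HH}_{u/p,1,k}$ on the contour $\mathcal{C}$).

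The main obstacle, compared with the line-bundle case of Lemma \ref{noyauBupetMup}, is that the fibers of $\E_p$ have dimension growing with $p$, so one cannot reduce to a fixed finite-dimensional model as was done via the choice of $S_L$ in \eqref{def-E-sans-p}. This is why everything has been phrased in terms of the operator norm on $\End(\E_p)$ and why the uniform boundedness of Toeplitz operators (Theorem \ref{RFpToeplitz} together with \eqref{||Tfp||infinie}) is crucial: it is what allows the resolvent and commutator estimates used in Proposition \ref{lepbestlocal2} to survive with $p$-independent constants, both for $B_p^2$ and for $M_{p,x_0}$. Once this point is granted, no new analytic difficulty appears.
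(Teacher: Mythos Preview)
Your proposal is correct and follows essentially the same approach as the paper's (very terse) proof: establish that $M_{p,x_0}$ has the same structural form as $B_p^2$ so that Lemma \ref{estimeeelliptique(p)2} and Proposition \ref{lepbestlocal2} carry over to it, then use finite propagation speed together with the decomposition \eqref{liensFGH} to reduce the difference of heat kernels to the $\tilde{\G}$-terms. One small refinement: for the resolvent argument on $M_{p,x_0}$ you do not actually need the full spectral gap of Lemma \ref{spdeDp22}, only a uniform-in-$p$ lower bound on the spectrum of its degree-$0$ part, and this follows directly from the fact that it is a Bochner Laplacian plus a self-adjoint potential which, by Theorem \ref{RFpToeplitz} and \eqref{||Tfp||infinie}, is bounded in operator norm independently of $p$.
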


\begin{proof}
 As explain in the proof of Lemma \ref{noyauBupetMup}, we can prove Lemma \ref{noyauBupetMup2} by proving analogs of Lemma \ref{estimeeelliptique(p)2} and Porposition \ref{lepbestlocal2} for $M_{p,x_0}$, and using the finite propagation speed of the wave equation.
\end{proof}

In the sequel, if $U\in T_\R M$, we denote by $U^H$ its lift to $T^H_{M,\R}N$. Moreover, the basis $\{f_\alpha\}$ of $T_\R B$ has already been identified with a basis of $T_\R^HM$, and \textbf{when we write $f_\alpha^H$ we mean the lift in $T^H_{M,\R}N$ of $f_\alpha$ wiewed as an element of $T_\R^HM$} (which is not necessarily the same as the lift of $f_\alpha \in T_\R B$ in $T^H_{B,\R}N$). If $e_{a_1},e_{a_2}$ are some vectors among the $e_i$ and the $f_\alpha$ we set
\begin{equation}
R^L_{a_1,a_2} = R^L(e_{a_1}^H,e_{a_2}^H).
\end{equation}
To simplify the notations, we also write $c^i$ for $c(e_i^H)$.

Similarly to what is done in \eqref{defnrescaled}, we define for $t=\frac{1}{\sqrt{p}}$, $s\in \smooth(T_{x_0}X, \E_{p,x_0})$ and $Z\in T_{x_0}X$:
\begin{equation}
\label{defLup}
\begin{aligned}
&(S_ts)(Z)  = s(Z/t), \\
&\n_t = tS_t^{-1} \kappa^{1/2} \nabla^{\E_{p,x_0}} \kappa^{-1/2} S_t, \\
&\LL_t=  t^2 S_t^{-1} \kappa^{1/2}  M_{p,x_0}  \kappa^{-1/2} S_t, \\
\end{aligned}
\end{equation}

Recall that $[\cdot\, , \cdot]_+$ is our notation for the anti-commutator. We define for $U\in T_{\R,x_0}X$:
\begin{equation}
\label{defulLup}
\begin{aligned}
& \ul{\n}_{t,U}= \nabla_U + T_{\frac{1}{2}R^L( Z^H, U^H),p}(x_0) , \\
&\ul{\LL}_{t} = - \frac{1}{2}\sum_i \left\{\n_{e_i}{}^2+[\n_{e_i},T_{\frac{1}{2}R^L( Z^H, e_i^H),p}(x_0)]_+ +T_{(\frac{1}{2}R^L( Z^H, e_i^H))^2,p}(x_0)\right\}\\
& \qquad \qquad+T_{ \frac{1}{4} c^ic^jR^L_{i,j}+\frac{1}{\sqrt{2}}c^if^\alpha R^L_{i,\alpha}+ \frac{f^\alpha f^\beta}{2}R^L_{\alpha,\beta},p}(x_0).
\end{aligned}
\end{equation}

\begin{prop}
\label{asympL2}
When $t\to 0$, we have the following asymptotic in $\B_{x_0}$
\begin{equation}
\label{asympL2eq}
\n_{t,e_i} = \ul{\n}_{t,e_i} + O(t) \text{ and} \quad \LL_{t} = \ul{\LL}_{t} +O(t).
\end{equation}
\end{prop}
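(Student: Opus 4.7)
The approach parallels Proposition \ref{asympL}, but the key new input is Theorem \ref{RFpToeplitz}, which lets us replace scalar quantities coming from $R^L$ (as in the line-bundle case) by Toeplitz operators. First I would establish the asymptotic for $\n_{t,e_i}$ by a Taylor expansion of the connection form $\Gamma^{F_p}$, then deduce the asymptotic for $\LL_t$ by substitution into the rescaled version of \eqref{Mpu2}, using Theorem \ref{prodtoeplitztenseurhilbert} whenever two Toeplitz factors appear in a product.

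For the first asymptotic, unwinding the conjugations in \eqref{defLup} just as in \eqref{natiup} gives
\begin{equation*}
\n_{t,e_i}(Z) = \kappa^{1/2}(tZ)\bigl\{\n_{e_i} + \rho(t|Z|/\e)\bigl(t\,\Gamma^{F_p}_{tZ}(e_i) + t\,\Gamma_{1,tZ}(e_i)\bigr)\bigr\}\kappa^{-1/2}(tZ),
\end{equation*}
and $t\Gamma_{1,tZ}(e_i)=O(t^2)$ as in \eqref{dvlptGammaup}. From the radial-gauge identity $\lie_Z\Gamma^{F_p}=i_ZR^{F_p}$ (so $\Gamma^{F_p}_0=0$), a Taylor expansion in $W$ gives
\begin{equation*}
\Gamma^{F_p}_W(e_i)=\tfrac12 R^{F_p}_{x_0}(W,e_i)+O(|W|^2),
\end{equation*}
with the remainder controlled in the operator norm on $F_p$. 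Theorem \ref{RFpToeplitz} then yields $R^{F_p}_{x_0}(W,e_i)=p\,T_{R^L(W^H,e_i^H),p}(x_0)+O(1)$. Since $t^2p=1$, substituting and setting $W=tZ$ produces
\begin{equation*}
t\,\Gamma^{F_p}_{tZ}(e_i)=\tfrac12 T_{R^L(Z^H,e_i^H),p}(x_0)+O(t),
\end{equation*}
which combined with $\kappa^{1/2}(tZ)=1+O(t)$ gives $\n_{t,e_i}=\ul{\n}_{t,e_i}+O(t)$ in $\B_{x_0}$.

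For the second asymptotic, I would split $t^2S_t^{-1}\kappa^{1/2}M_{p,x_0}\kappa^{-1/2}S_t$ according to the three pieces of \eqref{Mpu2}. The Laplacian piece gives, by the analogue of \eqref{dvptLB}, $-\tfrac12\sum_i\n_{t,e_i}^2+O(t)$; substituting the first asymptotic and expanding
\begin{equation*}
\ul{\n}_{t,e_i}^2 = \n_{e_i}^{\,2}+[\n_{e_i},T_{\frac12 R^L(Z^H,e_i^H),p}(x_0)]_+ + T_{\frac12 R^L(Z^H,e_i^H),p}(x_0)^{\,2},
\end{equation*}
Theorem \ref{prodtoeplitztenseurhilbert} (with $\mathcal{H}=\Lambda^\bullet(T^*_{\R,b_0}B)\otimes(\Wedge(T^*X)\otimes\xi)_{x_0}$ taken as a parameter space) rewrites the square of the Toeplitz factor as $T_{(\frac12 R^L(Z^H,e_i^H))^2,p}(x_0)+O(1/p)$, with $1/p=t^2$. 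The $\Phi$ piece of \eqref{Mpu2} contributes $t^2\rho(t|Z|/\e)\Phi=O(t^2)$ since $\Phi$ acts on a finite-rank bundle. Finally, for the $R^{F_p}$ piece, Theorem \ref{RFpToeplitz} and $t^2p=1$ give $t^2R^{F_p}=T_{R^L(\cdot^H,\cdot^H),p}+O(t^2)$, and smoothness of the Toeplitz symbol in the base point (via $\n^\A$) yields $T_{f,p}(tZ)=T_{f,p}(x_0)+O(t)$; summing over $i,j,\alpha,\beta$ gives the Toeplitz term in $\ul{\LL}_t$ up to $O(t)$. Adding the three contributions gives $\LL_t=\ul{\LL}_t+O(t)$ in $\B_{x_0}$.

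The main obstacle, compared with Proposition \ref{asympL}, is ensuring that all remainders are controlled uniformly in $p$ in the operator norm. This relies throughout on the uniform bound \eqref{||Tfp||infinie}, on the fact that derivatives in the $M$-direction of Toeplitz operators are again Toeplitz (Theorem \ref{RFpToeplitz}), and on the product rule \eqref{prodtoeplitztenseurhilbert,eq}--\eqref{prodtoeplitztenseurhilbert,eq-2}. With these tools in place, the computation reduces to bookkeeping identical in structure to the line-bundle case.
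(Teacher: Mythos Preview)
Your proposal is correct and follows essentially the same approach as the paper: both expand $\n_{t,e_i}$ via the radial-gauge Taylor expansion of $\Gamma^{F_p}$ combined with Theorem \ref{RFpToeplitz} (and $t^2p=1$), then substitute into the rescaled form of \eqref{Mpu2} and use the product rule for Toeplitz operators to turn $\ul{\n}_{t,e_i}^2$ into the expression in \eqref{defulLup}. The only minor difference is that the paper cites the scalar product formula \eqref{commutateurtoeplitz} (which suffices, since the symbols $R^L(Z^H,e_i^H)$ are scalar), whereas you invoke the Hilbert-space-valued Theorem \ref{prodtoeplitztenseurhilbert}; both give the needed $T_{f,p}T_{g,p}=T_{fg,p}+O(p^{-1})$.
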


\begin{proof}
First, by Theorem \ref{RFpToeplitz}, if $f$ is a smooth section of $\A$ over a compact subset of $M$, there is a $C>0$ such that
\begin{equation}
\label{derivee-Toep}
||\n^{F_p}T_{f,p}||_\infty\leq C.
\end{equation}

By \eqref{nablax02} and \eqref{defLup}, we have
\begin{equation}
\label{natiup2}
\n_{t,e_i}(Z) = \kappa^{1/2}\left(tZ\right)\left\{ \n_{e_i} + \rho(t|Z|/\e)\left( t\Gamma^{F_p}_{tZ}(e_i) +t\Gamma_{1,tZ}(e_i)\right)\right\}\kappa^{-1/2}\left(tZ\right).
\end{equation}
Moreover,  observe that in \eqref{dvlptGamma}, the term $O(|Z|^2)$ is given by the norm of the derivatives of the curvature, thus, by \eqref{dvlptGamma} and \eqref{derivee-Toep} and Theorem \ref{RFpToeplitz}, we know that 
\begin{equation}
t\Gamma^{F_p}_{tZ}(U) =  \frac{t^2}{2}R^{F_p}_{x_0}(Z,U) +O\left( t^3 \right) = T_{\frac{1}{2}R^{L}_{x_0}(Z^H,U^H),p}(x_0)+O(t^2).
\end{equation}
Hence, by \eqref{dvlptGammaup} and \eqref{natiup2}, and the fact that $\rho(0)=\kappa(0)=1$, we find the first asymptotic development of Proposition \ref{asympL2}.

As in \eqref{dvptLB} and  \eqref{dvlptAup}, we have
\begin{multline}
\label{dvlptLBetAup}
\LL_{t} = -g^{ij}(tZ)  \left(\n_{t,e_i}\n_{t,e_j} -t\n_ {t,\nabla ^{TX_0}_{e_i}e_j} \right) \\
 +t^2\rho(t|Z|/\e))\left \{ \kappa^{1/2}\left(\Phi+  \frac{1}{4} c(\widetilde{e_i})c(\widetilde{e_j})R^{F_p}(\widetilde{e_i},\widetilde{e_j})\right.\right. \\
 \left.\left.+\frac{1}{\sqrt{2}}c(\widetilde{e_i})f^\alpha R^{F_p}(\widetilde{e_i},f_\alpha)+ \frac{f^\alpha f^\beta}{2}R^{F_p}(f_\alpha,f_\beta)\right)\kappa^{-1/2} \right\}_{tZ}.
\end{multline}

From Theorem \ref{RFpToeplitz}, the first  development in \eqref{asympL2eq} and \eqref{dvlptLBetAup}, 
we find
\begin{equation}
\label{presqueasympL2}
\LL_{t} = - \frac{1}{2}\sum_i \left(\ul{\n}_{t,e_i}\right)^2+T_{ \frac{1}{4} c^ic^jR^L_{i,j}+\frac{1}{\sqrt{2}}c^if^\alpha R^L_{i,\alpha}+ \frac{f^\alpha f^\beta}{2}R^L_{\alpha,\beta},p}(x_0)+O(t).
\end{equation}
Using the first equation of \eqref{commutateurtoeplitz} and \eqref{presqueasympL2}, we get the second identity of \eqref{asympL2eq}. The proof of Proposition \ref{asympL2} is completed.
\end{proof}

The next step is to prove an analogue of Theorem \ref{cvcenoyauLup->noyauLuinfini}.

Let $e^{-\LL_{t}}(Z,Z')$, $e^{-\ul{\LL}_{t} }(Z,Z')$ be the smooth kernels of the operators $e^{-\LL_{t}}$, $e^{-\ul{\LL}_{t} }$ with respect to $dv_{TX}(Z')$. Let $\mathrm{pr}_X$  be the projection from the fiberwise product $T_\R X\times_X T_\R X$ to $X$, then these kernels are sections of $\mathrm{pr}_X^*\big(\End(\E_p)\big)$ over $T_\R X\times_X T_\R X$.

\begin{thm}
\label{cvcenoyauLup->noyauLuinfini2}
For $u>0$ fixed, there exists $C>0$ such that for $t>0$ and $Z,Z'\in T_{\R,x_0}X$ with $|Z|,|Z'|\leq 1$, we have the following estimates form the operator norm:
\begin{equation}
\Big \| \left(e^{-u\LL_{t}}-e^{-u\ul{\LL}_{t} }\right)(Z,Z') \Big \| \leq Ct^{1/(2n_X+1)}.
\end{equation}
\end{thm}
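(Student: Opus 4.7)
The plan is to carry out the full Ma--Marinescu resolvent/heat-kernel machinery from Section \ref{sectioncvce}, but in the present infinite-dimensional setting where the coefficients live in the Toeplitz algebra, and where all estimates must be for the operator norm on $\End(\E_{p,x_0})$ (uniformly in $p$). The key enabling facts will be: (i) Toeplitz operators are uniformly bounded in operator norm by $\|f\|_{\mathscr{C}^0}$; (ii) commutators of $T_{f,p}$ with $\nabla^{F_p}$ are again Toeplitz, hence uniformly bounded (Theorem \ref{RFpToeplitz} and Theorem \ref{prodtoeplitztenseurhilbert}); (iii) $\ul{\LL}_t$ has, up to its Toeplitz dressing, exactly the harmonic oscillator form of $\LL_{0}$ in Section \ref{sectioncvce}.

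First I would introduce the weighted Sobolev norms $\|\cdot\|_{t,m}$ as in \eqref{defnorme(p,m)}, built from the part of $\ul{\n}_{t,e_i}$ of degree $0$ in $\Lambda^\bullet(T^*_\R B)$. I would then establish the analogue of Proposition \ref{estimationLupprop}: a G\aa rding-type inequality $\langle \ul{\LL}_t^{(0)} s,s\rangle_{t,0}\geq C_1\|s\|_{t,1}^2 - C_2\|s\|_{t,0}^2$, continuity $|\langle \ul{\LL}_t^{(0)}s,s'\rangle_{t,0}|\leq C\|s\|_{t,1}\|s'\|_{t,1}$, and the control $\|(\LL_t-\ul{\LL}_t^{(0)})s\|_{t,0}\leq C\|s\|_{t,1}$. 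The first two come from the harmonic-oscillator part exactly as in \cite[Thm.~1.6.7]{ma-marinescu}, with the Toeplitz contribution estimated via $\|T_{f,p}\|_\infty\leq \|f\|_{\mathscr{C}^0}$; the third follows from Proposition \ref{asympL2} and the structural expansion of $\n_t-\ul{\n}_t$.

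Next I would deduce, on a contour $\Gamma$ as in Figure \ref{contour-Gamma}, the uniform resolvent bounds $\|(\lambda-\LL_t)^{-1}\|^{0,0}_t\leq C(1+|\lambda|^2)^a$ and $\|(\lambda-\LL_t)^{-1}\|^{-1,1}_t\leq C(1+|\lambda|^2)^b$ (analogue of Proposition \ref{estimationsresoprop}), by a perturbation series off the resolvent of $\ul{\LL}_t^{(0)}$. Then I would prove the commutator estimate
\begin{equation}
\Big|\langle [Q_1,[Q_2,\dots[Q_m,\LL_t]\dots]]s,s'\rangle_{t,0}\Big|\leq C_m\|s\|_{t,1}\|s'\|_{t,1},\qquad Q_i\in\{\ul{\n}^{(0)}_{t,e_j},Z_j\},
\end{equation}
as in Proposition \ref{estimationcommutateursprop}; the new point is that each commutator with a Toeplitz coefficient $T_{f,p}(x_0)$ produces terms of the form $T_{\partial^\alpha f,p}(x_0)$, which remain uniformly bounded in operator norm. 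From here the Sobolev regularity of the resolvent (analogue of Proposition \ref{norme(m,m+1)resolvante}) and the uniform $\mathscr{C}^m$-bound on $e^{-u\LL_t}(Z,Z')$ for $|Z|,|Z'|\leq 1$ (analogue of Theorem \ref{estimationderiveesnoyaudeLpu}) follow by the contour formula $e^{-u\LL_t}=\frac{(-1)^{k-1}(k-1)!}{2i\pi u^{k-1}}\int_\Gamma e^{-u\lambda}(\lambda-\LL_t)^{-k}d\lambda$. The same estimates hold for $\ul{\LL}_t$, with identical constants.

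The main step is the resolvent-difference estimate (analogue of Theorem \ref{diffdesresolvantes-thm}):
\begin{equation}
\big\|\big((\lambda-\LL_t)^{-1}-(\lambda-\ul{\LL}_t)^{-1}\big)s\big\|_{0,0}\leq Ct\,(1+|\lambda|^2)^M\sum_{|\alpha|\leq 3}\|Z^\alpha s\|_{0,0}.
\end{equation}
Writing $(\lambda-\LL_t)^{-1}-(\lambda-\ul{\LL}_t)^{-1}=(\lambda-\LL_t)^{-1}(\LL_t-\ul{\LL}_t)(\lambda-\ul{\LL}_t)^{-1}$, the gain of $t$ comes from Proposition \ref{asympL2}: a Taylor expansion of the Toeplitz-valued coefficients of $\LL_t$ around $Z=0$ (which, for the derivative terms in $\n_t$, uses Theorem \ref{RFpToeplitz} applied to $\Gamma^{F_p}$) shows that $\LL_t-\ul{\LL}_t$ is, up to polynomial growth in $Z$, a first-order operator bounded by $Ct$ in operator norm. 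Finally, combining the $\mathscr{C}^m$-bound of the heat kernel with the $L^2$-difference bound above and using the Sobolev inequality as in \cite[Thm.~1.6.13]{ma-marinescu} gives the pointwise bound $\|(e^{-u\LL_t}-e^{-u\ul{\LL}_t})(Z,Z')\|\leq Ct^{1/(2n_X+1)}$, where the power $1/(2n_X+1)$ arises from the standard interpolation between $L^2$ and $\mathscr{C}^{2n_X+1}$ on $\R^{2n_X}$.

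The hard part will be uniformity in $p$ throughout every estimate: replacing the finite-rank algebra $\End(\Wedge(T^*X)\otimes\xi)_{x_0}$ used in \cite{ma-marinescu} by $\End(\E_{p,x_0})$ is only safe if one systematically uses the operator norm (never a trace norm or a basis expansion) and exploits the fact, underlying Theorems \ref{RFpToeplitz} and \ref{prodtoeplitztenseurhilbert}, that Toeplitz coefficients and all their commutators with $\nabla$ and $Z$ are uniformly bounded. Once this discipline is maintained, the whole resolvent/heat-kernel argument of Section \ref{sectioncvce} transfers.
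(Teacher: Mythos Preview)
Your proposal is correct and follows essentially the same route as the paper: Propositions \ref{estimationLupprop2}--\ref{norme(m,m+1)resolvante2}, Theorem \ref{estimationderiveesnoyaudeLpu2}, and Theorem \ref{diffdesresolvantesthm2} are precisely the analogues of the Section \ref{sectioncvce} results you outline, and the paper then concludes exactly as in Theorem \ref{cvcenoyauLup->noyauLuinfini}. The one place where the paper is more explicit than your sketch is the passage from the $L^2$ bound to the pointwise operator-norm bound on the kernel: since $\dim \E_{p,x_0}$ grows with $p$, the paper inserts Lemma \ref{Sobolevuniforme} to check that the Sobolev embedding constant for $M_d(\C)$-valued functions equipped with the operator norm is independent of $d$---this is exactly the ``discipline'' you allude to in your final paragraph, made precise.
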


The proof of Theorem \ref{cvcenoyauLup->noyauLuinfini2} follows the same strategy as the proof of Theorem \ref{cvcenoyauLup->noyauLuinfini} in Section \ref{sectioncvce}. Here again, the difficulties coming from the fact that the dimension on $F_p$ tend to infinity are dealt with the properties of Toeplitz operators. 

Recall that we add a superscript $(0)$ to the objects introduced above to denote their part of degree 0 in $\Lambda^\bullet(T^*_{\R,b_0} B)$.

Let $||\cdot ||_{t,0}$ be the $L^2$ norm on $\smooth(T_{\R,x_0}X,\E_{p,x_0})$ induced by $h^{\Lambda^\bullet(T^*_\R B)}_{x_0}$, $h^{\Wedge}_{x_0}$, $h^{\xi}_{x_0}$, $h^{F_p}_{x_0}$ and the volume form $dv_{TX}(Z)$. For $s\in\smooth(X_0,\E_{p,x_0})$, $m\in \N$, and $p\in \N^*$, set
\begin{equation}
\label{defnorme(p,m)2}
\begin{aligned}
&||s||_{t,m}^2=\sum_{\ell\leq m} \sum_{i_1,\dots,i_\ell}  ||\n_{t,e_{i_1}}^{(0)}\cdots\n_{t,e_{i_\ell}}^{(0)}s||_{t,0}^2, \\
&||s||_{\ul{t},m}^2=\sum_{\ell\leq m} \sum_{i_1,\dots,i_\ell}  ||\ul{\n}_{t,e_{i_1}}\cdots\ul{\n}_{t,e_{i_\ell}}s||_{t,0}^2.
\end{aligned}
\end{equation}
We denote by $\sob^m_t$ the Sobolov space $\sob^m(X_0,\E_{p,x_0})$ endowed with the norm $||\cdot ||_{t,m}$, and by $\sob^{-1}_t$ the Sobolev space of order $-1$ endowed with the norm
\begin{equation}
||s||_{t,-1}=\sup_{s'\in\sob^1_p\setminus\{0\}} \frac{\langle s,s'\rangle_{p,0}}{||s'||_{t,1}}\, .
\end{equation}

Finally, if $A\in \LL(\sob^k_t, \sob^m_t)$, we denote by $||A||^{k,m}_t$ the operator norm of $A$ associated with $||\cdot||_{t,k}$ and $||\cdot||_{t,m}$.

 Let
\begin{equation}
\mathscr{R}_{t} = \LL_{t} -\LL_{t}^{(0)}. 
\end{equation}

\begin{prop}
\label{estimationLupprop2}
There exist constants $C_1,C_2,C_3>0$ such that for any $t>0$ and any $s,s'\in\smooth(X_0,\E_{p,x_0})$,
\begin{equation}
\label{estimationsLup2}
\begin{aligned}
&\langle \LL_{t}^{(0)} s, sÊ\rangle_{t,0} \geq C_1||s||_{t,1}^2 - C_2||s||_{t,0}^2, \\
& \left|\langle \LL_{t}^{(0)} s, s'Ê\rangle_{t,0}\right| \leq C_3 ||s||_{t,1} ||s'||_{t,1}, \\
& \left \| \mathscr{R}_{t} s\right \|_{t,0} \leq C_4 ||s||_{t,1}.
\end{aligned}
\end{equation}
\end{prop}

\begin{proof}
 By \eqref{dvlptLBetAup}, we have 
\begin{equation}
\langle \LL_{t}^{(0)} s, sÊ\rangle_{t,0} = \frac{1}{2}||\n_t^{(0)}s||_{t,0}^2 + \left\langle T_{\frac{1}{4} c^ic^jR^L_{i,j}(x_0),p}s,s\right\rangle_{t,0}+ O(t)||s||_{t,0}^2.
\end{equation}
Together with \eqref{||Tfp||infinie}, this gives the first two estimates of \eqref{estimationsLup2}. 

By \eqref{dvlptGammaup}, \eqref{natiup2} and \eqref{dvlptLBetAup}, we see that \eqref{diffnatietnati(0)} and \eqref{dvlpmt-Rt} are still true, hence the last estimate of \eqref{estimationsLup2} holds.
\end{proof}

We define a contour $\Gamma$  in $\C$ as in Figure \ref{contour-Gamma} in Section \ref{sectioncvce}, but using the $C_2$ of Theorem \ref{estimationLupprop2}.

\begin{prop}
\label{estimationsresoprop2}
There exist $C>0$, $a,b \in \N$ such that for any $t>0$ and any $\lambda \in \Gamma$, the resolvant $\left( \lambda-\LL_t\right)^{-1}$ exists and
\begin{equation}
\label{estimationsreso2}
\begin{aligned}
&\left\| \left( \lambda-\LL_t\right)^{-1} \right\|^{0,0}_t \leq C(1+|\lambda|^2)^a, \\
&\left\| \left( \lambda-\LL_t\right)^{-1} \right\|^{-1,1}_t \leq C(1+|\lambda|^2)^b.
\end{aligned}
\end{equation}
\end{prop}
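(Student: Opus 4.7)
The plan is to follow exactly the strategy used for Proposition \ref{estimationsresoprop}, replacing the inputs by their Toeplitz analogues from Proposition \ref{estimationLupprop2}. The only conceptual difference with Section \ref{sectioncvce} is that the Hilbert space on which $\LL_t^{(0)}$ acts now has a fiber $F_p$ of dimension growing with $p$; but every estimate we need is written in terms of the $\|\cdot\|_{t,m}$-norms, which are the standard $L^2$--Sobolev norms on sections of $\E_{p,x_0}$, so the finite-dimensionality plays no role. The uniformity in $p$ is encoded in the uniform bounds of Proposition \ref{estimationLupprop2}, which themselves rely on the uniform operator-norm bound \eqref{||Tfp||infinie} for Toeplitz operators.

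First I would prove \eqref{estimationsreso2} for $\LL_t^{(0)}$ in place of $\LL_t$. Since $\LL_t^{(0)}$ is a second-order elliptic formally self-adjoint operator (its scalar principal symbol is $\tfrac{1}{2}|\xi|^2$ by \eqref{dvlptLBetAup}), the first two lines of \eqref{estimationsLup2} are precisely the G\aa rding and continuity estimates needed to run the argument of \cite[Thm. 1.6.8]{ma-marinescu} verbatim: the sesquilinear form $s,s' \mapsto \langle (\lambda + 2C_2 - \LL_t^{(0)})s,s'\rangle_{t,0}$ is coercive on $\sob^1_t$ for $\lambda \in \Gamma$, hence by Lax--Milgram $\lambda - \LL_t^{(0)}$ is invertible from $\sob^{-1}_t$ to $\sob^1_t$, with norm controlled polynomially in $|\lambda|$, and by interpolation from $\sob^0_t$ to $\sob^0_t$ as well.

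Next I would use the nilpotent perturbation argument. Since $\mathscr{R}_t \in \Lambda^{\geq 1}(T_{\R,b_0}^*B) \otimes \mathrm{End}(\cdots)$ has strictly positive degree in the exterior algebra of the base, the Neumann-type expansion
\begin{equation*}
(\lambda - \LL_t)^{-1} = \sum_{k=0}^{\dim_\R B} (\lambda - \LL_t^{(0)})^{-1} \bigl[\mathscr{R}_t(\lambda - \LL_t^{(0)})^{-1}\bigr]^k
\end{equation*}
is actually a finite sum, and defines a genuine inverse of $\lambda - \LL_t$. By the third line of \eqref{estimationsLup2}, $\mathscr{R}_t \colon \sob^1_t \to \sob^{-1}_t$ is bounded uniformly in $t$ and $p$ with norm $\leq C_4$. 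Combining this with the two estimates already obtained for $(\lambda - \LL_t^{(0)})^{-1}$ (as an operator $\sob^0_t \to \sob^0_t$ and as an operator $\sob^{-1}_t \to \sob^1_t$), each term in the expansion is bounded by $C(1+|\lambda|^2)^{a'}$ in the $\sob^0_t \to \sob^0_t$ norm and by $C(1+|\lambda|^2)^{b'}$ in the $\sob^{-1}_t \to \sob^1_t$ norm, with exponents $a',b'$ depending only on $k,a,b$ and the length of the expansion. Summing the finitely many contributions yields \eqref{estimationsreso2}.

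The only point that requires genuine care — and the step I expect to be the main obstacle — is ensuring the uniformity in $p$ (equivalently, in $t = 1/\sqrt{p}$) when applying \cite[Thm. 1.6.8]{ma-marinescu}: the constants $C_1,C_2,C_3,C_4$ of Proposition \ref{estimationLupprop2} must be shown to be independent of $p$, which ultimately reduces to the uniform Toeplitz bound \eqref{||Tfp||infinie} together with the Toeplitz structure of $\Gamma^{F_p}$ and $R^{F_p}/p$ given by Theorem \ref{RFpToeplitz}. Once this uniformity is in hand, no further estimate is needed: the whole argument is a purely functional-analytic transcription of the one in Section \ref{sectioncvce}.
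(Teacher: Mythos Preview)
Your proposal is correct and follows exactly the same approach as the paper: the paper's proof simply says that Proposition \ref{estimationsresoprop2} follows from Proposition \ref{estimationLupprop2} exactly as Proposition \ref{estimationsresoprop} follows from Proposition \ref{estimationLupprop}, and you have faithfully unpacked that argument (resolvent estimates for $\LL_t^{(0)}$ via \cite[Thm. 1.6.8]{ma-marinescu}, then the finite Neumann expansion in $\mathscr{R}_t$ using its nilpotency in $\Lambda^{\geq 1}(T^*_{\R,b_0}B)$). Your emphasis on the uniformity in $p$ coming from \eqref{||Tfp||infinie} and Theorem \ref{RFpToeplitz} is exactly the point the paper has in mind when it invokes Proposition \ref{estimationLupprop2}.
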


\begin{proof}
Proposition \ref{estimationsresoprop2} follows from Proposition \ref{estimationLupprop2} exactly as Proposition \ref{estimationsresoprop} follows from Proposition \ref{estimationLupprop}.
\end{proof}

\begin{prop}
\label{estimationcommutateursprop2}
Take $m\in \N^*$. Then there exists a contant $C_m>0$ such that for any $t>0$,  $Q_1,\dots,Q_m \in \left\{  \n^{(0)}_{t,e_i},Z_i\right\}_{i=1}^{2n_X}$ and  $s,s'\in \smooth_c(T_{\R,x_0}X, \E_{p,x_0})$,
\begin{equation}
\label{estimationcommutateurs2}
\left| \Big\langle [Q_1,[Q_2,\dots[Q_m,\LL_{t}]\dots]]s,s'\Big\rangle_{t,0} \right| \leq C_m||s||_{t,1}||s'||_{t,1}.
\end{equation}
\end{prop}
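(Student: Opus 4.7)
The plan is to follow the strategy of Proposition~\ref{estimationcommutateursprop}, but now the coefficients appearing in $\LL_t$ and in its iterated commutators take values in the Toeplitz algebra, so we systematically replace scalar uniform bounds by operator-norm uniform bounds, using Theorem~\ref{RFpToeplitz} and estimate \eqref{||Tfp||infinie}.

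First, the elementary commutator $[\n_{t,e_i}^{(0)},Z_j] = \delta_{ij}$ is unchanged, so $[Z_j,\LL_t]$ is a first order differential operator along the fiber whose coefficients are obtained from \eqref{defLup}, \eqref{dvlptLBetAup} and \eqref{natiup2}; these are a polynomial in $t$ whose coefficients are smooth sections of $\End(\eta)\otimes \toep$ composed with $t^k Z^\alpha$, and all Toeplitz-valued pieces are uniformly bounded in the operator norm by \eqref{||Tfp||infinie}. Thus $[Z_j,\LL_t]$ satisfies \eqref{estimationcommutateurs2}.

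Next I compute $[\n_{t,e_i}^{(0)},\n_{t,e_j}^{(0)}]$: by \eqref{nablax02} and \eqref{defLup}, this curvature equals $\big(R^{F_p}_\rho + t^2 R_{1,\rho}\big)_{tZ}^{(0)}(e_i,e_j)$, where $R^{F_p}_\rho$ is the curvature of $\nabla + \rho(|Z|/\e)\Gamma^{F_p}$. By Theorem~\ref{RFpToeplitz}, $\tfrac{1}{p}R^{F_p}$ is a Toeplitz-operator-valued form, hence $t^2 R^{F_p}_\rho$ is uniformly bounded in operator norm by a smooth function of $tZ$. Combining this with the analog of \eqref{dvptLB} and \eqref{dvlptAup} in the Toeplitz setting (equations \eqref{natiup2} and \eqref{dvlptLBetAup}), we find that $[\n_{t,e_i}^{(0)},\LL_t]$ has the same structure as $\LL_t$, in the sense of
\begin{equation}
\label{structure2}
\sum_{i,j} a_{ij}(t,tZ)\,\n_{t,e_i}^{(0)}\n_{t,e_j}^{(0)} + \sum_i b_i(t,tZ)\,\n_{t,e_i}^{(0)} + c(t,tZ),
\end{equation}
where $a_{ij}(t,\cdot)$, $b_i(t,\cdot)$, $c(t,\cdot)$ are polynomials in $t$ whose coefficients are smooth sections of $\End(\eta)\otimes \toep$; crucially, by Theorem~\ref{RFpToeplitz} and \eqref{||Tfp||infinie} \textbf{all} these Toeplitz coefficients and their $Z$-derivatives are uniformly bounded in operator norm, uniformly in $p$ and in $Z\in T_{\R,x_0}X$, $t\in]0,1]$.

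The adjoint formula \eqref{adjointdenati} still holds verbatim, since the metric $h^{F_p}$ is preserved by $\n^{F_p}$ (Remark~\ref{N->Mkahler}) and the scalar factor $\kappa^{-1}\nabla\kappa$ is independent of $F_p$. Combining \eqref{structure2}, the adjoint formula, and the uniform operator-norm bounds on all Toeplitz coefficients (so that Cauchy--Schwarz yields the desired bilinear estimate), we obtain \eqref{estimationcommutateurs2} for $m=1$. The general case follows by induction on $m$: one checks that each further commutator $[Q_k,\cdot]$ applied to \eqref{structure2} produces an expression of the same form \eqref{structure2}, with new coefficients obtained from the previous ones by (i) differentiation in the $Z$ variable (which preserves uniform operator-norm bounds on Toeplitz coefficients, again by Theorem~\ref{RFpToeplitz}), (ii) taking commutators with the Toeplitz operators themselves (which remain uniformly bounded by \eqref{commutateurtoeplitz}), and (iii) further applications of $[\n_{t,e_i}^{(0)},\n_{t,e_j}^{(0)}]$, already handled above.

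The main obstacle, and the only place where the argument genuinely deviates from Proposition~\ref{estimationcommutateursprop}, is verifying this uniform operator-norm control at every step of the iteration, since the underlying Hilbert space $L^2(Y, \eta\otimes L^p)$ changes with $p$ and the coefficients are no longer scalar. This is precisely what Theorem~\ref{RFpToeplitz} and \eqref{||Tfp||infinie} provide, and this is why working with the operator norm on Toeplitz operators, rather than attempting to pass to a limiting object, is essential.
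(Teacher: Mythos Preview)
Your proposal is correct and follows exactly the approach the paper intends: its own proof is the single sentence ``This Proposition is proved in the same way as Proposition~\ref{estimationcommutateursprop},'' and you have spelled out precisely that argument, correctly identifying that the only new ingredient is replacing scalar uniform bounds on coefficients by operator-norm bounds on Toeplitz operators via Theorem~\ref{RFpToeplitz} and \eqref{||Tfp||infinie}. One minor slip: the curvature commutator should read $t^2\big(R^{F_p}_\rho + R_{1,\rho}\big)^{(0)}_{tZ}(e_i,e_j)$ (both terms pick up the factor $t^2$ from the rescaling, cf.\ \eqref{natiup2}), but your subsequent claim that $t^2 R^{F_p}_\rho = \tfrac{1}{p}R^{F_p}_\rho$ is uniformly bounded shows you have the right scaling in mind, so this does not affect the argument.
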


\begin{proof}
This Proposition is proved in the same way as Proposition \ref{estimationcommutateursprop}.
\end{proof}

From Proposition \ref{estimationcommutateursprop2}, we can deduce the following result as done for Proposition \ref{norme(m,m+1)resolvante}.
\begin{prop}
\label{norme(m,m+1)resolvante2}
For any $t>0$, $\lambda \in \Gamma$ and $m\in \N$,
\begin{equation}
(\lambda-\LL_t)^{-1}\big( \sob^m_t\big) \subset \sob^{m+1}_t.
\end{equation}

Moreover, for any $\alpha \in \N^{2n_X}$, there exist $K\in \N$ and $C_{\alpha,m}>0$ such that for any $t>0$, $\lambda \in \Gamma$ and $s\in \smooth_c(T_{\R,x_0}X,\E_{p,x_0})$,
\begin{equation}
\left\| Z^\alpha (\lambda-\LL_t)^{-1}s\right\|_{t,m+1} \leq C_{\alpha,m}(1+|\lambda|^2)^K\sum_{\alpha'\leq \alpha} ||Z^{\alpha'}s||_{t,m}.
\end{equation}
\end{prop}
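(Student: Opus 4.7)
The plan is to mimic the argument of \cite[Thm.~1.6.10]{ma-marinescu}, namely to prove the statement by induction on $m$ and $|\alpha|$ using the commutator estimates of Proposition~\ref{estimationcommutateursprop2} together with the resolvent bounds of Proposition~\ref{estimationsresoprop2}. The single analytic input beyond those two propositions is the elementary resolvent commutator identity
\begin{equation*}
\big[Q,(\lambda-\LL_t)^{-1}\big] = (\lambda-\LL_t)^{-1}\,[Q,\LL_t]\,(\lambda-\LL_t)^{-1},
\end{equation*}
iterated to move any product of operators $Q_1\cdots Q_k$ past $(\lambda-\LL_t)^{-1}$ at the cost of picking up nested commutators $[Q_{i_1},[Q_{i_2},\dots,[Q_{i_j},\LL_t]\dots]]$ sandwiched between powers of the resolvent.

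For the base case $m=0$ and $|\alpha|=0$, the inclusion $(\lambda-\LL_t)^{-1}(\sob^0_t)\subset\sob^1_t$ together with the bound $\|(\lambda-\LL_t)^{-1}s\|_{t,1}\leq C(1+|\lambda|^2)^b\|s\|_{t,0}$ is exactly the second estimate in Proposition~\ref{estimationsresoprop2} (obtained by interpolating from its $-1\to 1$ version). For the inductive step, fix the operators $Q_1,\dots,Q_{m+1}\in\{\n^{(0)}_{t,e_i}\}$ and $Z^\alpha$ viewed as a product of the operators $Z_{j}$, and expand
\begin{equation*}
Q_1\cdots Q_{m+1}\,Z^\alpha\,(\lambda-\LL_t)^{-1}s
\end{equation*}
as a finite sum (whose combinatorics only depends on $m$ and $|\alpha|$) of terms of the form
\begin{equation*}
(\lambda-\LL_t)^{-1}\,\mathcal{C}_1\,(\lambda-\LL_t)^{-1}\,\mathcal{C}_2\cdots(\lambda-\LL_t)^{-1}\,\mathcal{C}_r\,\big(R\, s\big),
\end{equation*}
where each $\mathcal{C}_i$ is a nested commutator of a subset of $\{Q_1,\dots,Q_{m+1},Z^\alpha\}$ with $\LL_t$, and $R$ is the product of the $Q_j$ and $Z_k$ operators not used in any commutator. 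By Proposition~\ref{estimationcommutateursprop2} each $\mathcal{C}_i$ sends $\sob^1_t\to\sob^{-1}_t$ with operator norm independent of $t$ (and commutators involving only the $Z_k$ are of order $0$ by \eqref{dvlptLBetAup}, which again yields a bound $\sob^1_t\to\sob^{-1}_t$). Composed with the resolvent bound $\|(\lambda-\LL_t)^{-1}\|^{-1,1}_t\leq C(1+|\lambda|^2)^b$, each factor of the form $(\lambda-\LL_t)^{-1}\mathcal{C}_i$ is bounded on $\sob^1_t$ with norm $\leq C(1+|\lambda|^2)^b$, and the leftmost resolvent factor lifts regularity by one. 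This produces the desired estimate with $K$ determined by the number of factors, $C_{\alpha,m}$ absorbing the combinatorial constants, and the sum over $\alpha'\leq \alpha$ accounting for the residual $Z$-factors appearing in~$R$.

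Two bookkeeping observations close the argument. First, each time a $Z_j$ is commuted through $\LL_t$ the resulting operator lowers differential order by one (since $[Z_j,\LL_t]=-[\LL_t,Z_j]$ is first order, as seen in \eqref{dvlptLBetAup}), so after $|\alpha|$ such commutations the remaining $R$ acts from $\sob^m_t$ boundedly modulo multiplication by lower powers of $Z$, giving the sum $\sum_{\alpha'\leq\alpha}\|Z^{\alpha'}s\|_{t,m}$ on the right. Second, the commutator $\mathcal{C}_i$ generically contains a $Z_j$-factor arising from $[Z_k,\LL_t]$, so one must verify inductively that Proposition~\ref{estimationcommutateursprop2} remains applicable after such insertions; this is immediate from the structure \eqref{structure} exhibited in the proof of that proposition, which is preserved under all such operations. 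The main (and essentially only) technical obstacle is precisely this bookkeeping: making sure that the finite expansion obtained by repeated commutation indeed terminates and that each term retains the $\sob^1\leftrightarrow\sob^{-1}$ estimate of Proposition~\ref{estimationcommutateursprop2}. Once this is checked, the inductive step on $m+|\alpha|$ goes through verbatim, yielding the stated bound.
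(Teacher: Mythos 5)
Your proposal is correct and follows essentially the same route as the paper, which simply invokes the same chain: Propositions~\ref{estimationsresoprop2} and~\ref{estimationcommutateursprop2} plus the argument of \cite[Thm.~1.6.10]{ma-marinescu}, iterating the resolvent commutator identity to trade differentiations and multiplications by $Z$ for nested commutators controlled by Proposition~\ref{estimationcommutateursprop2}. Your write-up spells out the bookkeeping that the paper leaves implicit, and the only minor slip is the parenthetical ``commutators involving only the $Z_k$ are of order $0$'' -- a single commutator $[Z_j,\LL_t]$ is first order, not zeroth -- but this does not affect the argument since the $\sob^1_t\to\sob^{-1}_t$ bound holds in either case.
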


Let $e^{-\LL_t}(Z,Z')$ be the smooth kernel of the operator $e^{-\LL_t}$ with respect to $dv_{TX}(Z')$. Let $\mathrm{pr}_X \colon T_\R X\times_X T_\R X\to X$ be the projection from the fiberwise product $T_\R X\times_M T_\R X$ to $M$, then $e^{-\LL_t}(\cdot,\cdot)$ is a section of $\mathrm{pr}_X^*\left(\End\left( \E_p \right)\right)$ over $T_\R X\times_M T_\R X$. Let $\n^{\End(\E_p)}$ be the connection on the bundle $\End(\E_p)$ over $M$ induced by $\n^{\Lambda^\bullet(T^*_\R B)}$, $\n^{\Wedge,LC}$, $\n^\xi$ and $\n^{F_p}$, and let $\nabla^{\mathrm{pr}_X^*\End(\E_p)}$ be the induced connection on $\mathrm{pr}_X^*\End(\E_p)$. Then $\nabla^{\mathrm{pr}_X^*\End(\E_p)}$ and the operator norm on $\End(\E_p)$ induce naturally a $\mathscr{C}^m$-norm for the parameters $b_0\in B$ and $x_0\in X_{b_0}$.

\begin{thm}
\label{estimationderiveesnoyaudeLpu2}
For any $m,m'\in \N$, there is $C>0$ such that for any $t>0$, $Z,Z'\in T_{\R,x_0}X$ with $|Z|,|Z'|\leq 1$,
\begin{equation}
\label{estimationderiveesnoyaudeLpu:eq2}
\sup_{|\alpha|,|\alpha'|\leq m} \left\| \frac{\partial^{|\alpha|+|\alpha'|}}{\partial {Z}^{\alpha}\partial Z'^{\alpha'}} e^{-\LL_t}(Z,Z')\right\|_{\mathscr{C}^{m'}(M,\mathrm{pr}_X^*\End(\E_p))} \leq C,
\end{equation}
where $|\cdot|_{\mathscr{C}^{m'}(M,\mathrm{pr}_X^*\End(\E_p))}$ denotes the $\mathscr{C}^{m'}$ norm with respect to the parameters $b_0$ in a compact subset of $B$ and $x_0\in X_{b_0}$.
\end{thm}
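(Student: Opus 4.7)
The plan is to imitate the strategy used to prove Theorem \ref{estimationderiveesnoyaudeLpu}, which itself follows \cite[Thm. 1.6.11]{ma-marinescu}, while paying close attention to the extra difficulties caused by the growing dimension of $F_p$. The key technical input is Proposition \ref{norme(m,m+1)resolvante2}, and the unifying idea is that only the operator norm on $\End(\E_p)$ behaves well under $p\to +\infty$, so all estimates must be written in terms of $\|\cdot\|_\infty$ via the Toeplitz calculus.

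First I would represent the heat kernel by the contour integral
\begin{equation*}
e^{-\LL_t} = \frac{(-1)^{k-1}(k-1)!}{2i\pi} \int_\Gamma e^{-\lambda}(\lambda-\LL_t)^{-k}\,d\lambda,
\end{equation*}
for $k\in \N^*$ to be chosen large. By iterating Proposition \ref{norme(m,m+1)resolvante2}, for $k$ sufficiently large $(\lambda-\LL_t)^{-k}$ sends $\sob^{-q}_t$ continuously into $\sob^{q+1}_t$ for any prescribed $q$, with operator norm bounded by a polynomial in $|\lambda|$; combined with exponential decay of $e^{-\lambda}$ along $\Gamma$ and the Sobolev embedding on $\{|Z|,|Z'|\le 1\}$, this gives the estimate \eqref{estimationderiveesnoyaudeLpu:eq2} in the case $m'=0$, for the pure $Z,Z'$ derivatives.

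To introduce the parameter derivatives $(b_0,x_0)\in M$, I would differentiate under the integral sign. Denoting by $\nabla_U$ (with $U$ tangent to $M$) the connection induced by $\n^{\End(\E_p)}$ on the coefficients of $\LL_t$, one has
\begin{equation*}
\nabla_U(\lambda-\LL_t)^{-k} = \sum_{j=0}^{k-1}(\lambda-\LL_t)^{-j-1}(\nabla_U\LL_t)(\lambda-\LL_t)^{-(k-j-1)}\cdot(\text{iterated Leibniz}),
\end{equation*}
and similarly for higher-order parameter derivatives. The crucial observation is that, by Theorem \ref{RFpToeplitz}, \eqref{devlpmtGammaFp} and \eqref{derivee-Toep}, the parameter derivatives of the coefficients of $\LL_t$ are themselves of the same Toeplitz type, with operator norms uniformly bounded in $p$. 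Consequently, the arguments leading to Propositions \ref{estimationcommutateursprop2} and \ref{norme(m,m+1)resolvante2} carry over verbatim if one replaces $\LL_t$ by any iterated commutator $[\nabla_{U_1},\dots,[\nabla_{U_\ell},\LL_t]\dots]$, yielding the same kind of polynomial-in-$|\lambda|$ bound on the relevant Sobolev norms.

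Inserting these into the contour integral and applying the Sobolev embedding in the $(Z,Z')$ variables then yields \eqref{estimationderiveesnoyaudeLpu:eq2} for arbitrary $m,m'\in \N$. The main obstacle is precisely this last uniformity in $p$: the target space $\End(\E_p)$ has unbounded dimension, so one cannot rely on trace-class arguments. The remedy, already exploited in Section \ref{localization2}, is to stay systematically with the operator norm and to exploit the uniform boundedness of Toeplitz operators given by \eqref{||Tfp||infinie}, together with the fact that $\nabla^{F_p}T_{f,p}$ is itself a Toeplitz operator in the sense of Theorem \ref{RFpToeplitz}. Once these uniform-in-$p$ operator-norm bounds are in place, the rest of the proof is a routine transposition of \cite[Thm. 1.6.11]{ma-marinescu}.
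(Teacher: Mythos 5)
Your overall strategy matches the paper's: represent $e^{-\LL_t}$ by a contour integral, control $Q\,e^{-\LL_t}\,Q'$ for $Q,Q'\in\mathcal{Q}^m$ via Proposition~\ref{norme(m,m+1)resolvante2}, and then pass from operator-norm bounds to pointwise kernel bounds. You are also right that everything must be run through the operator norm on $\End(\E_p)$ and that Theorem~\ref{RFpToeplitz} together with \eqref{||Tfp||infinie} and \eqref{derivee-Toep} make the coefficients and their $M$-derivatives uniformly bounded.

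However, there is a genuine gap at the step you describe as ``combined with\dots the Sobolev embedding on $\{|Z|,|Z'|\le 1\}$'' and later summarize as ``the rest of the proof is a routine transposition of \cite[Thm. 1.6.11]{ma-marinescu}.'' Once you have $\| Q\,e^{-\LL_t}\,Q'\|_t^{0,0}\le C_m$ for $Q,Q'\in\mathcal{Q}^m$, you need to convert this $L^2$-type information into a pointwise bound on $e^{-\LL_t}(Z,Z')$ and its $Z,Z'$-derivatives, measured in the operator norm of $\End(\E_p)$. In \cite[Thm. 1.6.11]{ma-marinescu} (and in Theorem~\ref{estimationderiveesnoyaudeLpu} of this paper) the target vector bundle is \emph{fixed}, so the Sobolev embedding constant is just a constant. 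Here the kernel takes values in $\End(\E_p)$, whose fiber dimension goes to infinity with $p$; the Sobolev embedding for functions with values in $M_d(\C)$ endowed with the operator norm does \emph{not} a priori have a $d$-independent constant (the operator norm is not a Hilbertian norm, so the componentwise argument does not go through directly). The paper therefore has to prove a separate lemma (Lemma~\ref{Sobolevuniforme}) establishing, via the Fourier transform and Cauchy–Schwarz, that the Sobolev embedding constant for $H^k(\R^N, M_d(\C))\hookrightarrow\mathscr{C}^\ell(\R^N, M_d(\C))$ with the operator norm is uniform in $d$; it then compares the $\|\cdot\|_{t,m}$ norms with the standard ones on compact sets via \eqref{equivalenceSob2}. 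Your proposal simply invokes ``Sobolev embedding'' without this dimension-independent version; identifying and proving that uniform embedding is precisely the nontrivial ingredient that distinguishes this theorem from its line-bundle counterpart.
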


\begin{proof}
For $m\in \N$ and $p\in \N^*$, let
\begin{equation}
\mathcal{Q}^m =\left\{  \n_{t,e_{i_1}}^{(0)} \cdots \n_{t,e_{i_j}}^{(0)}\right\}_{j\leq m}.
\end{equation}

As in the proof of Theorem \ref{estimationderiveesnoyaudeLpu} (see \cite[(1.6.48)-(1.6.52)]{ma-marinescu}), it follows from Proposition \ref{norme(m,m+1)resolvante2} that there exists $C_m>0$ such that for $p\in \N^*$ and $Q,Q'\in \mathcal{Q}^m$,
\begin{equation}
\label{Qe(Lup)Q'2}
\left\| Qe^{-\LL_t}Q' \right \|_t^{0,0} \leq C_m.
\end{equation}

Here, we \emph{a priori} cannot conclude with a Sobolev inequality for a fixed Sobolev norm as in  the proof of Theorem \ref{estimationderiveesnoyaudeLpu}, because the space is changing. However, we will show a uniformity result in the Sobolev inequality for the ``standard" Soboloev norm.
\begin{lemme}
\label{Sobolevuniforme}
For every $d\in \N^*$, we endow $M_d(\C)$ (the space of $d\times d$ matrices with coefficients in $\C$) with the operator norm $\|\cdot \|$. This induces a Sobolev norm on $\smooth_c \big(\R^{N},M_d(\C)\big)$. We denote the corresponding Sobolev space by $H^k\big(\R^{N},M_d(\C)\big)$. 

Then for every $k, \ell \in \N$ such that $k-\ell >N/2$, there exists $C_{k,\ell,N}>0$ such that for every $d\in \N^*$ and $\varphi\in H^k\big(\R^{N},M_d(\C)\big)$,
\begin{equation}
\varphi \text{ is } \mathscr{C}^\ell  \text{ and }  \|\varphi\|_{\mathscr{C}^\ell} \leq C_{k,\ell,N} ||\varphi||_k,
\end{equation}
where  $\|\cdot\|_{\mathscr{C}^\ell}$ denotes the $\mathscr{C}^\ell$-norm on $\smooth_c\big(\R^{N},M_d(\C)\big)$.
\end{lemme}

\begin{proof}
 Suppose first that $\ell = 0$. For $\varphi\in \smooth_c\big(\R^{N},M_d(\C)\big)$, we denote by $\widehat{\varphi}$ the Fourier transform of $\varphi$. By the Fourier inversion formula, to show that $\varphi$ is continuous, it suffices to prove that $ \widehat{\varphi}(\xi) $ is in $L^1\big(\R^N, M_d(\C)\big)$. 

Set $\langle \xi \rangle = \left(1+|\xi|^2\right)^{1/2}$. Then $\varphi \in H^k\big(\R^{N},M_d(\C)\big)$ if and only if $\langle \xi \rangle^k \widehat{\varphi} \in L^2\big(\R^{N},M_d(\C)\big)$. Moreover, there exists $c_{k,N}>0$ independent of $d$ such that for $\varphi \in H^k\big(\R^{N},M_d(\C)\big)$,
\begin{equation}
\frac{1}{c_{k,N}}\| \langle \xi \rangle^k \widehat{\varphi} \|_{L^2} \leq \| \varphi \|_k \leq c_{k,N} \| \langle \xi \rangle^k \widehat{\varphi} \|_{L^2}.
\end{equation}
Now, we use Cauchy-Scwarz inequality:
\begin{equation}
\begin{aligned} 
\int \|\widehat{\varphi}(\xi) \| d\xi &\leq   \int \|\langle \xi \rangle^k \widehat{\varphi}(\xi)\|\times \|\langle \xi \rangle^{-k} \Id\| d\xi \\
& \leq \| \langle \xi \rangle^k \widehat{\varphi} \|_{L^2} \int \langle \xi \rangle^{-2k}  d\xi \quad \leq C_{k,0,N} \| \varphi \|_k.
\end{aligned}
\end{equation}

The case $\ell \geq 1$ follows from the case $\ell=0$ applied to the derivatives of $\varphi$.
\end{proof}

We can now finish the proof of Theorem \ref{estimationderiveesnoyaudeLpu2}, applying Lemma \ref{Sobolevuniforme} to our situation. Let $m\in \N$, as $e^{-\LL_t}(\cdot \,, \cdot) \in \smooth \big((T_{\R,x_0}X)^2, \End(\E_p)\big)$, there is  $k\in \N$ and a constant $C>0$ independent on $p$ such that for $|\alpha|,|\alpha'|\leq m$ and $|Z|,|Z'|\leq 1$,
\begin{equation}
\label{onutiliseSobolevunif}
\left\| \frac{\partial^{|\alpha|+|\alpha'|}}{\partial {Z}^{\alpha}\partial Z'^{\alpha'}} e^{-\LL_t}(Z,Z')\right\| \leq C \left \|e^{-\LL_t}(\cdot \,,\cdot)|_{B(0,1)^2}\right \|_k.
\end{equation}
Now, by \eqref{natiup2} and \eqref{defnorme(p,m)2}, for any $m\in \N$ there exists $C'_m>0$ independent on $t$ such that for $\varphi \in \smooth \big((T_{\R,x_0}X)^2, \End(\E_p)\big)$ with support in $B^{T_{\R,x_0}}(0,1)^2$,
\begin{equation}
\label{equivalenceSob2}
\frac{1}{C'_m} \|\varphi\|_{t,m} \leq \|\varphi\|_m \leq C'_m \|\varphi\|_{t,m}.
\end{equation}
With \eqref{Qe(Lup)Q'2}, \eqref{onutiliseSobolevunif} and \eqref{equivalenceSob2}, we see that \eqref{estimationderiveesnoyaudeLpu:eq} holds when $m'=0$.

For $m'\geq 1$, we use the same arguments as in Theorem \ref{estimationderiveesnoyaudeLpu} (see \cite[(1.6.55)]{ma-marinescu}).
\end{proof}

\begin{thm}
\label{diffdesresolvantesthm2}
There are constants $C>0$ and $M\in \N^*$ such that for $t >0$, 
\begin{equation}
\label{diffdesresolvantes2}
\left\| \big((\lambda-\LL_t)^{-1} - (\lambda-\ul{\LL}_t )^{-1}\big)s \right\|_{t,0} \leq Ct (1+|\lambda|^2)^M\sum_{|\alpha|\leq3}||Z^\alpha s||_{t,0}.
\end{equation}
\end{thm}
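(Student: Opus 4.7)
The plan is to imitate the proof of Theorem \ref{diffdesresolvantes-thm} (i.e.\ \cite[Thm.~1.6.12]{ma-marinescu}), but working with Toeplitz-valued coefficients and using the uniformity provided by the operator norm. The starting point is the resolvent identity
\begin{equation*}
(\lambda-\LL_t)^{-1} - (\lambda-\ul{\LL}_t)^{-1} = (\lambda-\LL_t)^{-1}\bigl(\LL_t-\ul{\LL}_t\bigr)(\lambda-\ul{\LL}_t)^{-1}.
\end{equation*}
Both resolvents are controlled by Proposition \ref{estimationsresoprop2}: the estimates there were stated for $\LL_t$, but since $\ul{\LL}_t$ satisfies exactly the same structural bounds of Proposition \ref{estimationLupprop2} (with the Toeplitz-operator norms of the coefficients uniformly bounded thanks to \eqref{||Tfp||infinie}), the analogue of \eqref{estimationsreso2} holds for $\ul{\LL}_t$ as well. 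Combining the two operator bounds reduces the theorem to proving an inequality of the shape
\begin{equation*}
\bigl\|(\LL_t-\ul{\LL}_t)s\bigr\|_{\ul{t},-1} \leq C t \sum_{|\alpha|\leq 3}\|Z^\alpha s\|_{t,0},
\end{equation*}
after converting between $\|\cdot\|_{t,\pm 1}$ and $\|\cdot\|_{\ul{t},\pm 1}$, which is immediate since $\n_{t,e_i}-\ul{\n}_{t,e_i} = O(t)$ as zero-order operators by Proposition \ref{asympL2}.

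To estimate $\LL_t-\ul{\LL}_t$, I would go back to the explicit expressions \eqref{natiup2}, \eqref{dvlptLBetAup} and the definitions \eqref{defulLup}, and perform a Taylor expansion at order two of each coefficient in the variable $tZ\in T_{\R,x_0}X$. Concretely, each coefficient is either (i) a smooth scalar function of $tZ$ (such as $g^{ij}(tZ)$, $\kappa^{1/2}(tZ)$, the scalar horizontal curvature components $R^L_{i,\alpha}(tZ)$, $R^L_{\alpha,\beta}(tZ)$) or (ii) a Toeplitz-valued function of $tZ$ (such as $\Gamma^{F_p}_{tZ}(e_i)$, $R^{F_p}_{tZ}(e_i,e_j)$). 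For the scalar coefficients one simply Taylor-expands, the remainder producing a factor $(tZ)^\alpha$ with $|\alpha|\leq 2$. For the Toeplitz-valued coefficients, Theorem \ref{RFpToeplitz} guarantees that the first and second fiberwise derivatives of $\Gamma^{F_p}$ and $R^{F_p}$ are themselves Toeplitz operators with uniformly bounded operator norm (in $p$), so the same Taylor expansion applies and the remainders again pick up a factor $(tZ)^\alpha$. Adding up we obtain
\begin{equation*}
\LL_t-\ul{\LL}_t = \sum_{|\alpha|\leq 2} t^{|\alpha|+1} Z^\alpha \bigl(A_\alpha(t,tZ) \n_t^{(0)} + B_\alpha(t,tZ) \n_t^{(0)}\cdot \n_t^{(0)} + C_\alpha(t,tZ)\bigr),
\end{equation*}
with Toeplitz-operator-valued smooth coefficients $A_\alpha,B_\alpha,C_\alpha$ uniformly bounded (with all derivatives) in $p$ and in $t\in]0,1]$, and involving at most one or two derivatives $\n_t^{(0)}$. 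Using $[\n_{t,e_i}^{(0)}, Z^\alpha ]$ estimates (obtained exactly as in Proposition \ref{estimationcommutateursprop2}) to move the $Z^\alpha$ past the connections, together with the first line of \eqref{estimationsLup2} to absorb the remaining first derivative, yields the claimed bound in the $\|\cdot\|_{t,-1}$ norm.

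The main obstacle is genuinely bookkeeping rather than conceptual: one must check that the Toeplitz remainders in the fiberwise Taylor expansions behave uniformly in $p$ with respect to the operator norm. This is the role of Theorem \ref{RFpToeplitz}: it tells us that the Toeplitz structure is preserved under differentiation along $T^H_MN$, and that each fiberwise derivative of $\Gamma^{F_p}$ produces a Toeplitz operator of the same asymptotic order. Once this is in hand, combining the resolvent bounds from Proposition \ref{estimationsresoprop2} (for $\LL_t$ and the analogue for $\ul{\LL}_t$) with the $\|\cdot\|_{t,-1}$ estimate above gives the required inequality \eqref{diffdesresolvantes2}, with the polynomial factor $(1+|\lambda|^2)^M$ coming from the product of the two resolvent norms.
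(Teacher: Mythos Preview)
Your approach is essentially the same as the paper's: resolvent identity, Taylor expansion of the coefficients using Theorem~\ref{RFpToeplitz} for the Toeplitz-valued terms, and resolvent/commutator estimates for both $\LL_t$ and $\ul{\LL}_t$. One small imprecision: the intermediate bound you display should read $\|(\LL_t-\ul{\LL}_t)s\|_{t,-1}\leq Ct\sum_{|\alpha|\leq 3}\|Z^\alpha s\|_{\ul{t},1}$ (a $1$-norm on the right, not a $0$-norm), and the passage from there to \eqref{diffdesresolvantes2} then requires the weighted resolvent estimate of Proposition~\ref{norme(m,m+1)resolvante2} for $\ul{\LL}_t$ (not merely Proposition~\ref{estimationsresoprop2}) to control $\|Z^\alpha(\lambda-\ul{\LL}_t)^{-1}s\|_{\ul{t},1}$ by $\sum_{\alpha'\leq\alpha}\|Z^{\alpha'}s\|_{t,0}$.
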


\begin{proof}

From \eqref{natiup2} and \eqref{defnorme(p,m)2}, for $p \geq 1$ and $m\in \N$ we find
\begin{equation}
\|s\|_{t,m} \leq C\sum_{|\alpha|\leq m} \|Z^\alpha s\|_{\ul{t},m}.
\end{equation}
Moreover, for $s,s'$ with compact support, using Theorem \ref{RFpToeplitz} and a Taylor expansion of \eqref{dvlptLBetAup}, we find
\begin{equation}
\label{Lup-Luinfty2}
\begin{aligned}
&\left | \big\langle (\LL_t-\ul{\LL}_t )s, s' \big\rangle_{t,0} \right | \leq Ct \|s'\|_{t,1} \sum_{|\alpha|\leq 3} \|Z^\alpha s \|_{\ul{t},1}, \\
&\| (\LL_t-\ul{\LL}_t )s \|_{t,-1} \leq C\sum_{|\alpha|\leq 3} \|Z^\alpha s \|_{\ul{t},1}.
\end{aligned}
\end{equation}

Note that
\begin{equation}
\label{Lup-Luinfty,resolvantes2}
(\lambda-\LL_t)^{-1}-(\lambda-\ul{\LL}_t )^{-1} = (\lambda-\LL_t)^{-1}(\LL_t-\ul{\LL}_t )(\lambda-\ul{\LL}_t )^{-1}.
\end{equation}
Moreover, Propositions \ref{estimationsresoprop2}, \ref{estimationcommutateursprop2} and \ref{norme(m,m+1)resolvante2} still hold for the operator $\ul{\LL}_t $, the norms $\|\cdot\|_{\ul{t},m}$ and the family of test operators for commutators $\left\{  \ul{\n}_{t,e_i},Z_i\right\}_{i=1}^{2n_X}$. Thus, Proposition \ref{norme(m,m+1)resolvante2}, \eqref{Lup-Luinfty2} and \eqref{Lup-Luinfty,resolvantes2} yields to \eqref{diffdesresolvantes2}.
\end{proof}

\begin{proof}[Proof of Theorem \ref{cvcenoyauLup->noyauLuinfini2}]
 By Theorems \ref{estimationderiveesnoyaudeLpu2} and \ref{diffdesresolvantesthm2}, we can prove Theorem \ref{cvcenoyauLup->noyauLuinfini2} exactly as Theorem \ref{cvcenoyauLup->noyauLuinfini}.
\end{proof}

Define 
\begin{equation}
\label{def-ul(L)-t,u}
\ul{\LL}_{t,u} = u\psi_{1/\sqrt{u}} \ul{\LL_{t}}\psi_{\sqrt{u}}.
\end{equation}
Whereas in Section \ref{sectioncvce} we could use a closed formula for the heat kernel of $\LL_{0,u}
$ to derive Theorem \ref{cvcenoyaudelachaleur} from Theorem \ref{cvcenoyauLup->noyauLuinfini}, here we cannot compute $e^{-\ul{\LL}_{t,u} }(0,0)$ exactly to get the asymptotic of $ \psi_{1/\sqrt{p}}\exp(-B_{p,u/p}^2)(x_0,x_0)$. The difficulty is that here, the harmonic oscillator $\ul{\LL}_t $ has its coefficients in the non-commutative algebra $\toep_{x_0}$. However, by \eqref{commutateurtoeplitz}, the coefficients of $\ul{\LL}_t $ tends to commute increasingly, so we can expect to have at least a equivalent of $e^{-\ul{\LL}_{t,u} }(0,0)$.

 For $y\in Y_{x_0}$, we define the operator $\har_{x_0} (y)$ acting on the space
\begin{equation}
\label{C(V,...)}
\smooth \left(T_{\R,x_0}X,\Lambda^\bullet(T^*_{\R,b_0} B)\otimes \left(\Wedge(T^*X)\otimes \xi \right)_{x_0}\right)
\end{equation}
by
\begin{equation}
\label{defhary}
\begin{aligned}
\har_{x_0} (y) = - \frac{1}{2}\sum_i \Big(\nabla_{e_i} &+ \frac{1}{2}R^L_{(x_0,y)}( Z^H, e_i^H)\Big)^2 \\
&+ \frac{1}{4} c^ic^jR^L_{i,j}(x_0,y)+\frac{1}{\sqrt{2}}c^if^\alpha R^L_{i,\alpha}(x_0,y)+ \frac{f^\alpha f^\beta}{2}R^L_{\alpha,\beta}(x_0,y).
\end{aligned}
\end{equation}
Set also
\begin{equation}
\label{defharu}
\har_{x_0,u}(y) = u\psi_{1/\sqrt{u}} \har_{x_0} (y)\psi_{\sqrt{u}}.
\end{equation}

Then $y\mapsto \har_{x_0} (y)$ is a smooth function from $Y_{x_0}$ to the space of differential operators acting on the space given in \eqref{C(V,...)}. As a consequence, the family  $\{P_{p,x_0}\har_{x_0} (y)P_{p,x_0}\}_p$ is a family of differential operators that belongs to the algebra $\B_{x_0}$. Now, as $\n_{e_i}$ and $P_{p,x_0}$ commute, it is easy to see that for any $p\in \N^*$,
\begin{equation}
\label{ul(L)=toep}
\ul{\LL}_t = P_{p,x_0}\har_{x_0} (\cdot)P_{p,x_0}.
\end{equation}

We denote by  $e^{-\ul{\LL}_t }(Z,Z')$ and $e^{-\har_{x_0} (y)}(Z,Z')$  the smooth kernels of the operators $e^{-\ul{\LL}_t }$ and $e^{-\har_{x_0} (y)}$ with respect to $dv_{TX}(Z')$. Then for $Z,Z' \in T_{\R,x_0}X$,
\begin{equation}
\left\{y\mapsto e^{-\har_{x_0} (y)}(Z,Z')\right\} \in \smooth\Big(Y_{x_0},\Lambda^\bullet(T_{b_0}^*B) \otimes \End\big(\Wedge(T_{x_0}^*X)\otimes\xi_{x_0}\big)\Big).
\end{equation}

\begin{thm}
\label{chaleurettoeplitzcommutent}
For $u>0$ fixed and for all $Z,Z' \in T_{\R,x_0}X$ we have as $t\to 0$
\begin{equation}
e^{-u\ul{\LL}_t }(Z,Z') = T_{e^{-u\har_{x_0} (\cdot)}(Z,Z'),p}+o(1),
\end{equation}
where $o(1)$ denotes a term converging to 0 for the operator norm.
\end{thm}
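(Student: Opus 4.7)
The plan is to deduce the theorem from the Toeplitz composition result (Theorem \ref{prodtoeplitztenseurhilbert}) applied within the $\mathcal{B}(\mathcal{H})$-valued Toeplitz calculus of Section \ref{operatorwithToeplitzcoeff}, where $\mathcal{H}$ is taken to be an appropriate weighted $L^2$-space on $T_{\R,x_0}X$ (so that the resolvents of $\har_{x_0}(y)$ act boundedly on $\mathcal{H}$ uniformly in $y\in Y_{x_0}$). By \eqref{ul(L)=toep}, $\ul{\LL}_t = T_{\har_{x_0}(\cdot),p}$ is, in this sense, the Toeplitz quantization in the $y$-variable of the smooth operator-valued symbol $y \mapsto \har_{x_0}(y)$.

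The key step is to establish the resolvent-level asymptotic
\begin{equation}
\label{plan-res}
(\lambda - \ul{\LL}_t)^{-1} = T_{(\lambda - \har_{x_0}(\cdot))^{-1},p} + o(1)
\end{equation}
uniformly for $\lambda$ on the contour $\Gamma$ of Figure \ref{contour-Gamma}, in operator norm. To do so, I would use the algebraic identity $\har_{x_0}(y)(\lambda - \har_{x_0}(y))^{-1} = \lambda(\lambda - \har_{x_0}(y))^{-1} - \Id$ together with the composition formula of Theorem \ref{prodtoeplitztenseurhilbert} to obtain
\begin{equation*}
(\lambda - \ul{\LL}_t)\, T_{(\lambda - \har_{x_0}(\cdot))^{-1},p} = P_{p,x_0} + O(1/p).
\end{equation*}
Multiplying by $(\lambda - \ul{\LL}_t)^{-1}$, whose operator norm is uniformly bounded on $\Gamma$ by Proposition \ref{estimationsresoprop2}, gives \eqref{plan-res}.

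The theorem then follows from the contour integral representation
\begin{equation*}
e^{-u\ul{\LL}_t} - T_{e^{-u\har_{x_0}(\cdot)},p} = \frac{1}{2\pi i}\int_{\Gamma} e^{-u\lambda}\bigl[(\lambda - \ul{\LL}_t)^{-1} - T_{(\lambda - \har_{x_0}(\cdot))^{-1},p}\bigr]\, d\lambda,
\end{equation*}
which shows that the difference is $o(1)$ in operator norm, followed by a Sobolev embedding argument uniform in $p$ (in the spirit of the end of the proof of Theorem \ref{estimationderiveesnoyaudeLpu2}, using Lemma \ref{Sobolevuniforme}) to pass from operator-norm convergence of the operators to pointwise convergence of their Schwartz kernels at $(Z,Z')$.

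The main obstacle will be to rigorously apply Theorem \ref{prodtoeplitztenseurhilbert} despite the \emph{unboundedness} of the symbol $\har_{x_0}(y)$. This is circumvented by always composing in pairs in which $\har_{x_0}$ is absorbed by an adjacent resolvent, so that every factor entering the composition formula is bounded. A secondary technical point is to ensure that the error in \eqref{plan-res} decays polynomially in $|\lambda|$, so that the contour integral remains convergent after passing to the limit $t\to 0$; this follows from the decay properties in $|\lambda|$ of $(\lambda - \har_{x_0}(y))^{-1}$ that are uniform in $y$, analogous to those established in Proposition \ref{estimationsresoprop}.
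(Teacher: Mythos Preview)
Your outline follows the same route as the paper: contour-integral reduction to a resolvent comparison, use of the $\mathcal{B(H)}$-valued Toeplitz composition formula (Theorem \ref{prodtoeplitztenseurhilbert}) with $\mathcal{H}=L^2\big(T_{\R,x_0}X,\Lambda^\bullet(T^*_{\R,b_0}B)\otimes(\Wedge(T^*X)\otimes\xi)_{x_0}\big)$, and passage to Schwartz kernels via the uniform Sobolev embedding of Lemma \ref{Sobolevuniforme}. Your identity $(\lambda-\ul{\LL}_t)\,T_{(\lambda-\har_{x_0})^{-1},p}=P_{p,x_0}+O(p^{-1})$ and the paper's resolvent identity \eqref{diffdesinverses} both reduce to bounding the single term $P_{p,x_0}\har_{x_0} P^\perp_{p,x_0}(\lambda-\har_{x_0})^{-1}P_{p,x_0}$, so the two arguments are essentially the same computation phrased differently.

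The one point where your proposal is not quite right is the handling of the unboundedness of $\har_{x_0}$. Your suggested workaround (``absorb $\har_{x_0}$ into an adjacent resolvent before applying the composition formula'') is circular: the very statement $T_{\har_{x_0},p}T_{(\lambda-\har_{x_0})^{-1},p}\approx T_{\har_{x_0}(\lambda-\har_{x_0})^{-1},p}$ \emph{is} the composition formula with the unbounded symbol $\har_{x_0}$, so you cannot use the absorption to justify it. The paper's resolution is different and is the missing idea: since the $\nabla_{e_i}$'s act only in the $Z$-variable, they commute with $P_{p,x_0}$, so in $P_{p,x_0}\har_{x_0}P^\perp_{p,x_0}$ the second-order differential part drops out entirely. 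What remains are the $y$-dependent coefficient functions $f(\cdot,y)$ appearing in \eqref{defhary}; for these bounded symbols one may legitimately apply Theorem \ref{prodtoeplitztenseurhilbert} with $A(y)=f(\cdot,y)$ and $B(y)=(\lambda-\har_{x_0}(y))^{-1}$ to obtain \eqref{[resolvante,Toepltiz]=0} and hence \eqref{normeestimee}. Once you insert this observation, your argument and the paper's coincide.
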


\begin{proof}
For $\lambda \in \Gamma$ (see Figure \ref{contour-Gamma}), both $\lambda-P_{p,x_0}\har_{x_0} (y)P_{p,x_0}$ and $\lambda-\har_{x_0} (y)$ are invertible, so we can use a contour integral to get
\begin{multline}
\label{diffdesnoyauxdelachaleure}
e^{-uP_{p,x_0}\har_{x_0} (y)P_{p,x_0}} - P_{p,x_0}e^{-u\har_{x_0} (\cdot)}P_{p,x_0} = \\
\frac{1}{2i\pi}\int_\Gamma e^{-u\lambda}\left[ (\lambda-P_{p,x_0}\har_{x_0} P_{p,x_0})^{-1} -P_{p,x_0}(\lambda-\har_{x_0} )^{-1}P_{p,x_0}\right] d\lambda.
\end{multline}
Moreover, setting $P^\perp_{p,x_0} = 1-P_{p,x_0}$, we have 
\begin{align}
\label{diffdesinverses}
 (\lambda-P_{p,x_0}\har_{x_0} P_{p,x_0})^{-1} &-P_{p,x_0}(\lambda-\har_{x_0} )^{-1}P_{p,x_0} \notag \\
 &=(\lambda-P_{p,x_0}\har_{x_0} P_{p,x_0})^{-1}(P_{p,x_0}\har_{x_0} P_{p,x_0}-\har_{x_0} )P_{p,x_0}(\lambda-\har_{x_0} )^{-1}P_{p,x_0}  \\
 &=(\lambda-P_{p,x_0}\har_{x_0} P_{p,x_0})^{-1}P_{p,x_0}\har_{x_0} P^\perp_{p,x_0}(\lambda-\har_{x_0} )^{-1}P_{p,x_0}.\notag
\end{align}

By Propositions \ref{estimationsresoprop2} for $\ul{\LL}_t $, there are  constants $C>0$ and $a\in \N$ such that for $\lambda\in \Gamma$,
\begin{equation}
\label{normeinversetoeplitz}
\left \| (\lambda-P_{p,x_0}\har_{x_0} P_{p,x_0})^{-1} \right \|_t^{0,0} \leq C(1+|\lambda|^2)^a.
\end{equation}

Let $f \in \smooth (T_{\R,x_0}X\times Y_{x_0},\C)$. Note that $y\mapsto (\lambda-\har_{x_0} (y))^{-1}$ is a smooth function on $Y_{x_0}$ with values in the algebra  of bounded operator acting on the Hilbert space
\begin{equation}
L^2\big(T_{\R,x_0}X,\Lambda^\bullet(T^*_{\R,b_0} B)\otimes \left(\Wedge(T^*X)\otimes \xi \right)_{x_0}\big).
\end{equation}
Thus, we can apply Theorem \ref{prodtoeplitztenseurhilbert} to 
\begin{equation}
\begin{aligned}
& \mathcal{H} = L^2\big(T_{\R,x_0}X,\Lambda^\bullet(T^*_{\R,b_0} B)\otimes \left(\Wedge(T^*X)\otimes \xi \right)_{x_0}\big), \\
& A(y) = f(\cdot, y) , \quad B(y) = (\lambda-\har_{x_0} (y))^{-1}.
\end{aligned}
\end{equation}
We then get
\begin{equation}
P_{p,x_0}fP_{p,x_0}(\lambda-\har_{x_0} )^{-1}P_{p,x_0} - P_{p,x_0}f(\lambda-\har_{x_0} )^{-1}P_{p,x_0} = O(p^{-1}).
\end{equation}
Here, the term $O(p^{-1})$ depends of course on $\lambda$. To get the expansion \eqref{expansionTA,p,y0}, we used the Taylor expansion of $A$. Thus, in \eqref{expansionTA,p,y0}, we can bound the error term $O(p^{-\frac{k+1}{2}})$ using the derivatives of $A$ of order less than $k+1$. Applying this argument to $(\lambda-\har_{x_0} )^{-1}$ and using Proposition \ref{estimationsresoprop2}, we find that there exists $M\in \N^*$ such that
\begin{equation}
\label{[resolvante,Toepltiz]=0}
\left \|P_{p,x_0}fP_{p,x_0}(\lambda-\har_{x_0} )^{-1}P_{p,x_0} - P_{p,x_0}f(\lambda-\har_{x_0} )^{-1}P_{p,x_0} \right \|_t^{0,0} \leq Cp^{-1}(1+|\lambda|^2)^M.
\end{equation}
Hence, as $\n_{e_i}$ commutes with $P_{p,x_0}$, using \eqref{defhary} we find 
\begin{equation}
\label{normeestimee}
\left \| P_{p,x_0}\har_{x_0} P^\perp_{p,x_0}(\lambda-\har_{x_0} )^{-1}P_{p,x_0} \right \|_t^{0,0} \leq Cp^{-1}(1+|\lambda|^2)^M.
\end{equation}

With \eqref{diffdesnoyauxdelachaleure}, \eqref{diffdesinverses}, \eqref{normeinversetoeplitz} and \eqref{normeestimee} we infer that 
\begin{equation}
\label{norme[exp,toep]}
\left \| e^{-P_{p,x_0}\har_{x_0} (y)P_{p,x_0}} - P_{p,x_0}e^{-\har_{x_0} (\cdot)}P_{p,x_0}\right \|_t^{0,0} \leq Cp^{-1}.
\end{equation}

Note that $P_{p,x_0}e^{-\har_{x_0} (\cdot)}P_{p,x_0}$ satisfies a estimate analogous to \eqref{estimationderiveesnoyaudeLpu:eq2}. Indeed, we have
\begin{equation}
P_{p,x_0}e^{-\har_{x_0} (\cdot)}P_{p,x_0}(Z,Z')=P_{p,x_0}e^{-\har_{x_0} (\cdot)}(Z,Z')P_{p,x_0},
\end{equation}
and we can apply \eqref{estimationderiveesnoyaudeLpu:eq} to $\har_{x_0} (y)$ (which correspond for $y$ fixed to $\LL_{0}$ in Section \ref{rescalingBpu}) and \eqref{||Tfp||infinie} to conclude. Thus, by \eqref{estimationderiveesnoyaudeLpu:eq2} applied to $e^{-P_{p,x_0}\har_{x_0} (y)P_{p,x_0}}$ and $P_{p,x_0}e^{-\har_{x_0} (\cdot)}P_{p,x_0}$, and by \eqref{norme[exp,toep]}, we can apply the method of Theorem \ref{cvcenoyauLup->noyauLuinfini} to complete the proof of Theorem \ref{chaleurettoeplitzcommutent}.
\end{proof}

Using the analogue of Lemma \ref{noyauBupetMup2}, Theorems \ref{cvcenoyauLup->noyauLuinfini2} and \ref{chaleurettoeplitzcommutent}, and \eqref{BupetLinfini}   we get that  
\begin{equation}
\label{BupetLinfini2}
\psi_{1/\sqrt{p}} e^{-B_{p,u/p}^2}(x_0,x_0)=p^{n_X} T_{e^{-\har_{x_0,u} (\cdot)}(0,0),p} + o(p^{n_X})
\end{equation}
for the operator norm and the operator norm of the derivatives. 

Recall that $\dot{R}^{X,L}$ is define in \eqref{defRpoint2}, that $\{ w_j\}$ is an orthonormal frame of $(TX,h^{TX})$, with dual frame $\{w^j\}$ and that $\{f_\alpha\}$ is a frame of $T_\R B\simeq T^H_{B,\R}M$ with dual basis $\{f^\alpha\}$. Define
\begin{equation}
\label{defOmegau2}
\Omega_u =  uR^L(w_k^H,\bw_\ell^H)\bw^{\ell}\wedge i_{\bw_k} + \sqrt{\frac{u}{2}}c(e_i)f^\alpha R^L(e_i^H,f_\alpha^H)+ \frac{f^\alpha f^\beta}{2}R^L(f_\alpha^H,f_\beta^H).
\end{equation}
By comparing the definitions of $\har_{x_0,u} $ in \eqref{defhary} and \eqref{defharu} and of $\LL_{0,u}$ in \eqref{defnrescaled}, and using \eqref{noyaudelachaleurLinfini}, we find that
\begin{equation}
\label{toep-exp(har)}
T_{e^{-\har_{x_0,u} (\cdot)}(0,0),p} = (2\pi)^{-n_X} P_{p,x_0} \exp(-\Omega_{u,(x_0,\cdot)})\frac{\det(\dot{R}^{X,L}_{(x_0,\cdot)})}{\det \big(1-\exp(-u\dot{R}^{X,L}_{(x_0,\cdot)})\big)}\otimes \Id_{\xi}P_{p,x_0},
\end{equation}

Finally, we have proved that for any $k\in \N$, as $p\to +\infty$, uniformly as $u$ varies in a compact subset of $\R^*_+$, we have the following asymptotic for for the operator norm on $\End(\E_p)$ and the operator norm of the derivatives up to order $k$:
\begin{multline}
\psi_{1/\sqrt{p}}\exp(-B_{p,u/p}^2) (x_0,x_0)\\
= \frac{p^{n_X}}{(2\pi)^{n_X}} P_{p,x_0} e^{-\Omega_{u,(x_0,\cdot)}}\frac{\det(\dot{R}^{X,L}_{(x_0,\cdot)})}{\det \big(1-\exp(-u\dot{R}^{X,L}_{(x_0,\cdot)})\big)}\otimes \Id_{\xi_{x_0}}P_{p,x_0} + o(p^{n_X}).
\end{multline}

Theorem \ref{cvcenoyaudelachaleur2} is proved.

\subsection{Asymptotic of the torsion forms.}
\label{Asymptoticofthetorsionforms2}

The method is the same as in Section \ref{Asymptoticofthetorsionforms}. Let $b_0\in B$, we denote $X_{b_0}$ and $Z_{b_0}$ simply by $X$ and $Z$. Recall that $n_M=\dim M$.

Let $\Lambda  \in \smooth\left( Z , \pi_1^*\End\big(\Lambda^\bullet(T^*_{\R,b_0} B)\otimes\Wedge(T^*X)\big)\right)$ be defined by
\begin{equation}
\label{def-Lambda_u}
\Lambda_u(z) =e^{-\har_{u} (z)}(0,0) = (2\pi)^{-n_X} \exp(-\Omega_{u,z})\frac{\det(\dot{R}^{X,L}_{z})}{\det \big(\Id-\exp(-u\dot{R}^{X,L}_{z})\big)},
\end{equation}
and let $\RR_u \in \smooth (Z,\C)$ be defined by
\begin{equation}
\label{defRu2}
\RR_u(z)=  \tr_s\left[ N_u\Lambda_u(z)\right].
\end{equation}

Let $A_j\in \smooth\left(Z, \pi_1^*\End\big(\Lambda^\bullet(T^*_{\R,b_0} B)\otimes\Wedge(T^*X)\big)\right)$ be such that as $u\to 0$
\begin{equation}
\label{defAj2}
\Lambda_u (z) = \sum_{j=-n_M}^k A_j(z)u^j + O(u^{k+1}),
\end{equation}
and here again we set $A_{-n_M-1}=0$.

\begin{thm}
\label{dvlptuniformedunoyaudelachaleur2}
There exist $\{A_{p,j}\}\in \smooth(X,\End(\E_p))$ such that for any $k,\ell \in \N$, there exist $C>0$ such that for any $u\in ]0,1]$ and $p\geq 1$, 
\begin{equation}
\label{dvlptuniformedunoyaudelachaleureq1-2}
\left\|p^{-n_X}\psi_{1/\sqrt{p}} \exp\left( -B_{p,u/p}^2 \right)(x,x) - \sum_{j=-n_M}^k A_{p,j}(x)u^j\right\|_{\mathscr{C}^\ell(M)}\leq Cu^{k+1}.
\end{equation}
Moreover, as $p\to+\infty$, we have for any $j\geq-n_M$
\begin{equation}
\label{dvlptuniformedunoyaudelachaleureq2-2}
A_{p,j}(x)=P_{p,x}A_j(x,\cdot)\otimes \Id_{\xi_x} P_{p,x} + o(1),
\end{equation}
for the operator norm on $\End(\E_p)$ and the operator norm of the derivatives up to order $\ell$.
\end{thm}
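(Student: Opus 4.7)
The plan is to follow the proof of Theorem \ref{dvlptuniformedunoyaudelachaleur}, adapting its ingredients to the Toeplitz-operator setting developed in Section \ref{Sectionimagedirecte}. After localizing at $x_0 \in X_{b_0}$ as in Subsection \ref{localization2} and rescaling to obtain the operator $\LL_t$ of \eqref{defLup}, one embeds the ball $B^{T_{\R,x_0}X}(0,2\e)$ in a compact sphere bundle $V$ over $X$, and extends $\LL_{t,x_0}$ to a generalized Laplacian $\widetilde{\LL}_{t,x_0}$ on $V$ acting on sections of $\mathrm{pr}_M^*\End(\E_p)$. The crucial point is that the coefficients of $\widetilde{\LL}_{t,x_0}$ depend smoothly on $(x_0,t)$ and admit uniform operator-norm bounds in $p$, thanks to Theorem \ref{RFpToeplitz} and \eqref{||Tfp||infinie} applied to the Toeplitz structure of $M_{p,x_0}$ visible in \eqref{Mpu2}. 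As in \eqref{aveclelaplaciensurlefibrecompact}, finite propagation speed implies that $e^{-u\LL_{t,x_0}}(0,0)$ and $e^{-u\widetilde{\LL}_{t,x_0}}(0,0)$ coincide up to an $O(\exp(-c/u))$ error in the operator norm.

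Next, the classical small-time asymptotic of heat kernels for generalized Laplacians on a compact manifold yields an expansion $e^{-u\widetilde{\LL}_{t,x_0}}(0,0) = \sum_{j=-n_M}^{k} \widetilde{A}_{p,j}(x_0,t)u^j + O(u^{k+1})$, in which uniformity in $p$ is obtained by checking that each Minakshisundaram--Pleijel-type coefficient is built, via finitely many differentiations and multiplications, out of coefficients already known to be bounded in operator norm uniformly in $p$; the operator-norm estimates of Subsection \ref{operatorwithToeplitzcoeff} and the derivative bounds from Theorem \ref{estimationderiveesnoyaudeLpu2} control remainders. Setting $A_{p,j}(x) := \widetilde{A}_{p,j}(x, 1/\sqrt{p})$, combining with the analogue of Lemma \ref{noyauBupetMup2} and the rescaling identity \eqref{noyauxdeMupetdeLup} then yields the uniform expansion \eqref{dvlptuniformedunoyaudelachaleureq1-2} in all $\mathscr{C}^\ell$-norms by invoking the derivative control already established in Subsections \ref{localization2}--\ref{sectioncvce2}.

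To identify the coefficients in \eqref{dvlptuniformedunoyaudelachaleureq2-2}, I use Theorem \ref{cvcenoyaudelachaleur2}: for each fixed $u > 0$,
\begin{equation*}
p^{-n_X}\psi_{1/\sqrt{p}} \exp(-B_{p,u/p}^2)(x,x) = T_{\Lambda_u(x,\cdot),p}\otimes \Id_{\xi_x} + o(1)
\end{equation*}
in the operator norm, together with the derivatives. Inserting the expansion \eqref{defAj2} of $\Lambda_u$ and using the $\mathscr{C}^0$-continuity $\|T_{f,p}\|_\infty \leq \|f\|_{\mathscr{C}^0}$ from \eqref{||Tfp||infinie}, one gets, for each fixed $u$,
\begin{equation*}
p^{-n_X}\psi_{1/\sqrt{p}}\exp(-B_{p,u/p}^2)(x,x) = \sum_{j=-n_M}^{k} T_{A_j(x,\cdot),p}\, u^j + O_p(u^{k+1}) + o_u(1).
\end{equation*}
Comparing with \eqref{dvlptuniformedunoyaudelachaleureq1-2} and using a Vandermonde-type inversion over finitely many well-chosen values $u_1, \dots, u_{k+n_M+1}$ in $(0,1]$ isolates each coefficient, giving $A_{p,j}(x) = P_{p,x}\,A_j(x,\cdot)\otimes \Id_{\xi_x}\, P_{p,x} + o(1)$ in operator norm, with the same argument applied to the $(b,x)$-derivatives via Theorem \ref{RFpToeplitz} and \eqref{||Tfp||infinie}.

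The main obstacle is that, unlike in the proof of Theorem \ref{dvlptuniformedunoyaudelachaleur}, one cannot directly take a pointwise $p \to \infty$ limit of the coefficients $A_{p,j}$, since they live in the growing space $\End(F_{p,x})$. The Toeplitz machinery of Subsection \ref{operatorwithToeplitzcoeff} provides the correct substitute: it gives both the uniform-in-$p$ operator-norm bounds needed to construct the $u$-expansion, and the identification mechanism by which the heat-kernel asymptotic of Theorem \ref{cvcenoyaudelachaleur2} transfers to each individual coefficient in the small-$u$ expansion. Careful bookkeeping of the $\mathscr{C}^\ell$-derivative estimates, combining the bounds of Theorem \ref{estimationderiveesnoyaudeLpu2} with the derivative formulas for Toeplitz operators in Theorem \ref{RFpToeplitz}, will be the technically heaviest point.
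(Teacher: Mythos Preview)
Your proposal has a genuine gap in the first step, and the paper itself flags exactly this issue at the start of Section~\ref{demothm1-2}: ``we cannot use the same method to prove Theorem~\ref{dvlptuniformedunoyaudelachaleur} and Theorem~\ref{dvlptuniformedunoyaudelachaleur2}\dots\ the development of the heat kernel on a compact space acting on a \emph{fixed} bundle is smooth in the parameters. However, here we cannot fix the bundle.'' Your sphere-bundle argument invokes the classical small-time asymptotic on a compact manifold and then asserts that uniformity in $p$ follows because the Minakshisundaram--Pleijel coefficients are built from Toeplitz data with bounded operator norm. That controls the \emph{coefficients} $\widetilde A_{p,j}$, but it does not control the \emph{remainder} in the expansion: the result you cite (e.g.\ \cite[(D.1.24)]{ma-marinescu}) is proved for a fixed bundle, and its remainder bound depends on the bundle in a way that is not automatically uniform when $\dim F_p\to\infty$. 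Neither Theorem~\ref{estimationderiveesnoyaudeLpu2} (which bounds $e^{-u\LL_t}$ itself, not its small-$u$ remainder) nor the estimates of Subsection~\ref{operatorwithToeplitzcoeff} supply this. Making your route rigorous would require redoing the parametrix construction with operator-norm bookkeeping, which is comparable in effort to what the paper actually does.

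The paper instead performs a second rescaling: it cuts off $\LL_t$ to $\LL_{2,t}$, sets $v=\sqrt{u}$ and $\LL_{3,t}^v=v^2S_v^{-1}\LL_{2,t}S_v$, and proves resolvent estimates for $\LL_{3,t}^v$ uniform in \emph{both} $t$ and $v$ (Theorems~\ref{QA_k^rQ'}--\ref{expansion-exp-L3}, following \cite[\S4.1]{ma-marinescu}). A Taylor expansion in $v$ then yields coefficients $\mathscr{B}_{2r,t}(0,0)$ with uniform remainder, giving \eqref{dvlptuniformedunoyaudelachaleureq1-2} directly. For \eqref{dvlptuniformedunoyaudelachaleureq2-2}, the paper does not use your Vandermonde inversion from Theorem~\ref{cvcenoyaudelachaleur2}; instead it compares $\mathscr{B}_{2r,t}$ with its underlined analogue $\ul{\mathscr{B}}_{2r,t}$ built from $\ul{\LL}_t$, and then uses the Toeplitz identity \eqref{ul(L)=toep} together with Theorem~\ref{prodtoeplitztenseurhilbert} to show $\ul{\mathscr{B}}_{2r,t}(0,0)=P_{p,x_0}\widetilde{\mathscr{B}}_{2r}(0,0)P_{p,x_0}+o(1)$. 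Your Vandermonde extraction is a legitimate alternative for this second part---once uniform bounds $\|A_{p,j}\|\le C_j$ are in hand, one can isolate each coefficient by letting $u\to0$ inductively---but it presupposes the uniform expansion, which is precisely where your argument is incomplete.
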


Theorem \ref{dvlptuniformedunoyaudelachaleur2} will be proved in Section \ref{demothm1-2}.

For $j\geq -n_M-1$, set
\begin{equation}
\label{defAtildej2}
\tilde{A}_j(z) =  \tr_s\left[ N_VA_j(z)+i\omega^{H}A_{j+1}(z)\right].
\end{equation}
Then by \eqref{defNueq}, \eqref{defRu2} and \eqref{defAj2}, we have
\begin{equation}
\label{Atildej2}
\RR_u (z) = \sum_{j=-n_M-1}^k \tilde{A}_j(z)u^j + O(u^{k+1}).
\end{equation}
 Set also
 \begin{equation}
 \label{defBpj/Bj2}
 \begin{aligned}
 &B_{p,j}=\int_Z\tr_s\left[N_VA_{p,j}(z)+i\omega^{H}A_{p,j+1}(z)\right] \frac{\Theta^{Y,n_Y}}{n_Y!}dv_X, \\
 &B_j=\int_Z\tilde{A}_j(z) \frac{\Theta^{Y,n_Y}}{n_Y!}dv_X.
 \end{aligned}
 \end{equation}

\begin{rem}
The operator convergences in Theorems \ref{cvcenoyaudelachaleur2} and \ref{dvlptuniformedunoyaudelachaleur2} implies the convergence of the corresponding supertraces divided by $p^{\dim Z}$. Indeed, it is classical that $\dim F_p \leq Cp^{\dim Y}$ for some constant $C$, and thus for $D\in \End( \Lambda^\bullet(T_{\R,b}^* B)\otimes \Wedge(T^*X_b)\otimes  F_p )$ and $f\in \End( \Lambda^\bullet(T_{\R,b}^* B)\otimes \Wedge(T^*X_b))$, we know that 
\begin{equation}
\|D-p^{\dim X}T_{f,p}\| =o(p^{\dim X}) \: \Longrightarrow \: |p^{-\dim Z}\trs(D_1)-p^{-\dim Y}\trs(T_{f,p})|=o(1).
\end{equation}
Thus, we can conclude using \eqref{traceToeplitz}. In particular, we have the following result.
\end{rem}
 
 Recall that $n_Z=n_X+n_Y$.
 \begin{cor}
 \label{cordvlptuniforme2}
 For any $k,\ell\in \N$, there exists $C>0$ such that for any $u\in ]0,1]$ and $p\geq 1$, 
\begin{equation}
\left|p^{-n_Z}\psi_{1/\sqrt{p}}\tr_s \left[ N_{u/p}\exp\left( -B_{p,u/p}^2 \right)\right] - \sum_{j=-n_M-1}^k B_{p,j}u^j\right|_{\mathscr{C}^\ell(M)} \leq Cu^{k+1}.
\end{equation}
Moreover, as $p\to+\infty$, we have for any $j\geq-d-1$
\begin{equation}
B_{p,j}=\mathrm{rk}(\xi)\mathrm{rk}(\eta) B_j  + O\left( \frac{1}{\sqrt{p}}\right).
\end{equation}
 \end{cor}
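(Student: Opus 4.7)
The plan is to deduce Corollary \ref{cordvlptuniforme2} from Theorem \ref{dvlptuniformedunoyaudelachaleur2} following the same scheme as Corollary \ref{cordvlptuniforme}, but with one extra ingredient: the trace over $F_p$ must be converted to a fiber integral over $Y$ via the Toeplitz trace formula \eqref{traceToeplitz}.

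First I would note the rescaling identity $\psi_{1/\sqrt{p}} N_{u/p} = N_u$, which follows immediately from \eqref{defNueq}: since $\omega^H$ is a $(1,1)$-form on $B$, $\psi_{1/\sqrt{p}}\omega^H = \omega^H/p$, hence $\psi_{1/\sqrt{p}}(N_V + ip\omega^H/u) = N_V + i\omega^H/u$. Using this identity and the uniform asymptotic \eqref{dvlptuniformedunoyaudelachaleureq1-2}, I would expand
\begin{equation*}
 p^{-n_X} \psi_{1/\sqrt{p}} \trs\!\left[N_{u/p} e^{-B_{p,u/p}^2}(x,x)\right] = \trs\!\left[N_u \sum_{j=-n_M}^k A_{p,j}(x) u^j\right] + O(u^{k+1}).
\end{equation*}
Splitting $N_u = N_V + i\omega^H/u$ and shifting the index $j\to j+1$ in the $\omega^H$-piece (with the convention $A_{p,-n_M-1}:=0$), this becomes $\sum_{j=-n_M-1}^k \trs[N_V A_{p,j} + i\omega^H A_{p,j+1}] u^j + O(u^{k+1})$, with uniform $\mathscr{C}^\ell$-control on the error (as $N_V$ and $\omega^H$ are fixed and the $\mathscr{C}^\ell$-bound in \eqref{dvlptuniformedunoyaudelachaleureq1-2} is uniform). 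Integrating over $X$ against $dv_X$ and multiplying by $p^{-n_Y}$ yields the first half of the corollary, with coefficients naturally identified with $B_{p,j}$.

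For the second half, I would substitute the Toeplitz asymptotic \eqref{dvlptuniformedunoyaudelachaleureq2-2}, namely $A_{p,j}(x) = P_{p,x}\bigl(A_j(x,\cdot)\otimes \Id_{\xi_x}\bigr)P_{p,x} + o(1)$ in operator norm, into the expression for $B_{p,j}$. Since $N_V$, $\omega^H$ and $\Id_\xi$ all commute with the Toeplitz projectors $P_{p,x}$ (they do not act on the $y$-variable or on $\eta\otimes L^p$), the leading term involves the trace of Toeplitz operators of the form $T_{f(x,\cdot)\otimes \Id_\xi,\,p}$ with $f = N_V A_j + i\omega^H A_{j+1} \in \End(\Lambda^\bullet(T_\R^*B)\otimes \Wedge(T^*X))$. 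The trace formula \eqref{traceToeplitz} then gives
\begin{equation*}
 p^{-n_Y}\trs\!\left[T_{f\otimes \Id_\xi,\,p}\right] = \mathrm{rk}(\xi)\,\mathrm{rk}(\eta) \int_{Y_x} \trs[f(x,y)]\, e^{\Theta^Y} + O(p^{-1}).
\end{equation*}
Applying Fubini to re-assemble $\int_X \int_{Y_x}$ as $\int_Z$, and using that integration of $e^{\Theta^Y}$ along $Y$ retains only the top-degree part $\frac{\Theta^{Y,n_Y}}{n_Y!}$, produces exactly $\mathrm{rk}(\xi)\mathrm{rk}(\eta) B_j$ as defined in \eqref{defBpj/Bj2}.

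The main obstacle is controlling the $o(1)$ operator-norm remainder from \eqref{dvlptuniformedunoyaudelachaleureq2-2} after taking traces. Since $\dim F_p = O(p^{n_Y})$ by Riemann--Roch--Hirzebruch, an $o(1)$ error in operator norm on $\End(\E_p)$ contributes $o(p^{n_Y})$ to the supertrace, which is precisely the scale absorbed by the $p^{-n_Y}$ prefactor and gives $o(1)$ in $B_{p,j}$; upgrading this to the $O(p^{-1/2})$ claimed in the statement requires tracking the rate hidden in the proof of Theorem \ref{dvlptuniformedunoyaudelachaleur2}, in analogy with the explicit $O(1/\sqrt{p})$ in the line-bundle case (Theorem \ref{dvlptuniformedunoyaudelachaleur}). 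The $\mathscr{C}^\ell(B)$-estimates follow verbatim by replacing every norm estimate throughout by its $\mathscr{C}^\ell$-version, all of which are provided by the uniform statements in Theorem \ref{dvlptuniformedunoyaudelachaleur2}.
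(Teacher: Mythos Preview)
Your proposal is correct and follows essentially the same route as the paper: the Remark immediately preceding Corollary~\ref{cordvlptuniforme2} spells out precisely your key step, namely that an $o(1)$ operator-norm error from \eqref{dvlptuniformedunoyaudelachaleureq2-2} becomes $o(1)$ in the supertrace after dividing by $p^{n_Z}$ (since $\dim F_p = O(p^{n_Y})$), and then \eqref{traceToeplitz} converts the Toeplitz trace into the fiber integral over $Y$. Your observation that the stated rate $O(p^{-1/2})$ is stronger than what \eqref{dvlptuniformedunoyaudelachaleureq2-2} literally provides is accurate; the paper's own proof of Theorem~\ref{dvlptuniformedunoyaudelachaleur2} (see \eqref{diff-Brt-ulBrt}, \eqref{ul(B)=toep}) yields a rate like $p^{-1/(2n_X+1)}$, so the $O(p^{-1/2})$ in the corollary should be read as some positive power of $p^{-1}$ rather than taken literally.
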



\begin{thm}
\label{dominationugrand2}
There exists $C>0$ such that for $u\geq 1$ and $p\geq 1$,
\begin{equation}
\label{dominationeq2}
\left|p^{-n_Z} \psi_{1/\sqrt{p}} \tr_s \left[ N_{u/p} \exp\left( -B_{p,u/p}^2 \right) \right] \right|_{\mathscr{C}^\ell(B)}\leq \frac{C}{\sqrt{u}}.
\end{equation}
\end{thm}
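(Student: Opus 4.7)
The plan is to follow the same overall strategy as in the proof of Theorem \ref{dominationugrand} in Section \ref{demothm2}, but replacing the use of the spectral gap \eqref{spdeDp2} by the analogous Lemma \ref{spdeDp22} of Section \ref{localization2}, and systematically substituting the operator norm / Toeplitz machinery of Sections \ref{toeplitzop}--\ref{operatorwithToeplitzcoeff} for the finite dimensional arguments used previously. Concretely, I set $C_p=\tfrac{1}{p}B_p^2=\tfrac{1}{p}(D_p^2+R_p)$, and, using Lemma \ref{spdeDp22}, I choose $\nu>0$ such that for $p$ large $\Sp(C_p)\subset\{0\}\cup [\nu,+\infty[$. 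I then introduce the small circle $\delta$ around $0$ and the contour $\Delta$ exactly as in Figure \ref{contours-deltas}, and split
\begin{equation*}
p^{-n_Z}\psi_{1/\sqrt{p}}\trs\left[N_{u/p}e^{-B_{p,u/p}^2}\right]=p^{-n_Z}\trs\left[N_u\left(\PP_{p,u}+\K_{p,u}\right)\right],
\end{equation*}
where $\PP_{p,u}$ and $\K_{p,u}$ are the corresponding contour integrals of the resolvent of $C_p$.

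For the $\K_{p,u}$-term, I would prove the analogue of Lemma \ref{estimeeaveclambda0}: for $\lambda_0\in\R_-^*$ and $q$ large, $p^{-n_Z}\|(\n^{\End(\E_p)}_U)^\ell(\lambda_0-C_p)^{-q}\|_1\leq C$. The key input is that the heat kernel $p^{-n_X}\psi_{1/\sqrt{p}}\tr[e^{-tD_p^2/p}]$ along the fiber $X$ is controlled by Theorems \ref{cvcenoyaudelachaleur2} and \ref{dvlptuniformedunoyaudelachaleur2} (in degree $0$); taking an additional fiberwise trace along $Y$ via \eqref{traceToeplitz} produces the factor $p^{n_Y}$ and thus yields the desired $p^{n_Z}$ growth of $\tr[e^{-tD_p^2/p}]$ for $t\in\,]0,1]$, while the spectral gap gives exponential decay for $t\geq 1$. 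Combining this with the Toeplitz structure \eqref{Bp2etDp2-2} of $R_p/p$ (bounded in operator norm thanks to \eqref{||Tfp||infinie}) and the rescaled-operator arguments from Section \ref{sectioncvce2} (in particular \eqref{structure-derivee-Dp2/p-rescaled} adapted to the present setting), I obtain the required resolvent bounds $\|(\lambda-C_p)^{-1}\|_\infty\leq C|\lambda|$ and derivative bounds uniformly in $p$ and $\lambda\in\delta\cup\Delta$. The argument of Proposition \ref{termeavecKpu} then yields $p^{-n_Z}|\trs[N_u\K_{p,u}]|_{\mathscr{C}^\ell(B)}\leq Ce^{-au}$.

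For the $\PP_{p,u}$-term, I use the decomposition technique of \cite[Sect. 9.13]{MR1623496}, as in Proposition \ref{termeavecPpu}: decompose $R_p=R_p^{(1)}+R_p^{(\geq 2)}$ with $R_p^{(1)}=[B_p^{(1)},D_p]$, expand $(\lambda-uC_p)^{-1}$ using the resolvent of $C_p^{(0)}=D_p^2/p$ and Neumann series in $R_p/p$, and use $P_p[B_p^{(1)},D_p]P_p=0$. This writes $p^{-n_Z}\trs[N_u\PP_{p,u}]$ as a polynomial $\sum_k c_k(p)u^{-k/2}$ of bounded degree. Uniform boundedness of the coefficients follows because each monomial contains at least one projector $P_p$ (from $i_0\geq 1$), and Riemann--Roch--Hirzebruch together with Kodaira vanishing bound $\dim F_p\leq Cp^{n_Y}$ and $\dim H^0(X,\xi\otimes F_p)\leq Cp^{n_Z}$, matching the $p^{-n_Z}$ normalization. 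Finally, letting $u\to+\infty$ with $p$ fixed and using Theorem \ref{thmutoinf} together with the vanishing of $H^{>0}$ forces $c_0(p)=0$, so that \eqref{termeavecPpueq} holds with $p^{-n_Z}$. Combining this with the derivative versions (obtained by the same rescaling argument as for \eqref{norme-de-A_p-lambda}) gives \eqref{dominationeq2}.

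The main obstacle is the passage from finite-rank bundle estimates to operator-norm/Toeplitz estimates on an infinite-dimensional bundle: in particular, proving the analogue of \eqref{norme-de-A_p-lambda}, i.e. that $\n^{\End(\E_p)}_U (D_p^2/p)(\lambda-D_p^2/p)^{-1}$ is uniformly bounded in operator norm, requires the rescaled-operator machinery of Section \ref{sectioncvce2} (Propositions \ref{estimationLupprop2}--\ref{norme(m,m+1)resolvante2} applied to $\ul{\LL}_t$) together with the Toeplitz expansion of $\n^{F_p}\n^{F_p}$ via Theorem \ref{RFpToeplitz}, and the use of the uniform Sobolev inequality of Lemma \ref{Sobolevuniforme} to convert operator-norm Sobolev bounds on the rescaled side back to pointwise operator-norm bounds on the $B_p$-side.
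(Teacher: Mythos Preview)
Your proposal is correct and follows essentially the same approach as the paper's proof in Section \ref{demothm2-2}: the same contour decomposition into $\PP_{p,u}$ and $\K_{p,u}$, the analogue of Lemma \ref{estimeeaveclambda0} (called Lemma \ref{estimeeaveclambda02}) using the Toeplitz structure \eqref{Bp2etDp2-2} of $R_p/p$, and the same treatment of $\PP_{p,u}$ via the $[B_p^{(1)},D_p]$-decomposition together with the dimension bound $p^{-n_Z}\dim H^0(X,\xi\otimes F_p)=p^{-n_Z}\dim H^0(Z,\pi_1^*\xi\otimes\eta\otimes L^p)\leq C$. The paper's write-up is considerably terser (it mostly refers back to Section \ref{demothm2}), but the substance matches your outline, including the identification of the main technical point—the operator-norm analogue of \eqref{norme-de-A_p-lambda}—which the paper handles via the rescaled-operator estimates of Section \ref{sectioncvce2}.
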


Theorem \ref{dominationugrand2} will be proved in Section \ref{demothm2-2}.

Let $p_0$ be such that for all $p\geq p_0$, the direct images $R^\bullet\pi_{1*}(\eta\otimes L^p)$ is locally free, $R^i\pi_{1*}(\eta\otimes L^p)=0$ for $i>0$ and the direct images $R^\bullet \pi_{2*}(\xi\otimes F_p)$ and $R^\bullet \pi_{3*}(\pi^*_1\xi\otimes\eta\otimes L^p)$ are locally free. 

 As in Section \ref{Asymptoticofthetorsionforms}, we define for $p\geq p_0$
\begin{equation}
\tilde{\zeta}_p(s) = -\frac{p^{-n_Z}}{\Gamma(s)}\int_0^{+\infty} u^{s-1}\psi_{1/\sqrt{p}}\Phi\left\{ \trs \left[N_{u/p} \exp(-B_{p,u/p}^2) \right]   \right\}du.
\end{equation}
Then if $\zeta_p$ denotes the zeta function \eqref{defzeta} associated with $B_{p,u}$, we have
\begin{equation}
\label{zetaetzetatilde2}
 p^{-n_Z}\psi_{1/\sqrt{p}}\zeta'_p(0) =  \log(p)B_{p,0}+\tilde{\zeta}'_p(0).
\end{equation}

Let
\begin{equation}
\tilde{\zeta}(s)=-\frac{1}{\Gamma(s)}\int_0^{+\infty}\int_Z \RR_u(z)dv_Z(z)u^{s-1}du.
\end{equation}
As in Section \ref{Asymptoticofthetorsionforms}, by \eqref{traceToeplitz} and Theorem \ref{cvcenoyaudelachaleur2}, and by dominated convergence (justified by Corollary \ref{cordvlptuniforme2} and Theorem \ref{dominationugrand2})  we find that
\begin{equation}
\label{zetatildeptozetatilde2}
\tilde{\zeta}'_p(0) \limarrow{p}{+\infty} \mathrm{rk}(\xi)\mathrm{rk}(\eta)   \Phi\tilde{\zeta}'(0).
\end{equation}

Let $T_B^{H'}N \subset T_M^HN$ be the space obtained by lifting in $TN$ the subspace $T_B^H M$ of $TM$. In particular, $T_B^{H'}N$ is orthogonal to $TY$. Let $\{f'_\alpha\}$ be an orthonormal basis of $T_{B,\R}^{H'}N$ with dual basis $\{f'^\alpha\}$. Set
\begin{equation}
\label{defF^H2}
\FF^H = \exp \left( - f'^\alpha f'^\beta R^L(f'_\alpha,f'_\beta) \right).
\end{equation}
Repeating the computations done in the proof of Theorem \ref{expressionzetatilde'(0)thm} which yield to \eqref{liensaveclecasdupoint2} and \eqref{independancedeTHM}, we find here again that
\begin{equation}
\label{liensaveclecasdupoint2-2}
\begin{aligned}
&\tilde{A}_j = 0 \text{ for } j\leq-2, \\
& \RR_u - \frac{\tilde{A}_{-1}}{u}-\tilde{A}_0 = \left\{\RR_u^{\{*\}} - \frac{\tilde{A}^{\{*\}}_{-1}}{u}-\tilde{A}^{\{*\}}_0\right\} \FF^H.
\end{aligned}
\end{equation}
Thus, we have
\begin{equation}
\label{expressionzetatilde'(0)2}
\tilde{\zeta}'(0)=\frac{1}{2} \int_Z \det\left( \frac{\dot{R}^{X,L}}{2\pi}\right) \log \left[ \det\left( \frac{\dot{R}^{X,L}}{2\pi}\right) \right] \FF^H \frac{\Theta^{Y,n_Y}}{n_Y!} dv_X.
\end{equation}
Moreover, by \eqref{defF^H2}, we know that
\begin{equation}
\begin{aligned}
 &\det\left( \frac{\dot{R}^{X,L}}{2\pi}\right) \frac{\Theta^{Y,n_Y}}{n_Y!} dv_X = \frac{\Theta^{Z,n_Z}}{n_Z!}, \\
 & \Phi \FF^H e^{\Theta^Z}  =e^{\Theta^N}.
 \end{aligned}
\end{equation}
Thus, by Corollary \ref{cordvlptuniforme2},  \eqref{zetaetzetatilde2}, \eqref{zetatildeptozetatilde2},  \eqref{expressionzetatilde'(0)2} and as in \eqref{ccldumainthm},  we have as $p\to +\infty$
\begin{equation}
\begin{aligned}
\psi_{1/\sqrt{p}}\zeta'_p(0) &=  \log(p)p^{n_Z}B_{0}+p^{n_Z}\Phi\tilde{\zeta}'(0) +o(p^{n_Z}) \\
&=\frac{\mathrm{rk}(\xi)\mathrm{rk}(\eta)}{2}  \int_Z  \log \left[ \det\left( \frac{p\dot{R}^{X,L}}{2\pi}\right) \right] e^{\Theta^N+(p-1)\Theta^Z}  +o(p^{n_Z}).
\end{aligned}
\end{equation}
Theorem \ref{mainthm2} is proved.


\subsection{Proof of Theorem \ref{dvlptuniformedunoyaudelachaleur2}}
\label{demothm1-2}

First, we would like to point out that we cannot use the same method to prove Theorem \ref{dvlptuniformedunoyaudelachaleur}   and Theorem \ref{dvlptuniformedunoyaudelachaleur2}. Indeed, the point was to see $t$ as a parameter, in the same way as $x_0$, and to use the fact that the development of the heat kernel on a compact space acting on a \emph{fixed} bundle is smooth in the parameters. However, here we cannot fixe the bundle, so we have to reprove directly the uniform development of the heat kernel. The techniques in this section are inspired by \cite[Sect. 4.1]{ma-marinescu}.

 Let $\n$ be the usual derivation and let $\Delta^{T_{\R,x_0}X}$ be the usual Bochner Laplacian on $T_{\R,x_0}X$. Recall that $\rho$ is defined in  \eqref{def-rho}, and define
\begin{equation}
\label{def-LL_2t}
\LL_{2,t} = \rho(|Z|/\e ) \LL_t + \big( 1- \rho(|Z|/\e )\big) \Delta^{T_{\R,x_0}X}.
\end{equation}

Then using the fact that
\begin{equation}
\sup_{a \in \Gamma} \left|a^m\tilde{\G}_u(\sqrt{u}a)\right| \leq C_m\exp\Big(-\frac{\e^2}{16u}\Big),
\end{equation}
 as in Proposition \ref{lepbestlocal2} and Lemma \ref{noyauBupetMup2}, we find
\begin{equation}
\label{exp-L-et-L2}
\Big\| e^{-u\LL_t}(0,0) -  e^{-u\LL_{2,t}}(0,0)\Big\|_{\mathscr{C}^m(M)} \leq C \exp(-\frac{\e^2p}{32u}).
\end{equation}

For $v=\sqrt{u}$, set (with $S_v$ in \eqref{defnrescaled})
\begin{equation}
\label{def-LL_3t^v}
\begin{aligned}
&\LL_{3,t}^v = v^2 S_v^{-1}\LL_{2,t}S_v, \\
&\LL_{3,t}^0 = \Delta^{T_{\R,x_0}X}.
\end{aligned}
\end{equation}
Then as in \eqref{noyauxdeMupetdeLup}, we have
\begin{equation}
\label{exp-L2-et-L3}
e^{-u\LL_{2,t}}(0,0) = u^{-n_X}e^{-\LL_{3,t}^v}(0,0).
\end{equation}

We will use the usual Sobolev norm $\|\cdot\|_k$ (see Lemma \ref{Sobolevuniforme}) on $\smooth_c(\R^{2n_X}, \E_{p,x_0})$.

 Using the fact that uniformly in $t$ we have
\begin{equation}
\LL_{3,t}^v = \Delta^{T_{\R,x_0}X} +O(v),
\end{equation}
we can prove results analogous to Propositions \ref{estimationLupprop2} to \ref{norme(m,m+1)resolvante2}, replacing $\n_t^{(0)}$, $\LL_t$ and $\|\cdot\|_{t,k}$ by $\n$, $\LL_{3,t}^v$ and $\|\cdot\|_k$. {\bf In the rest of this section, we will use these propositions for $\LL_{3,t}^v$ without further notice.}

 For $k,q \in \N^*$, set
\begin{equation}
I_{k,r} = \Big\{ (\boldrm{k},\boldrm{r})=(k_i,r_i) \in (\N^*)^{j+1}\times (\N^*)^{j} \: : \: \sum_{i=0}^j k_i = k+j \, , \: \sum_{i=1}^j r_i = r \Big\}.
\end{equation}
For $(\boldrm{k},\boldrm{r}) \in I_{k,r}$, $\lambda \in \Gamma$ (see Figure \ref{contour-Gamma} in Section \ref{sectioncvce}), $t>0$ and $v\geq 0$ set
\begin{equation}
\label{def-Ark(lambda,t,v)}
A_\boldrm{r}^\boldrm{k}(\lambda,t,v) = (\lambda-\LL_{3,t}^v)^{-k_0}\derpar{^{r_1}\LL_{3,t}^v}{v^{r_1}}(\lambda-\LL_{3,t}^v)^{-k_1} \cdots \derpar{^{r_j}\LL_{3,t}^v}{v^{r_j}}(\lambda-\LL_{3,t}^v)^{-k_j}.
\end{equation}
Then there exist $a_\boldrm{r}^\boldrm{k} \in \R$ such that
\begin{equation}
\label{derivee-resolvante^m-avec-A_r^k}
\derpar{^{r}}{v^{r}}(\lambda-\LL_{3,t}^v)^{-k} = \sum_{(\boldrm{k},\boldrm{r}) \in I_{k,q}} a_\boldrm{r}^\boldrm{k}A_\boldrm{r}^\boldrm{k}(\lambda,t,v).
\end{equation}

For $\ell\in \N$, let $\mathcal{Q}^\ell$ be the set of operators 
\begin{equation}
\mathcal{Q}^\ell=\{\n_{e_{i_1}}\dots \n_{e_{i_j}}\}_{j\leq \ell}.
\end{equation}

\begin{thm}
\label{QA_k^rQ'}
 For any $\ell \in \N$, $k>2(\ell+r+1)$ and $(\boldrm{k},\boldrm{r}) \in I_{k,r}$, there are $C_m>0$ and $N\in \N$ such that for any $\lambda \in \Gamma$, $t>0$, $v\geq 0$ and $Q,Q'\in \mathcal{Q}^\ell$,
 \begin{equation}
 \label{QA_k^rQ'-eq}
\big\| QA_\boldrm{r}^\boldrm{k}(\lambda,t,v) Q's\big\|_{0} \leq C(1+|\lambda|)^N\sum_{|\beta|\leq 2r} \|Z^\beta s\|_0.
\end{equation}
\end{thm}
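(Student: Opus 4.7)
\medskip

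\noindent\textbf{Proof plan for Theorem \ref{QA_k^rQ'}.}

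The plan is to adapt the resolvent techniques already used for $\LL_t$ (Propositions \ref{estimationLupprop2}--\ref{norme(m,m+1)resolvante2}) to the family $\LL_{3,t}^v$ equipped with the standard Sobolev norms $\|\cdot\|_k$ on $\R^{2n_X}$, and then to use these estimates to bound each factor in the product $A_{\boldrm{r}}^{\boldrm{k}}(\lambda,t,v)$ separately, with enough resolvent powers available to absorb both the operators $Q,Q'$ and the derivative operators $\partial_v^{r_i}\LL_{3,t}^v$.

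First, I would record the uniform analogues for $\LL_{3,t}^v$ of the four resolvent propositions of Section \ref{sectioncvce2}, as already announced in the text: since $\LL_{3,t}^v=\Delta^{T_{\R,x_0}X}+O(v)$ uniformly in $t$, the G\r{a}rding inequality, the resolvent bounds $\|(\lambda-\LL_{3,t}^v)^{-1}\|^{0,0},\,\|(\lambda-\LL_{3,t}^v)^{-1}\|^{-1,1}\le C(1+|\lambda|^2)^a$, the commutator estimate for the test operators $\{\nabla_{e_i},Z_i\}$, and the Sobolev gain $\|Z^\alpha(\lambda-\LL_{3,t}^v)^{-1}s\|_{m+1}\le C(1+|\lambda|^2)^K\sum_{\alpha'\le\alpha}\|Z^{\alpha'}s\|_{m}$ all transfer verbatim. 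These are uniform in $\lambda\in\Gamma$, $t>0$ and $v\ge 0$.

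Second, I would analyse the derivatives $\partial_v^{r_i}\LL_{3,t}^v$. By construction (see \eqref{def-LL_3t^v} together with the definitions of $\LL_t,\LL_{2,t}$), $\LL_{3,t}^v$ is, after the rescaling $Z\mapsto Z/v$, obtained from a second order operator whose coefficients have a smooth Taylor expansion in the base point; consequently $\partial_v^{r_i}\LL_{3,t}^v$ is, on $B^{T_{\R,x_0}X}(0,4\e/v)$, a second order differential operator of the form $\sum_{|\gamma|\le 2}a_{i,\gamma}(tZ,v)\,Z^{\beta_{i,\gamma}}\,\nabla^\gamma$ where $a_{i,\gamma}$ is uniformly bounded with all its derivatives and $|\beta_{i,\gamma}|\le 2r_i$. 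This yields, uniformly in $t,v$,
\begin{equation*}
\bigl\|\partial_v^{r_i}\LL_{3,t}^v\,s\bigr\|_{-1}\le C\sum_{|\beta|\le 2r_i}\|Z^\beta s\|_1,
\end{equation*}
and symmetric commutator estimates with $Q\in\mathcal{Q}^\ell$.

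Third, I would telescope $QA_{\boldrm{r}}^{\boldrm{k}}(\lambda,t,v)Q'$ by successively commuting $Q$ (resp.\ $Q'$) through the outermost resolvents using $[Q_1,(\lambda-\LL_{3,t}^v)^{-1}]=(\lambda-\LL_{3,t}^v)^{-1}[Q_1,\LL_{3,t}^v](\lambda-\LL_{3,t}^v)^{-1}$ and its iterates. Each iteration introduces an additional resolvent factor and a commutator of controlled order, producing a finite sum of products of the same type as $A_{\boldrm{r}}^{\boldrm{k}}$ with augmented indices. The hypothesis $k>2(\ell+r+1)$ guarantees that along every such product $\sum k_i$ is still large enough for the bookkeeping below.

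Finally, in a generic product, I would apply the $(-1,1)$-resolvent bound from step one $k_0$ times to move from $H^0$ into $H^{k_0}$, absorb one $\partial_v^{r_i}\LL_{3,t}^v$ with cost $\|Z^\beta\cdot\|_1$ for some $|\beta|\le 2r_i$ using step two, then use the $Z^\alpha$-version of Proposition \ref{norme(m,m+1)resolvante2} to transfer the polynomial factors across the next resolvent block, and repeat. Because $\sum r_i=r$, the cumulated polynomial weight is of total degree at most $2r$, and because $k_0+k_1+\cdots+k_j=k+j>2(\ell+r+1)+j$ we have enough resolvent powers to gain the Sobolev regularity consumed by $Q,Q'$ (total order $2\ell$) and by the derivative operators (total order $2r$). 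Collecting everything yields \eqref{QA_k^rQ'-eq} with some polynomial factor $(1+|\lambda|)^N$.

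The main obstacle is purely combinatorial: keeping track of the polynomial weights $Z^\beta$ that propagate through the product as one commutes $\partial_v^{r_i}\LL_{3,t}^v$ with the resolvents, and verifying that the inequality $k>2(\ell+r+1)$ is indeed sharp enough to ensure that in every branch of the telescoping one still has a resolvent left to land back in $H^0$. Once this accounting is done, no new analytic input beyond Propositions \ref{estimationLupprop2}--\ref{norme(m,m+1)resolvante2} (translated to $\LL_{3,t}^v$) is required.
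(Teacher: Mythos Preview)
Your plan is correct and follows essentially the same route as the paper: transfer Propositions \ref{estimationLupprop2}--\ref{norme(m,m+1)resolvante2} to $\LL_{3,t}^v$, write each $\partial_v^{r_i}\LL_{3,t}^v$ as a second-order operator with bounded coefficients times polynomial weights $Z^\beta$, then move the $Z^\beta$ (and the $\nabla$-factors absorbed by $Q,Q'$) across the resolvents one factor at a time using the commutator trick, exactly as in \cite[Thm.~4.1.13]{ma-marinescu}. The paper organizes the bookkeeping slightly differently---it first disposes of the case $r=0$ via \eqref{QA_k^rQ'-r=0} and then, for $r>0$, pushes all $Z^\beta$ to the right rather than threading them through via the weighted version of Proposition \ref{norme(m,m+1)resolvante2}---but this is only a cosmetic reordering of the same argument.
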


\begin{proof}
 First, note that as in the proof of Theorem \ref{estimationderiveesnoyaudeLpu} (see \cite[(1.6.49), (1.6.51)]{ma-marinescu}), Proposition \ref{norme(m,m+1)resolvante2}  leads to
 \begin{equation}
 \label{QA_k^rQ'-r=0}
\big\| Q(\lambda - \LL_{3,t}^v)^{-m}\big\|^{0,0} \leq C(1+|\lambda|)^N \, , \quad \big\| (\lambda - \LL_{3,t}^v)^{-m}Q'\big\|^{0,0} \leq C(1+|\lambda|)^N.
\end{equation}
With this estimate and Proposition \ref{estimationsresoprop2}, we get \eqref{QA_k^rQ'-eq} for $r=0$.

Assume now $r>0$. By \eqref{natiup2}, \eqref{dvlptLBetAup}, \eqref{def-LL_2t}, \eqref{def-LL_3t^v} and Theorem \ref{RFpToeplitz}, we know that $\derpar{^r}{v^r} \LL_{3,t}^v$ is a combination of
\begin{equation}
\label{dervivees-p/-v-possibles}
\begin{aligned}
& \Big(\derpar{^{r_1}}{v^{r_1}}a_{ij}(t,vZ)\Big)\Big( \derpar{^{r_2}}{v^{r_2}}\n_{3,t,e_i}^v \Big)\Big( \derpar{^{r_3}}{v^{r_3}}\n_{3,t,e_j}^v \Big), & \derpar{^{r_1}}{v^{r_1}}b(t,vZ), \\
& \Big(\derpar{^{r_1}}{v^{r_1}}c_i(t,vZ)\Big)\Big( \derpar{^{r_2}}{v^{r_2}}\n_{3,t,e_i}^v \Big), &\Big(\derpar{^{r_1}}{v^{r_1}}d(t,vZ)\Big)\Delta^{T_{\R,x_0}X},
\end{aligned}
\end{equation}
where $a_{ij}$, $b$, $c_i$ and $d$ are of the form $f(Z)g(tZ)$ with $f(Z)$ and $g(Z)$ and their derivatives in $Z$ uniformly bounded for $Z\in \R^{2n_X}$ (recall that for Toeplitz operators, we take the operator norm).

From this decomposition and Proposition \ref{norme(m,m+1)resolvante2}, we can prove Theorem \ref{QA_k^rQ'} using a similar reasoning as in \cite[Thm. 4.1.13]{ma-marinescu}: we write the derivatives in \eqref{dervivees-p/-v-possibles} in the form $f(vZ)g(tvZ)Z^\beta$ with $f(Z)$ and $g(Z)$ and their derivatives in $Z$  uniformly bounded for $Z\in \R^{2n_X}$ and then we move all the terms $Z^\beta$ of $A_\boldrm{r}^\boldrm{k}(\lambda,t,v) Q'$ to the right-hand side of the operator, using the commutator trick of \cite{ma-marinescu}, i.e., commuting only the factors $Z_j$ each at a time. Finally, we move all the terms $\n_{3,t,e_i}^v$ in $\derpar{^r}{v^r} \LL_{3,t}^v$ to the right-hand side and we obtain \eqref{QA_k^rQ'-eq} using  Proposition \ref{norme(m,m+1)resolvante2} for $\LL_{3,t}^v$.
 \end{proof}

\begin{thm}
\label{derivee(v)-derivee(0)}
 For any $r\geq 0$ and $k>0$, there exist $C>0$ and $N\in \N$ such that for $\lambda \in \Gamma$, $t>0$ and $v\geq 0$,
 \begin{equation}
 \label{derivee(v)-derivee(0)-eq}
\begin{aligned}
& \left\| \Big(\derpar{^{r}\LL_{3,t}^v}{v^{r}} - \derpar{^{r}\LL_{3,t}^v}{v^{r}}\Big|_{v=0} \Big)s\right\|_{-1} \leq Cv \sum_{|\alpha|\leq r+3} \|Z^\alpha s\|_{1}, \\
&\bigg\| \Big(\derpar{^{r}}{v^{r}}(\lambda-\LL_{3,t}^v)^{-k} -  \sum_{(\boldrm{k},\boldrm{r}) \in I_{k,r}} a_\boldrm{r}^\boldrm{k}A_\boldrm{r}^\boldrm{k}(\lambda,t,0)\Big)s\bigg\|_0 \leq Cv(1+|\lambda|)^N\sum_{|\alpha|\leq 4r+3} \|Z^\alpha s\|_0.
\end{aligned}
\end{equation}
\end{thm}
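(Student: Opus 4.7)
The plan is to deduce both estimates from a careful Taylor expansion in $v$ of the coefficients of $\LL_{3,t}^v$, combined with the operator bounds already established in Proposition~\ref{norme(m,m+1)resolvante2} and Theorem~\ref{QA_k^rQ'}. The point is that the only $v$-dependence in $\LL_{3,t}^v$ enters through smooth functions of the form $f(vZ)g(tvZ)$ (with $f,g$ and their derivatives uniformly bounded), together with the rescaled connection $\n_{3,t,e_i}^v$ obtained from \eqref{natiup2}, \eqref{dvlptLBetAup}, \eqref{def-LL_2t} and \eqref{def-LL_3t^v}.

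For the first estimate, I would start from the explicit form \eqref{dervivees-p/-v-possibles} of $\derpar{^{r}\LL_{3,t}^v}{v^{r}}$. Each summand there is a product of factors $f(vZ)g(tvZ)Z^\beta$ and at most two operators of the form $\derpar{^{r'}\n_{3,t,e_i}^v}{v^{r'}}$, or else a single factor times $\Delta^{T_{\R,x_0}X}$. Subtracting the corresponding expression at $v=0$ and applying a first order Taylor expansion to each scalar factor $f(vZ)g(tvZ)$ in the variable $v$ produces an extra power of $v$ together with an additional factor $Z^\gamma$ with $|\gamma|\leq 1$. The same mechanism applied to $\derpar{^{r'}\n_{3,t,e_i}^v}{v^{r'}}$ gives the analogous bound, since by \eqref{natiup2} all the $v$-dependence sits in smooth Toeplitz coefficients evaluated at $vZ$ or $tvZ$ whose derivatives in $Z$ are uniformly bounded for the operator norm (by Theorem~\ref{RFpToeplitz} and \eqref{||Tfp||infinie}). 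Taking the dual pairing against $s'\in\sob^1_t$ and using Cauchy--Schwarz, the $Z^\beta Z^\gamma$-factors get absorbed into $\sum_{|\alpha|\leq r+3}\|Z^\alpha s\|_1$, giving the first inequality.

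For the second estimate, I write a telescoping decomposition: using \eqref{derivee-resolvante^m-avec-A_r^k} and the definition \eqref{def-Ark(lambda,t,v)} of $A_\boldrm{r}^\boldrm{k}(\lambda,t,v)$,
\begin{equation*}
A_\boldrm{r}^\boldrm{k}(\lambda,t,v)-A_\boldrm{r}^\boldrm{k}(\lambda,t,0)
=\sum_{i=0}^{j}\mathcal{T}_i(\lambda,t,v)+\sum_{i=1}^{j}\mathcal{S}_i(\lambda,t,v),
\end{equation*}
where $\mathcal{T}_i$ replaces the $i$-th resolvent factor $(\lambda-\LL_{3,t}^v)^{-k_i}$ by $(\lambda-\LL_{3,t}^0)^{-k_i}$ minus $(\lambda-\LL_{3,t}^v)^{-k_i}$, using the resolvent identity
\begin{equation*}
(\lambda-\LL_{3,t}^v)^{-k_i}-(\lambda-\LL_{3,t}^0)^{-k_i}
=\sum_{m}(\lambda-\LL_{3,t}^v)^{-m}(\LL_{3,t}^v-\LL_{3,t}^0)(\lambda-\LL_{3,t}^0)^{-k_i-1+m},
\end{equation*}
and $\mathcal{S}_i$ replaces $\derpar{^{r_i}\LL_{3,t}^v}{v^{r_i}}$ by its value at $v=0$. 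Each $\mathcal{T}_i$ involves one factor $\LL_{3,t}^v-\LL_{3,t}^0$ which is $O(v)$ in $\|\cdot\|_{t}^{1,-1}$ by the $r=0$ case of the first estimate, while each $\mathcal{S}_i$ involves the difference $\derpar{^{r_i}\LL_{3,t}^v}{v^{r_i}}-\derpar{^{r_i}\LL_{3,t}^v}{v^{r_i}}|_{v=0}$ which is $O(v)$ in the same norm. The remaining resolvent factors and operators of the form $\derpar{^{r_{i'}}\LL_{3,t}^{v'}}{v^{r_{i'}}}$ ($v'\in\{0,v\}$) are bounded by using Proposition~\ref{estimationsresoprop2} and Theorem~\ref{QA_k^rQ'} in norms $\|\cdot\|^{0,0}_t$, $\|\cdot\|^{-1,1}_t$ and $\|\cdot\|^{1,-1}_t$ as appropriate, together with the commutator trick to move all the $Z^\alpha$ factors produced by the Taylor remainders to the right, at the cost of increasing the order of differentiation tolerated by Theorem~\ref{QA_k^rQ'}. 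Summing produces the polynomial weight $(1+|\lambda|)^N$ and the sum $\sum_{|\alpha|\leq 4r+3}\|Z^\alpha s\|_0$.

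The main technical obstacle is the bookkeeping in the second estimate: each telescoping step generates $Z^\beta$ factors from the Taylor remainders, and these must be commuted past several resolvents and derivatives $\derpar{^{r_{i'}}\LL_{3,t}^v}{v^{r_{i'}}}$ without losing the uniform boundedness. This is the same commutator trick as in Theorem~\ref{QA_k^rQ'}: one commutes the $Z_j$'s one at a time, using that $[Z_j,(\lambda-\LL_{3,t}^v)^{-1}]=(\lambda-\LL_{3,t}^v)^{-1}[\LL_{3,t}^v,Z_j](\lambda-\LL_{3,t}^v)^{-1}$ and that $[\LL_{3,t}^v,Z_j]$ is a first order operator with uniformly bounded coefficients. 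This step is bookkeeping-heavy but does not introduce any new analytic difficulty, since everything has been prepared in Propositions~\ref{estimationLupprop2}--\ref{norme(m,m+1)resolvante2} (applied to $\LL_{3,t}^v$ as indicated in the paper) and Theorem~\ref{QA_k^rQ'}.
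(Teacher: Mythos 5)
Your proposal follows essentially the same route as the paper: a Taylor expansion in $v$ of the coefficients of $\LL_{3,t}^v$ (using their uniformity in $t$) to obtain the first inequality, and then a telescoping decomposition of $A_{\boldrm{r}}^{\boldrm{k}}(\lambda,t,v)-A_{\boldrm{r}}^{\boldrm{k}}(\lambda,t,0)$ into terms where a single resolvent factor or a single derivative factor is replaced, each difference being $O(v)$ via the first inequality and the resolvent identity, and the remaining factors controlled by the earlier bounds. Your discussion of the commutator bookkeeping is somewhat more explicit than the paper's, but the decomposition and the supporting estimates are the same.
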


\begin{proof}
As in the proof of Theorem \ref{diffdesresolvantes-thm}, the first line of \eqref{derivee(v)-derivee(0)-eq} just follows from a Taylor expansion in $v$ of $\LL_{3,t}^v$ and the fact that this expansion is uniform in $t>0$. We also get an analogue of \eqref{diffdesresolvantes}:
\begin{equation}
\label{diff-resolvantes-v-et-0}
\left\| \big((\lambda-\LL_{3,t}^v)^{-1} - (\lambda-\LL_{3,t}^0)^{-1}\big)s \right\|_{0} \leq Cv (1+|\lambda|^2)^M\sum_{|\alpha|\leq3}||Z^\alpha s||_{0}.
\end{equation}
Moreover, using Propositions \ref{estimationsresoprop2} and \ref{norme(m,m+1)resolvante2}, and \eqref{diff-resolvantes-v-et-0}, we have for any $m\in \N^*$
\begin{equation}
\label{diff-resolvantes^m-v-et-0}
\begin{aligned}
&\left\| \big((\lambda-\LL_{3,t}^v)^{-m} - (\lambda-\LL_{3,t}^0)^{-m}\big)s \right\|_{0}\\
 &\qquad\quad= \Big\| \sum_{i=0}^{m-1}(\lambda-\LL_{3,t}^v)^{-i} \big((\lambda-\LL_{3,t}^v)^{-1} - (\lambda-\LL_{3,t}^0)^{-1}\big)(\lambda-\LL_{3,t}^0)^{-(m-i-1)} s\Big\| \\
 &\qquad\quad\leq Cv (1+|\lambda|^2)^M\sum_{|\alpha|\leq3}||Z^\alpha s||_{0}.
\end{aligned}
\end{equation}

For $(\boldrm{k},\boldrm{r}) \in I_{k,r}$, set $a_i = (\lambda-\LL_{3,t}^v)^{-k_i}$, $b_i=\derpar{^{r_i}\LL_{3,t}^v}{v^{r_i}}$, $a'_i = (\lambda-\LL_{3,t}^0)^{-k_i}$ and  $b'_i=\left.\derpar{^{r_i}\LL_{3,t}^v}{v^{r_i}}\right|_{v=0}$. Then
\begin{multline}
A_\boldrm{r}^\boldrm{k}(\lambda,t,v)-A_\boldrm{r}^\boldrm{k}(\lambda,t,0) = a_0 b_1a_1\cdots b_ja_j  -   a'_0 b'_1a'_1\cdots b'_ja'_j  \\
 = \sum_{i=1}^j a_0b_1 \cdots a_{i-1} (b_i-b'_i)a'_i \cdots b'_ja'_j + \sum_{i=0}^j a_0b_1 \cdots b_i(a_i-a'_i) b'_{i+1} \cdots b'_ja'_j.
\end{multline}
Using this and \eqref{derivee-resolvante^m-avec-A_r^k}, the first inequality of \eqref{derivee(v)-derivee(0)-eq} and \eqref{diff-resolvantes^m-v-et-0}, we find the second inequality of \eqref{derivee(v)-derivee(0)-eq}.
\end{proof}

\begin{thm}
\label{exp-L3-bornee}
 For any $\ell, \ell' , r\in \N$ and $q>0$, there is $C>0$ such that for $t>0$, $v\geq 0$ and $Z,Z'\in T_{\R,x_0}X$ with $|Z|,|Z'|\leq q$, we have
 \begin{equation}
\sup_{|\alpha|,|\alpha'|\leq \ell} \left\| \frac{\partial^{|\alpha|+|\alpha'|}}{\partial {Z}^{\alpha}\partial Z'^{\alpha'}} \derpar{^{r}}{v^{r}}e^{-\LL_{3,t}^v}(Z,Z')\right\|_{\mathscr{C}^{\ell'}(M,\mathrm{pr}_X^*\End(\E_p))} \leq C.
\end{equation}
\end{thm}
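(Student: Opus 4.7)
The plan is to proceed in direct analogy with the proof of Theorem \ref{estimationderiveesnoyaudeLpu2}, but now accounting for the $v$-derivatives and the base parameters. First, for an integer $k > 2(\ell + r + 1)$, I would express $\partial_v^r e^{-\LL_{3,t}^v}$ as a contour integral
\begin{equation}
\derpar{^r}{v^r}e^{-\LL_{3,t}^v} = \frac{(-1)^{k-1}(k-1)!}{2i\pi}\int_\Gamma e^{-\lambda}\, \derpar{^r}{v^r}(\lambda-\LL_{3,t}^v)^{-k}\, d\lambda,
\end{equation}
and then invoke \eqref{derivee-resolvante^m-avec-A_r^k} to rewrite $\derpar{^r}{v^r}(\lambda-\LL_{3,t}^v)^{-k}$ as a linear combination of the operators $A_\boldrm{r}^\boldrm{k}(\lambda,t,v)$ defined in \eqref{def-Ark(lambda,t,v)}. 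This reduces the problem to estimating compositions $Q\, A_\boldrm{r}^\boldrm{k}(\lambda,t,v)\, Q'$ for $Q,Q'\in\mathcal{Q}^\ell$.

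The next step is to apply Theorem \ref{QA_k^rQ'}: for such $Q,Q'$ we have
\begin{equation}
\bigl\| Q\, A_\boldrm{r}^\boldrm{k}(\lambda,t,v)\, Q' s\bigr\|_0 \leq C(1+|\lambda|)^N \sum_{|\beta|\leq 2r} \|Z^\beta s\|_0.
\end{equation}
Combining this with the decay of $e^{-\lambda}$ on the contour $\Gamma$, I obtain for any $s$ supported in $B(0,q+1)$ the operator bound
\begin{equation}
\Bigl\| Q\, \derpar{^r}{v^r}e^{-\LL_{3,t}^v}\, Q' s \Bigr\|_0 \leq C'\|s\|_0,
\end{equation}
uniformly in $t>0$ and $v\geq 0$.

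To pass from these $L^2$ operator bounds to the pointwise kernel estimate, I would apply a Sobolev embedding on $B(0,q+1)\times B(0,q+1)$ to the kernel of $\derpar{^r}{v^r}e^{-\LL_{3,t}^v}$, as in the end of the proof of Theorem \ref{estimationderiveesnoyaudeLpu2}: choosing suitable $Q,Q'\in\mathcal{Q}^m$ with $m$ large enough controls the required pointwise derivatives in $Z,Z'$. The crucial point here is that the bundle $\E_{p,x_0}$ depends on $p$ and its dimension goes to infinity, so the Sobolev constant must be chosen independently of $p$; this is handled by the uniform Sobolev inequality of Lemma \ref{Sobolevuniforme} applied with the operator norm, exactly as in Theorem \ref{estimationderiveesnoyaudeLpu2}. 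For the $\mathscr{C}^{\ell'}(M)$-norm in the parameters $(b_0,x_0)\in M$, I would differentiate the contour integral in these variables and commute the derivatives through the resolvents; each derivative $\n^{\End(\E_p)}$ applied to $(\lambda-\LL_{3,t}^v)^{-1}$ produces an extra factor of the form $(\lambda-\LL_{3,t}^v)^{-1}(\n^{\End(\E_p)}\LL_{3,t}^v)(\lambda-\LL_{3,t}^v)^{-1}$, and $\n^{\End(\E_p)}\LL_{3,t}^v$ still has the same structural form as $\LL_{3,t}^v$ (with Toeplitz coefficients by Theorem \ref{RFpToeplitz}), so the analogues of Theorems \ref{QA_k^rQ'} for these commuted operators hold by the same argument.

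The main obstacle is precisely the uniformity in $p$: the operator $\LL_{3,t}^v$ acts on a space whose dimension blows up, so the classical reduction from $L^2$ operator estimates to pointwise Schwartz kernel estimates via Sobolev embedding cannot be applied in a naive way. This is circumvented via Lemma \ref{Sobolevuniforme}, which provides $p$-independent Sobolev embedding constants when one uses the operator norm on $M_d(\C)$-valued functions — the same mechanism that was essential for Theorem \ref{estimationderiveesnoyaudeLpu2}. Once this uniform Sobolev step is in place, the remaining work is bookkeeping: keeping track of the powers of $(1+|\lambda|)$ to ensure integrability on $\Gamma$, verifying that all commutators of $\n^{\End(\E_p)}$ with $\LL_{3,t}^v$ preserve the polynomial structure exploited in Theorem \ref{QA_k^rQ'}, and confirming that the Taylor remainder arguments of Theorem \ref{derivee(v)-derivee(0)} go through uniformly in $t,v$.
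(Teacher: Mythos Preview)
Your proposal is correct and follows essentially the same approach as the paper: the contour integral representation combined with \eqref{derivee-resolvante^m-avec-A_r^k} and Theorem \ref{QA_k^rQ'} gives the $L^2$ operator bounds, and one then passes to pointwise kernel estimates via the uniform Sobolev inequality of Lemma \ref{Sobolevuniforme} exactly as in Theorem \ref{estimationderiveesnoyaudeLpu2}. One minor remark: the reference to Theorem \ref{derivee(v)-derivee(0)} at the end is extraneous here --- that result is used later for Theorem \ref{estimation-Brtv}, not for the present boundedness statement.
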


\begin{proof}
Using the integral representation
\begin{equation}
\label{integral-rep-exp(L3)}
\derpar{^{r}}{v^{r}}e^{-\LL_{3,t}^v} = \frac{(-1)^k(k-1)!}{2i\pi} \int_\Gamma e^{-\lambda} \derpar{^{r}}{v^{r}}(\lambda - \LL_{3,t}^v)^{-1}d\lambda,
\end{equation}
Theorem \ref{exp-L3-bornee} is proved from \eqref{derivee-resolvante^m-avec-A_r^k} and Theorem \ref{QA_k^rQ'} exactly as Theorem \ref{estimationderiveesnoyaudeLpu2} is proved from \eqref{Qe(Lup)Q'2}.
\end{proof}

For $k$ large enough, set
\begin{equation}
\label{def-Brtv}
\begin{aligned}
&\mathscr{B}_{r,t} = \frac{(-1)^k(k-1)!}{2i\pi r!} \int_\Gamma e^{-\lambda}  \sum_{(\boldrm{k},\boldrm{r}) \in I_{k,r}} a_\boldrm{r}^\boldrm{k}A_\boldrm{r}^\boldrm{k}(\lambda,t,0) d\lambda, \\
&\mathscr{B}_{r,t,v} = \frac{1}{r!} \derpar{^{r}}{v^{r}}e^{-\LL_{3,t}^v} - \mathscr{B}_{r,t}.
\end{aligned}
\end{equation}
Then $\mathscr{B}_{r,t}$ and $\mathscr{B}_{r,t,v}$ do not depend on the choice on $k$ large. We denote by $\mathscr{B}_{r,t}(Z,Z')$ (resp. $\mathscr{B}_{r,t,v}(Z,Z')$) the smooth kernerl of $\mathscr{B}_{r,t}$ (resp. $\mathscr{B}_{r,t,v}$) with respect to $dv_{TX}(Z')$.

\begin{thm}
\label{estimation-Brtv}
 For $r\in \N$ and $q>0$, there exists $C>0$ such that for $t>0$, $v\geq 0$ and $Z,Z'\in T_{\R,x_0}X$ with $|Z|,|Z'|\leq q$, we have
 \begin{equation}
 \label{estimation-Brtv-eq}
\big\| \mathscr{B}_{r,t,v}(Z,Z') \big\| \leq Cv^{1/(2n_X+1)}.
\end{equation}
\end{thm}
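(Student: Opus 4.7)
The plan is to mimic the strategy used in the proof of Theorem \ref{cvcenoyauLup->noyauLuinfini2} (see also \cite[Thm.~1.6.13]{ma-marinescu}): one first establishes an operator norm estimate for $\mathscr{B}_{r,t,v}$ as a bounded operator on a Sobolev-type space, and then upgrades this to a pointwise kernel estimate by combining it with uniform bounds on derivatives of the kernels of $\tfrac{1}{r!}\partial_v^r e^{-\LL_{3,t}^v}$ and $\mathscr{B}_{r,t}$. The exponent $1/(2n_X+1)$ is then produced by a standard interpolation argument.

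First, I would combine \eqref{integral-rep-exp(L3)}, \eqref{derivee-resolvante^m-avec-A_r^k} and the definition \eqref{def-Brtv} to rewrite, for $k$ large enough,
\begin{equation*}
\mathscr{B}_{r,t,v}=\frac{(-1)^k(k-1)!}{2i\pi\, r!}\int_{\Gamma}e^{-\lambda}\sum_{(\boldrm{k},\boldrm{r})\in I_{k,r}}a_{\boldrm{r}}^{\boldrm{k}}\bigl(A_{\boldrm{r}}^{\boldrm{k}}(\lambda,t,v)-A_{\boldrm{r}}^{\boldrm{k}}(\lambda,t,0)\bigr)\,d\lambda.
\end{equation*}
The second estimate of Theorem \ref{derivee(v)-derivee(0)} then yields, for some $N\in\mathbb{N}$ and $C>0$ independent of $t,v$,
\begin{equation*}
\bigl\| \mathscr{B}_{r,t,v}s\bigr\|_{0}\leq Cv\int_\Gamma|e^{-\lambda}|(1+|\lambda|)^{N}\,|d\lambda|\sum_{|\alpha|\leq 4r+3}\|Z^\alpha s\|_{0}\leq C'v\sum_{|\alpha|\leq 4r+3}\|Z^\alpha s\|_{0},
\end{equation*}
since the contour integral is absolutely convergent by the geometry of $\Gamma$. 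Multiplying by a compactly supported cutoff (which kills the $Z^\alpha$ factors on the support of test sections) this gives the key $L^2\to L^2$ estimate
\begin{equation*}
\bigl\|\mathscr{B}_{r,t,v}\bigr\|^{0,0}\leq Cv
\end{equation*}
when acting on sections supported in a fixed ball, which is the only regime we need for the pointwise bound on $|Z|,|Z'|\leq q$.

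Next, using Theorem \ref{QA_k^rQ'} together with the integral representation \eqref{def-Brtv}, and arguing exactly as in Theorem \ref{exp-L3-bornee}, one obtains that for every $\ell\in\mathbb{N}$ there exists $C_\ell>0$ such that for all $t>0$, $v\geq 0$ and $|Z|,|Z'|\leq q$,
\begin{equation*}
\sup_{|\alpha|+|\alpha'|\leq\ell}\Bigl\|\partial_Z^{\alpha}\partial_{Z'}^{\alpha'}\mathscr{B}_{r,t,v}(Z,Z')\Bigr\|\leq C_\ell.
\end{equation*}
This uniform derivative bound is the second ingredient.

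Finally, I combine the two ingredients via the uniform Sobolev embedding of Lemma \ref{Sobolevuniforme}: by Cauchy--Schwarz applied on a fixed ball and the $L^2\to L^2$ bound, the kernel $\mathscr{B}_{r,t,v}(Z,Z')$ satisfies an $L^2$-average bound of order $v$, while the above derivative estimate gives it Lipschitz control (uniform in $t,v$) on the same ball. A standard interpolation (as in the proof of \cite[Thm.~1.6.13]{ma-marinescu} used for Theorem \ref{cvcenoyauLup->noyauLuinfini2}) then yields the pointwise estimate
\begin{equation*}
\|\mathscr{B}_{r,t,v}(Z,Z')\|\leq C v^{1/(2n_X+1)},
\end{equation*}
the factor $2n_X+1$ being dictated by the real dimension of $T_{\mathbb{R},x_0}X$.

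The main technical point is to ensure that all the operator-theoretic estimates (Propositions \ref{estimationLupprop2}--\ref{norme(m,m+1)resolvante2} and Theorems \ref{QA_k^rQ'}--\ref{derivee(v)-derivee(0)}) are truly uniform in $t>0$ and $v\geq 0$, and in particular that the Toeplitz structure of the coefficients of $\LL_{3,t}^v$ (via Theorem \ref{RFpToeplitz}) does not introduce any hidden $p$-dependence when we use the operator norm on $\toep$; this is the same subtlety already handled in Sections \ref{operatorwithToeplitzcoeff} and \ref{sectioncvce2}, and the decomposition of $\partial_v^r\LL_{3,t}^v$ carried out in the proof of Theorem \ref{QA_k^rQ'} provides exactly the structural information needed to conclude.
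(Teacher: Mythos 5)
Your proof plan is correct and follows essentially the same route as the paper, which simply states that the proof of Theorem~\ref{estimation-Brtv} is obtained by repeating the proof of Theorem~\ref{cvcenoyauLup->noyauLuinfini} (modeled on \cite[Thm.~1.6.13]{ma-marinescu}), with Theorem~\ref{derivee(v)-derivee(0)} and \eqref{integral-rep-exp(L3)} supplying the two needed ingredients: an $O(v)$ operator-norm estimate coming from the resolvent differences, and uniform bounds on the kernel derivatives. One small caveat: the intermediate remark that Cauchy--Schwarz directly yields an $L^2$-average bound on the kernel is imprecise as phrased (the Ma--Marinescu argument proceeds by testing against a function concentrated near a point $Z_0'$ and balancing the $L^2\to L^2$ bound against the Lipschitz control, which is exactly how the exponent $1/(2n_X+1)$ arises from $\dim_\R T_{\R,x_0}X = 2n_X$), but since you invoke \cite[Thm.~1.6.13]{ma-marinescu} explicitly this does not affect the soundness of the plan.
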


\begin{proof}
The proof is the same as the proof of Theorem \ref{cvcenoyauLup->noyauLuinfini}, using Theorem \ref{derivee(v)-derivee(0)} and \eqref{integral-rep-exp(L3)} instead of Theorem \ref{diffdesresolvantes-thm} and \eqref{e(-Lup)=integralledecontour} respectively.
\end{proof}

\begin{thm}
\label{expansion-exp-L3}
  For any $\ell, \ell' , k\in \N$ and $q>0$, there is $C>0$ such that for $t>0$, $v\geq 0$ and $Z,Z'\in T_{\R,x_0}X$ with $|Z|,|Z'|\leq q$, we have
\begin{equation}
\label{expansion-exp-L3-eq}
\sup_{|\alpha|,|\alpha'|\leq \ell} \left\| \frac{\partial^{|\alpha|+|\alpha'|}}{\partial {Z}^{\alpha}\partial Z'^{\alpha'}} \derpar{^{r}}{v^{r}}\Big(e^{-\LL_{3,t}^v}(Z,Z')-\sum_{r=0}^k\mathscr{B}_{r,t}v^r \Big)(Z,Z')\right\|_{\mathscr{C}^{\ell'}(M,\mathrm{pr}_X^*\End(\E_p))} \leq Cv^{k+1}.
\end{equation}
\end{thm}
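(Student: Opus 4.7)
The plan is to recognize the polynomial $\sum_{r=0}^{k}\mathscr{B}_{r,t}v^{r}$ as the Taylor polynomial at $v=0$ of the smooth function $v\mapsto e^{-\LL_{3,t}^{v}}(Z,Z')$, and then to deduce the desired estimate from Taylor's formula with integral remainder combined with the uniform bound on higher $v$-derivatives furnished by Theorem \ref{exp-L3-bornee}.

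First I would verify the Taylor identity $\mathscr{B}_{r,t}(Z,Z')=\tfrac{1}{r!}\,\bigl.\derpar{^{r}}{v^{r}}e^{-\LL_{3,t}^{v}}(Z,Z')\bigr|_{v=0}$. This follows by differentiating the contour representation \eqref{integral-rep-exp(L3)} $r$ times in $v$, invoking \eqref{derivee-resolvante^m-avec-A_r^k} to obtain
\begin{equation*}
\derpar{^{r}}{v^{r}}e^{-\LL_{3,t}^{v}} = \frac{(-1)^{k}(k-1)!}{2i\pi}\int_{\Gamma}e^{-\lambda}\sum_{(\boldrm{k},\boldrm{r})\in I_{k,r}}a_{\boldrm{r}}^{\boldrm{k}}\,A_{\boldrm{r}}^{\boldrm{k}}(\lambda,t,v)\,d\lambda,
\end{equation*}
and evaluating at $v=0$; comparison with definition \eqref{def-Brtv} gives the claim on the level of Schwartz kernels. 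Hence $\sum_{r=0}^{k}\mathscr{B}_{r,t}(Z,Z')v^{r}$ is precisely the Taylor polynomial of order $k$ at $v=0$.

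Next, Taylor's formula with integral remainder yields
\begin{equation*}
e^{-\LL_{3,t}^{v}}(Z,Z')-\sum_{r=0}^{k}\mathscr{B}_{r,t}(Z,Z')v^{r}=\frac{1}{k!}\int_{0}^{v}(v-s)^{k}\,\derpar{^{k+1}}{s^{k+1}}e^{-\LL_{3,t}^{s}}(Z,Z')\,ds,
\end{equation*}
and differentiating this identity $r$ times in $v$ (for any $r\le k$) gives a remainder of the form $\tfrac{1}{(k-r)!}\int_{0}^{v}(v-s)^{k-r}\,\partial_{s}^{k+1}e^{-\LL_{3,t}^{s}}(Z,Z')\,ds$. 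Theorem \ref{exp-L3-bornee}, applied with the value $k+1$ in place of $r$ and with the appropriate $\ell,\ell'$, provides a bound on $\partial_{s}^{k+1}e^{-\LL_{3,t}^{s}}(Z,Z')$ that is uniform in $s\in[0,v]$ and $t>0$ together with its $Z,Z'$-derivatives of order $\le\ell$ and its parameter-derivatives of order $\le\ell'$. Integrating in $s$ produces a bound by $Cv^{k+1-r}$; to reach the stated $v^{k+1}$ one merely enlarges $k$ by the additional amount $r$, which is permissible since the theorem is stated for all $k\in\N$ at the price of a larger constant.

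The main obstacle is not the final Taylor argument, which is purely calculus, but rather the uniform-in-$p$ control underlying Theorem \ref{exp-L3-bornee}. That control rests on the polynomial resolvent estimates of Theorem \ref{QA_k^rQ'}, on the commutator trick that allows one to move factors $Z^{\beta}$ to the right of $A_{\boldrm{r}}^{\boldrm{k}}(\lambda,t,v)$, and crucially on the uniform Sobolev inequality of Lemma \ref{Sobolevuniforme} which permits passage from operator-norm Sobolev estimates to pointwise kernel estimates in a manner that does not degenerate as $\dim F_{p}\to\infty$. All these ingredients are in place by the time one reaches the present theorem, so the proof consists of assembling them through the Taylor identity above.
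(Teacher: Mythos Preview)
Your proposal is correct and follows essentially the same route as the paper: identify $\mathscr{B}_{r,t}$ as the $r$-th Taylor coefficient of $v\mapsto e^{-\LL_{3,t}^{v}}(Z,Z')$ at $v=0$, then apply Taylor's formula with integral remainder together with the uniform bound of Theorem~\ref{exp-L3-bornee}. The only cosmetic difference is that the paper establishes the Taylor identity $\mathscr{B}_{r,t}=\tfrac{1}{r!}\partial_v^r e^{-\LL_{3,t}^v}\big|_{v=0}$ by invoking Theorem~\ref{estimation-Brtv} (letting $v\to 0$ in $\mathscr{B}_{r,t,v}$), whereas you read it off directly from the contour-integral definition~\eqref{def-Brtv}; both arguments are equally valid.
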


\begin{proof}
 By \eqref{def-Brtv} and \eqref{estimation-Brtv-eq}, we have
 \begin{equation}
\frac{1}{r!} \left.  \derpar{^{r}}{v^{r}}e^{-\LL_{3,t}^v} \right|_{v=0} = \mathscr{B}_{r,t}.
\end{equation}
Now by Theorem \ref{exp-L3-bornee}, \eqref{def-Brtv} and the Taylor expansion
\begin{equation}
f(v)-\sum_{r=0}^k \frac{1}{r!}  \derpar{^{r}f}{v^{r}}(0)v^r = \frac{1}{k!}\int_0^v (v-v_0)^k \derpar{^{k+1}f}{v^{k+1}}(v_0)dv_0,
\end{equation}
we get \eqref{expansion-exp-L3-eq}.
\end{proof}

Now, by \eqref{exp-L2-et-L3} and the asymptotic expansion heat kernels (see \cite{berline-getzler-vergne} for instance), we know that $e^{-\LL_{3,t}^v}(0,0)$ has an asymptotic expansion as $v=\sqrt{u}\to 0$ in powers of $u$, so we have
\begin{equation}
\label{B_impaire=0}
\mathscr{B}_{2r+1,t}(0,0) = 0.
\end{equation}

Theorem \ref{expansion-exp-L3}, along with \eqref{exp-L-et-L2}, \eqref{exp-L2-et-L3} and \eqref{B_impaire=0}, yields to
\begin{equation}
\Big\| u^{n_X} e^{-u\LL_t}(0,0) - \sum_{r=0}^k \mathscr{B}_{2r,t}(0,0)u^r \Big\|_{\mathscr{C}^{\ell'}(M,\mathrm{pr}_X^*\End(\E_p))} \leq Cu^{k+1}.
\end{equation}
Thus, by the analogue of \eqref{BupetLinfini}, we have uniformly in $p$
\begin{equation}
\begin{aligned}
p^{-n_X}\psi_{1/\sqrt{p}} e^{-B_{p,u/p}^2}(x_0,x_0) &= \psi_{1/\sqrt{u}}e^{-u\LL_t}(0,0) \\
&=\psi_{1/\sqrt{u}} \sum_{r=0}^k \mathscr{B}_{2r,t}(0,0)u^{r-n_X} +O(u^{k+1}).
\end{aligned}
\end{equation}

In conclusion, we have proved \eqref{dvlptuniformedunoyaudelachaleureq1-2} with
\begin{equation}
\label{Apj=f(B(2r,t))}
A_{p,j} = \sum_{r-\alpha=j+n_X} \mathscr{B}_{2r,t}(0,0)^{(2\alpha)}.
\end{equation}

We now prove \eqref{dvlptuniformedunoyaudelachaleureq2-2}. To do so, we fixe $r\in \N$ and study the asymptotic as $t\to 0$ of $\mathscr{B}_{2r,t}(0,0)$.

We define $\ul{\LL}_{3,t}^v$, $\ul{A}_\boldrm{r}^\boldrm{k}(\lambda,t,v)$ and $\ul{\mathscr{B}}_{2r,t}$ to be the objects corresponding to $\LL_{3,t}^v$, $A_\boldrm{r}^\boldrm{k}(\lambda,t,v)$ and $\mathscr{B}_{2r,t}$ above when we replace $\LL_t$ by $\ul{\LL}_t$ in their definitions. Then all Theorems \ref{QA_k^rQ'}-\ref{expansion-exp-L3} also hold for this underlined objects. 

Also, similarly to Theorems \ref{derivee(v)-derivee(0)} and \ref{estimation-Brtv}, we can prove first that for any $r\geq 0$ and $k>0$, there exist $C>0$ and $N\in \N$ such that for $\lambda \in \Gamma$ and $t>0$ ,
 \begin{equation}
\begin{aligned}
& \left\| \Big(\derpar{^{r}\LL_{3,t}^v}{v^{r}}\Big|_{v=0} - \derpar{^{r}\ul{\LL}_{3,t}^v}{v^{r}}\Big|_{v=0} \Big)s\right\|_{-1} \leq Ct \sum_{|\alpha|\leq r+3} \|Z^\alpha s\|_{1}, \\
&\bigg\| \Big(  \sum_{(\boldrm{k},\boldrm{r}) \in I_{k,r}} a_\boldrm{r}^\boldrm{k}A_\boldrm{r}^\boldrm{k}(\lambda,t,0)- a_\boldrm{r}^\boldrm{k}\ul{A}_\boldrm{r}^\boldrm{k}(\lambda,t,0)\Big)s\bigg\|_0 \leq Ct(1+|\lambda|)^N\sum_{|\alpha|\leq 4r+3} \|Z^\alpha s\|_0.
\end{aligned}
\end{equation}
And secondly that for $r\in \N$ and $q>0$, there exists $C>0$ such that for $t>0$  and $Z,Z'\in T_{\R,x_0}X$ with $|Z|,|Z'|\leq q$, we have
 \begin{equation}
 \label{diff-Brt-ulBrt}
\big\| \big(\mathscr{B}_{r,t}-\ul{\mathscr{B}}_{r,t}\big)(Z,Z') \big\| \leq Ct^{1/(2n_X+1)}.
\end{equation}

Recall that $\har_{x_0}(y)$, $y\in Y_{x_0}$, is defined in \eqref{defhary}. Once again, we define $\har_{x_0,3}^v(y)$, $\widetilde{A}_\boldrm{r}^\boldrm{k}(\lambda,v)(y)$ and $\widetilde{\mathscr{B}}_{2r}(y)$ to be the objects corresponding to $\LL_{3,t}^v$, $A_\boldrm{r}^\boldrm{k}(\lambda,t,v)$ and $\mathscr{B}_{2r,t}$ above when we replace $\LL_t$ by $\har_{x_0}(y)$ in their definitions. Then, once again, Theorems \ref{QA_k^rQ'}-\ref{expansion-exp-L3} also hold for this objects.

By \eqref{ul(L)=toep}, we then have
\begin{equation}
\label{ul(L)3v=toep}
\ul{\LL}_{3,t}^v = P_{p,x_0} \har_{x_0,3}^v(\cdot) P_{p,x_0}.
\end{equation}
As $\Delta^{T_{\R,x_0}X}$ commutes with $P_{p,x_0}$, we have $(\lambda-P_{p,x_0}\Delta^{T_{\R,x_0}X}P_{p,x_0})^{-1}=P_{p,x_0}(\lambda-\Delta^{T_{\R,x_0}X})^{-1}P_{p,x_0}$. As a consequence, using \eqref{def-Ark(lambda,t,v)} and the same reasoning as for \eqref{[resolvante,Toepltiz]=0} (in particular Theorem \ref{prodtoeplitztenseurhilbert}), we find that for any $(\boldrm{k},\boldrm{r})\in I_{k,r}$, there exist $C>0$ and $K\in \N$ such that
\begin{equation}
\label{ul(A)=toep}
\left\|\ul{A}_\boldrm{r}^\boldrm{k}(\lambda,t,0) - P_{p,x_0} \widetilde{A}_\boldrm{r}^\boldrm{k}(\lambda,0)(\cdot) P_{p,x_0}\right\|^{0,0} \leq Cp^{-1}(1+|\lambda|^2)^K.
\end{equation}
Thus by \eqref{def-Brtv},
\begin{equation}
\left\|\ul{\mathscr{B}}_{2r,t} - P_{p,x_0} \widetilde{\mathscr{B}}_{2r} P_{p,x_0}\right\|^{0,0}\leq Cp^{-1}.
\end{equation}
As the proof of Theorem \ref{cvcenoyauLup->noyauLuinfini}, this implies that for the operator norm,
\begin{equation}
\label{ul(B)=toep}
\ul{\mathscr{B}}_{2r,t} (0,0)= P_{p,x_0} \widetilde{\mathscr{B}}_{2r} (0,0)P_{p,x_0} + O(p^{-1/(2n_X+1)}).
\end{equation}

Recall that $A_j$ is defined in \eqref{def-Lambda_u} and \eqref{defAj2}. With the same reasoning which led to \eqref{Apj=f(B(2r,t))}, we find
\begin{equation}
\label{Aj=f(Btilde(2r))}
A_{j} = \sum_{r-\alpha=j+n-x} \tilde{\mathscr{B}}_{2r}(0,0)^{(2\alpha)}.
\end{equation}

With \eqref{Apj=f(B(2r,t))}, \eqref{diff-Brt-ulBrt}, \eqref{ul(B)=toep} and \eqref{Aj=f(Btilde(2r))}, we find \eqref{dvlptuniformedunoyaudelachaleureq2-2} for the $\mathscr{C}^0$-norm.

Finally, using the fact that $\nabla^{\mathrm{pr}_M^*\End(\E_p)}_U\LL_{3,t}^v$ has the same structure as $\LL_{3,t}^v$, we can show that all the estimates in this section also hold for the derivatives of the operators involved. Thus, \eqref{dvlptuniformedunoyaudelachaleureq2-2} holds for the $\mathscr{C}^\ell$-norm.

 The proof of Theorem \ref{dvlptuniformedunoyaudelachaleur2} is completed.


\subsection{Proof of Theorem \ref{dominationugrand2}}
\label{demothm2-2}

We use here the same notations and definitions as in Section \ref{demothm2}. Also, we assume here again that \eqref{SpCp} holds for $p\geq 1$. As $\Sp(B_{p,1}^2)=\Sp(D_p^2)$ and by Lemma \ref{spdeDp22}, we have once again a decomposition
\begin{equation}
\label{trsavecBupenfctiondePpuetKpu2}
p^{-n_X}\psi_{1/\sqrt{p}}\trs \left[ N_{u/p}e^{-B_{p,u/p}^2} \right] = p^{-n_X} \trs \left[ N_u (\PP_{p,u}+\K_{p,u})\right].
\end{equation}

\begin{lemme}
\label{estimeeaveclambda02}
Let $\lambda_0\in \R_-^*$. Then there exists $q_0$ such that for $q\geq q_0$, for $U\in T_\R B$ and $\ell \in \N$, there is a $C>0$ such that for $p\geq 1$
\begin{equation}
\label{estimeeaveclambda0eq2}
p^{-n_X}\left\| \big(\n^{\pi^*\End(\E_p)}_U\big)^\ell(\lambda_0-C_p)^{-q} \right\|_1 \leq C.
\end{equation}
\end{lemme}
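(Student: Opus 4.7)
The plan is to follow the strategy of Lemma \ref{estimeeaveclambda0}, replacing the line bundle $L^p$ throughout by the infinite-rank bundle $F_p$ and using the Toeplitz formalism of Sections \ref{toeplitzop}--\ref{Infinitedimensionalbundles} together with the Lichnerowicz-type decomposition \eqref{Bp2etDp2-2} to keep the error terms bounded in operator norm. First I would set $H_p = D_p^2/p - \lambda_0$, a positive self-adjoint generalized Laplacian on $X$ acting on $L^2(X,\E_p)$, and use the integral representation
\begin{equation*}
H_p^{-k}(x,x') = \frac{1}{(k-1)!}\int_0^{+\infty} e^{-tH_p}(x,x')\, t^{k-1}\, dt.
\end{equation*}
The degree-$0$ part of Theorem \ref{cvcenoyaudelachaleur2} together with \eqref{traceToeplitz} shows that $p^{-n_Z}\,\tr[e^{-D_p^2/p}]$ is uniformly bounded in $p$, and the spectral-gap Lemma \ref{spdeDp22} yields $p^{-n_Z}\,|\tr[e^{-tD_p^2/p}]| \leq Ce^{\lambda_0 t}$ for $t \geq 1$, exactly as in \eqref{tr(Hp-k)-t-grand}. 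For $t\in ]0,1]$, the degree-$0$ component of Theorem \ref{dvlptuniformedunoyaudelachaleur2}, again combined with \eqref{traceToeplitz}, provides a uniform asymptotic expansion of $p^{-n_Z}\,\tr[e^{-tD_p^2/p}]$ starting at $t^{-n_X}$. Splitting the $t$-integral at $t=1$ then yields the key trace bound: for $q$ large enough,
\begin{equation*}
\left\|( \lambda_0 - D_p^2/p)^{-q}\right\|_1 \le C\, p^{n_Z}.
\end{equation*}

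Next I would pass from $D_p^2/p$ to $C_p = D_p^2/p + R_p/p$ using the Toeplitz-valued structure \eqref{Bp2etDp2-2}. The crucial operator-norm estimate is the analogue of \eqref{estimation1/pRp}, namely
\begin{equation*}
\left\|(R_p/p)\,(\lambda_0 - D_p^2/p)^{-1}\right\|_\infty \le C,
\end{equation*}
which I would establish by the spectral calculation $\sup_{\mu\ge\sqrt{\nu}}(1+\mu)/|\lambda_0-\mu^2|<\infty$ (using the spectral gap of Lemma \ref{spdeDp22}) combined with the uniform bound \eqref{||Tfp||infinie} on the Toeplitz coefficients in $R_p/p$ given by \eqref{nupaveclesGamma3} and \eqref{devlpmtGammaFp}. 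Iterating the resolvent identity \eqref{resolvantesCpetDp2} then writes $(\lambda_0-C_p)^{-q}$ as a finite sum of terms of the form $(\lambda_0-D_p^2/p)^{-k_0}(R_p/p)\cdots(R_p/p)(\lambda_0-D_p^2/p)^{-k_i}$ with $\sum k_j = q+i$ and $i\le\dim_{\R}B$, so that at least one $k_{j_0}$ exceeds $q/(\dim_{\R}B+1)$. Using $\|AB\|_1\le\|A\|_1\|B\|_\infty$ together with the operator-norm bound above gives \eqref{estimeeaveclambda0eq2} for $\ell=0$.

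For the case $\ell\ge 1$, the object to control is $(\nabla^{\pi^*\End(\E_p)}_U)^\ell(\lambda_0-C_p)^{-q}$. Differentiating the resolvent expansion produces terms of the form \eqref{termes-ds-der-resolvante^a} with factors
\[
A_{k_j}(p)\in\Bigl\{R_p/p,\ \nabla^{\End(\E_p)}_U R_p/p,\ \nabla^{\End(\E_p)}_U D_p^2/p\Bigr\}.
\]
By Theorem \ref{RFpToeplitz}, each differentiation of a Toeplitz-valued coefficient in \eqref{nupaveclesGamma3} produces a new Toeplitz-valued term whose symbol remains uniformly bounded in $\mathscr{C}^0$-norm, so $\nabla^{\End(\E_p)}_U R_p/p$ has the same structure as $R_p/p$ in \eqref{Bp2etDp2-2} and satisfies the same operator-norm bound as above. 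The main obstacle is the third type of term $\nabla^{\End(\E_p)}_U D_p^2/p$, which contains a contribution of the size of $R^{F_p}(U^H,e_i^H)$ and therefore grows like $p$ before rescaling; the analogue of \eqref{norme-de-A_p-lambda} must be re-derived here. I would proceed as in the proof of Lemma \ref{estimeeaveclambda0} by localizing on a ball of radius $\e$, passing to the rescaled picture $t=1/\sqrt{p}$ of Section \ref{sectioncvce2}, and exploiting that in the rescaled coordinates $(\nabla_U\Gamma^{F_p})(e_i)(tZ)$ acquires an extra power of $t$, coming from $(\nabla_U\Gamma^{F_p})(e_i)(0)=0$ (Taylor expansion of the Toeplitz-valued connection form around $0$, analogous to \eqref{structure-derivee-Dp2/p-rescaled}). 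Then the uniform norm bound $\|\mathscr{A}_t(\lambda_0)\|_{t}^{0,0}\le C$ follows from the Toeplitz analogues of Propositions \ref{estimationLupprop2}--\ref{norme(m,m+1)resolvante2} already established in Section \ref{sectioncvce2}, which gives back a uniform operator-norm estimate $\|\nabla^{\End(\E_p)}_U D_p^2/p \,(\lambda_0-D_p^2/p)^{-1}\|_\infty \le C$ by an argument identical to \eqref{A_p(lambda_0)-bornee}. Combining these uniform operator-norm bounds with the trace-norm bound on $(\lambda_0-D_p^2/p)^{-q_0}$ via $\|AB\|_1\le\|A\|_1\|B\|_\infty$ then yields \eqref{estimeeaveclambda0eq2} for $\ell\ge 1$ by induction. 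The hard part here is really the rescaling argument for the derivatives: all the necessary pieces are already in place in Section \ref{sectioncvce2}, but they must be re-invoked carefully to keep track of the Toeplitz structure of $\nabla_U\Gamma^{F_p}$ near the diagonal.
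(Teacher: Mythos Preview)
Your approach is essentially the paper's: the paper's proof is just a two-line pointer back to Lemma~\ref{estimeeaveclambda0}, noting that $R_p/p$ decomposes locally as $\frac{1}{p}\0_{1,p}+\0_{0,p}$ with uniformly bounded (Toeplitz-valued) coefficients, after which everything carries over verbatim. Your write-up fills in exactly those details, and your treatment of the $\ell\geq 1$ case via the rescaling of Section~\ref{sectioncvce2} and the vanishing $(\nabla_U\Gamma^{F_p})(e_i)(0)=0$ is the correct analogue of \eqref{structure-derivee-Dp2/p-rescaled}.

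One point deserves comment. You obtain $\|(\lambda_0-D_p^2/p)^{-q}\|_1\leq Cp^{n_Z}$, whereas the lemma as stated asks for $p^{-n_X}\|\cdot\|_1\leq C$, i.e.\ a bound $Cp^{n_X}$. Your bound is in fact the correct one: since $\dim F_p\sim p^{n_Y}$, the pointwise operator-norm estimate $\|e^{-D_p^2/p}(x,x)\|\leq Cp^{n_X}$ coming from Theorem~\ref{cvcenoyaudelachaleur2} can only give $|\tr_{\E_p}[e^{-D_p^2/p}(x,x)]|\leq Cp^{n_X}\dim F_p\leq Cp^{n_Z}$ after tracing over the fibre, and the paper's own ``as in \eqref{estimeedeg0}'' cannot produce anything sharper. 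The exponent $n_X$ in the statement and in \eqref{estimeedeg02} thus appears to be a slip for $n_Z$. This is harmless downstream: the lemma is used only for Proposition~\ref{termeavecKpu2} and ultimately Theorem~\ref{dominationugrand2}, and the latter carries the factor $p^{-n_Z}$, which is exactly what your bound delivers.
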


\begin{proof}
As in \eqref{estimeedeg0}, we find using $H_p = D_p^2/p - \lambda_0$ that
\begin{equation}
\label{estimeedeg02}
p^{-n_X}\left\| ( \lambda_0-D_p^2/p)^{-q} \right\|_1  \leq C.
\end{equation}

Recall that $B_p^2=D_p^2+R_p$. A look at Bismut's Lichnerowicz formula \eqref{Lichnerowiczeq} and \eqref{convention} shows that locally, under the trivialization on $U_{x_k}$ (see Sections \ref{localization} and  \ref{localization2}), we have
\begin{equation}
\label{1/pRpavecOi,p}
\frac{1}{p}R_p = \frac{1}{p} \0_{1,p} +\0_{0,p},
\end{equation}
were $\0_{k,p}$ is an operator of order $k$ acting on $\E_{p}$ with bounded coefficients (with respect to the operator norm). Thus,
\begin{equation}
\begin{aligned}
\label{estimationsOip,i=1,2}
& || \0_{1,p}s||_{\sob^{k}(p)}\leq C ||s||_{\sob^{k+1}(p)}, \\
& || \0_{0,p}s||_{\sob^{k}(p)}\leq C ||s||_{\sob^{k}(p)}.
\end{aligned}
\end{equation}
From these estimates, we can conclude the proof as in Lemma~\ref{estimeeaveclambda0}.
\end{proof}

\begin{prop}
\label{termeavecKpu2}
For any $\ell\in \N$, there exist $a,C>0$ such that for $p\geq 1$ and $u\geq 1$,
\begin{equation}
p^{-n_X}\big| \trs\left[N_u \K_{p,u}\right] \big|_{\mathscr{C}^\ell(B)} \leq Ce^{-au}.
\end{equation}
\end{prop}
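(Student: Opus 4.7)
The plan is to transpose the proof of Proposition \ref{termeavecKpu} to the present infinite-dimensional setting, replacing at each step the estimates from the line bundle case by their Toeplitz analogues, and keeping $n_X$ in place of $n$. The key observation throughout is that the operator-norm bound $\|T_{f,p}\|_\infty \leq \|f\|_{\mathscr{C}^0}$ from \eqref{||Tfp||infinie}, together with Theorem \ref{RFpToeplitz} and Lemma \ref{spdeDp22}, allows every resolvent-type estimate in the finite rank case to go through uniformly in $p$.

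First, I would establish the basic resolvent bounds for $\lambda \in \delta \cup \Delta$. The spectral gap of Lemma \ref{spdeDp22} gives the analogue of \eqref{estimee-lambda}, namely $\|(\lambda - D_p^2/p)^{-1}\|_\infty \leq C$ uniformly in $p$ and $\lambda$. Next I would combine the decomposition \eqref{1/pRpavecOi,p}--\eqref{estimationsOip,i=1,2} (more precisely its variant with $D_p^2$ in place of $B_p^2$, deduced from \eqref{Bup2=f(DX2,u,p)2}) with the Sobolev estimate from Lemma \ref{estimeeelliptique(p)2} to obtain, as in \eqref{normeinfinede1/pRp}, the bound $\tfrac{1}{p}\|R_p(\lambda - D_p^2/p)^{-1}\|_\infty \leq C|\lambda|$. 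The Neumann expansion \eqref{resolvantesCpetDp2} then yields $\|(\lambda - C_p)^{-1}\|_\infty \leq C|\lambda|$ uniformly for $\lambda \in \delta \cup \Delta$ and $p \geq 1$.

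Second, I would control the trace norm. Using the identity \eqref{lambdaetlambda02-puissanceq} with $\lambda_0 \in \R_-^*$, combined with Lemma \ref{estimeeaveclambda02} for $q$ sufficiently large (in place of Lemma \ref{estimeeaveclambda0}) and with the operator-norm bound just established, one obtains
\begin{equation*}
p^{-n_X}\|(\lambda - C_p)^{-q}\|_1 \leq C |\lambda|^{2q}, \qquad \lambda \in \delta \cup \Delta.
\end{equation*}
Writing
\begin{equation*}
\K_{p,u} = \frac{(-1)^{q-1}(q-1)!}{2i\pi (-u)^{q-1}} \psi_{1/\sqrt{u}}\int_\Delta e^{-u\lambda}(\lambda-C_p)^{-q}\, d\lambda
\end{equation*}
as in \eqref{autreecrituredeKpu}, using the inequality $\Re(\lambda) \geq K|\lambda| \geq \kappa$ valid on $\Delta$, and applying the crude bound $\|N_u\|_\infty \leq C(1 + u^{-\dim B/2})$, one obtains $p^{-n_X}|\trs[N_u \K_{p,u}]| \leq C e^{-au}$ for some $a > 0$, which is \eqref{termeavecKpu2} for $\ell = 0$.

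The main obstacle is handling the derivatives in the base direction $U \in T_\R B$, which correspond to $\ell \geq 1$. As in the derivation of \eqref{lambdaetlambda02-puissanceq-derivee}--\eqref{norme-infinie-autre-bout}, I would decompose $\nabla^{\pi^*\End(\E_p)}_U(\lambda - C_p)^{-q}$ as a sum of products of terms of the form \eqref{termes-ds-der-resolvante^a}, where the factors involve $R_p/p$, $\nabla_U R_p/p$, and $\nabla_U D_p^2/p$. The delicate point is to prove the analogue of \eqref{norme-de-A_p-lambda}, i.e.\ $\|\nabla_U(D_p^2/p)(\lambda_0 - D_p^2/p)^{-1}\|_\infty \leq C|\lambda|^N$ uniformly in $p$. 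In the present setting this requires combining the Toeplitz structure of $\Gamma^{F_p}$ from \eqref{devlpmtGammaFp}, the commutator identity \eqref{[nablaU,TGamma]} (which remains bounded thanks to Theorem \ref{RFpToeplitz} and the operator-norm bound on Toeplitz operators), and the rescaling argument around \eqref{structure-derivee-Dp2/p-rescaled}, reformulated in the Toeplitz language of Section \ref{operatorwithToeplitzcoeff}; the uniformity in $p$ is ensured precisely because we measure operator norms rather than Hilbert-Schmidt or trace norms. Once this estimate is in hand, the argument of the $\ell = 0$ case applies verbatim via Lemma \ref{estimeeaveclambda02} and the contour representation, and the case $\ell \geq 2$ follows by induction on $\ell$ using the same scheme.
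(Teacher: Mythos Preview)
Your proposal is correct and follows exactly the approach of the paper: the paper's own proof is the single sentence ``Proposition \ref{termeavecKpu2} follows from Lemma \ref{estimeeaveclambda02} exactly as Proposition \ref{termeavecKpu} follows from Lemma \ref{estimeeaveclambda0}'', and you have simply unpacked what this means, correctly identifying the Toeplitz operator-norm bounds (\eqref{||Tfp||infinie}, Theorem \ref{RFpToeplitz}) and the spectral gap (Lemma \ref{spdeDp22}) as the ingredients that make the transposition work uniformly in $p$.
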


\begin{proof}
Proposition \ref{termeavecKpu2} follows from Lemma \ref{estimeeaveclambda02} exactly as Proposition \ref{termeavecKpu} follows from Lemma \ref{estimeeaveclambda0}.
\end{proof}

\begin{prop}
\label{termeavecPpu2}
For any $\ell\in \N$, there is a $C>0$ such that for any $p\geq 1$ and $u\geq 1$, 
\begin{equation}
\label{termeavecPpueq2}
p^{-n_Z}\big|  \trs\left[N_u \PP_{p,u}\right] \big|_{\mathscr{C}^\ell(B)}\leq \frac{C}{\sqrt{u}}.
\end{equation}
\end{prop}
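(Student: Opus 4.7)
The plan is to adapt the proof of Proposition~\ref{termeavecPpu} in Section~\ref{demothm2} to the present setting, the main differences being (i) the normalization $p^{-n_Z}$ instead of $p^{-n_X}$, which accounts for the fact that $\dim F_p$ grows like $p^{n_Y}$, and (ii) that the bounded‐operator bounds on $R_p/p$ must be obtained via the Toeplitz formalism of Subsection~\ref{toeplitzop} rather than via the trivial $\frac{1}{p}\Gamma^L$ identification used in Section~\ref{Sectionlinebundle}.

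First, as in \eqref{autreecrituredeKpu}--\eqref{grossesomme}, since $C_p$ has no spectrum between $\delta$ and $\delta/u$ we rewrite $\PP_{p,u} = \frac{1}{2i\pi}\psi_{1/\sqrt{u}}\int_\delta e^{-\lambda}(\lambda-uC_p)^{-1}d\lambda$, expand by Neumann series in $u\tilde R_p$ (where $\tilde R_p = R_p/p$) and split the resolvent of $uC_p^{(0)}$ using $P_p$ and $P_p^\perp$, obtaining a finite sum of terms in which $P_p$ appears at least once. Decomposing $R_p = R_p^{(1)} + R_p^{(\geq 2)}$ with $R_p^{(1)} = [B_p^{(1)},D_p]$, the identity $P_p R_p^{(1)} P_p = 0$ still holds since $D_p P_p = 0$, so each resulting block is a polynomial in $u^{-1/2}$ with non-negative powers inside a sandwich of $P_p$'s and $(uC_p^{(0)})^{-r}$'s. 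Hence there exist forms $c_k(p)\in\Omega^\bullet(B)$ such that
\begin{equation}
p^{-n_Z}\trs[N_u\PP_{p,u}] = \sum_{k=0}^K c_k(p)u^{-k/2}.
\end{equation}

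Next, I need uniform bounds on $|c_k(p)|$. The analogue of \eqref{normeinftermespossibles} follows from two ingredients. By \eqref{Bp2etDp2-2}, $R_p$ lies in $\Lambda^{\geq 1}(T^*_\R B)\otimes\mathrm{Op}^{\leq 1}_X\otimes\C[p]\otimes \toep$, so the principal part of $\tilde R_p$ is $T_{f,p}$ applied to a first-order differential operator in the vertical direction, and $\|T_{f,p}\|_\infty \leq \|f\|_{\mathscr{C}^0}$ by \eqref{||Tfp||infinie}. Combined with Lemma \ref{spdeDp22} (the spectral gap $\Sp(D_p^2)\subset\{0\}\cup]\,C_0p-C_L,+\infty[$), the argument of \eqref{normeinftermespossibles} gives uniformly in $p\geq 1$:
\begin{equation}
\|P_p\tilde R_pP_p\|_\infty,\ \|P_p\tilde R_p(C_p^{(0)})^{-r}\|_\infty,\ \|(C_p^{(0)})^{-r}\tilde R_p(C_p^{(0)})^{-r'}\|_\infty\ \leq\ C, \quad r,r'\geq \tfrac12.
\end{equation}
Each term in the expansion of $\PP_{p,u}$ is thus a product of uniformly bounded operators containing at least one factor of $P_p$, so by the trace-class estimate $\|A\|_1 \leq \|A\|_\infty\dim\ker(D_p^2)$ together with Riemann--Roch--Hirzebruch plus Kodaira vanishing (applied on the fiber $Z$), we obtain
\begin{equation}
\dim\ker(D_p^2) = \dim H^0(X,\xi\otimes F_p) = \dim H^0(Z,\pi_1^*\xi\otimes\eta\otimes L^p) \leq C p^{n_Z},
\end{equation}
which precisely compensates the $p^{-n_Z}$ normalization and yields $|c_k(p)|\leq C$ uniformly.

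Finally, to eliminate the constant term, we fix $p$ large and let $u\to+\infty$ in \eqref{polynomial} (its analogue here). By Theorem~\ref{thmutoinf}, \eqref{trsavecBupenfctiondePpuetKpu2} and Proposition~\ref{termeavecKpu2},
\begin{equation}
p^{-n_Z}\trs[N_u\PP_{p,u}] \xrightarrow[u\to+\infty]{} p^{-n_Z}\psi_{1/\sqrt{p}}\trs\left[N_V\exp(-(\n^{H(X,\xi\otimes F_p|_X)})^2)\right]=0,
\end{equation}
because $R^i\pi_{2*}(\xi\otimes F_p) \simeq R^i\pi_{3*}(\pi_1^*\xi\otimes\eta\otimes L^p) = 0$ for $i>0$ and $p$ large. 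Hence $c_0(p)=0$, and \eqref{termeavecPpueq2} follows for $\ell=0$. For $\ell\geq 1$, one differentiates the expansion in the base direction $U\in T_\R B$; the derivatives $\n^{\End(\E_p)}_U$ acting on $\tilde R_p$ and on $(C_p^{(0)})^{-r}$ produce terms of the same structure (this is where Theorem~\ref{RFpToeplitz} enters, ensuring $\n^{F_p}T_{f,p}$ is again Toeplitz and hence uniformly bounded, and where the analogue of \eqref{norme-de-A_p-lambda} controls the derivative of the resolvent), so the same argument gives \eqref{termeavecPpueq2} in general. The main obstacle, compared with the line-bundle case, is establishing these uniform bounds on derivatives of the resolvent despite the fact that $F_p$ has varying dimension; this is precisely where the Toeplitz machinery and the operator-norm estimates of Section~\ref{operatorwithToeplitzcoeff} replace the finite-dimensional arguments of Section~\ref{demothm2}.
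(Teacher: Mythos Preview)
Your proposal is correct and follows essentially the same approach as the paper: you reproduce the argument of Proposition~\ref{termeavecPpu} verbatim, replacing the line-bundle operator bounds by the Toeplitz estimates \eqref{||Tfp||infinie} and the dimension bound $\dim\ker(D_p^2)=\dim H^0(Z,\pi_1^*\xi\otimes\eta\otimes L^p)\leq Cp^{n_Z}$, which is exactly what the paper does. The paper's own proof is in fact just a two-line reference to Proposition~\ref{termeavecPpu} together with this substituted dimension estimate.
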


\begin{proof}
 The proof is exactly the same as the proof of Proposition \ref{termeavecPpu}, the only change is that to prove the analogue of \eqref{normeinftermespossibles}, we substitute \eqref{coeffbornŽs} by
 \begin{equation}
p^{-n_Z} \dim \ker (D_p^2)=p^{-n_Z}\dim H^0(X,\xi\otimes F_p) = p^{-n_Z}\dim H^0(Z,\pi^*_1\xi\otimes\eta\otimes L^p )\leq C.
\end{equation}
\end{proof}

With \eqref{trsavecBupenfctiondePpuetKpu2} and Propositions \ref{termeavecKpu2} and \ref{termeavecPpu2}, we have proved Theorem \ref{dominationugrand2}.


\end{document}